\renewcommand\baselinestretch{1.2}
\newtheorem{thm}{Theorem}[section]
\newtheorem{lem}[thm]{Lemma}
\newtheorem{cor}[thm]{Corollary}
\newtheorem{prop}[thm]{Proposition}
\newtheorem{defn}[thm]{Definition}
\newtheorem{rem}[thm]{Remark}
\numberwithin{equation}{section}
\begin{document}
\begin{CJK*}{GBK}{song}
\CJKtilde

\begin{titlepage}
~~~ \vspace{2cm}

 {\bf \LARGE
\begin{center}
Gorenstein Homological Algebra of Artin Algebras
\end{center}}\vspace{3.cm}

\centerline{\bf \huge Xiao-Wu Chen}\vspace{8cm}

{\large \bf
\begin{center}
Department of Mathematics\\University of Science and Technology of China \\
Hefei, 230026,  People's Republic of China
\end{center}}

{\large \bf
\begin{center} { March  \ \ 2010}
\end{center}}

\end{titlepage}

\newpage

 \thispagestyle{empty}
 {\LARGE \bf
\begin{center} Acknowledgements \end{center}}\vspace{0.3cm}

I would like to thank my postdoctoral  mentor Professor Sen Hu for
his support. I also would like to thank Professor Pu Zhang,
Professor Henning Krause and Professor Yu Ye for their help.

I benefit from  private communications with Professor Apostolos
Beligiannis and Professor Edgar Enochs. I am indebted to Dr. Nan Gao
and Dr. Guodong Zhou for their encouragement.

During my postdoctoral research, I am supported by two grants from
China Postdoctoral Science Foundation and a grant from K.C. Wong
Education Foundation, Hong Kong. I am also partly supported by
Alexander von Humboldt Stiftung and National Natural Science
Foundation.

The last but not the least, I would like to thank my wife Jue for
her love and support.

\newpage
\pagenumbering{roman}

\addcontentsline{toc}{chapter}{\numberline {}{\bf Abstract }}
{\LARGE \bf \begin{center} Abstract\end{center}}\vspace{0.3cm}

Gorenstein homological algebra is a kind of relative homological
algebra which has been developed to a high level since more than
four decades.

In this report we review the basic theory of Gorenstein homological
algebra of artin algebras. It is hoped that such a theory will help
to understand the famous Gorenstein symmetric conjecture of artin
algebras.

With only few exceptions all the results in this report are
contained in the existing literature. We have tried to keep the
exposition as self-contained as possible. This report can be viewed
as a preparation for learning the newly developed theory of
virtually Gorenstein algebras.

In Chapter 2 we recall the basic notions in Gorenstein homological
algebra with particular emphasis on finitely generated
Gorenstein-projective modules, Gorenstein algebras and CM-finite
algebras.

In Chapter 3 based on a theorem by Beligiannis we study the
Gorenstein-projective resolutions and various Gorenstein dimensions;
we also discuss briefly Gorenstein derived categories in the sense
of Gao and Zhang.

We include three appendixes: Appendix A treats cotorsion pairs;
Appendix B sketches a proof of the theorem by Beligiannis; Appendix
C provides a list of open problems in Gorenstein homological algebra
of artin algebras.

\vskip 30pt

 \noindent {\small {\bf Keywords:}\quad Gorenstein-projective modules, Gorenstein dimensions, Gorenstein algebras,
CM-finite algebras, virtually Gorenstein algebras.}

%
%

\end{CJK*}

\setlength{\topskip}{-35mm} \tableofcontents

\newpage

\thispagestyle{empty} ~~~

\newpage

\renewcommand\baselinestretch{1.2}

\pagenumbering{arabic}

\chapter{Introduction}

The main idea of homological algebra in module categories is to
replace a module by its projective (or injective) resolution. In
this way one defines derived functors of a given functor, which
provide more information on the given functor. Roughly speaking, the
information obtained measures how far the given functor is from
being exact. Here the notions of projective module and projective
resolution play a central role.

Relative homological algebra is initiated by Eilenberg and Moore
(\cite{EM}). The idea of relative homological algebra is that one
might replace projective modules by certain classes of modules and
then ``pretends"  that these modules are projective. Let us call
these modules temporarily relatively projective.  Suppose that one
has a resolution of any module with respect to these relatively
projective modules. Then for a given functor one defines certain
derived functors via the resolution. These derived functors will be
the hero in the theory of relative homological algebra. Evidently
the choice of relatively projective modules will be vital in
relative homological algebra. As a matter of fact, a different
choice of such modules will lead to a different theory.

There is another point of view on relative homological algebra. For
the chosen class of relatively projective modules, one can associate a
class of short exact sequences on which theses chosen modules behave
like projective modules. Such a class of short exact sequences
provides a new exact structure on the module category and then one
gets a new exact category in the sense of Quillen (\cite{Qui}). Then
it follows that relative homological algebra is just homological
algebra of certain exact categories. Again these exact categories depend on
the choice of these relatively projective modules.

Gorenstein homological algebra is a kind of relative homological
algebra, where the relatively projective modules are chosen to be
Gorenstein-projective modules. Finitely generated
Gorenstein-projective modules over a noetherian ring are introduced
by Auslander and Bridger under the name ``modules of G-dimension
zero" (\cite{ABr}). Over a commutative Gorenstein ring these modules
are equal to the maximal Cohen-Macaulay modules. Auslander and
Bridger introduce the notion of G-dimension for a finitely generated
module and then they generalize the famous Auslander-Buchbaum
formula with projective dimension replaced by the G-dimension. The
notion of arbitrary Gorenstein-projective modules over an arbitrary
ring is invented by Enochs and Jenda (\cite{EJ1}). Later the theory
of Gorenstein-projective modules is  studied intensively by Enochs's
school and others. Gorenstein derived functors are then defined
using a Gorenstein-projective resolution of a module
(\cite{EJ,Hol2}). However it is not a priori that such a resolution
exists for an arbitrary module. A recent and remarkable result due
to J{\o}rgensen states that for a large class of rings such a
resolution always exists (\cite{Jor2}). Inspired by these results
Gao and Zhang introduce the notion of Gorenstein derived category
(\cite{GZ}; also see \cite{Ch3}), which is a category with a higher
structure in the theory of Gorenstein homological algebra.

Dual to Gorenstein-projective modules one has the notion of
Gorenstein-injective module. These modules play the role of
injective modules in the classical homological algebra. Using
Gorenstein-injective modules one can define the Gorenstein-injective
coresolutions of modules and then define the corresponding derived
functors for a given functor. However, in general it is not clear
how these derived functors are related to the ones given by
Gorenstein-projective resolutions.

In this report, we study the Gorenstein homological algebra of artin
algebras. The restriction to artin algebras is mainly because of a
matter of taste. Due to a work by Auslander and Reiten
Gorenstein-projective modules are closely related to the famous
Gorenstein symmetric conjecture in the theory of artin algebras
(\cite{AR, AR1991}; also see \cite{Hap2}). We hope that an intensive study
of Gorenstein homological algebra of artin algebras will help to
understand this conjecture.

This report is organized as follows.

In Chapter 2, we provide some preliminaries on Gorenstein
homological algebra: we treat the category of finitely generated
Gorenstein-projective modules in detail; we also discuss other
classes of modules which are important in Gorenstein homological
algebra; we briefly discuss Gorenstein algebras, CM-finite algebras,
CM-free algebras and virtually Gorenstein algebras.

Chapter 3 treats the main topic in Gorenstein homological algebra:
we study the Gorenstein-projective extension groups in detail; we
study various Gorenstein dimensions of modules and algebras and
study the class of modules having finite Gorenstein dimension; we
briefly discuss Gorenstein derived categories.

We include three appendixes: Appendix A treats cotorsion pairs and
related notions; Appendix B sketches a proof of an important theorem
due to Beligiannis; Appendix C collects some open problems, most of
 which are related to CM-finite algebras.

Let us finally point out that with only few exceptions the results
in this report are contained in the existing literature.  This
report may be viewed as a preparation for reading the beautiful
theory of virtually Gorenstein algebras developed in \cite{BR, Bel2,
Bel3, BK}.

\chapter{Preliminaries}

\section{Gorenstein-Projective Modules}

In this section we study for an artin algebra the category of
finitely generated Gorenstein-projective modules. Such modules are
also known as maximal Cohen-Macaulay modules. These modules play a
central role in the theory of Gorenstein homological algebra.

Throughout $A$ will be an artin $R$-algebra where $R$ is a
commutative artinian ring. Denote by $A\mbox{-mod}$ the category of
finitely generated left $A$-modules. In this section all modules are
considered to be finitely generated. A left $A$-module $X$ is often
written as $_AX$ and a right $A$-module $Y$ is written as $Y_A$.
Right $A$-modules are viewed as left $A^{\rm op}$-modules. Here
$A^{\rm op}$ denotes the opposite algebra of $A$. In what follows,
$A$-modules always mean left $A$-modules.

 For an $A$-module $X$, write $DX={\rm Hom}_R(X, E)$ its Matlis dual where $E$ is
the minimal injective cogenerator for $R$. Note that $DX$ has a
natural right $A$-module structure and then it is viewed as an
$A^{\rm op}$-module.

 A complex $C^\bullet=(C^n, d^n)_{n\in \mathbb{Z}}$ of $A$-modules  consists
 of a family $\{C^n\}_{n\in \mathbb{Z}}$ of $A$-modules  and
 differentials $d^n\colon C^n\rightarrow C^{n+1}$ satisfying
$d^{n}\circ d^{n-1}=0$. Sometimes a complex is written as a sequence of $A$-modules
$\cdots \rightarrow C^{n-1}\stackrel{d^{n-1}}\rightarrow C^{n}\stackrel{d^{n}}\rightarrow C^{n+1}\rightarrow \cdots$.
For each $n\in \mathbb{Z}$ denote by $B^n(C^\bullet)={\rm Im}\; d^{n-1}$ and $Z^n(C^\bullet)={\rm Ker}\; d^n$
the $n$-th coboundary and cocycle of $C^\bullet$, respectively. Note that $B^n(C^\bullet)\subseteq Z^n(C^\bullet)$.
Denote by $H^n(C^\bullet)=Z^n(C^\bullet)/B^n(C^\bullet)$ the $n$-th cohomology of the complex $C^\bullet$.

 A complex $C^\bullet$ of $A$-modules is \emph{acyclic}\index{complex!acyclic} provided that
  it is exact as a sequence, or equivalently, $H^n(C^\bullet)=0$ for all $n$.
   Following \cite[p.400]{AM} a complex $P^\bullet$ of projective $A$-modules is said to be \emph{totally
acyclic}\index{complex!totally acyclic} provided it is acyclic and
the Hom complex ${\rm Hom}_A(P^\bullet, A)$ is also acyclic.

Following Enochs and Jenda \cite{EJ1,EJ} we have the following
definition.

\begin{defn} \label{defn:Gp}
An $A$-module $M$ is said to be (finitely generated)
\emph{Gorenstein-projective}\index{Gorenstein-projective module!finitely generated} provided that there is a totally
acyclic complex $P^\bullet$ of projective modules such that its
$0$-th cocycle $Z^0(P^\bullet)$ is isomorphic to $M$.
\end{defn}

We will denote by $A\mbox{-Gproj}$ the full subcategory of
$A\mbox{-mod}$ consisting of Gorenstein-projective modules.

In Definition \ref{defn:Gp} the complex $P^\bullet$ is said to be a
\emph{complete resolution}\index{resolution!complete} of $M$.
Observe that each cocycle in a totally acyclic complex is
Gorenstein-projective. Note that any projective module $P$
  is Gorenstein-projective, since its complete resolution can be taken as $\cdots \rightarrow 0\rightarrow P\stackrel{{\rm Id}_P}\rightarrow P \rightarrow 0\rightarrow \cdots$. Therefore, we have $A\mbox{-proj}\subseteq A\mbox{-Gproj}$.

For an $A$-module $M$ write  $M^*={\rm Hom}_A(M, A)$ which has a
canonical right $A$-module structure. This gives rise to a
contravariant functor $(-)^*\colon A\mbox{-mod}\rightarrow A^{\rm
op}\mbox{-mod}$. For a complex $M^\bullet$ we denote by
$(M^\bullet)^*$ the Hom complex ${\rm Hom}_A(M^\bullet, A)$.

Recall that for an $A$-module $M$, one has the following
\emph{evaluation morphism}\index{evaluation morphism}
$${\rm ev}_M\colon M\longrightarrow M^{**}={\rm Hom}_{A^{\rm op}}({\rm Hom}_A(M, A), A)$$
such that ${\rm ev}_M(m)(f)=f(m)$;  $M$ is called \emph{reflexive
}\index{module!reflexive}if ${\rm ev}_M$ is an isomorphism. For example, (finitely
generated) projective modules are reflexive.

Denote by $^\perp A$ the full subcategory of $A\mbox{-mod}$
consisting of modules $M$ with the property ${\rm Ext}_A^i(M, A)=0$
for $i\geq 1$. By a dimension-shift argument, one obtains that ${\rm
Ext}^i_A(M, L)=0$ for all $i\geq 1$, $M\in {^\perp A}$ and $L$
 having finite projective dimension.

\begin{lem}\label{lem:totallyacyclic}
Let $P^\bullet$ be a complex of projective $A$-modules. Then the
following statements are equivalent:
\begin{enumerate}
\item[(1)] the complex $P^\bullet$ is totally acyclic;
\item[(2)] the complex $P^\bullet$ is acyclic and each  cocycle $Z^i(P^\bullet)$
lies in $^\perp A$;
\item[(3)] the complex $(P^\bullet)^*$ is totally acyclic.
\end{enumerate}
\end{lem}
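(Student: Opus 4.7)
The plan is to split the verification into two halves: first $(1)\Leftrightarrow(3)$ via reflexivity of finitely generated projective modules, then $(1)\Leftrightarrow(2)$ via a dimension-shift argument on the short exact sequences coming from the acyclic complex $P^\bullet$.

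For $(1)\Leftrightarrow(3)$, I would observe that each $P^i$ is a finitely generated projective $A$-module, hence reflexive, and that each $(P^i)^*$ is a finitely generated projective right $A$-module. The evaluation maps then assemble into a natural isomorphism of complexes $(P^\bullet)^{**}\cong P^\bullet$. Unwinding the definitions, $P^\bullet$ being totally acyclic amounts to asking that $P^\bullet$ and $(P^\bullet)^*$ both be acyclic, while $(P^\bullet)^*$ being totally acyclic amounts to asking that $(P^\bullet)^*$ and $(P^\bullet)^{**}\cong P^\bullet$ both be acyclic. These are the same pair of conditions, so $(1)\Leftrightarrow(3)$ is immediate.

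For $(1)\Leftrightarrow(2)$, I would assume $P^\bullet$ is acyclic and for each $i$ break it into the short exact sequence $0\to Z^i\to P^i\to Z^{i+1}\to 0$. Applying ${\rm Hom}_A(-,A)$ and using that each $P^i$ belongs to $^\perp A$ yields the long exact sequence
$$0\to (Z^{i+1})^*\to (P^i)^*\to (Z^i)^*\to {\rm Ext}^1_A(Z^{i+1},A)\to 0$$
together with dimension-shift isomorphisms ${\rm Ext}^j_A(Z^{i+1},A)\cong {\rm Ext}^{j-1}_A(Z^i,A)$ for $j\geq 2$. Next I would factor $d^i$ as $P^i\to Z^{i+1}\to P^{i+1}$ and dualize, writing $(d^i)^*$ as the composite $(P^{i+1})^*\to (Z^{i+1})^*\to (P^i)^*$ in which the second arrow is the injection from the long exact sequence above. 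A short diagram chase then identifies exactness of $(P^\bullet)^*$ at $(P^i)^*$ with surjectivity of $(P^{i+1})^*\to (Z^{i+1})^*$, which by the analogous long exact sequence one step up is equivalent to the vanishing of ${\rm Ext}^1_A(Z^{i+2},A)$. Running over all $i$, the complex $(P^\bullet)^*$ is acyclic precisely when ${\rm Ext}^1_A(Z^i,A)=0$ for every $i$, and the dimension-shift isomorphism above then propagates this from ${\rm Ext}^1$ to all ${\rm Ext}^j$ with $j\geq 1$, which is the condition $Z^i\in{}^\perp A$.

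The only real technical care lies in the index bookkeeping: correctly computing ${\rm Ker}((d^{i-1})^*)\cong (Z^{i+1})^*$ using that $B^i=Z^i$ by acyclicity of $P^\bullet$, and then pinpointing which ${\rm Ext}^1$ governs the failure of surjectivity one step up. Beyond that the argument is a routine dimension shift, and I do not foresee any serious obstacle.
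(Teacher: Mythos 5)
Your proposal is correct and takes essentially the same approach as the paper: the $(1)\Leftrightarrow(3)$ step via the reflexivity isomorphism $P^\bullet\cong(P^\bullet)^{**}$ is identical, and your $(1)\Leftrightarrow(2)$ argument, though phrased by dualizing the short exact sequences $0\to Z^i\to P^i\to Z^{i+1}\to 0$ one at a time, is just an unpacked version of the paper's observation that the truncated complex $\cdots\to P^{i-2}\to P^{i-1}\to 0$ is a projective resolution of $Z^i(P^\bullet)$, yielding $H^{-i+k+1}((P^\bullet)^*)\simeq{\rm Ext}^k_A(Z^i(P^\bullet),A)$.
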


\begin{proof}
Note that for a complex $P^\bullet$ of projective modules the
evaluation morphisms induce an isomorphism $P^\bullet \simeq
(P^\bullet)^{**}$ of complexes. Then the equivalence
$(1)\Leftrightarrow (3)$ follows from the definition. The
equivalence $(1) \Leftrightarrow (2)$ follows from the following
observation: for an acyclic complex $P^\bullet$ of projective
modules and for each $i\in \mathbb{Z}$, the truncated complex
$\cdots \rightarrow P^{i-2}\rightarrow P^{i-1}\rightarrow 0$ is a
projective resolution of the cocycle $Z^i(P^\bullet)$ and
 then we have  $H^{-i+k+1}((P^\bullet)^*)\simeq {\rm Ext}_A^k(Z^i(P^\bullet), A)$
  for all $k\geq 1$.
\end{proof}

\begin{cor}\label{cor:infinitedimension}
We have that $A\mbox{-{\rm Gproj}}\subseteq {^\perp A}$. Then for a
Gorenstein-projective module  $M$ we have:
\begin{enumerate}
\item[(1)] ${\rm Ext}_A^i(M, L)=0={\rm Tor}_i^A(L', M)$ for all $i\geq 1$,
 $_AL$ of finite projective dimension and $L'_A$ of finite
injective dimension;
\item[(2)] $M$ is either projective or has
infinite projective dimension.
\end{enumerate}
\end{cor}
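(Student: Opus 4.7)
For the inclusion $A\mbox{-Gproj}\subseteq {}^\perp A$, I would simply apply the equivalence $(1)\Leftrightarrow(2)$ in Lemma \ref{lem:totallyacyclic}: if $M$ is Gorenstein-projective with complete resolution $P^\bullet$, then $M=Z^0(P^\bullet)$ is a cocycle of a totally acyclic complex of projectives, hence lies in ${}^\perp A$. The Ext-vanishing claim in (1) is then immediate from the dimension-shift argument recalled in the paragraph preceding that lemma, applied to $M\in{}^\perp A$ and $L$ of finite projective dimension.

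For the Tor part of (1), my plan is to reduce to the Ext part by Matlis duality. The standard Hom--tensor adjunction gives $D(L'\otimes_A M)\simeq {\rm Hom}_A(M,DL')$, which on derivation yields a natural isomorphism $D\,{\rm Tor}_i^A(L',M)\simeq {\rm Ext}^i_A(M,DL')$. Since $D$ is an exact contravariant duality between $A\mbox{-mod}$ and $A^{\rm op}\mbox{-mod}$ interchanging projective and injective modules, the hypothesis that $L'_A$ has finite injective dimension forces $DL'$ to have finite projective dimension as a left $A$-module. The Ext case already established then gives ${\rm Ext}^i_A(M,DL')=0$ for $i\geq 1$, and the faithfulness of $D$ lets us conclude ${\rm Tor}_i^A(L',M)=0$.

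For (2), I argue by contradiction: assume $M$ is Gorenstein-projective with $d:={\rm pd}_A M$ satisfying $1\leq d<\infty$. Fix a complete resolution $P^\bullet$ of $M$ and set $\Omega^k M:=Z^{-k}(P^\bullet)$; each $\Omega^k M$ is Gorenstein-projective, because a suitable shift of $P^\bullet$ is itself a complete resolution of $\Omega^k M$. A standard Schanuel-type argument shows that any $d$-th syzygy of a module of projective dimension $d$ is projective, so $\Omega^d M$ is projective. Now apply part (1) to the Gorenstein-projective module $\Omega^{d-1}M$ and the (finite-projective-dimension) module $\Omega^d M$ to conclude ${\rm Ext}^1_A(\Omega^{d-1}M,\Omega^d M)=0$. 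Hence the short exact sequence
\[
0\longrightarrow \Omega^d M\longrightarrow P^{-d}\longrightarrow \Omega^{d-1}M\longrightarrow 0
\]
splits, making $\Omega^{d-1}M$ a summand of a projective, hence projective; this forces ${\rm pd}_A M\leq d-1$, contradicting the choice of $d$. None of the steps is genuinely difficult; the only place where the artin-algebra hypothesis enters in an essential way is the Matlis-duality reduction in the Tor step, which is where I would expect the main subtlety to lie.
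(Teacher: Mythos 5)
Your proposal is correct and follows essentially the same route as the paper: the inclusion $A\mbox{-Gproj}\subseteq{}^\perp A$ via Lemma \ref{lem:totallyacyclic}(2), the Tor claim via $D\,\mathrm{Tor}_i^A(L',M)\simeq\mathrm{Ext}_A^i(M,DL')$ and the fact that $D$ trades finite injective for finite projective dimension, and part (2) from the vanishing established in (1). For (2) the paper simply cites the standard fact that $1\leq\mathrm{pd}_A X=d<\infty$ forces $\mathrm{Ext}_A^d(X,A)\neq 0$, whereas you unroll the same splitting/Schanuel argument that proves that fact; the content is identical, yours just being slightly more self-contained.
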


\begin{proof}
Note that $D{\rm Tor}_i^A(L', M)\simeq {\rm Ext}_A^i(M, DL')$. Then
the first statement follows from Lemma \ref{lem:totallyacyclic}(2).
For the second, we apply the first statement. Then it follows from
the fact that a module $X$ of finite projective dimension $d$
satisfies that ${\rm Ext}_A^d(X, A)\neq 0$.
\end{proof}

 \begin{lem}\label{lem:Gproj}
 Let $M\in A\mbox{-{\rm mod}}$. Then the  following statements are equivalent:
 \begin{enumerate}
 \item[(1)] $M$ is Gorenstein-projective;
 \item[(2)] there exists a long exact sequence $0\rightarrow M \rightarrow P^0
          \rightarrow P^1 \rightarrow P^2 \rightarrow \cdots$ with each $P^i$ projective
          and each cocycle in $^\perp A$;
 \item[(3)] $M\in {^\perp A}$, $M^*\in {^\perp}(A_A)$ and $M$ is reflexive.
 \end{enumerate}
 \end{lem}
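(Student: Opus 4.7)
The plan is to establish the cycle $(1)\Rightarrow(2)\Rightarrow(1)$ together with the equivalence $(1)\Leftrightarrow(3)$, using Lemma~\ref{lem:totallyacyclic} and Corollary~\ref{cor:infinitedimension} throughout. The implication $(1)\Rightarrow(2)$ is immediate: take a complete resolution $P^\bullet$ of $M$ and read off its right-hand truncation $0\to M\to P^0\to P^1\to\cdots$, whose cocycles lie in ${^\perp A}$ by Lemma~\ref{lem:totallyacyclic}(2). For $(2)\Rightarrow(1)$ I splice the given sequence with a projective resolution $\cdots\to P^{-1}\to P^0\to M\to 0$. The result is acyclic, and each cocycle lies in ${^\perp A}$: those on the right by hypothesis, those on the left because $M\in{^\perp A}$ (it is one of the cocycles on the right) forces each syzygy $\Omega^iM$ to lie in ${^\perp A}$ by dimension shift. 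Lemma~\ref{lem:totallyacyclic} then certifies total acyclicity, with $Z^0=M$.

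For $(1)\Rightarrow(3)$, start from a complete resolution $P^\bullet$ of $M$. Corollary~\ref{cor:infinitedimension} yields $M\in{^\perp A}$. By Lemma~\ref{lem:totallyacyclic}(3), $(P^\bullet)^*$ is totally acyclic, and a direct index computation exhibits $M^*$ as one of its cocycles; hence $M^*$ is Gorenstein-projective as a right $A$-module, so $M^*\in{^\perp}(A_A)$ by Corollary~\ref{cor:infinitedimension}. Finally, the canonical termwise isomorphism $P^\bullet\cong(P^\bullet)^{**}$ (evaluation, which is an iso on each finitely generated projective $P^n$) restricts on $0$-th cocycles to ${\rm ev}_M\colon M\to M^{**}$, proving reflexivity.

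The main obstacle is $(3)\Rightarrow(1)$, where a complete resolution must be assembled from scratch on both sides and total acyclicity then verified by passing to the dual. Take projective resolutions $\cdots\to P^{-1}\to P^0\to M\to 0$ in $A\mbox{-mod}$ and $\cdots\to Q^{-1}\to Q^0\to M^*\to 0$ in $A^{\rm op}\mbox{-mod}$. Applying ${\rm Hom}_{A^{\rm op}}(-,A)$ to the latter, using $M^*\in{^\perp}(A_A)$ to preserve exactness at each step and the identification $M^{**}\cong M$ given by reflexivity, yields an exact sequence
$$0\to M\to (Q^0)^*\to (Q^{-1})^*\to\cdots$$
of left $A$-modules. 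Splicing at $M$ with the projective resolution of $M$ produces an acyclic complex $P^\bullet$ of projective $A$-modules with $Z^0(P^\bullet)=M$. Applying $(-)^*$ to $P^\bullet$, and invoking reflexivity of the projective modules $P^{-i}$ and $Q^{-j}$, identifies $(P^\bullet)^*$ with the splicing, at $M^*$, of the projective resolution of $M^*$ with the sequence $0\to M^*\to(P^0)^*\to(P^{-1})^*\to\cdots$; this latter sequence is exact precisely because $M\in{^\perp A}$. Hence $(P^\bullet)^*$ is acyclic, so Lemma~\ref{lem:totallyacyclic} forces $P^\bullet$ to be totally acyclic, and $M$ is Gorenstein-projective. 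The three clauses of (3) are exactly what is needed to make both the constructed complex and its dual exact.
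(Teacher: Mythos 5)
Your proof is correct and follows essentially the same route as the paper's: truncate a complete resolution for $(1)\Rightarrow(2)$; splice with a projective resolution and use closure of ${^\perp A}$ under syzygies for $(2)\Rightarrow(1)$; read off $(3)$ from the complete resolution and Lemma~\ref{lem:totallyacyclic}(3); and assemble the complete resolution in $(3)\Rightarrow(1)$ by splicing a projective resolution of $M$ with the dual of a projective resolution of $M^*$. The only difference is that you explicitly verify total acyclicity of the spliced complex by computing its dual, which the paper leaves implicit in the phrase ``we obtain a complete resolution.''
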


\begin{proof}
 The implication ``$(1)\Rightarrow (2)$" follows from Lemma
 \ref{lem:totallyacyclic}(2).  For the other direction, assume (2) and
 take a projective resolution $\cdots \rightarrow P^{-2} \rightarrow
P^{-1}\rightarrow M \rightarrow
 0$. By splicing we get an acyclic complex $P^\bullet$ such that $Z^0(P^\bullet)\simeq
 M$. Note that since $^\perp A \subseteq A\mbox{-mod}$ is closed
 under taking kernels of epimorphisms. It follows that all the cocycles in $P^\bullet$
 lie in $^\perp A$. By Lemma \ref{lem:totallyacyclic}(2) the complex
 $P^\bullet$ is totally acyclic. We are done.

   To see ``$(1) \Rightarrow (3)$", first note that $M\in {^\perp A}$; see
   Corollary \ref{cor:infinitedimension}. For others, take a
   complete resolution $P^\bullet$  of $M$. Note that $(P^\bullet)^*$ is totally acyclic and that
   $Z^i((P^\bullet)^*)=(Z^{-i+1}(P^\bullet))^*$. It follows that
   $M^*$ is Gorenstein-projective and then $M^* \in {^\perp
   A_A}$. For the same reason we have $Z^i((P^\bullet)^{**})=(Z^{-i+1}((P^\bullet)^*))^*
   = Z^i(P^\bullet)^{**}$. Note that evaluation morphisms induce an isomorphism $P^\bullet \simeq
   (P^\bullet)^{**}$ of complexes. Then it follows that $M$ is reflexive.

For ``$(3)\Rightarrow (1)$", take projective resolutions $\cdots
\rightarrow P^{-3}\rightarrow P^{-2}\rightarrow P^{-1}\rightarrow
M\rightarrow 0$ and $\cdots \rightarrow Q^{-2}\rightarrow
Q^{-1}\rightarrow Q^0\rightarrow M^*\rightarrow 0$.  Apply the
functor $(-)^*$ to the second resolution. By $(3)$ the resulting
complex $0\rightarrow (M^*)^*\rightarrow (Q^0)^*\rightarrow
(Q^{-1})^*\rightarrow (Q^{-2})^* \rightarrow \cdots$ is acyclic.
Note that $M$ is reflexive. Then by splicing the first and the third
complexes together we obtain a complete resolution of $M$.
\end{proof}

\begin{rem}
In view of Lemma \ref{lem:Gproj}(3) Gorenstein-projective modules
are the same as \emph{totally reflexive modules} in \cite[section
2]{AM}. Sometimes they are also called \emph{modules of G-dimension
zero} \index{module!of G-dimension zero}(\cite{Au,ABr}) or
\emph{maximal Cohen-Macaulay modules} (\cite[Definition
3.2]{Bel2})\index{module!maximal Cohen-Macaulay}. In view of Lemma
\ref{lem:Gproj}(2) we note that the subcategory $A\mbox{-{\rm
Gproj}}$ is a special case of the categories studied in
\cite[Proposition 5.1]{AR}.
\end{rem}

The following is an immediate  consequence of Lemma
\ref{lem:Gproj}(3).

\begin{cor}\label{cor:duality}
There is a duality $(-)^*\colon A\mbox{-{\rm
Gproj}}\stackrel{\sim}\longrightarrow A^{\rm op}\mbox{-{\rm Gproj}}$
with its quasi-inverse given by $(-)^*={\rm Hom}_{A^{\rm op}}(-,
A)$. \hfill $\square$
\end{cor}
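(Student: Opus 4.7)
The plan is to verify three things that together make $(-)^*$ into the claimed duality: (i) $(-)^*$ restricts to a contravariant functor $A\mbox{-Gproj} \to A^{\rm op}\mbox{-Gproj}$; (ii) its double application is naturally isomorphic to the identity on $A\mbox{-Gproj}$; and (iii) the symmetric statements hold on $A^{\rm op}\mbox{-Gproj}$, so that the two copies of $(-)^*$ are mutually quasi-inverse.

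For (i), I would take $M \in A\mbox{-Gproj}$ together with a complete resolution $P^\bullet$. By Lemma \ref{lem:totallyacyclic}, the implication $(1) \Rightarrow (3)$ gives that $(P^\bullet)^*$ is a totally acyclic complex of projective right $A$-modules. Exactly as in the proof of ``$(1) \Rightarrow (3)$'' of Lemma \ref{lem:Gproj}, the identification $Z^i((P^\bullet)^*) \simeq (Z^{-i+1}(P^\bullet))^*$ exhibits $M^*$ as a cocycle of $(P^\bullet)^*$, so $M^*$ lies in $A^{\rm op}\mbox{-Gproj}$. The same reasoning applied to $A^{\rm op}$ gives that $(-)^* = {\rm Hom}_{A^{\rm op}}(-,A)$ restricts to a contravariant functor $A^{\rm op}\mbox{-Gproj} \to A\mbox{-Gproj}$.

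For (ii) I would invoke Lemma \ref{lem:Gproj}(3), which says every $M \in A\mbox{-Gproj}$ is reflexive, i.e.\ the evaluation morphism ${\rm ev}_M\colon M \to M^{**}$ is an isomorphism. Since ${\rm ev}_{(-)}$ is natural in its argument, this promotes to a natural isomorphism $\mathrm{Id}_{A\mbox{-}\mathrm{Gproj}} \simeq (-)^{**}$. Lemma \ref{lem:Gproj}(3) applied to $A^{\rm op}$ yields the analogous natural isomorphism on $A^{\rm op}\mbox{-Gproj}$, giving (iii). Putting the three steps together shows that the two contravariant functors are mutual quasi-inverses, which is the asserted duality. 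The only mild obstacle is the index bookkeeping needed to identify $M^*$ as a specific cocycle of $(P^\bullet)^*$; but this has essentially been done in the proof of the previous lemma, so the corollary is indeed immediate once (i)--(iii) are assembled.
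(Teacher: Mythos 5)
Your argument is correct, and it is essentially the paper's (the paper marks the corollary as an immediate consequence of Lemma \ref{lem:Gproj}(3), whose proof already contains the identification $Z^i((P^\bullet)^*)\simeq (Z^{-i+1}(P^\bullet))^*$ that you reuse, together with reflexivity). The only cosmetic difference is that you reprove that $M^*$ is Gorenstein-projective via the complete resolution rather than reading off all three conditions of Lemma \ref{lem:Gproj}(3) for $M^*$ directly from those for $M$; both routes are equally immediate.
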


Recall that a full additive subcategory $\mathcal{X}$ of
$A\mbox{-mod}$ is \emph{resolving} provided that it contains all
projective modules and is closed under extensions, taking kernels of
epimorphisms and direct summands (\cite{ABr})\index{subcategory!resolving}. For example,
$A\mbox{-proj}\subseteq A\mbox{-mod}$ is resolving. We will see
shortly that $A\mbox{-Gproj}\subseteq A\mbox{-mod}$ is resolving.

The following result collects some important properties of
Gorenstein-projective modules. (1)-(3) are due to \cite[Proposition
5.1]{AR} (compare \cite[Lemma 2.3]{AM} and \cite[Theorem 2.5]{Hol})
and (4) is \cite[Corollary 2.11]{Hol}.

\begin{prop}\label{prop:Gproj}
Let $\xi\colon 0\rightarrow N\stackrel{f}\rightarrow
M\stackrel{g}\rightarrow L\rightarrow 0$ be a short exact sequence
of $A$-modules. Then we have the following statements:
\begin{enumerate}
\item[(1)] if $N, L$ are Gorenstein-projective, then so is $M$;
\item[(2)] if $\xi$ splits and $M$ is Gorenstein-projective, then so are
$N, L$;
\item[(3)] if $M, L$ are Gorenstein-projective, then so is $N$;
\item[(4)] if ${\rm Ext}^1_A(L, A)=0$ and $N, M$ are
Gorenstein-projective, then so is $L$.
\end{enumerate}
\end{prop}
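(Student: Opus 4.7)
The overall strategy is to use the three equivalent characterizations of Gorenstein-projectivity from Lemma~\ref{lem:Gproj}, switching between them as convenient, together with the fact that ${}^\perp A\subseteq A\mbox{-mod}$ is closed under extensions and, in the presence of vanishing ${\rm Ext}^1$, under kernels of epimorphisms; both of these are immediate from the long exact Ext-sequence. For (1), I would apply the Horseshoe Lemma to the right resolutions of $N$ and $L$ provided by Lemma~\ref{lem:Gproj}(2) to produce $0\to M\to P^0_N\oplus P^0_L\to M^1\to\cdots$, whose $i$-th cocycle is an extension of the $i$-th cocycle of $L$ by that of $N$ and so lies in ${}^\perp A$. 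For (2), since direct summands preserve both reflexivity and Ext-vanishing, the conclusion follows at once from Lemma~\ref{lem:Gproj}(3). For (3), embed $M$ into a projective with Gorenstein-projective cokernel, $0\to M\to Q\to M^1\to 0$; the composite $N\hookrightarrow M\hookrightarrow Q$ together with a $3\times 3$ diagram yields two short exact sequences $0\to N\to Q\to X\to 0$ and $0\to L\to X\to M^1\to 0$, so part (1) makes $X$ Gorenstein-projective, and iterating this construction on $X$ builds the resolution demanded by Lemma~\ref{lem:Gproj}(2).

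The hard part is (4). First, the hypothesis ${\rm Ext}^1_A(L,A)=0$ combined with $M,N\in{}^\perp A$ forces $L\in{}^\perp A$ via the long exact Ext-sequence, so applying $(-)^*$ to $\xi$ yields a short exact sequence of right $A$-modules $0\to L^*\to M^*\to N^*\to 0$. By Corollary~\ref{cor:duality}, $M^*$ and $N^*$ are Gorenstein-projective over $A^{\rm op}$, so part (3), applied to this dual sequence, gives that $L^*$ is Gorenstein-projective; in particular $L^*\in{}^\perp(A_A)$.

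It remains to verify that $L$ is reflexive, which is the main obstacle. I would dualize once more: since $N^*$ is Gorenstein-projective we have ${\rm Ext}^1_{A^{\rm op}}(N^*,A)=0$, so the sequence $0\to N^{**}\to M^{**}\to L^{**}\to 0$ is exact. Naturality of the evaluation morphism yields the commutative diagram
\[
\begin{array}{ccccccccc}
0 & \to & N & \to & M & \to & L & \to & 0 \\
  &     & \downarrow{\rm ev}_N & & \downarrow{\rm ev}_M & & \downarrow{\rm ev}_L & & \\
0 & \to & N^{**} & \to & M^{**} & \to & L^{**} & \to & 0
\end{array}
\]
with exact rows; since ${\rm ev}_N$ and ${\rm ev}_M$ are isomorphisms by Lemma~\ref{lem:Gproj}(3), the Five Lemma forces ${\rm ev}_L$ to be an isomorphism, and a final appeal to Lemma~\ref{lem:Gproj}(3) concludes that $L$ is Gorenstein-projective.
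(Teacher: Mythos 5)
Your proof is correct; parts (2), (3) and (4) take genuinely different routes from the paper's. Part (1) is essentially the paper's argument in different clothing: the ``Horseshoe Lemma'' you invoke for these right resolutions by projectives is not automatic (the $P^0_N$ are projective, not injective), and what makes it go through is exactly the lift of $N\hookrightarrow P^0_N$ along $M\to L$ furnished by ${\rm Ext}^1_A(L,P^0_N)=0$, which is precisely the map $a\colon M\to P$ the paper constructs by hand; the identification of the $i$-th cocycle as an extension in ${}^\perp A$ is the same in both. For (2), the paper deliberately avoids Lemma~\ref{lem:Gproj}(3) and instead runs an iterative construction of a right resolution of $N$; your observation that membership in ${}^\perp A$, membership of $(-)^*$ in ${}^\perp(A_A)$, and reflexivity all pass to direct summands is a shorter and cleaner route to the same conclusion. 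For (3), the paper forms the pullback of $M\to L\leftarrow P$ to obtain $E\simeq N\oplus P$ and then applies part (2), whereas you use a $3\times 3$ diagram to present $N$ as the kernel of a surjection from a projective onto a Gorenstein-projective module $X$ and then splice to verify the criterion of Lemma~\ref{lem:Gproj}(2); both are fine, yours buying a proof that never needs the splitting argument of part (2). The sharpest divergence is in (4): the paper performs a pushout along a projective embedding $0\to N\to P\to N'\to 0$, uses ${\rm Ext}^1_A(L,A)=0$ to split the resulting middle row $0\to P\to E\to L\to 0$, and then invokes parts (1) and (2); you instead dualize $\xi$ twice, apply Corollary~\ref{cor:duality} and part (3) on the dualized sequence to get $L^*$ Gorenstein-projective, and extract reflexivity of $L$ from the evaluation ladder by the Five Lemma, closing with Lemma~\ref{lem:Gproj}(3). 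Your route is more uniform in its use of the $(-)^*$-duality and structurally mirrors the proof of Lemma~\ref{lem:Gproj}(3)$\Rightarrow$(1); the paper's is more self-contained diagram chasing and never leaves the category $A\text{-mod}$. Both are sound.
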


\begin{proof}
(1). Since $N$ and $L$ are Gorenstein-projective, we may take
monomorphisms $N\stackrel{i_N}\rightarrow P$ and
$L\stackrel{i_L}\rightarrow Q$ such that $P$ and $Q$ are projective
and the cokernels $N^1$ and $L^1$ of $i_N$ and $i_L$, respectively,
are Gorenstein-projective. Since ${\rm Ext}^1_A(L, P)=0$, from the
long exact sequence obtained by applying the functor ${\rm Hom}_A(-,
P)$ to $\xi$ we infer that the induced map ${\rm Hom}_A(M,
P)\rightarrow {\rm Hom}_A(N, P)$ is surjective. In particular, there
is a morphism $a\colon M\rightarrow P$ such that $a\circ f=i_N$.
Therefore we have the following exact diagram
\[\xymatrix{
0\ar[r] & N  \ar[d]^{i_N} \ar[r]^-{f} & M \ar[d]^{\binom{a}{i_L\circ
g}} \ar[r]^-{g} & L \ar[d]^{i_L} \ar[r] & 0\\
0\ar[r] & P \ar[r]^-{\binom{1}{0}} & P\oplus Q \ar[r]^-{(0\; 1)} & Q
\ar[r] & 0. }\]
 By Snake Lemma the middle column map is monic and
there is an induced short exact sequence $0\rightarrow
N^1\rightarrow M^1\rightarrow L^1\rightarrow 0$ where $M^1$ is the
cokernel of the middle column map. Note that $N^1, L^1$ are
Gorenstein projective. In particular, $N^1, L^1\in {^\perp A}$ and
then we have $M^1\in {^\perp A}$. Iterating this argument, using
Lemma \ref{lem:Gproj}(2) we show that $M$ is Gorenstein-projective.

The statement (2) amounts to the fact that a direct summand of a
Gorenstein-projective module is again Gorenstein-projective. For
this end, let $N\oplus L$ be Gorenstein-projective. Note that
$N\oplus L\in {^\perp A}$ and then $N\in {^\perp A}$. Take a short
exact sequence $0\rightarrow N\oplus L \rightarrow P\rightarrow
G\rightarrow 0$ such that $P$ is projective and $G$ is
Gorenstein-projective. Then the cokernel $N^1$ of the monomorphism
$N\rightarrow P$ fits into a short exact sequence $\eta\colon
0\rightarrow L\rightarrow N^1\rightarrow G\rightarrow 0$. We add the
trivial exact sequence $0\rightarrow N\stackrel{{\rm
Id}_N}\rightarrow N \rightarrow 0\rightarrow 0$ to $\eta$. Note that
both $L\oplus N$ and $G$ are Gorenstein-projective. By (1)  we infer
that $N^1\oplus N$ is Gorenstein-projective. Note that we have the
short exact sequence $0\rightarrow N\rightarrow P\rightarrow
N^1\rightarrow 0$ and that $N^1\in {^\perp A}$. We repeat the
argument by replacing $N$ with $N^1$ to get $N^2$ and a short exact
sequence $0\rightarrow N^1\rightarrow P^1\rightarrow N^2\rightarrow
0$. Continue this argument. Then we get a required long exact
sequence in Lemma \ref{lem:Gproj}(2), proving that $N$ is
Gorenstein-projective. Similarly $L$ is Gorenstein-projective.

For the statement (3), take a short exact sequence $0\rightarrow
L'\rightarrow P\rightarrow L\rightarrow 0$ such that $P$ is
projective and $L'$ is Gorenstein-projective. Consider the following
pullback diagram.
\[\xymatrix{ &  & 0\ar[d] & 0\ar[d]\\
            &   & L'\ar@{.>}[d] \ar@{=}[r] & L'\ar[d] \\
0\ar[r]  & N\ar@{=}[d] \ar@{.>}[r] & E\ar@{.>}[d] \ar@{.>}[r] &
P\ar[d] \ar[r] &
0\\
0\ar[r] & N \ar[r] & M\ar[d] \ar[r] & L\ar[d] \ar[r] & 0\\
& & 0 & 0 }\] Consider the short exact sequence in the middle
column. By (1) $E$ is Gorenstein-projective. Note that the short
exact sequence in the middle row splits since $P$ is projective.
Hence $E\simeq N\oplus P$. By (2) $N$ is Gorenstein-projective.

For the statement (4), take a short exact sequence $0\rightarrow
N\rightarrow P\rightarrow N'\rightarrow 0$ such that $P$ is
projective and $N'$ is Gorenstein-projective. Consider the following
pushout diagram.
\[\xymatrix{
& 0\ar[d] & 0\ar[d] \\
0\ar[r] & N \ar[d] \ar[r] & M \ar@{.>}[d] \ar[r] & L \ar@{=}[d]
\ar[r] & 0\\
0\ar[r] & P \ar@{.>}[r] \ar[d] & E \ar@{.>}[r] \ar@{.>}[d] & L
\ar[r] & 0\\
& N' \ar[d]\ar@{=}[r] & N' \ar[d]\\
& 0 & 0 }\] Consider the short exact sequence in the middle column.
By (1) we infer that $E$ is Gorenstein-projective. By the assumption
the short exact sequence in the middle row splits and then $E\simeq
P\oplus L$. Now applying (2) we are done.
\end{proof}

We denote by $A\mbox{-\underline{mod}}$ the \emph{stable
category}\index{category!stable} of $A\mbox{-mod}$ modulo projective
modules: the objects are the same as $A\mbox{-mod}$ while the
morphism space between two objects $M$ and $N$, denote by
${\underline{\rm Hom}}_A(M, N)$, is by definition the quotient
$R$-module ${\rm Hom}_A(M, N)/P(M, N)$ where $P(M, N)$ is the
$R$-submodule of ${\rm Hom}_A(M, N)$ consisting of morphisms
factoring through projective modules. The stable category
$A\mbox{-\underline{mod}}$ is additive and projective modules are
zero objects; for details, see \cite[p.104]{ARS}. Moreover, two
modules $M$ and $N$ become isomorphic in $A\mbox{-\underline{mod}}$
if and only if there exist projective modules $P$ and $Q$ such that
$M\oplus P\simeq  N\oplus Q$; compare \cite[Proposition 1.41]{ABr}.

For a module $M$ take a short exact sequence $0\rightarrow \Omega M
\rightarrow P\rightarrow M\rightarrow 0$ with $P$ projective. The
module $\Omega M$ is called a \emph{syzygy
module}\index{module!syzygy} of $M$. Note that syzygy modules of $M$
are not uniquely determined, while they are naturally isomorphic to
each other in $A\mbox{-\underline{mod}}$. In this sense we say that
$\Omega M$ is ``the" syzygy module of $M$. Moreover, we get the
\emph{syzygy functor}\index{functor!syzygy} $\Omega\colon
A\mbox{-\underline{mod}}\rightarrow A\mbox{-\underline{mod}}$. For
each $i\geq 1$, denote by $\Omega^{i}$ the $i$-th power of $\Omega$
and then for a module $M$, $\Omega^i M$ is the $i$-th \emph{syzygy
module} of $M$; for details, see \cite[p.124]{ARS}.

Recall the following basic property of these syzygy modules\footnote{The author thanks Rene Marczinzik for pointing out an error in the previous version.}.

\begin{lem} \label{lem:basicpropertyofsyzygy}
Let $M, N$ be $A$-modules and let $k\geq 1$.  Then there exists a
natural epimorphism
 $$\underline{\rm
Hom}_A(\Omega^k M, N) \twoheadrightarrow {\rm Ext}_A^k(M, N).$$ 
If in addition ${\rm Ext}_A^{i}(M, A)=0$ for $1\leq i\leq k$, then the above map is an isomorphism. \hfill $\square$
\end{lem}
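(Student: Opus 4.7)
The plan is to proceed by standard dimension-shifting. Fix a projective resolution $\cdots \to P_1 \to P_0 \to M \to 0$ with syzygies $\Omega^i M$ fitting into short exact sequences $0 \to \Omega^{i+1} M \to P_i \to \Omega^i M \to 0$; these yield natural isomorphisms ${\rm Ext}^k_A(M, N) \simeq {\rm Ext}^1_A(\Omega^{k-1}M, N)$, reducing the problem to the case $k = 1$ applied with $L = \Omega^{k-1}M$ in place of $M$.

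For the $k = 1$ step, applying ${\rm Hom}_A(-, N)$ to $0 \to \Omega L \xrightarrow{\iota} P \to L \to 0$ gives
$${\rm Hom}_A(P, N) \xrightarrow{\iota^*} {\rm Hom}_A(\Omega L, N) \to {\rm Ext}^1_A(L, N) \to 0,$$
identifying ${\rm Ext}^1_A(L, N)$ with ${\rm Hom}_A(\Omega L, N)/I$, where $I = {\rm Im}(\iota^*)$ consists of the morphisms $\Omega L \to N$ that extend along $\iota$. Any such morphism factors through the projective $P$, yielding the key inclusion $I \subseteq P(\Omega L, N)$. Combined with the quotient descriptions of $\underline{\rm Hom}_A$ and ${\rm Ext}^k_A$, this inclusion produces the natural comparison map between $\underline{\rm Hom}_A(\Omega^k M, N)$ and ${\rm Ext}^k_A(M, N)$ asserted in the lemma.

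For the isomorphism under the hypothesis ${\rm Ext}^i_A(M, A) = 0$ for $1 \leq i \leq k$, the task is to upgrade $I \subseteq P(\Omega^k M, N)$ to equality. Given $f = \beta \alpha \colon \Omega^k M \xrightarrow{\alpha} Q \xrightarrow{\beta} N$ with $Q$ projective, the morphism $f$ extends along $\iota$ provided $\alpha$ does, and the obstruction to extending $\alpha$ lies in ${\rm Ext}^1_A(\Omega^{k-1}M, Q) \simeq {\rm Ext}^k_A(M, Q)$. Since $Q$ is a direct summand of $A^n$ for some $n$, the hypothesis forces this obstruction to vanish, so $f \in I$. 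The main delicate point is tracking the direction of the natural map between the two quotients of ${\rm Hom}_A(\Omega^k M, N)$: the comparison is built from the inclusion $I \subseteq P(\Omega^k M, N)$, which becomes an equality precisely under the Ext-vanishing hypothesis.
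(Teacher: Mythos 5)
Your dimension-shifting reduction and the identification $I = {\rm Im}(\iota^*) \subseteq P(\Omega L, N)$ are correct, as is the obstruction argument for the second clause. But you have left the crucial direction unresolved (``the main delicate point is tracking the direction''), and resolving it exposes a real problem: the inclusion $I \subseteq P(\Omega L, N)$ induces the surjection
$${\rm Ext}^1_A(L, N) = {\rm Hom}_A(\Omega L, N)/I \twoheadrightarrow {\rm Hom}_A(\Omega L, N)/P(\Omega L, N) = \underline{\rm Hom}_A(\Omega L, N),$$
which goes the \emph{opposite} way from the arrow printed in the lemma. A natural map $\underline{\rm Hom}_A(\Omega^k M, N) \to {\rm Ext}^k_A(M, N)$ would need the reverse inclusion $P(\Omega^k M, N) \subseteq I$ just to be well defined, and that is precisely the equality which your obstruction argument establishes only under the extra Ext-vanishing hypothesis.

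No such epimorphism exists unconditionally. For $A = k[x,y]/(x^2, xy, y^2)$, $M = k$, $N = A$, $k = 1$ one has $\underline{\rm Hom}_A(\Omega k, A) = 0$, since any map into a projective trivially factors through a projective, while ${\rm Ext}^1_A(k, A) \neq 0$; there is therefore no epimorphism in the printed direction. The epimorphism your construction actually produces is ${\rm Ext}_A^k(M, N) \twoheadrightarrow \underline{\rm Hom}_A(\Omega^k M, N)$, which is the correct unconditional statement (the arrow in the stated lemma appears to be reversed), and your argument that it becomes an isomorphism under ${\rm Ext}^i_A(M, A) = 0$ for $1 \le i \le k$ is sound: under that hypothesis a morphism $\Omega^k M \to N$ factoring through a projective $Q$ extends along the inclusion $\Omega^k M \hookrightarrow P$, because the obstruction lives in ${\rm Ext}^1_A(\Omega^{k-1}M, Q) \simeq {\rm Ext}^k_A(M, Q)$, which vanishes since $Q$ is a summand of a free module.
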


The second part of the following observation seems  of interest.

\begin{cor}\label{cor:syzygy} Let $M$ be an $A$-module and let $d\geq 1$.
Then we have the following statements:
\begin{enumerate}
\item[(1)] if $M$ is a Gorenstein-projective module, then so are $\Omega^i M$ for $i\geq
1$;
\item[(2)] if ${\rm Ext}_A^i(M, A)=0$ for $1\leq i\leq d$ and $\Omega^d M$ is
Gorenstein-projective,  then so is $M$.
\end{enumerate}\end{cor}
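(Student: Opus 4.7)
The plan is to derive both parts directly from Proposition \ref{prop:Gproj}, using the defining short exact sequences $0 \to \Omega M \to P \to M \to 0$ with $P$ projective, together with dimension shifting for Ext.

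For (1), I would treat the case $i=1$ first and then iterate. Consider a short exact sequence $0\rightarrow \Omega M \rightarrow P \rightarrow M \rightarrow 0$ with $P$ projective. Since projective modules are Gorenstein-projective and $M$ is Gorenstein-projective by hypothesis, Proposition \ref{prop:Gproj}(3) applied to this sequence yields that $\Omega M$ is Gorenstein-projective. Replacing $M$ by $\Omega M$ and iterating gives the statement for all $i\geq 1$. (The fact that $\Omega M$ is only defined up to a projective summand causes no issue, since by Proposition \ref{prop:Gproj}(2) direct summands of Gorenstein-projective modules are Gorenstein-projective.)

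For (2), I would proceed by induction on $d$. The base case $d=1$ is the heart of the argument: given the short exact sequence $0 \rightarrow \Omega M \rightarrow P \rightarrow M \rightarrow 0$ with $P$ projective (hence Gorenstein-projective) and $\Omega M$ Gorenstein-projective, the additional assumption ${\rm Ext}_A^1(M,A)=0$ is exactly what is needed to invoke Proposition \ref{prop:Gproj}(4), concluding that $M$ is Gorenstein-projective. For the inductive step $d>1$, apply ${\rm Hom}_A(-,A)$ to the sequence $0\rightarrow \Omega M\rightarrow P\rightarrow M\rightarrow 0$; since $P$ is projective, the usual dimension shift gives ${\rm Ext}_A^i(\Omega M, A)\simeq {\rm Ext}_A^{i+1}(M,A)=0$ for $1\leq i\leq d-1$. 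Together with the fact that $\Omega^{d-1}(\Omega M) \simeq \Omega^d M$ is Gorenstein-projective (again up to a projective summand, which is harmless by Proposition \ref{prop:Gproj}(2)), the inductive hypothesis applied to $\Omega M$ with dimension $d-1$ yields that $\Omega M$ is Gorenstein-projective. The base case then delivers $M$ itself, since ${\rm Ext}_A^1(M,A)=0$ by assumption.

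The main subtlety, rather than an obstacle, is making sure the choice of syzygy module is consistent: because $\Omega M$ is only determined up to adding or removing a projective summand, one has to appeal to Proposition \ref{prop:Gproj}(2) each time to transfer the Gorenstein-projective property across such choices. Once this is noted, both statements reduce to mechanical applications of parts (3) and (4) of Proposition \ref{prop:Gproj} combined with the elementary dimension shift for Ext.
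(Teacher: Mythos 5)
Your proof is correct and follows essentially the same route as the paper: part (1) is Proposition \ref{prop:Gproj}(3) applied repeatedly to the defining short exact sequences, and part (2) is Proposition \ref{prop:Gproj}(4) applied repeatedly to the cocycles of the syzygy sequence, with the Ext-vanishing transferred by dimension shifting. Your induction on $d$, once unwound, performs exactly the same sequence of applications of (4) that the paper carries out iteratively along the long exact sequence $\eta$, so the two arguments are identical in substance.
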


\begin{proof}
The first statement follows by  applying Proposition
\ref{prop:Gproj}(3) repeatedly. Just consider the long exact
sequence $\eta\colon 0\rightarrow \Omega^d M \rightarrow
P^{d-1}\rightarrow \cdots \rightarrow P^0 \rightarrow M\rightarrow
0$. For the second one, note that  from the assumption and a
dimension-shift argument we have that ${\rm Ext}^1_A(-, A)$ vanishes
on all the cocycles of $\eta$. Then the second
statement follows by applying Proposition \ref{prop:Gproj}(4)
repeatedly.
\end{proof}

Recall  the construction of the \emph{transpose} ${\rm Tr}M$ of a
module  $M$: take a projective presentation $P^{-1}\rightarrow
P^0\rightarrow M\rightarrow 0$ and then define the right $A$-module
${\rm Tr}M$ to be the cokernel of the morphism $(P^0)^*\rightarrow
(P^{-1})^*$. Again the module ${\rm Tr}M$ is not uniquely determined,
while it is unique when viewed as an object in
$A\mbox{-\underline{mod}}$. This defines the \emph{transpose
functor} \index{functor!transpose} ${\rm Tr}\colon
A\mbox{-\underline{mod}}\rightarrow A^{\rm
op}\mbox{-\underline{mod}}$ which is  contravariant; it is even a
duality of categories. Observe that there is a natural isomorphism
$\Omega^2 M \simeq ({\rm Tr}M)^*$ (certainly in the stable category
$A\mbox{-\underline{mod}}$). For details, see \cite{ABr} and
\cite[p.105]{ARS}.

The following result is interesting.

\begin{prop}\label{prop:transpose}
Let $M$ be an $A$-module. Then $M$ is Gorenstein-projective if and
only if ${\rm Tr}M$ is Gorenstein-projective. Moreover, there is an
isomorphism ${\rm Tr}M\simeq (\Omega^2 M)^*$ in
$A\mbox{-\underline{\rm mod}}$ which is functorial in $M\in
A\mbox{-{\rm Gproj}}$.
\end{prop}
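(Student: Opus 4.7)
The plan is to fix a projective presentation of $M$, dualize the two resulting syzygy short exact sequences, and splice them to identify ${\rm Tr}M$ with $(\Omega^2 M)^*$ on the nose. The ``only if'' direction then follows from the duality of Corollary~\ref{cor:duality} applied to $\Omega^2 M$. The converse will fall out by applying this same computation to ${\rm Tr}M$ combined with the standard fact that ${\rm Tr}^2 \simeq {\rm Id}$ on the stable category.

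For the main computation, I start with any projective presentation $P^{-1}\stackrel{d}\rightarrow P^0 \rightarrow M \rightarrow 0$. It produces two syzygy short exact sequences
\[\xi_1\colon 0 \rightarrow \Omega M \stackrel{i}\rightarrow P^0 \rightarrow M \rightarrow 0, \qquad \xi_2\colon 0 \rightarrow \Omega^2 M \rightarrow P^{-1}\stackrel{p}\rightarrow \Omega M \rightarrow 0,\]
with $d = i\circ p$. Since $M \in A\mbox{-Gproj}$, Corollary~\ref{cor:syzygy}(1) gives $\Omega M \in A\mbox{-Gproj}$, and then Corollary~\ref{cor:infinitedimension} yields ${\rm Ext}^1_A(M, A) = 0 = {\rm Ext}^1_A(\Omega M, A)$. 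Applying $(-)^*$ and using these vanishings turns $\xi_1$ and $\xi_2$ into short exact sequences of right $A$-modules
\[0 \rightarrow M^* \rightarrow (P^0)^* \stackrel{i^*}\rightarrow (\Omega M)^* \rightarrow 0, \qquad 0 \rightarrow (\Omega M)^* \stackrel{p^*}\rightarrow (P^{-1})^* \rightarrow (\Omega^2 M)^* \rightarrow 0.\]
Since $d^* = p^*\circ i^*$ with $i^*$ surjective and $p^*$ injective, the image of $d^*$ coincides with $p^*((\Omega M)^*)$, and consequently
\[{\rm Tr}M = {\rm coker}(d^*) \simeq (P^{-1})^*/p^*((\Omega M)^*) \simeq (\Omega^2 M)^*.\]
As $\Omega^2 M \in A\mbox{-Gproj}$, Corollary~\ref{cor:duality} gives $(\Omega^2 M)^* \in A^{\rm op}\mbox{-Gproj}$, whence ${\rm Tr}M$ is Gorenstein-projective.

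For the ``if'' direction, suppose ${\rm Tr}M$ is Gorenstein-projective. Applying the forward direction over $A^{\rm op}$ to ${\rm Tr}M$ shows that ${\rm Tr}({\rm Tr}M) \in A\mbox{-Gproj}$. Since ${\rm Tr}$ is a duality of stable categories, ${\rm Tr}({\rm Tr}M) \simeq M$ in $A\mbox{-\underline{mod}}$, meaning $M \oplus P \simeq {\rm Tr}({\rm Tr}M) \oplus Q$ for some projective modules $P, Q$. The right-hand module is Gorenstein-projective by Proposition~\ref{prop:Gproj}(1), and then $M$ itself is Gorenstein-projective by Proposition~\ref{prop:Gproj}(2).

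I expect the most delicate step to be upgrading the isomorphism ${\rm Tr}M \simeq (\Omega^2 M)^*$ to a \emph{natural} isomorphism of functors on $A\mbox{-\underline{mod}}$. The construction above depends on a choice of projective presentation, though different choices differ by morphisms factoring through projective modules. Since ${\rm Tr}$, $\Omega^2$ and $(-)^*$ all descend to bona fide functors on the corresponding stable categories, naturality amounts to checking that the splicing of the dualized syzygy sequences commutes with morphisms of projective presentations up to factors through projectives; this is straightforward but somewhat tedious and is where the bulk of the bookkeeping will live.
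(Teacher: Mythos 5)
Your proof is correct, and for the ``if'' direction it takes a genuinely different route from the paper. For ``only if'' and the identification ${\rm Tr}M\simeq(\Omega^2M)^*$, the paper works from a complete resolution $P^\bullet$ of $M$: it observes that ${\rm Tr}M\simeq Z^3\bigl((P^\bullet)^*\bigr)$ in the stable category and then invokes Lemma~\ref{lem:totallyacyclic}(3) to conclude $(P^\bullet)^*$ is totally acyclic, whence ${\rm Tr}M$ is Gorenstein-projective; the stable isomorphism is then read off from the general fact $\Omega^2M\simeq({\rm Tr}M)^*$ together with reflexivity of ${\rm Tr}M$. You instead dualize the two syzygy short exact sequences directly, using the Ext-vanishings supplied by Corollary~\ref{cor:infinitedimension}, and splice to get ${\rm Tr}M\simeq(\Omega^2M)^*$ on the nose before passing to the stable category — a more hands-on version of the same computation, with the advantage that it produces the module isomorphism explicitly. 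The sharper divergence is in the ``if'' direction: the paper invokes the Auslander--Reiten exact sequence $0\to{\rm Ext}^1_{A^{\rm op}}({\rm Tr}M,A)\to M\to M^{**}\to{\rm Ext}^2_{A^{\rm op}}({\rm Tr}M,A)\to 0$ to first deduce that $M$ is reflexive, then shows $M^*$ is Gorenstein-projective and concludes via Corollary~\ref{cor:duality}. You bypass that sequence entirely by feeding ${\rm Tr}M$ back into the forward implication over $A^{\rm op}$ and using ${\rm Tr}^2\simeq{\rm Id}$ on the stable category, finishing with Proposition~\ref{prop:Gproj}(1),(2). That is cleaner and more self-contained, at the cost of not also recording the reflexivity of $M$ as a byproduct. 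Your honest flag about naturality is fair; the paper sidesteps this bookkeeping by deriving $(\Omega^2M)^*\simeq{\rm Tr}M$ from the already-natural isomorphism $\Omega^2M\simeq({\rm Tr}M)^*$ and the natural evaluation isomorphism on ${\rm Tr}M\in A^{\rm op}\mbox{-Gproj}$, which you could adopt to avoid tracking your splice through morphisms of presentations.
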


\begin{proof}
For the ``only if" part of the first statement, assume that $M$ is
Gorenstein-projective. Take a complete resolution $P^\bullet$ of
$M$. By definition ${\rm Tr}M$ is isomorphic to $Z^3((P^\bullet)^*)$
(in the stable category). Then ${\rm Tr}M$ is Gorenstein-projective,
since the complex $(P^\bullet)^*$ is totally acyclic; see Lemma
\ref{lem:totallyacyclic}(3).

To see the ``if" part, first note the following exact sequence
(\cite[Chapter IV, Proposition 3.2]{ARS})
$$0\longrightarrow {\rm Ext}^1_{A^{\rm op}}({\rm Tr}M, A) \longrightarrow M \stackrel{{\rm ev}_M}\longrightarrow
M^{**}\longrightarrow {\rm Ext}^2_{A^{\rm op}}({\rm Tr}M,
A)\longrightarrow 0.$$ Since ${\rm Tr}M$ is Gorenstein-projective
the two end terms vanish; see Corollary
\ref{cor:infinitedimension}(1). Then $M$ is reflexive. Take a
projective presentation $P^{-1}\rightarrow P^0\rightarrow
M\rightarrow 0$. Then we have an exact sequence $0\rightarrow
M^*\rightarrow (P^{-1})^*\rightarrow (P^0)^*\rightarrow {\rm
Tr}M\rightarrow 0$. Applying Proposition \ref{prop:Gproj}(3) twice
we obtain that $M^*$ is Gorenstein-projective. By Corollary
\ref{cor:duality} $M\simeq (M^*)^*$ is Gorenstein-projective.

Note that $\Omega^2 M \simeq ({\rm Tr}M)^*$ and that ${\rm Tr}M$ is
reflexive. Then the second statement follows.
\end{proof}

Recall that an \emph{exact category} \index{category!exact}  in the sense of Quillen is an
additive category endowed with an \emph{exact structure}\index{exact structure}, that is, a
distinguished class of ker-coker sequences which are called
\emph{conflations}\index{conflation}, subject to certain axioms (\cite[Appendix
A]{Kel}). For example, an extension-closed subcategory of an abelian
category is an exact category such that conflations are short exact
sequences with terms in the subcategory.

Recall that an exact category $\mathcal{A}$ is \emph{Frobenius} \index{category!exact!Frobenius}
provided that it has enough projective and enough injective objects
and the class of projective objects coincides with the class of
injective objects. For a Frobenius exact category, denote by
$\underline{\mathcal{A}}$ its stable category modulo projective
objects; it is a \emph{triangulated category}\index{category!triangulated} such that its shift functor
is the quasi-inverse of the syzygy functor and triangles are induced
by conflations. For details, see \cite[Chapter I, section 2]{Hap}.

In what follows we denote by $A\mbox{-\underline{Gproj}}$ the full
subcategory of $A\mbox{-\underline{mod}}$ consisting of
Gorenstein-projective modules.

\begin{prop} \label{prop:Gprojtriangulated}
Let $A$ be an artin algebra. Then we have
\begin{enumerate}
\item[(1)] the category $A\mbox{-{\rm Gproj}}$ is a Frobenius exact
category, whose projective objects are equal to projective modules;
\item[(2)] the stable category $A\mbox{-\underline{{\rm Gproj}}}$
modulo projective modules is
triangulated.
\end{enumerate}
\end{prop}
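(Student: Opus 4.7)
The plan is to verify the axioms of a Frobenius exact category for $A\mbox{-Gproj}$ with projective-injective objects being precisely the projective $A$-modules, and then to conclude the triangulated structure on $A\mbox{-\underline{Gproj}}$ by invoking the general result of Happel recalled in the paragraph preceding the proposition.

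First I would observe that, because $A\mbox{-Gproj}$ is closed under extensions in $A\mbox{-mod}$ by Proposition \ref{prop:Gproj}(1), it inherits from the abelian category $A\mbox{-mod}$ a canonical exact structure whose conflations are the short exact sequences $0 \to N \to M \to L \to 0$ in $A\mbox{-mod}$ with all three terms in $A\mbox{-Gproj}$. Enough projectives then comes for free: for any $M \in A\mbox{-Gproj}$ I pick an epimorphism from a projective $A$-module, and Proposition \ref{prop:Gproj}(3) ensures that its kernel is again Gorenstein-projective, so that the defining short exact sequence is a conflation. Enough injectives is handled by the very definition of Gorenstein-projective through Lemma \ref{lem:Gproj}(2): the first step $0 \to M \to P^0 \to M^1 \to 0$ of the long exact sequence there is a conflation embedding $M$ into a projective module with Gorenstein-projective cokernel.

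Next I would verify that the projective $A$-modules are simultaneously projective and injective in the exact category $A\mbox{-Gproj}$. Since $A\mbox{-Gproj} \subseteq {^\perp A}$ by Corollary \ref{cor:infinitedimension}, the group $\mathrm{Ext}^1_A(N, P)$ vanishes for every $N \in A\mbox{-Gproj}$ and every projective $P$, so any conflation $0 \to P \to X \to N \to 0$ in $A\mbox{-Gproj}$ already splits as a sequence of $A$-modules, showing $P$ is injective in the exact structure; projectivity in the exact structure is just inherited from projectivity in $A\mbox{-mod}$. Conversely, an object $I$ injective in $A\mbox{-Gproj}$ fits into a conflation $0 \to I \to P^0 \to I^1 \to 0$ with $P^0$ projective by the previous step, and this conflation must split, so $I$ is a direct summand of $P^0$ and hence itself a projective $A$-module. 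The analogous syzygy conflation $0 \to \Omega P \to Q \to P \to 0$ handles the converse identification for objects projective in $A\mbox{-Gproj}$.

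Once (1) is in hand, statement (2) is immediate from the standard theorem of Happel recalled just before the proposition: the stable category of any Frobenius exact category modulo its projective-injective objects carries a canonical triangulated structure with shift functor the quasi-inverse of the syzygy. I do not anticipate any serious obstacle; the only delicate point is to avoid conflating the two senses of ``projective'' (inside the exact category $A\mbox{-Gproj}$ versus inside the abelian category $A\mbox{-mod}$) until the Ext-vanishing supplied by Corollary \ref{cor:infinitedimension} allows one to identify them.
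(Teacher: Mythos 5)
Your proposal is correct and takes essentially the same route as the paper: closure of $A\mbox{-Gproj}$ under extensions supplies the exact structure, $A\mbox{-Gproj}\subseteq{^\perp A}$ makes projectives both projective and injective in that structure, and Happel's general result gives the triangulated stable category. You merely spell out the enough-projectives/enough-injectives and the identification of all projective-injective objects, which the paper leaves implicit.
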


\begin{proof}
By Proposition \ref{prop:Gproj}(1) $A\mbox{-{\rm Gproj}}\subseteq
A\mbox{-mod}$ is closed under extensions, and then $A\mbox{-Gproj}$
is an exact category. Note that $A\mbox{-Gproj}\subseteq {^\perp
A}$. Then projective modules are projective and injective in
$A\mbox{-Gproj}$. Then (1) follows, while (2) follows from (1).
\end{proof}

\begin{rem}\label{rem:shiftfunctor}
Note that $\Omega\colon A\mbox{-\underline{\rm Gproj}} \rightarrow
A\mbox{-\underline{\rm Gproj}}$ is invertible and its quasi-inverse
$\Sigma$ is  the shift functor for the triangulated category
A\mbox{-\underline{\rm Gproj}}; see \cite[p.13]{Hap}. Moreover, we
have a natural isomorphism $\Sigma M\simeq \Omega(M^*)^*$ for $M\in
A\mbox{-{\rm Gproj}}$.

In fact, applying $(-)^*$ to the short exact sequence $0\rightarrow
\Omega(M^*)\rightarrow P\rightarrow M^*\rightarrow 0$ with $P_A$
projective, we get an exact sequence $0\rightarrow M\rightarrow
P^*\rightarrow \Omega(M^*)^*\rightarrow 0 $ (here we use that $M$ is
reflexive and ${\rm Ext}_{A^{\rm op}}^1(M^*, A)=0$). Then we
conclude that $\Sigma M\simeq \Omega(M^*)^*$. Let us remark that one
can also infer this natural isomorphism from Corollary
\ref{cor:trianglequivalence} below.
\end{rem}

The following observation is of interest.

\begin{lem}\label{lem:syzygyofGproj}
Let $M$ be a non-projective indecomposable Gorenstein-projective
$A$-module. Consider its projective cover $\pi \colon
P(M)\rightarrow M$. Then ${\rm Ker}\; \pi$ is non-projective and
indecomposable.
\end{lem}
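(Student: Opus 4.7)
The plan is to set $K={\rm Ker}\,\pi$ and prove the two assertions separately. First, $K$ is Gorenstein-projective by Proposition \ref{prop:Gproj}(3) applied to the short exact sequence $0\to K\to P(M)\to M\to 0$. For non-projectivity, if $K$ were projective then Corollary \ref{cor:infinitedimension}(1) would yield ${\rm Ext}^1_A(M,K)=0$, so the sequence would split and $M$ would be a direct summand of the projective module $P(M)$, contradicting the hypothesis that $M$ is non-projective.

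For indecomposability, I would pass to the triangulated stable category $A\mbox{-\underline{Gproj}}$ of Proposition \ref{prop:Gprojtriangulated}. Since $M$ is indecomposable in $A\mbox{-mod}$, ${\rm End}_A(M)$ is local by Krull--Schmidt; and since $M$ is non-projective, ${\rm Id}_M$ does not factor through a projective, so the ideal $P(M,M)$ of morphisms factoring through projectives is proper. Hence $\underline{\rm End}_A(M)={\rm End}_A(M)/P(M,M)$ is local and $M$ remains indecomposable in $A\mbox{-\underline{Gproj}}$. Because $\Omega$ is an auto-equivalence of this triangulated category (Remark \ref{rem:shiftfunctor}) and $K$ represents $\Omega M$ there, $K$ is likewise indecomposable in $A\mbox{-\underline{Gproj}}$. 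Thus any decomposition $K=K_1\oplus K_2$ in $A\mbox{-mod}$ must collapse modulo projectives, forcing one summand --- say $K_2$ --- to be projective as an $A$-module.

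The main obstacle is then to exclude a non-zero projective direct summand $K_2$ of $K$. I would invoke ${\rm Ext}^1_A(M,K_2)=0$ (Corollary \ref{cor:infinitedimension}(1)) to lift the canonical projection $K\twoheadrightarrow K_2$ along the inclusion $K\hookrightarrow P(M)$, producing a retraction $r\colon P(M)\to K_2$. This exhibits $K_2$ as a direct summand of $P(M)$. On the other hand, $\pi$ being a projective cover gives $K\subseteq {\rm rad}\,P(M)$, whence $K_2\subseteq {\rm rad}\,P(M)$; but a non-zero direct summand of $P(M)$ cannot lie inside its radical by Nakayama's lemma, so $K_2=0$, contradicting the assumed decomposition and finishing the proof.
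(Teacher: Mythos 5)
Your argument is correct and is essentially the paper's own proof: pass to the stable category $A\mbox{-\underline{Gproj}}$, use that $\Omega$ is a triangle auto-equivalence (so ${\rm Ker}\,\pi=\Omega M$ is stably indecomposable), then apply Krull--Schmidt to peel off a possible projective summand and kill it via ${\rm Ext}^1_A(M,-)=0$ on projectives together with the projective cover condition. The only cosmetic difference is that you isolate the non-projectivity of ${\rm Ker}\,\pi$ as a separate preliminary step and state the Nakayama argument explicitly, both of which the paper leaves implicit.
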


\begin{proof}
Note that ${\rm Ker}\; \pi$ is isomorphic to $\Omega M$ in
$A\mbox{-\underline{mod}}$. By Remark \ref{rem:shiftfunctor} we
infer that ${\rm Ker}\; \pi$ is indecomposable in
$A\mbox{-\underline{mod}}$. By Krull-Schmidt Theorem we have ${\rm
Ker}\; \pi\simeq N\oplus P$ such that $N$ is non-projective and
indecomposable and $P$ is projective. Since ${\rm Ext}_A^1(M, P)=0$,
then the composite inclusion $P\hookrightarrow {\rm Ker}\; \pi
\hookrightarrow P(M)$ splits. On this other hand, the morphism $\pi$
is a projective cover. This forces that $P$ is zero.
\end{proof}

Note that it follows from Corollary \ref{cor:infinitedimension} that
the duality functors $(-)^*\colon
A\mbox{-Gproj}\stackrel{\sim}\longrightarrow A^{\rm
op}\mbox{-Gproj}$ and $(-)^*\colon A^{\rm
op}\mbox{-Gproj}\stackrel{\sim}\longrightarrow A\mbox{-Gproj}$ are
exact; see Corollary \ref{cor:duality}. Moreover, they restrict to
the well-known duality $ A\mbox{-proj}\simeq A^{\rm
op}\mbox{-proj}$. Then the following result follows immediately
(consult \cite[p.23, Lemma]{Hap}).

\begin{cor}\label{cor:trianglequivalence}
There is a duality $(-)^*\colon A\mbox{-\underline{\rm
Gproj}}\stackrel{\sim}\longrightarrow A^{\rm
op}\mbox{-\underline{\rm Gproj}}$ of triangulated categories such
that its quasi-inverse is given by $(-)^*={\rm Hom}_{A^{\rm op}}(-,
A)$. \hfill $\square$
\end{cor}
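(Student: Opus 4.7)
The plan is to lift the duality $(-)^*\colon A\mbox{-Gproj}\to A^{\rm op}\mbox{-Gproj}$ established in Corollary \ref{cor:duality} from the level of Frobenius exact categories to the level of their stable categories, and then invoke Happel's lemma (\cite[p.23, Lemma]{Hap}) which says that an exact contravariant functor between Frobenius exact categories that sends projective-injective objects to projective-injective objects induces a triangle functor (here, a duality of triangulated categories) on the stable categories.

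First I would verify that $(-)^*$ descends to a well-defined functor $A\mbox{-\underline{Gproj}}\to A^{\rm op}\mbox{-\underline{Gproj}}$. For this one needs to check that morphisms between Gorenstein-projective modules that factor through a projective module are sent to morphisms factoring through a projective module; this is immediate because a projective $A$-module $P$ has $P^*$ a projective $A^{\rm op}$-module (indeed $(-)^*$ restricts to the classical duality $A\mbox{-proj}\simeq A^{\rm op}\mbox{-proj}$), and the assignment on morphisms is functorial.

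Next I would check the hypotheses of Happel's lemma. By Proposition \ref{prop:Gprojtriangulated} both $A\mbox{-Gproj}$ and $A^{\rm op}\mbox{-Gproj}$ are Frobenius exact categories whose projective-injective objects are exactly the (classically) projective modules. A short exact sequence $0\to N\to M\to L\to 0$ with all terms Gorenstein-projective has $L\in{^\perp A}$ by Corollary \ref{cor:infinitedimension}, so applying $(-)^*$ yields an exact sequence $0\to L^*\to M^*\to N^*\to 0$ in $A^{\rm op}\mbox{-Gproj}$; hence $(-)^*$ is exact in the sense of exact categories. Combined with the fact that $(-)^*$ preserves the class of projective objects, Happel's lemma then guarantees that the induced functor on stable categories is a triangle functor (taking duality-flipped shifts into account), compatibly with the shift description given in Remark \ref{rem:shiftfunctor}.

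Finally, since $(-)^*\colon A\mbox{-Gproj}\to A^{\rm op}\mbox{-Gproj}$ is already a duality with quasi-inverse $(-)^*={\rm Hom}_{A^{\rm op}}(-,A)$ by Corollary \ref{cor:duality}, this property is inherited at the level of stable categories: the two induced functors are mutually quasi-inverse triangle equivalences, witnessed by the reflexivity isomorphism ${\rm ev}_M$ which is a morphism in $A\mbox{-Gproj}$. The main (mild) obstacle is just bookkeeping of the shift functor: one must be comfortable that the duality interchanges the syzygy functor $\Omega$ on one side with its quasi-inverse $\Sigma$ on the other, which is precisely the content of Remark \ref{rem:shiftfunctor} and is automatic from Happel's lemma applied to a contravariant exact equivalence.
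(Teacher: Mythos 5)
Your proposal matches the paper's argument exactly: establish that $(-)^*$ is exact on $A\mbox{-Gproj}$ and $A^{\rm op}\mbox{-Gproj}$ (via Corollary \ref{cor:infinitedimension}, since all terms of a conflation lie in $^\perp A$), observe that $(-)^*$ restricts to the classical duality $A\mbox{-proj}\simeq A^{\rm op}\mbox{-proj}$, and then invoke Happel's lemma (\cite[p.23, Lemma]{Hap}) to descend to a triangle duality on the stable categories. The paper compresses this into two sentences preceding the corollary; you have spelled out the same steps, including the correct appeal to Corollary \ref{cor:duality} for the quasi-inverse.
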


The following observation is of independent interest. For the notion
of cohomological functor, we refer to \cite[p.4]{Hap}.

\begin{prop}\label{prop:cohomologicalfunctor}
Let $M$ be an $A$-module. Then the functors $\underline{{\rm
Hom}}_A(M, -)\colon A\mbox{-\underline{{\rm Gproj}}}\rightarrow
R\mbox{-{\rm mod}}$ and $\underline{{\rm Hom}}_A(-, M)\colon
A\mbox{-\underline{{\rm Gproj}}}\rightarrow R\mbox{-{\rm mod}}$ are
cohomological.
\end{prop}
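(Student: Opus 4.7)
The plan is to verify the cohomological property directly from the definition. By Proposition~\ref{prop:Gprojtriangulated} the triangulated structure on $A\mbox{-\underline{\rm Gproj}}$ is induced by the Frobenius structure, so every distinguished triangle comes (up to rotation) from a conflation $\xi\colon 0\to X\stackrel{i}\to Y\stackrel{g}\to Z\to 0$ with $X,Y,Z\in A\mbox{-Gproj}$. Thus it suffices to show that applying each of $\underline{\rm Hom}_A(M,-)$ and $\underline{\rm Hom}_A(-,M)$ to $\xi$ yields a sequence exact at the middle term; the full long exact sequence then follows formally from the triangulated axioms.

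For the covariant functor, I would take $f\colon M\to Y$ whose image in $\underline{\rm Hom}_A(M,Z)$ vanishes, i.e.\ $g\circ f=h\circ k$ for some $k\colon M\to P$, $h\colon P\to Y$ with $P$ projective. Since $P$ is projective, $h$ lifts along $g$ to some $\tilde h\colon P\to Y$, and then $g\circ(f-\tilde h\circ k)=0$, so $f-\tilde h\circ k$ factors through $\ker g=X$. In $\underline{\rm Hom}_A(M,Y)$ this means $[f]$ lies in the image of $\underline{\rm Hom}_A(M,X)\to\underline{\rm Hom}_A(M,Y)$. This step only uses that projectives are projective.

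For the contravariant functor the argument is dual, but requires one extra input. Take $f\colon Y\to M$ such that $f\circ i$ factors as $X\stackrel{k}\to Q\stackrel{h}\to M$ with $Q$ projective. To proceed I want to extend $k$ along $i$ to some $\tilde k\colon Y\to Q$; this is possible provided ${\rm Ext}^1_A(Z,Q)=0$, and here is where Gorenstein-projectivity enters: by Corollary~\ref{cor:infinitedimension}(1), $Z\in A\mbox{-Gproj}$ kills $\mathrm{Ext}^{\ge 1}_A(-,L)$ for any $L$ of finite projective dimension, in particular for $L=Q$. Given such $\tilde k$, the morphism $f-h\circ\tilde k$ annihilates $X$ and therefore factors through $g$, showing that $[f]$ lies in the image of $\underline{\rm Hom}_A(Z,M)\to\underline{\rm Hom}_A(Y,M)$.

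The only genuine obstacle is the extension step in the contravariant case; once the vanishing of $\mathrm{Ext}^1_A(Z,Q)$ is invoked via Corollary~\ref{cor:infinitedimension}, both verifications reduce to diagram chases. Note that it is essential here that $Z$ (and not merely $M$) be Gorenstein-projective, which is precisely what encodes the fact that projectives are also injective objects of the Frobenius exact category $A\mbox{-Gproj}$.
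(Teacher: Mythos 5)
Your proposal is correct and follows essentially the same argument as the paper: both reduce to verifying exactness at the middle term for short exact sequences in $A\mbox{-Gproj}$, and your covariant diagram chase (lift $P\to Z$ along $g$, subtract, factor the difference through $\ker g$) is identical to the paper's. The paper leaves the contravariant case as ``dual''; you make it explicit and correctly isolate the one genuine extra input there, namely $\mathrm{Ext}^1_A(Z,Q)=0$ for $Q$ projective, which holds by Corollary~\ref{cor:infinitedimension} because $Z\in A\mbox{-Gproj}\subseteq{}^{\perp}A$. (One typo: in the covariant step the morphism $h$ should have codomain $Z$, not $Y$; the lifting step you describe immediately afterward makes the intended meaning unambiguous.)
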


\begin{proof}
We only show that the first functor is cohomological and the second
can be proved dually. Since triangles in
$A\mbox{-\underline{{\rm Gproj}}}$ are induced by short exact
sequences in $A\mbox{-\underline{\rm Gproj}}$, it suffices to show that for any short exact sequence
$0\rightarrow X \stackrel{f}\rightarrow Y \stackrel{g}\rightarrow Z
\rightarrow 0$ of Gorenstein-projective modules, the induced
sequence $\underline{\rm Hom}_A(M, X) \rightarrow \underline{\rm
Hom}_A(M, Y) \rightarrow \underline{\rm Hom}_A(M, Z)$ is exact in
the middle.

 This amounts to proving the following statement: given a morphism $\alpha\colon M
\rightarrow Y$ such that $g\circ \alpha$ factors though a projective
module, then there is a morphism $\beta\colon M \rightarrow X$ such
that $\alpha-f\circ \beta$ factors through a projective module.
Assume that there is a morphism $\pi\colon P\rightarrow Z$ with $P$
projective such that there is a morphism $t\colon M\rightarrow P$
satisfying $g\circ \alpha=\pi \circ t$. Since $P$ is projective, we
may lift $\pi$ along $g$ to a morphism $\pi'\colon P\rightarrow Y$, that
is, $g\circ \pi'=\pi$. Note that $g\circ (\alpha-\pi'\circ
t)=0$ and then we infer that  there exists a morphism $\beta\colon M \rightarrow X$
such that $\alpha-\pi'\circ t=f\circ \beta$. In particular,
$\alpha-f\circ \beta$ factors through the projective module $P$. We
are done.
\end{proof}

\section{Other Relevant Modules}

In this section we discuss other classes of  modules in Gorenstein
homological algebra: finitely generated Gorenstein-injective
modules, large Gorenstein-projective modules and large
Gorenstein-injective modules; here ``large" means ``not necessarily
finitely generated". Strongly Gorenstein-projective modules are also
briefly discussed.

Let $A$ be an artin algebra. Denote by $A\mbox{-inj}$ the full
subcategory of $A\mbox{-mod}$ consisting of finitely generated
injective $A$-modules. Consider the \emph{Nakayama functors}\index{functor!Nakayama}
$\nu=DA\otimes_A-\colon A\mbox{-mod} \rightarrow A\mbox{-mod}$ and
$\nu^{-}={\rm Hom}_A(DA, -) \colon A\mbox{-mod}\rightarrow
A\mbox{-mod}$. Note that $(\nu, \nu^{-1})$ is an adjoint pair. Then
for any module $_AM$ we have natural morphisms $\nu\nu^{-}
M\rightarrow M$ and $M\rightarrow \nu^{-}\nu M$. Moreover, for
projective modules $P$ we have $P\stackrel{\sim}\longrightarrow
\nu^{-}\nu P$; for injective modules $I$ we have $\nu\nu^{-}
I\stackrel{\sim}\longrightarrow I$. In this way we get mutually
inverse equivalences $\nu\colon
A\mbox{-proj}\stackrel{\sim}\longrightarrow A\mbox{-inj}$ and
$\nu^{-} \colon A\mbox{-inj}\stackrel{\sim}\longrightarrow
A\mbox{-proj}$.

Note that we have natural isomorphisms $\nu\simeq D\circ (-)^*$ and
$\nu^{-}\simeq (-)^*\circ D$ of functors. Hence a module $M$ is
reflexive if and only if the natural morphism $M\rightarrow
\nu^{-}\nu M$ is an isomorphism; while the natural morphism
$\nu\nu^{-} M\rightarrow M$ is an isomorphism if and only if the
right $A$-module $DM$ is reflexive.

An acyclic complex $I^\bullet$ of injective $A$-modules is said to
be \emph{cototally acyclic}\index{complex!cototally
acyclic}\footnote{The terminology, which certainly is not  standard,
is introduced to avoid possible confusion.} provided that the Hom
complex $\nu^{-} I^\bullet={\rm Hom}_A(DA, I^\bullet)$ is acyclic.
An $A$-module $N$ is said to be (finitely generated)
\emph{Gorenstein-injective}\index{Gorenstein-injective
module!finitely generated} provided that there is a cototally
acyclic complex $I^\bullet$ such that its zeroth coboundary
$B^0(I^\bullet)$ is isomorphic to $N$. Finitely generated
Gorenstein-injective modules are also known as \emph{maximal
co-Cohen-Macaulay modules} (\cite{BR, Bel2})\index{module!maximal
co-Cohen-Macaulay}.

We denote by $A\mbox{-Ginj}$ the full subcategory of $A\mbox{-mod}$
consisting of Gorenstein-injective modules. Observe that
$A\mbox{-inj}\subseteq A\mbox{-Ginj}$.

Denote by $(DA)^\perp$ the full subcategory of $A\mbox{-mod}$
consisting of modules $N$ with the property ${\rm Ext}_A^i(DA, N)=0$
for all $i\geq 1$.

We note the following analogue of Lemma \ref{lem:totallyacyclic}.

\begin{lem}\label{lem:cotatallyacyclic}
Let $I^\bullet$ be a complex of injective modules. Then the
following statements are equivalent:
\begin{enumerate}
\item[(1)] the complex $I^\bullet$ is cototally acyclic;
\item[(2)] the complex $I^\bullet$ is acyclic and each coboundary $B^i(P^\bullet)$
lies in $(DA)^\perp$;
\item[(3)] the complex $\nu^{-} I^\bullet$ is totally acyclic.
\end{enumerate}
\end{lem}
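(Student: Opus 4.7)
The plan is to reduce the claim to Lemma \ref{lem:totallyacyclic} applied to the opposite algebra $A^{\rm op}$, via the Matlis duality functor $D$. Since $D\colon A\mbox{-mod}\rightarrow A^{\rm op}\mbox{-mod}$ is an exact contravariant equivalence that restricts to an equivalence $A\mbox{-inj}\stackrel{\sim}\longrightarrow A^{\rm op}\mbox{-proj}$, for any complex $I^\bullet$ of injective $A$-modules the complex $P^\bullet:=DI^\bullet$ is a complex of projective $A^{\rm op}$-modules. I would then show that each of (1), (2), (3) is equivalent to the corresponding condition of Lemma \ref{lem:totallyacyclic} applied to $P^\bullet$ over $A^{\rm op}$.

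For the translation of (1) and (3): exactness of $D$ gives that $I^\bullet$ is acyclic if and only if $P^\bullet$ is acyclic. Using the natural isomorphism $\nu^-\simeq (-)^*\circ D$ quoted above, one obtains an isomorphism of complexes $\nu^- I^\bullet \simeq (P^\bullet)^*$, so the requirement in (1) that $\nu^- I^\bullet$ be acyclic becomes the condition that $(P^\bullet)^*$ be acyclic. Thus (1) corresponds to condition (3) of Lemma \ref{lem:totallyacyclic} for $P^\bullet$. A parallel argument, this time using $\nu\nu^- I\simeq I$ for injective $I$ together with $\nu\simeq D\circ (-)^*$, identifies our (3) with condition (1) of that lemma for $P^\bullet$.

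For (2), note that in an acyclic complex coboundaries coincide with cocycles; the short exact sequences $0\rightarrow B^i(I^\bullet)\rightarrow I^i\rightarrow B^{i+1}(I^\bullet)\rightarrow 0$ become, after applying the contravariant exact functor $D$, short exact sequences exhibiting each $D(B^{i+1}(I^\bullet))$ as a cocycle of $P^\bullet$, and every cocycle of $P^\bullet$ arises in this way up to reindexing. Since $D$ is an exact duality, one has a natural isomorphism ${\rm Ext}_A^k(DA, X)\simeq {\rm Ext}_{A^{\rm op}}^k(DX, A)$ for all $k\geq 1$ (obtained by applying $D$ to a projective resolution of $X$ and using $D(DA)\simeq A$ as a right $A$-module). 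It follows that $B^{i+1}(I^\bullet)\in (DA)^\perp$ if and only if $D(B^{i+1}(I^\bullet))\in {^\perp A_A}$, giving the translation between (2) and condition (2) of Lemma \ref{lem:totallyacyclic} for $P^\bullet$.

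Combining the three translations with Lemma \ref{lem:totallyacyclic} applied to $P^\bullet$ over $A^{\rm op}$ yields the desired equivalence of (1), (2), and (3). The only real obstacle in this strategy is the careful bookkeeping for condition (2): one must verify that the $(DA)^\perp$-condition on the coboundaries of $I^\bullet$ matches, under $D$, the ${^\perp A_A}$-condition on the cocycles of $P^\bullet$, keeping track of the reindexing introduced by the contravariance of $D$. Everything else is a straightforward transport of structure along Matlis duality and requires no further calculation.
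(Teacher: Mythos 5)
Your argument is correct, and it takes a genuinely different route from the paper. The paper's proof simply asserts that the argument of Lemma \ref{lem:totallyacyclic} goes through \emph{mutatis mutandis} with $\nu^{-}$ playing the role of $(-)^*$, the only new ingredient being the counit isomorphism $\nu\nu^{-}I^\bullet\simeq I^\bullet$ for the adjunction $(\nu,\nu^{-})$; in other words, the paper re-runs the same computation in the dual setting. You instead \emph{reduce} the claim to Lemma \ref{lem:totallyacyclic} already proved, by transporting structure along the exact duality $D$: setting $P^\bullet=DI^\bullet$, you identify acyclicity of $I^\bullet$ with acyclicity of $P^\bullet$, $\nu^{-}I^\bullet$ with $(P^\bullet)^*$ via $\nu^{-}\simeq(-)^*\circ D$, coboundaries $B^i(I^\bullet)\in(DA)^\perp$ with cocycles of $P^\bullet$ lying in ${}^\perp(A_A)$, and then invoke the lemma over $A^{\rm op}$. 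This is cleaner in that it avoids redoing the same calculation, and it makes explicit the fact that the paper only records afterwards in Remark \ref{rem:D} (``$I^\bullet$ is cototally acyclic if and only if $DI^\bullet$ is totally acyclic''). Two small blemishes worth fixing: first, the parenthetical justification of ${\rm Ext}_A^k(DA,X)\simeq{\rm Ext}_{A^{\rm op}}^k(DX,A)$ should use a projective resolution of $D(A_A)$ (or equivalently an injective coresolution of $X$), not a projective resolution of $X$; the isomorphism itself is of course standard for the exact duality $D$. Second, your matching of our (1) with condition (3) and our (3) with condition (1) of Lemma \ref{lem:totallyacyclic} is harmless but slightly loose, since under the translation $P^\bullet=DI^\bullet$ both of our (1) and (3) reduce to ``$P^\bullet$ and $(P^\bullet)^*$ acyclic,'' i.e.\ to conditions (1) and (3) of that lemma simultaneously; what saves you is precisely that those two conditions are already known to be equivalent.
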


\begin{proof}
The proof is  analogous to the one of Lemma
\ref{lem:totallyacyclic}. Just note that for $(1)\Leftrightarrow
(3)$, one uses that the natural chain map
$\nu\nu^{-}I^\bullet\stackrel{\sim}\longrightarrow I^\bullet$ is an
isomorphism.
\end{proof}

\begin{rem} \label{rem:D}
Note that $\nu^{-} I^\bullet=(DI^\bullet)^*$. By Lemma
\ref{lem:totallyacyclic} we infer that $I^\bullet$ is cototally
acyclic if and only if $DI^\bullet$ is totally acyclic.
Consequently, a module $M$ is Gorenstein-injective if and only if
$(DM)_A$ is Gorenstein-projective. Moreover, one has a duality
$D\colon A\mbox{-{\rm Ginj}}\stackrel{\sim}\longrightarrow A^{\rm
op}\mbox{-{\rm Gproj}}$.
\end{rem}

Let us remark that from Lemma \ref{lem:cotatallyacyclic}(2) it
follows that $A\mbox{-Ginj}\subseteq (DA)^\perp$. Note that in the
case of the lemma above, we have that $Z^0(\nu^{-}I^\bullet)\simeq
\nu^{-}B^0(I^\bullet)$. Dually a complex $P^\bullet$ of projective
modules is totally acyclic if and only if the complex
$\nu P^\bullet$ is cototally acyclic; in this case, we have
$B^0(\nu P^\bullet)\simeq \nu Z^0(P^\bullet)$.

 We conclude from the above discussion the following result.

\begin{prop}\label{prop:GinjGproj}
Let $M$ be an $A$-module. Then we have:
\begin{enumerate}
\item[(1)] $M$ is Gorenstein-injective if and only if $\nu^{-} M$ is
Gorenstein-projective and the natural morphism $\nu\nu^{-}
M\rightarrow M$ is an isomorphism;
\item[(2)]  $M$ is Gorenstein-projective if and only if $\nu M$ is
Gorenstein-injective and the natural morphism $M\rightarrow
\nu^{-}\nu M$ is an isomorphism.
\end{enumerate}
Consequently, we have an equivalence $\nu^{-}\colon A\mbox{-{\rm
Ginj}}\stackrel{\sim}\longrightarrow A\mbox{-{\rm Gproj}}$ of
categories with its quasi-inverse given by $\nu$. \hfill $\square$
\end{prop}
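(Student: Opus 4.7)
The plan is to combine three ingredients already available in the excerpt: (i) the factorizations $\nu\simeq D\circ(-)^*$ and $\nu^{-}\simeq (-)^*\circ D$, together with the reformulations stated just before Lemma~\ref{lem:cotatallyacyclic} that $M\to\nu^{-}\nu M$ is an isomorphism iff $M$ is reflexive, while $\nu\nu^{-} M\to M$ is an isomorphism iff $DM$ is reflexive; (ii) Remark~\ref{rem:D}, which supplies both the biconditional ``$M\in A\mbox{-Ginj}$ iff $DM\in A^{\rm op}\mbox{-Gproj}$'' and the associated duality $D\colon A\mbox{-Ginj}\to A^{\rm op}\mbox{-Gproj}$; and (iii) the duality $(-)^*\colon A\mbox{-Gproj}\to A^{\rm op}\mbox{-Gproj}$ of Corollary~\ref{cor:duality}, together with the reflexivity built into every object of $A\mbox{-Gproj}$ by Lemma~\ref{lem:Gproj}(3).

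For (1), I would rewrite $\nu^{-}M=(DM)^*$; the two conditions on the right of (1) then say exactly that $(DM)^*\in A\mbox{-Gproj}$ and $DM$ is reflexive. If $M\in A\mbox{-Ginj}$, Remark~\ref{rem:D} gives $DM\in A^{\rm op}\mbox{-Gproj}$, so Lemma~\ref{lem:Gproj}(3) yields the reflexivity of $DM$ and Corollary~\ref{cor:duality} yields $(DM)^*\in A\mbox{-Gproj}$. Conversely, assume both conditions. Corollary~\ref{cor:duality} applied to $(DM)^*\in A\mbox{-Gproj}$ returns $(DM)^{**}\in A^{\rm op}\mbox{-Gproj}$; reflexivity of $DM$ identifies this with $DM$, and Remark~\ref{rem:D} then returns $M\in A\mbox{-Ginj}$.

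Part~(2) is handled symmetrically. Write $\nu M=D(M^*)$; applying Remark~\ref{rem:D} to $\nu M$ gives $\nu M\in A\mbox{-Ginj}$ iff $M^*\in A^{\rm op}\mbox{-Gproj}$, and the condition that $M\to\nu^{-}\nu M$ is an isomorphism is exactly reflexivity of $M$. So if $M\in A\mbox{-Gproj}$, reflexivity of $M$ (Lemma~\ref{lem:Gproj}(3)) and $M^*\in A^{\rm op}\mbox{-Gproj}$ (Corollary~\ref{cor:duality}) both hold; conversely, from $M^*\in A^{\rm op}\mbox{-Gproj}$ Corollary~\ref{cor:duality} delivers $M^{**}\in A\mbox{-Gproj}$, and reflexivity of $M$ identifies this with $M$. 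The ``consequently'' clause is then immediate: (1) makes $\nu^{-}$ a well-defined functor $A\mbox{-Ginj}\to A\mbox{-Gproj}$ along which $\nu\nu^{-}\simeq\mathrm{id}$, (2) makes $\nu$ a well-defined functor in the opposite direction along which $\nu^{-}\nu\simeq\mathrm{id}$, and together these furnish the asserted equivalence with $\nu^{-}$ and $\nu$ as mutual quasi-inverses.

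There is no genuine conceptual obstacle; the proof is essentially the composition of two dualities. The only real care needed is bookkeeping: one must keep straight which of $M$ and $DM$ is being asserted to be reflexive, and on which side ($A$ or $A^{\rm op}$) each application of $(-)^*$ produces its output, so that Remark~\ref{rem:D} and Corollary~\ref{cor:duality} are chained in the correct direction.
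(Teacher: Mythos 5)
Your argument is correct, and the route is close to what the paper has in mind. The proposition is stated with no written proof and the sentence ``We conclude from the above discussion,'' so the intended argument is compressed; it rests on Lemma~\ref{lem:cotatallyacyclic} together with the immediately preceding remarks that $I^\bullet$ is cototally acyclic iff $\nu^{-}I^\bullet$ is totally acyclic with $Z^0(\nu^{-}I^\bullet)\simeq\nu^{-}B^0(I^\bullet)$, and dually $P^\bullet$ totally acyclic iff $\nu P^\bullet$ cototally acyclic with $B^0(\nu P^\bullet)\simeq\nu Z^0(P^\bullet)$. That is an argument at the level of complete resolutions. You instead compose the two module-level dualities $D$ (Remark~\ref{rem:D}) and $(-)^*$ (Corollary~\ref{cor:duality}) using $\nu\simeq D\circ(-)^*$ and $\nu^{-}\simeq(-)^*\circ D$, and feed in the reflexivity assertions from Lemma~\ref{lem:Gproj}(3). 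These are two packagings of the same underlying content, since Remark~\ref{rem:D} and Corollary~\ref{cor:duality} are themselves derived from the very complex-level facts the paper has in mind; yours works entirely inside $A\mbox{-mod}$ and $A^{\rm op}\mbox{-mod}$ and has the small advantage that no verification of the two cocycle/coboundary identifications is required, at the cost of tacitly invoking $DD\simeq\mathrm{id}$ on finitely generated modules when you pass from $\nu M=D(M^*)$ to $D\nu M\simeq M^*$. Both directions of both items chain correctly, so the proof stands.
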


The following result is analogous to Lemma \ref{lem:Gproj}.

\begin{lem}\label{lem:Ginj}
Let $N$ be an $A$-module. Then the following statements are
equivalent:
 \begin{enumerate}
 \item[(1)] $N$ is Gorenstein-injective;
 \item[(2)] there exists a long exact sequence $\cdots \rightarrow
 I^{-3} \rightarrow I^{-2} \rightarrow I^{-1}\rightarrow N\rightarrow 0$ with each $I^{-i}$ injective
  and each  coboundary  in $(DA)^\perp$;
 \item[(3)] $N\in {(DA)^\perp}$, ${\rm Tor}_i^A(DA, \nu^{-} N)=0$ for $i\geq 1$ and the natural
 morphism $\nu\nu^{-}N\rightarrow N$ is an isomorphism.
 \end{enumerate}
\end{lem}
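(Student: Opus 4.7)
The plan is to derive this as the injective-side analogue of Lemma \ref{lem:Gproj}, using Lemma \ref{lem:cotatallyacyclic} in place of Lemma \ref{lem:totallyacyclic}, and the Matlis duality $D$ (via Remark \ref{rem:D}) to transfer statements from the Gorenstein-projective setting.

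For the equivalence $(1)\Leftrightarrow (2)$ I will mimic the proof of Lemma \ref{lem:Gproj}$(1)\Leftrightarrow (2)$ step for step. The direction $(1)\Rightarrow (2)$ is immediate from Lemma \ref{lem:cotatallyacyclic}(2). For $(2)\Rightarrow (1)$, I splice the given sequence on the right with an injective coresolution of $N$ to obtain an acyclic complex $I^\bullet$ of injective modules with $B^0(I^\bullet)\simeq N$, and I need each coboundary of $I^\bullet$ to lie in $(DA)^\perp$; those coming from the coresolution do so because $(DA)^\perp\subseteq A\mbox{-mod}$ is closed under taking cokernels of monomorphisms, dually to the closure of ${^\perp A}$ under kernels of epimorphisms used in Lemma \ref{lem:Gproj}. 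Lemma \ref{lem:cotatallyacyclic}(2) then yields (1).

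For $(1)\Leftrightarrow (3)$, the plan is to reduce to Lemma \ref{lem:Gproj}(3) applied to the right $A$-module $DN$. By Remark \ref{rem:D}, $N$ is Gorenstein-injective if and only if $(DN)_A$ is Gorenstein-projective, so the latter is equivalent to $DN\in {^\perp(A_A)}$, $(DN)^*\in {^\perp({}_AA)}$ and $DN$ being reflexive. I then translate these three conditions term by term using the natural isomorphisms $\nu\simeq D\circ (-)^*$ and $\nu^{-}\simeq (-)^*\circ D$ together with the standard identities ${\rm Ext}^i_{A^{\rm op}}(DL, DM)\simeq {\rm Ext}^i_A(M, L)$ and $D\,{\rm Tor}^A_i(X, Y)\simeq {\rm Ext}^i_A(Y, DX)$. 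Concretely, the first becomes $N\in (DA)^\perp$; noting that $(DN)^* = \nu^{-}N$, the second becomes ${\rm Ext}^i_A(\nu^{-}N, A)=0$, equivalently ${\rm Tor}_i^A(DA, \nu^{-}N)=0$ for $i\geq 1$; and applying $D$ to the biduality morphism ${\rm ev}_{DN}\colon DN\to (DN)^{**}$ yields the natural morphism $\nu\nu^{-}N\to N$, so reflexivity of $DN$ corresponds to the last condition in (3).

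The main subtlety I foresee is the last of these translations: verifying that $D({\rm ev}_{DN})$ agrees, under the canonical identifications $D(DN)\simeq N$ and $D((DN)^{**})\simeq \nu\nu^{-}N$ coming from $\nu\simeq D\circ (-)^*$, with the adjunction counit $\nu\nu^{-}N\to N$ of the adjoint pair $(\nu,\nu^{-})$. This is a diagram chase of natural transformations on the finitely generated module category that uses $D\circ D\simeq {\rm Id}$; once done, the three conditions of Lemma \ref{lem:Gproj}(3) applied to $DN$ coincide with the three conditions of (3), completing the equivalence.
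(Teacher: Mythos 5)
Your proof is correct. For the equivalence $(1)\Leftrightarrow(2)$ you mimic Lemma~\ref{lem:Gproj} verbatim (splicing with an injective coresolution, using closure of $(DA)^\perp$ under cokernels of monomorphisms, and Lemma~\ref{lem:cotatallyacyclic}(2)); this is exactly the paper's route. For $(1)\Leftrightarrow(3)$, however, you take a genuinely different path: the paper's one-line proof asks the reader to re-run the argument of Lemma~\ref{lem:Gproj}$(1)\Leftrightarrow(3)$ directly inside $A\mbox{-mod}$, replacing the functor $(-)^*$ by $\nu^{-}$ and using the observation $Z^0(\nu^-I^\bullet)\simeq \nu^-B^0(I^\bullet)$, whereas you instead apply Remark~\ref{rem:D} to reduce everything to Lemma~\ref{lem:Gproj}(3) for the right $A$-module $DN$ and then translate the three conditions across the dualities $\nu\simeq D\circ(-)^*$ and $\nu^-\simeq(-)^*\circ D$. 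Both are legitimate; your route is slightly more economical in that it reuses a proven statement rather than reproving its mirror image, at the price of a dictionary-checking step. One small remark: the ``main subtlety'' you single out at the end --- that reflexivity of $DN$ is equivalent to $\nu\nu^{-}N\to N$ being an isomorphism --- is not actually a subtlety here, because the paper records precisely this equivalence in the paragraph introducing the Nakayama functors (``the natural morphism $\nu\nu^- M\to M$ is an isomorphism if and only if $DM$ is reflexive''). You can simply cite it instead of re-verifying the diagram chase.
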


\begin{proof} The proof is analogous to the one of  Lemma \ref{lem:Gproj}.
We apply Lemma \ref{lem:cotatallyacyclic} and note that we have
$Z^{0}(\nu^{-}I^\bullet)\simeq \nu^{-}B^0(I^\bullet)$ for a
cototally acyclic complex $I^\bullet$.
\end{proof}

 Recall that
$A\mbox{-inj}\subseteq A\mbox{-Ginj}$. Dual to Propostion
\ref{prop:Gproj} one can show that $A\mbox{-Ginj}\subseteq
A\mbox{-mod}$ is a \emph{coresolving subcategory}\index{subcategory!coresolving},
 that is, it contains all the injective modules and is closed under extensions,
 taking cokernels of
monomorphisms and direct summands. Since $A\mbox{-Ginj}$ is closed
under extensions, it becomes an exact category; moreover, it is
Frobenius such that its projective objects are equal to injective
$A$-modules.

Denote by $A\mbox{-}\overline{\mbox{mod}}$ the \emph{stable
category}\index{category!stable} of $A\mbox{-mod}$ modulo injective
modules. Recall that for a module $M$ the \emph{cosyzygy
module}\index{module!cosyzygy} $\Omega^{-}M$ is defined to be the
cokernel of a monomorphism $M\rightarrow I$ with $I$ injective; this
gives rise to the \emph{cosyzygy functor} \index{functor!cosyzygy}
$\Omega^{-}\colon A\mbox{-}\overline{\mbox{mod}}\rightarrow
A\mbox{-}\overline{\mbox{mod}}$. For each $i\geq 1$ denote by
$\Omega^{-i}$ the $i$-th power of $\Omega^{-}$.

 We denote by
$A\mbox{-}\overline{\mbox{Ginj}}$ the full subcategory of
$A\mbox{-}\overline{\mbox{mod}}$ consisting of Gorenstein-injective
modules. Dual to Corollary \ref{cor:syzygy} one observes that for a
Gorenstein-injective module $M$ its cozysygy modules $\Omega^{-i} M$
are Gorenstein-injective. In particular, one has an induced functor
$\Omega^{-}\colon A\mbox{-}\overline{\rm Ginj} \rightarrow
A\mbox{-}\overline{\rm Ginj}$.

\begin{lem}
The stable category $A\mbox{-}\overline{\mbox{{\rm Ginj}}}$ is
triangulated with the shift functor given by $\Omega^{-}$. Moreover,
we have an equivalence $\nu^{-}\colon A\mbox{-}\overline{\mbox{{\rm
Ginj}}}\stackrel{\sim}\longrightarrow A\mbox{-\underline{\rm
Gproj}}$ of triangulated categories with its quasi-inverse given by
$\nu$.
\end{lem}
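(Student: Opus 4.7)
The plan is to invoke Happel's general theorem that the stable category of a Frobenius exact category is triangulated (\cite[Chapter I, section 2]{Hap}) and then lift the abelian-level equivalence $\nu^{-}$ of Proposition \ref{prop:GinjGproj} to the stable categories.

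First I would verify the Frobenius structure on $A\mbox{-Ginj}$. The remarks just before the lemma already note that $A\mbox{-Ginj}\subseteq A\mbox{-mod}$ is coresolving (dual to Proposition \ref{prop:Gproj}) and so it becomes an exact category whose projective-injective objects are precisely the injective $A$-modules; this uses $A\mbox{-Ginj}\subseteq (DA)^\perp$ (Lemma \ref{lem:cotatallyacyclic}(2)) together with the dual of Lemma \ref{lem:Gproj}(2) to produce, for any $N\in A\mbox{-Ginj}$, both an inflation $N\hookrightarrow I$ and a deflation $I\twoheadrightarrow N$ with $I$ injective and cokernel/kernel again Gorenstein-injective. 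Applying Happel's theorem then gives that $A\mbox{-}\overline{\mbox{Ginj}}$ is triangulated with shift functor the quasi-inverse of the syzygy functor on the Frobenius category; but in $A\mbox{-Ginj}$ the syzygy functor (taking kernels of deflations onto injective objects) is precisely the inverse of $\Omega^{-}$, so the shift functor is $\Omega^{-}$.

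Next I would show that $\nu^{-}\colon A\mbox{-Ginj}\stackrel{\sim}\rightarrow A\mbox{-Gproj}$ from Proposition \ref{prop:GinjGproj} is an exact equivalence of Frobenius exact categories. That it is an equivalence of additive categories is already established; the point to check is that it sends conflations to conflations and preserves projective-injective objects. The latter is classical: $\nu^{-}$ restricts to the standard equivalence $A\mbox{-inj}\stackrel{\sim}\longrightarrow A\mbox{-proj}$. For exactness, take a conflation $0\rightarrow X\rightarrow Y\rightarrow Z\rightarrow 0$ of Gorenstein-injective modules and apply $\nu^{-}={\rm Hom}_A(DA,-)$; the long exact sequence together with $X\in (DA)^\perp$ (so that ${\rm Ext}^1_A(DA,X)=0$) yields the short exact sequence $0\rightarrow \nu^{-}X\rightarrow \nu^{-}Y\rightarrow \nu^{-}Z\rightarrow 0$ in $A\mbox{-Gproj}$. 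Dually, using $M\in {^\perp A}$ for $M\in A\mbox{-Gproj}$ and $\nu=DA\otimes_A-$ viewed via the isomorphism $\nu\simeq D\circ(-)^*$ together with Corollary \ref{cor:infinitedimension}, the quasi-inverse $\nu$ is also exact on the Frobenius level.

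An exact functor between Frobenius categories that preserves projective-injective objects induces a triangle functor between the associated stable categories: it commutes with the syzygy/cosyzygy functors up to natural isomorphism (modulo projective-injectives) and sends conflation-triangles to conflation-triangles. Applied to the mutually inverse exact equivalences $\nu^{-}$ and $\nu$, this yields the desired triangle equivalence $\nu^{-}\colon A\mbox{-}\overline{\mbox{Ginj}}\stackrel{\sim}\rightarrow A\mbox{-}\underline{\mbox{Gproj}}$ with quasi-inverse $\nu$.

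The main obstacle is the exactness of $\nu^{-}$ on $A\mbox{-Ginj}$; once the $(DA)^\perp$-vanishing is used to kill the obstructing ${\rm Ext}^1$, the rest is a routine application of Happel's formalism and needs no further calculation beyond what Proposition \ref{prop:GinjGproj} already supplies.
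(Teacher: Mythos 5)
Your proposal is correct and follows essentially the same route as the paper: the triangulated structure on $A\mbox{-}\overline{\mbox{Ginj}}$ is obtained by dualizing the Frobenius exact category argument of Proposition \ref{prop:Gprojtriangulated}, and the triangle equivalence is obtained by lifting the abelian-level equivalence $\nu^{-}$ of Proposition \ref{prop:GinjGproj} to the stable categories. You have merely written out the exactness check (vanishing of ${\rm Ext}^1_A(DA,-)$ on $(DA)^\perp$ and of ${\rm Tor}_1^A(DA,-)$ on Gorenstein-projectives) and the preservation of projective-injective objects, both of which the paper leaves implicit.
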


\begin{proof}
The first statement is dual to Proposition
\ref{prop:Gprojtriangulated}(2), while the second follows from
Proposition \ref{prop:GinjGproj}. Note that the second statement
also follows from Corollary \ref{cor:trianglequivalence} and Remark
\ref{rem:D}.
\end{proof}

\begin{rem}
The cosyzygy functor  $\Sigma=\Omega^{-}$ is invertible on
$A\mbox{-}\overline{\mbox{{\rm Ginj}}}$; its quasi-inverse is given
by $\Sigma^{-1}=\nu\Omega\nu^{-}$.
\end{rem}

Recall that $\tau=D\circ {\rm Tr} \colon A\mbox{-\underline{mod}}
\stackrel{\sim}\longrightarrow A\mbox{-}\overline{\rm mod}$ is the
\emph{Auslander-Reiten translation};\index{Auslander-Reiten translation} it is an equivalence with its
quasi-inverse given by $\tau^{-1}={\rm Tr}\circ D$; see
\cite[p.106]{ARS}.

 The following result is of independent interest.

\begin{prop}\label{prop:ARtranslation}
Let $M$ be an $A$-module. Then $M$ is Gorenstein-projective if and
only if $\tau M$ is Gorenstein-injective; dually $M$ is
Gorenstein-injective if and only if $\tau^{-1} M$ is
Gorenstein-projective.
\end{prop}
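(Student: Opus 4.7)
The plan is to reduce the statement to two results already established: Proposition \ref{prop:transpose}, which says that $M$ is Gorenstein-projective if and only if $\mathrm{Tr}\,M$ is Gorenstein-projective (over $A^{\rm op}$), and Remark \ref{rem:D}, which says that an $A$-module $N$ is Gorenstein-injective if and only if the right $A$-module $DN$ is Gorenstein-projective. Since $\tau = D \circ \mathrm{Tr}$ by definition, composing these two facts should give exactly what we want.

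More precisely, I would argue as follows. First, apply Remark \ref{rem:D} to the $A$-module $\tau M = D(\mathrm{Tr}\,M)$: this shows that $\tau M$ is Gorenstein-injective if and only if $D(\tau M) = DD(\mathrm{Tr}\,M) \simeq \mathrm{Tr}\,M$ is Gorenstein-projective as an $A^{\rm op}$-module (here I use that $D$ is a duality, so $DD \simeq \mathrm{Id}$ on finitely generated modules). Second, apply Proposition \ref{prop:transpose} (which works for any algebra, in particular for $A^{\rm op}$ as well as for $A$): $\mathrm{Tr}\,M$ is Gorenstein-projective over $A^{\rm op}$ if and only if $M$ is Gorenstein-projective over $A$. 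Chaining the two equivalences yields the first assertion.

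For the dual statement, the argument is symmetric. Starting from $\tau^{-1}M = \mathrm{Tr}\circ D(M)$, I would first apply Remark \ref{rem:D} to rewrite: $M$ is Gorenstein-injective if and only if the right $A$-module $DM$ is Gorenstein-projective. Then Proposition \ref{prop:transpose} applied over $A^{\rm op}$ says this is equivalent to $\mathrm{Tr}(DM) = \tau^{-1}M$ being Gorenstein-projective as a left $A$-module, which is the desired conclusion.

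I do not expect any genuine obstacle here; the whole proof is really a matter of bookkeeping the side conventions (left versus right modules, and the fact that $\mathrm{Tr}$ and $D$ swap them), and of remembering that all the stated equivalences are invariant under these side-changes. The only mildly delicate point is making sure that Proposition \ref{prop:transpose}, stated over $A$, can be invoked over $A^{\rm op}$ as well, which is immediate because the definitions and the proposition's proof are symmetric in $A$ and $A^{\rm op}$ (indeed, Corollary \ref{cor:duality} already embodies this symmetry).
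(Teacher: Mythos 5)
Your proof is correct and follows exactly the route the paper takes: the paper's own argument is simply ``We apply Proposition \ref{prop:transpose} and Remark \ref{rem:D}.'' You have merely spelled out the bookkeeping that the paper leaves implicit.
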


\begin{proof}
We apply Proposition \ref{prop:transpose} and Remark \ref{rem:D}.
\end{proof}

From now on we will study for an artin algebra $A$ the category
$A\mbox{-Mod}$ of ``large" $A$-modules, that is, modules which are
not necessarily finitely generated. We will consider modules and
complexes in $A\mbox{-Mod}$.

Denote by $A\mbox{-Proj}$ (resp. $A\mbox{-Inj}$) the full
subcategory of $A\mbox{-Mod}$ consisting of projective (resp.
injective) modules. Note that the Nakayama functor induces an
equivalence $\nu\colon A\mbox{-Proj}\stackrel{\sim}\longrightarrow
A\mbox{-Inj}$ with its quasi-inverse given by $\nu^{-}$; see
\cite[Lemma 5.4]{Kr97}.

We note the following well-known fact.

\begin{lem}\label{lem:well-known}
Let $A$ be an artin algebra. Then a module is projective if and only
if it is a direct summand of a product of $_AA$; a module is
injective if  and only if it is a direct summand of a coproduct of
$D(A_A)$. \hfill $\square$
\end{lem}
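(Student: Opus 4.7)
My plan is to prove the two biconditionals separately; both reduce to standard results via the Nakayama functors.

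For the injective half, the direction $(\Leftarrow)$ combines Bass--Papp (arbitrary coproducts of injectives are injective over a noetherian ring, and $A$ is noetherian) with the fact that $D({}_AA)$ is itself injective. For $(\Rightarrow)$ I would invoke Matlis' decomposition theorem over the noetherian ring $A$: every injective is a direct sum of indecomposable injectives, and each indecomposable injective is the injective envelope $I(S)$ of a simple left $A$-module. Every such $I(S)$ occurs as a direct summand of $D({}_AA)$, because the decomposition of $A_A$ into indecomposable projective right modules is transported by the duality $D$ into a decomposition of $D(A_A)$ into indecomposable injective left modules that exhausts every $I(S)$ up to isomorphism. Hence every injective is a summand of a sufficiently large coproduct of copies of $D(A_A)$.

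For the projective half, $(\Leftarrow)$ uses that an artin algebra is simultaneously left perfect and right coherent: by Chase's theorem $\prod_I A$ is left flat, and over a left perfect ring every flat left module is projective, so $\prod_I A$ and any of its summands are projective. For $(\Rightarrow)$ I would transport the problem across the Nakayama equivalence $\nu\colon A\mbox{-Proj}\stackrel{\sim}\longrightarrow A\mbox{-Inj}$, which sends ${}_AA$ to $D(A_A)$: given a projective $P$, the injective module $\nu P$ embeds as a split submodule of a product $D(A_A)^J$ via the usual evaluation map to the injective cogenerator $D(A_A)$, with $J={\rm Hom}_A(\nu P, D(A_A))$; applying the right adjoint $\nu^{-}$, which preserves products and satisfies $\nu^{-}D(A_A)\simeq A$ and $\nu^{-}\nu P\simeq P$, yields the desired split monomorphism $P\hookrightarrow \prod_J A$.

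The only subtlety, which is really a matter of bookkeeping rather than a genuine obstacle, is that the two halves of the lemma are \emph{not} exact duals of each other across the Nakayama equivalence: the injective half is stated with a coproduct, while the projective half is stated with a product. This asymmetry is forced by adjointness, since $\nu$ preserves coproducts (not products) and $\nu^{-}$ preserves products (not coproducts). Correspondingly, the ``coproduct'' formulation for injectives is the noetherian-ring route (Bass--Papp plus Matlis), whereas the ``product'' formulation for projectives arises by $\nu^{-}$-transport of the general injective-cogenerator embedding.
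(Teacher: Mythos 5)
The paper records this as a well-known fact and gives no proof, so there is nothing in the text to compare your argument against; on its own merits your proof is correct, and the ingredients are the right ones: Matlis decomposition and Bass--Papp for the injective half, Chase's theorem together with Bass's perfect-ring characterization of flat=projective for the $(\Leftarrow)$ of the projective half, and transport of the injective-cogenerator embedding across the Nakayama adjunction for the $(\Rightarrow)$.

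Two small remarks. There is a notational slip in the injective half: you write $D({}_AA)$ where $D(A_A)$ is meant (the Matlis dual of the left regular module is a \emph{right} module, not the left injective cogenerator). More substantively, the closing paragraph overclaims. The product/coproduct asymmetry in the lemma is \emph{not} forced by adjointness: over an artin algebra the bimodule $DA$ is finitely presented on the right, so $\nu=DA\otimes_A-$ preserves arbitrary products, and it is finitely generated on the left, so $\nu^-={\rm Hom}_A(DA,-)$ preserves arbitrary coproducts; both Nakayama functors therefore preserve both products and coproducts. The real reason the lemma pairs a \emph{product} of $_AA$ with a \emph{coproduct} of $D(A_A)$ is that these are the two non-trivial versions: ``summand of a coproduct of $_AA$'' is the tautological description of a projective module, and ``summand of a product of $D(A_A)$'' holds over any ring since products of injectives are always injective, whereas the formulations actually chosen genuinely require $A$ to be left perfect and right coherent, respectively left noetherian. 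Your proof itself does not lean on the incorrect reading of the asymmetry, so the argument stands.
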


An acyclic complex $P^\bullet$ of projective $A$-modules is
\emph{totally acyclic}\index{complex!totally acyclic} if for each
projective $A$-module $Q$ the Hom complex ${\rm Hom}_A(P^\bullet,
Q)$ is acyclic; dually an acyclic complex $I^\bullet$ of injective
$A$-modules is \emph{cototally acyclic}\index{complex!cototally
acyclic} if for each injective $A$-module $J$ the Hom complex ${\rm
Hom}_A(J, I^\bullet)$ is acyclic.

Denote by $^\perp(A\mbox{-{\rm Proj}})$ the full subcategory of
$A\mbox{-Mod}$ consisting of modules $M$ such that ${\rm Ext}^i_A(M,
Q)=0$ for all $i\geq 1$ and $Q$ projective.

We note the following equalities
\begin{align*}
^\perp(A\mbox{-{\rm Proj}})&=\{M\in
A\mbox{-Mod}\; |\; {\rm Ext}^i_A(M, A)=0 \mbox{ for all } i\geq
1\}\\
& = \{M\in A\mbox{-Mod}\; |\; {\rm Tor}_i^A(D(_AA), M)=0 \mbox{ for
all } i\geq 1\},
\end{align*}
where the first equality follows from Lemma \ref{lem:well-known} and
the second from the fact that $D{\rm Tor}_i^A(N, M)\simeq {\rm
Ext}_A^i(M, DN)$. It then follows that $^\perp(A\mbox{-{\rm
Proj}})\subseteq A\mbox{-Mod}$ is closed under products and
$^\perp(A\mbox{-{\rm Proj}})\cap A\mbox{-mod}={^\perp A}$.

The following is analogous to Lemma \ref{lem:cotatallyacyclic}.

\begin{lem}\label{lem:totallyacyclic,large} Let $P^\bullet$ be a complex of projective $A$-modules.
Then the following are equivalent:
\begin{enumerate}
\item[(1)] the complex $P^\bullet$ is totally acyclic;
\item[(2)] the complex $P^\bullet$ is acyclic and each cocycle $Z^i(P^\bullet)$
lies in $^\perp(A\mbox{-{\rm Proj}})$;
\item[(3)] the complexes $P^\bullet$ and $\nu P^\bullet$  are both
acyclic;
\item[(4)] the complex $\nu P^\bullet$ is cototally acyclic. \hfill
$\square$
\end{enumerate}
\end{lem}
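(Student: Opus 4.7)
The plan is to upgrade the proof of Lemma \ref{lem:totallyacyclic} to the ``large'' setting, using the two displayed equalities characterizing $^\perp(A\mbox{-{\rm Proj}})$ stated just before the lemma, together with the equivalence $\nu\colon A\mbox{-{\rm Proj}}\stackrel{\sim}{\to} A\mbox{-{\rm Inj}}$ to handle condition (4). So I would prove the chain $(1)\Leftrightarrow(2)\Leftrightarrow(3)\Leftrightarrow(4)$.

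For $(1)\Leftrightarrow(2)$, I would repeat the argument of Lemma \ref{lem:totallyacyclic} verbatim: if $P^\bullet$ is acyclic, the truncation $\cdots\to P^{i-2}\to P^{i-1}\to 0$ is a projective resolution of $Z^i(P^\bullet)$, so for any (large) projective module $Q$ one has $H^{-i+k+1}({\rm Hom}_A(P^\bullet,Q))\simeq {\rm Ext}_A^k(Z^i(P^\bullet),Q)$ for all $k\geq 1$. Hence the acyclicity of ${\rm Hom}_A(P^\bullet,Q)$ for every projective $Q$ is precisely the condition $Z^i(P^\bullet)\in {}^\perp(A\mbox{-{\rm Proj}})$ for every $i$.

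For $(2)\Leftrightarrow(3)$, I would invoke the second displayed equality to rewrite (2) as the vanishing of ${\rm Tor}_i^A(DA,Z^j(P^\bullet))$ for all $i\geq 1$ and all $j$. From the short exact sequences $0\to Z^j\to P^j\to Z^{j+1}\to 0$ obtained by breaking up the acyclic $P^\bullet$, the projectivity of $P^j$ yields the dimension-shift isomorphisms ${\rm Tor}_{k+1}^A(DA,Z^j)\simeq {\rm Tor}_k^A(DA,Z^{j+1})$ for $k\geq 1$. Applying $\nu=DA\otimes_A-$ to the same short exact sequences and chasing kernels and images at position $j$ of $\nu P^\bullet$ identifies $H^j(\nu P^\bullet)$ with ${\rm Tor}_1^A(DA,Z^{j+2}(P^\bullet))$. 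Therefore $\nu P^\bullet$ is acyclic iff ${\rm Tor}_1^A(DA,Z^j)=0$ for all $j$, which by the dimension shift is equivalent to the full Tor-vanishing required by (2).

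For $(3)\Leftrightarrow(4)$, I would use that $\nu\colon A\mbox{-{\rm Proj}}\stackrel{\sim}{\to} A\mbox{-{\rm Inj}}$ is an equivalence with quasi-inverse $\nu^{-}$; in particular every injective $J$ is of the form $\nu Q$ for some projective $Q$, and there is a natural isomorphism of complexes ${\rm Hom}_A(\nu Q,\nu P^\bullet)\simeq {\rm Hom}_A(Q,P^\bullet)$. Since $Q$ is projective, ${\rm Hom}_A(Q,-)$ is exact, so this complex is automatically acyclic whenever $P^\bullet$ is. Hence, given (3), the acyclicity of $\nu P^\bullet$ plus the above gives that $\nu P^\bullet$ is cototally acyclic, which is (4). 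Conversely, (4) gives $\nu P^\bullet$ acyclic, and testing against the injective $J=DA=\nu({}_AA)$ yields $P^\bullet\simeq \nu^{-}\nu P^\bullet={\rm Hom}_A(DA,\nu P^\bullet)$ acyclic too, giving (3). The main obstacle is the explicit identification $H^j(\nu P^\bullet)\simeq {\rm Tor}_1^A(DA,Z^{j+2}(P^\bullet))$ used in $(3)\Rightarrow(2)$: one must patiently track how the kernel and image at position $j$ of $\nu P^\bullet$ are produced by the right-exact functor $\nu$ applied to two adjacent short exact sequences of cocycles, using ${\rm Tor}_1^A(DA,P^j)=0$. Everything else reduces to the Tor/Ext characterization of $^\perp(A\mbox{-{\rm Proj}})$ and the $\nu$-equivalence.
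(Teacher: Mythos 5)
The paper states this lemma without proof (the statement ends with a $\square$; the preceding sentence merely calls it ``analogous to Lemma \ref{lem:cotatallyacyclic}''), so there is no written argument to compare against. Your proof is correct and supplies exactly the missing details: the $(1)\Leftrightarrow(2)$ step is the verbatim large-module version of Lemma \ref{lem:totallyacyclic}; the identification $H^j(\nu P^\bullet)\simeq {\rm Tor}_1^A(DA, Z^{j+2}(P^\bullet))$ (valid because the truncation $\cdots\to P^j\to P^{j+1}\to 0$ is a projective resolution of $Z^{j+2}$ when $P^\bullet$ is acyclic) together with the ${\rm Tor}$-dimension shift along the sequences $0\to Z^j\to P^j\to Z^{j+1}\to 0$ cleanly handles $(2)\Leftrightarrow(3)$; and the $\nu$-equivalence on (large) projectives and injectives, plus the isomorphism $\nu^-\nu P^\bullet\simeq P^\bullet$ read off by testing against $D(A_A)$, gives $(3)\Leftrightarrow(4)$. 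The only point worth flagging is that one cannot simply dualize ${\rm Hom}_A(P^\bullet,A)$ via $D$ as in the finitely generated case, since $D{\rm Hom}_A(M,A)\not\simeq DA\otimes_A M$ for $M$ not finitely generated; your route through ${\rm Tor}$ and the second displayed equality for $^\perp(A\mbox{-Proj})$ correctly avoids this pitfall.
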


The following notion was first introduced by Enochs and Jenda
(\cite{EJ1}).

\begin{defn}
A module $M$ is said to be  (large)
\emph{Gorenstein-projective}\index{Gorenstein-projective
module!large} provided that it is the zeroth cocycle of a totally
acyclic complex; a module $N$ is said to be (large)
\emph{Gorenstein-injective}\index{Gorenstein-injective module!large}
provided that it is the zero coboundary of a cototally acyclic
complex.
\end{defn}

 Denote
by $A\mbox{-GProj}$ (resp. $A\mbox{-GInj}$) the full subcategory of
$A\mbox{-Mod}$ consisting of Gorenstein-projective (resp.
Gorenstein-injective) modules. Note that $A\mbox{-Proj}\subseteq
A\mbox{-GProj}$ and $A\mbox{-Inj}\subseteq A\mbox{-GInj}$.

The following is analogous to Lemma \ref{lem:Gproj}.

\begin{lem} \label{lem:GProj} Let $M\in A\mbox{-{\rm Mod}}$. Then the following
statements are equivalent:
\begin{enumerate}
\item[(1)] $M$ is Gorenstein-projective;
\item[(2)] there is a long exact sequence $0\rightarrow M
\rightarrow P^0\rightarrow P^1 \rightarrow P^2 \rightarrow \cdots$
with each $P^i$ projective and each cocycle  in $^\perp(A\mbox{-{\rm
Proj}})$;
\item[(3)] ${\rm Tor}_i^A(DA, M)=0={\rm Ext}^i_A(DA, \nu M)$ for $i\geq
1$ and the natural morphism $M\rightarrow \nu^{-}\nu M$ is an
isomorphism. \hfill $\square$
\end{enumerate}
\end{lem}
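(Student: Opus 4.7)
The plan is to parallel the proof of Lemma~\ref{lem:Gproj} for the finitely generated case, with the adjoint pair $(\nu,\nu^{-})$ taking over the role of the duality $(-)^{*}$ and Lemma~\ref{lem:totallyacyclic,large} replacing Lemma~\ref{lem:totallyacyclic}. The equivalence $(1)\Leftrightarrow(2)$ should be a direct transcription: given (1), the right half of a totally acyclic complex gives (2) by Lemma~\ref{lem:totallyacyclic,large}(2); for the converse, I would splice with a projective resolution $\cdots\to P^{-1}\to M\to 0$ and use that ${^\perp(A\mbox{-Proj})}$ is closed under kernels of epimorphisms between its objects (a routine dimension-shift argument using the displayed equality before Lemma~\ref{lem:totallyacyclic,large}), so that the syzygies $\Omega^i M$ all lie in ${^\perp(A\mbox{-Proj})}$; Lemma~\ref{lem:totallyacyclic,large}(2) then yields total acyclicity of the spliced complex.

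For $(1)\Rightarrow(3)$, I would fix a complete resolution $P^\bullet$ with $M=Z^0(P^\bullet)$. The Tor-vanishing is immediate from $M\in{^\perp(A\mbox{-Proj})}$ together with the characterization of this class recalled just before Lemma~\ref{lem:totallyacyclic,large}. For the Ext-vanishing, Lemma~\ref{lem:totallyacyclic,large}(4) makes $\nu P^\bullet$ cototally acyclic; its right half is then an injective coresolution of $\nu M$, and applying $\nu^{-}={\rm Hom}_A(DA,-)$ produces a complex whose positive cohomology vanishes by cototal acyclicity, so ${\rm Ext}_A^i(DA,\nu M)=0$ for $i\geq 1$. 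For the unit isomorphism $M\simeq \nu^{-}\nu M$, I would apply the left-exact $\nu^{-}$ to the exact sequence $0\to \nu M\to \nu P^0\to \nu P^1$ and use the canonical identifications $\nu^{-}\nu P^i\simeq P^i$ for projective $P^i$, so that $\nu^{-}\nu M$ is identified with $\ker(P^0\to P^1)=M$; naturality of the unit then forces this to be the unit map itself.

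For $(3)\Rightarrow(1)$, I would splice two acyclic complexes of projectives: on the left a projective resolution $\cdots\to P^{-1}\to M\to 0$, and on the right the complex $\nu^{-}J^\bullet$ obtained by applying $\nu^{-}$ to an injective coresolution $0\to \nu M\to J^0\to J^1\to\cdots$. The Ext-vanishing together with $\nu^{-}\nu M\simeq M$ yields an exact sequence $0\to M\to \nu^{-}J^0\to \nu^{-}J^1\to\cdots$ of projectives (each $\nu^{-}J^i$ is projective since $\nu^{-}\colon A\mbox{-Inj}\to A\mbox{-Proj}$ is an equivalence). Splicing produces an acyclic complex $P^\bullet$ of projectives with $M=Z^0(P^\bullet)$; applying $\nu$ one recovers $J^\bullet$ on the right via $\nu\nu^{-}J^i\simeq J^i$ and the $\nu$-image of the projective resolution on the left, which is acyclic by Tor-vanishing, and the two splice consistently through $\nu M$, so $\nu P^\bullet$ is also acyclic, whence Lemma~\ref{lem:totallyacyclic,large}(3) gives total acyclicity. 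The hard part will be the bookkeeping at this splicing step---verifying that the image of $\nu P^{-1}\to J^0$ is exactly $\ker(J^0\to J^1)=\nu M$---and the identification of the canonical unit map in $(1)\Rightarrow(3)$; both reduce to naturality arguments that have no counterpart in the finitely generated proof.
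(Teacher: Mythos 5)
Your proof is correct and is precisely the ``analogous'' argument the paper alludes to but omits (the lemma is stated with a closing $\square$, deferring to the finitely generated case): the equivalence $(1)\Leftrightarrow(2)$ transcribes directly via Lemma~\ref{lem:totallyacyclic,large}(2), while for $(1)\Leftrightarrow(3)$ the adjoint pair $(\nu,\nu^{-})$ on $A\mbox{-Proj}\simeq A\mbox{-Inj}$ replaces the duality $(-)^{*}$ and reflexivity is replaced by the unit $M\to\nu^{-}\nu M$ being an isomorphism. The two naturality checks you flag (that the identification $\nu^{-}\nu M\simeq\ker(P^0\to P^1)$ is the unit map, and that the spliced complex yields $\nu P^{-1}\to J^0$ with image $\nu M$) do go through via the triangle identity $\epsilon_{\nu M}\circ\nu\eta_M=\mathrm{id}$ together with naturality of $\epsilon$, and, combined with the Tor-vanishing, give acyclicity of $\nu P^\bullet$ as you describe.
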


Let us remark that we have an analogue of Proposition
\ref{prop:Gproj} for $A\mbox{-GProj}$. In particular,
$A\mbox{-GProj}$ is closed under taking direct summands.

The second part of the following result is contained in
\cite[Proposition 1.4]{Zh} (also see \cite[Lemma 3.4]{Ch2}).

\begin{prop}\label{prop:GProj}
Let $A$ be an artin algebra. Then we have
\begin{enumerate}
\item[(1)] the subcategory $A\mbox{-{\rm GProj}}\subseteq A\mbox{-{\rm Mod}}$ is
closed under coproducts, products and filtered colimits;
\item[(2)] $A\mbox{-{\rm GProj}}\cap A\mbox{-{\rm mod}}=A\mbox{-{\rm Gproj}}$.
\end{enumerate}
\end{prop}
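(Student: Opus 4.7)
The plan is to handle the two parts separately; within (1) I split into (co)products and filtered colimits, because the latter breaks the ``complete-resolution'' definition: a termwise filtered colimit of complete resolutions consists of flat but not necessarily projective modules. So the main obstacle will be filtered colimits, which I deal with by switching to the intrinsic Tor/Ext characterization of Gorenstein-projectivity.

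For closure under coproducts and products, I will exploit Lemma \ref{lem:totallyacyclic,large}(2), which reduces total acyclicity to acyclicity together with all cocycles lying in $^\perp(A\mbox{-{\rm Proj}})$. Starting from complete resolutions $P_j^\bullet$ of a family $(M_j)$ of Gorenstein-projective modules, I form $\bigoplus_j P_j^\bullet$ and $\prod_j P_j^\bullet$. Both are complexes of projective modules --- for the product I use Lemma \ref{lem:well-known} to see that products of projectives are projective over an artin algebra --- and both remain acyclic, as (co)products are exact in module categories. Their cocycles are the corresponding (co)products of the $Z^n(P_j^\bullet)$. Since $^\perp(A\mbox{-{\rm Proj}})$ is already observed to be closed under products, and closure under coproducts follows from ${\rm Ext}^i_A(\bigoplus_j N_j, Q)\simeq \prod_j {\rm Ext}^i_A(N_j, Q)$, the cocycles still lie in $^\perp(A\mbox{-{\rm Proj}})$ and the complexes are totally acyclic.

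For filtered colimits, I invoke Lemma \ref{lem:GProj}(3): a module $M$ is Gorenstein-projective iff ${\rm Tor}_i^A(DA, M)=0 ={\rm Ext}_A^i(DA, \nu M)$ for $i\geq 1$ and $M\to \nu^{-}\nu M$ is an isomorphism. All three are preserved under filtered colimits: Tor commutes with filtered colimits in either variable, $\nu = DA\otimes_A -$ is a left adjoint hence commutes with all colimits, and since $DA$ is finitely presented over the noetherian algebra $A$ both $\nu^{-}={\rm Hom}_A(DA, -)$ and ${\rm Ext}_A^i(DA, -)$ commute with filtered colimits. The natural transformation ${\rm id}\to \nu^{-}\nu$ is then an isomorphism on ${\rm colim}_j M_j$ because it is so on each $M_j$.

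For (2), the inclusion $A\mbox{-{\rm Gproj}}\subseteq A\mbox{-{\rm GProj}}$ is immediate from the definitions. For the converse, take $M\in A\mbox{-{\rm mod}}\cap A\mbox{-{\rm GProj}}$ and check the three criteria of Lemma \ref{lem:Gproj}(3) by translating those of Lemma \ref{lem:GProj}(3). The identifications $\nu\simeq D\circ (-)^*$ and $\nu^{-}\simeq (-)^*\circ D$ recorded earlier, combined with the Matlis duality isomorphism $D^2\simeq {\rm id}$ on $A\mbox{-{\rm mod}}$ and the standard isomorphism $D{\rm Tor}_i^A(N, M)\simeq {\rm Ext}_A^i(M, DN)$, give for finitely generated $M$ the isomorphisms $\nu M\simeq D(M^*)$ and $\nu^{-}\nu M\simeq M^{**}$. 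Consequently the vanishing of ${\rm Tor}_i^A(DA, M)$ becomes $M\in {^\perp A}$, the vanishing of ${\rm Ext}_A^i(DA, \nu M)$ becomes ${\rm Ext}_{A^{\rm op}}^i(M^*, A)=0$, i.e.\ $M^*\in {^\perp(A_A)}$, and the isomorphism $M\to \nu^{-}\nu M$ becomes reflexivity of $M$. Hence $M\in A\mbox{-{\rm Gproj}}$ by Lemma \ref{lem:Gproj}(3).
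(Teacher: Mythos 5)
Your proof is correct, and it departs from the paper's argument in both parts.

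For (1), the paper handles coproducts, products, and filtered colimits uniformly in one stroke: it observes that $\nu$, $\nu^{-}$, ${\rm Tor}_i^A(DA,-)$ and ${\rm Ext}_A^i(DA,-)$ all commute with coproducts, products, and filtered colimits (because $D(A_A)$ is finitely presented on both sides over the artin, hence noetherian, algebra $A$), and then invokes Lemma \ref{lem:GProj}(3) once. You split the argument: coproducts and products via assembling total acyclic complexes from complete resolutions and checking cocycles lie in $^\perp(A\mbox{-Proj})$, then filtered colimits separately via the intrinsic Tor/Ext criterion. Both are valid; your worry that filtered colimits spoil the complete-resolution construction (giving flat rather than projective terms) is exactly right, but in fact once you notice the Tor/Ext criterion handles filtered colimits, you could have noticed it handles all three at once, as the paper does. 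Your route is slightly longer but also has the virtue of making the product-of-projectives-is-projective point (via Lemma \ref{lem:well-known}) explicit, which the paper leaves implicit.

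For (2), the paper gives a hands-on construction: starting from $M\in A\mbox{-GProj}\cap A\mbox{-mod}$, it takes $0\to M\to P\to M'\to 0$ with $P$ free and $M'$ Gorenstein-projective, splits off a finitely generated direct summand $P^0\supseteq M$, shows the resulting cokernel $M^1$ is again finitely generated, Gorenstein-projective, and in ${^\perp A}$, iterates to build the long exact coresolution required by Lemma \ref{lem:Gproj}(2). You instead translate the three intrinsic conditions of Lemma \ref{lem:GProj}(3) into those of Lemma \ref{lem:Gproj}(3) via $\nu\simeq D\circ(-)^*$, $\nu^{-}\simeq(-)^*\circ D$, Matlis biduality on $A\mbox{-mod}$, and the standard $D{\rm Tor}\simeq{\rm Ext}(-,D-)$ isomorphism. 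This is slicker and purely formal; its only cost is a bit of bookkeeping to verify that ${\rm Ext}_A^i(DA,\nu M)\simeq D{\rm Tor}_i^A(M^*,DA)\simeq{\rm Ext}_{A^{\rm op}}^i(M^*,A)$, which you have handled correctly. The paper's construction has the pedagogical advantage of producing the coresolution required by Lemma \ref{lem:Gproj}(2) explicitly.
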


\begin{proof}
 Note that the functors $\nu$, $\nu^{-}$, ${\rm Tor}_i^A(DA, -)$
and ${\rm Ext}_A^i(DA, -)$ commute with coproducts, products and
filtered colimits.  Then (1) follows from Lemma \ref{lem:GProj}(3).

For (2), note first that $^\perp A \subseteq {^\perp(A\mbox{-{\rm
Proj}})}$. It follows from Lemmas \ref{lem:Gproj}(2) and
\ref{lem:GProj}(2) that $A\mbox{-{\rm Gproj}}\subseteq A\mbox{-{\rm
GProj}}\cap A\mbox{-{\rm mod}}$. On the other hand, let $M\in
A\mbox{-{\rm GProj}}\cap A\mbox{-{\rm mod}}$. Take a short exact
sequence $0\rightarrow M \rightarrow P\rightarrow M'\rightarrow 0$
with $P$ projective and $M'$ Gorenstein-projective. We may assume
that $P$ is free. Since $M$ is finitely generated, there is a
decomposition $P\simeq P^0\oplus P'$ such that $P^0$ is finitely
generated containing $M$. Then we have a short exact sequence
$0\rightarrow M\rightarrow P^0\rightarrow M^1\rightarrow 0$. Note
that $M^1\oplus P'\simeq M'$ and that $M'$ is Gorenstein-projective.
Then $M^1$ is also Gorenstein-projective. Therefore $M^1 \in
A\mbox{-{\rm GProj}}\cap A\mbox{-{\rm mod}}$. Observe that $M^1\in
{^\perp A}$. Repeat the argument with $M$ replaced by $M^1$. We get
a long exact sequence $0\rightarrow M\rightarrow P^0\rightarrow
P^1\rightarrow \cdots$. Now we apply Lemma \ref{lem:Gproj}(2).
\end{proof}

Note that we have a version of Proposition \ref{prop:GinjGproj} for
large modules. In particular, there is an equivalence $\nu\colon
A\mbox{-GProj}\stackrel{\sim}\longrightarrow A\mbox{-GInj}$ with its
quasi-inverse given by $\nu^{-}$.

 Note that Auslander-Reiten
translations allow a natural extension on $A\mbox{-Mod}$ as follows:
for an $A$-module $M$ take a projective presentation
$P^{-1}\rightarrow P^0\rightarrow M\rightarrow 0$ and define $\tau
M$ to be the kernel of the induced morphism $\nu P^{-1}\rightarrow
\nu P^0$. Similarly one extends $\tau^{-1}$. Then we have a duality
$\tau\colon A\mbox{-\underline{Mod}}\stackrel{\sim}\longrightarrow
A\mbox{-}\overline{\rm Mod}$ with its quasi-inverse given by
$\tau^{-1}$\index{Auslander-Reiten translation}; for details, see
\cite[section 5]{Kr97} and \cite[Remark 2.3(ii)]{Bel2}.

The following result extends Proposition \ref{prop:ARtranslation};
see \cite[Proposition 3.4]{Bel2}.

\begin{prop}
Let $M$ be an $A$-module. Then $M$ is Gorenstein-projective if and
only if $\tau M$ is Gorenstein-injective; $M$ is
Gorenstein-injective if and only if $\tau^{-1} M$ is
Gorenstein-projective.
\end{prop}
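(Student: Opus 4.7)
My plan is to exploit the explicit description of $\tau$ for large modules via projective presentations, in conjunction with Lemma~\ref{lem:totallyacyclic,large}, which says that $\nu$ (respectively $\nu^{-}$) sends totally acyclic complexes of projectives to cototally acyclic complexes of injectives (and vice versa). This reduces the two forward directions to routine book-keeping on cocycles of a complete resolution.

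First I would treat the forward direction of the first equivalence. Assume $M$ is Gorenstein-projective with complete resolution $P^\bullet$. Since $P^\bullet$ is acyclic with $M = Z^{0}(P^\bullet)$, the differential $d^{-1}\colon P^{-1}\to P^{0}$ factors as $P^{-1}\twoheadrightarrow M\hookrightarrow P^{0}$, so the tail $P^{-2}\to P^{-1}\to M\to 0$ is a projective presentation of $M$. By the definition of the extended Auslander--Reiten translate, $\tau M = \ker(\nu P^{-2}\to \nu P^{-1}) = Z^{-2}(\nu P^\bullet)$. By Lemma~\ref{lem:totallyacyclic,large} the complex $\nu P^\bullet$ is cototally acyclic, so $\tau M$ is a cocycle of a cototally acyclic complex; after an obvious shift $\tau M$ appears as the zero-th coboundary of a cototally acyclic complex of injectives, whence $\tau M$ is Gorenstein-injective. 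The forward direction of the second equivalence is dual: given $M$ Gorenstein-injective with cototally acyclic $J^\bullet$ and $B^{0}(J^\bullet)=M$, the segment $0\to M\to J^{0}\to J^{1}$ is an injective copresentation, so $\tau^{-1}M = \mathrm{coker}(\nu^{-} J^{0}\to \nu^{-} J^{1})\cong Z^{2}(\nu^{-} J^\bullet)$ by acyclicity of the totally acyclic complex $\nu^{-} J^\bullet$; hence $\tau^{-1}M$ is Gorenstein-projective.

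For each converse I would bootstrap from the corresponding forward statement. If $\tau M$ is Gorenstein-injective, then the forward direction for Gi modules (applied to $N=\tau M$) gives that $\tau^{-1}(\tau M)$ is Gorenstein-projective. Since $\tau$ and $\tau^{-1}$ are quasi-inverse equivalences between $A\mbox{-\underline{Mod}}$ and $A\mbox{-}\overline{\mbox{Mod}}$, we have $\tau^{-1}\tau M\cong M$ in $A\mbox{-\underline{Mod}}$, which yields $M\oplus P\cong \tau^{-1}\tau M\oplus Q$ for some projectives $P,Q$. Using closure of $A\mbox{-GProj}$ under direct sums with projectives and under direct summands (the large-module analogue of Proposition~\ref{prop:Gproj}), we conclude that $M$ is Gorenstein-projective. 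The other converse is entirely analogous via the forward direction for Gp modules.

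The chief obstacle is the stable-isomorphism step. For finitely generated modules one invokes \cite[Proposition~1.41]{ABr} freely, but the large analogue---that $X\cong Y$ in $A\mbox{-\underline{Mod}}$ implies $X\oplus P\cong Y\oplus Q$ for projective $P,Q$---deserves careful treatment; once it is in hand the converse directions are immediate. Everything else reduces to identifying the cocycle of $\nu P^\bullet$ (or $\nu^{-} J^\bullet$) that represents $\tau M$ (or $\tau^{-1}M$), which is straightforward.
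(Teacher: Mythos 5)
Your proof is correct, and it genuinely diverges from the paper's in the converse directions. The forward implications are handled the same way in both: identify $\tau M$ as a cocycle of $\nu P^\bullet$ (for a complete resolution $P^\bullet$ of $M$), invoke Lemma~\ref{lem:totallyacyclic,large}, and shift; this is precisely the ``only if'' step in Proposition~\ref{prop:transpose}, to whose proof the paper's argument here explicitly defers. For the converse you bootstrap from the already-established forward direction applied to $\tau M$ (respectively $\tau^{-1}M$) together with $\tau^{-1}\tau M\simeq M$ in $A\mbox{-\underline{Mod}}$ --- a relation which, as you suspect, requires no heavy machinery here: computing $\tau^{-1}(\tau M)$ from the injective copresentation $0\to\tau M\to\nu P^{-1}\to\nu P^0$ returns $\mathrm{coker}(\nu^{-}\nu P^{-1}\to\nu^{-}\nu P^0)\simeq\mathrm{coker}(P^{-1}\to P^0)=M$ on the nose. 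The paper instead uses the four-term exact sequence $0\to{\rm Ext}^1_A(DA,\tau M)\to M\to\nu^{-}\nu M\to{\rm Ext}^2_A(DA,\tau M)\to 0$ (the Nakayama analogue of \cite[Chapter IV, Proposition 3.2]{ARS}): from $\tau M\in A\mbox{-GInj}\subseteq (DA)^{\perp}$ the end terms vanish, so $M\to\nu^{-}\nu M$ is an isomorphism; then one climbs the exact sequence $0\to\tau M\to\nu P^{-1}\to\nu P^0\to\nu M\to 0$ with the Gorenstein-injective analogue of Proposition~\ref{prop:Gproj}(3) to see $\nu M$ is Gorenstein-injective, and finishes with Proposition~\ref{prop:GinjGproj}. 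Your route is more economical but trades the exact-sequence machinery for the stable-isomorphism step you flagged; that step is not actually a gap, since it holds in $A\mbox{-Mod}$ over any ring by the standard argument (lift $\underline{f}\colon M\to N$ and an inverse to $f,g$ with $gf-1_M=\beta\gamma$ through a projective $P$; then $\bigl(\begin{smallmatrix}f\\-\gamma\end{smallmatrix}\bigr)\colon M\to N\oplus P$ is split monic with complement stably zero, hence projective).
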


\begin{proof}
The proof is similar as the one of Proposition \ref{prop:transpose}.
The following is analogous to \cite[Chapter IV, Proposition
3.2]{ARS}: for an $A$-module $M$, we have the following exact
sequence
$$0\longrightarrow {\rm Ext}_A^1(DA, \tau M)\longrightarrow M\longrightarrow \nu^{-}\nu M\longrightarrow
{\rm Ext}^2_A(DA, \tau M)\longrightarrow 0,$$ where the middle
morphism is the natural map associated to the adjoint pair $(\nu,
\nu^{-})$. Then we apply  a version  of  Propositions
\ref{prop:Gproj}(3) and
 \ref{prop:GinjGproj} for large modules.
\end{proof}

We make the following observation; see \cite[Lemma 8.6]{Bel2}.

\begin{prop}
Let $M$ (resp. $N$) be a Gorenstein-projective (resp.
Gorenstein-injective) $A$-module. Then we have
\begin{enumerate}
\item[(1)] $DM$ (resp. $M^*$) is a right Gorenstein-injective  (resp.
Gorenstein-projective) $A$-module;
\item[(2)]  $DN$ is a right Gorenstein-projective $A$-module.
\end{enumerate}
\end{prop}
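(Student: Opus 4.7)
The proposition contains three assertions: (1a) $DM$ is right Gorenstein-injective, (2) $DN$ is right Gorenstein-projective, and (1b) $M^*$ is right Gorenstein-projective. My plan is to prove (1a) and (2) by direct dualization of a complete resolution, respectively coresolution, and then to derive (1b) from (2) via the natural isomorphism $M^*\simeq D(\nu M)$.

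The unifying observation is that $D={\rm Hom}_R(-,E)$ is an exact contravariant functor which, by Lemma \ref{lem:well-known}, swaps (large) projective and injective modules between the left and right categories. For (1a), let $P^\bullet$ be a totally acyclic complex of left projectives with $Z^0(P^\bullet)\simeq M$. Then $DP^\bullet$ is an acyclic complex of right injective modules, and I must check it is cototally acyclic. By Lemma \ref{lem:well-known} and the exactness of products in module categories, it suffices to show that ${\rm Hom}_{A^{\rm op}}(D({_AA}),DP^\bullet)$ is acyclic. The tensor--hom adjunction furnishes a natural isomorphism ${\rm Hom}_{A^{\rm op}}(D({_AA}),DP^n)\simeq D(D({_AA})\otimes_A P^n)=D(\nu P^n)$, and $\nu P^\bullet$ is acyclic by Lemma \ref{lem:totallyacyclic,large}(3); hence $D(\nu P^\bullet)$ is acyclic. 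A degree shift and the dual of the short exact sequence $0\to M\to P^0\to Z^1(P^\bullet)\to 0$ then exhibit $DM$ as a coboundary of the resulting cototally acyclic complex.

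The argument for (2) is dual: take a cototally acyclic coresolution $I^\bullet$ of $N$; then $DI^\bullet$ is an acyclic complex of right projectives, and I would verify total acyclicity via Lemma \ref{lem:totallyacyclic,large}(4), reducing to cototal acyclicity of $\nu_{A^{\rm op}}(DI^\bullet)$. This should follow from the natural compatibility $\nu_{A^{\rm op}}\comp D\simeq D\comp\nu^{-}$ (on the relevant complex) together with the total acyclicity of $\nu^{-}I^\bullet$, guaranteed by Lemma \ref{lem:cotatallyacyclic}(3) applied to the cototally acyclic $I^\bullet$. Dualizing the short exact sequence $0\to N\to I^0\to I^0/N\to 0$ then places $DN$ as the $0$-th cocycle of $DI^\bullet$. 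Finally, for (1b), the tensor--hom adjunction together with the Matlis reflexivity $D^2A\simeq A$ (valid because $A$ is finitely generated over the artinian ring $R$) produces a natural isomorphism $M^*={\rm Hom}_A(M,A)\simeq {\rm Hom}_A(M,DDA)\simeq D(DA\otimes_A M)=D(\nu M)$. Since $\nu M$ is Gorenstein-injective by the large-module analogue of Proposition \ref{prop:GinjGproj}, applying (2) to $N=\nu M$ yields that $M^*\simeq D(\nu M)$ is right Gorenstein-projective.

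The main obstacle will be handling the large-module analogues of the natural isomorphisms among $D$, $\nu$, $\nu^{-}$, and $(-)^*$. The identification $\nu_{A^{\rm op}}\comp D\simeq D\comp\nu^{-}$ needed in (2) does not follow formally from the finitely generated identifications $\nu^{-}\simeq (-)^*\comp D$ and $\nu\simeq D\comp(-)^*$, since tensor products do not commute with infinite products in general; one must either verify this isomorphism on the specific complex $DI^\bullet$, or instead establish acyclicity of ${\rm Hom}_{A^{\rm op}}(DI^\bullet,A)$ directly by exploiting the presentation of $I^\bullet$ as a complex of summands of coproducts of $D(A_A)$ via Lemma \ref{lem:well-known}. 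The residual bookkeeping of degree shifts, matching $0$-th cocycles with $0$-th coboundaries after dualization, is routine.
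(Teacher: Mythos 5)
Your proof of (1a) is essentially the paper's: both come down to the adjunction isomorphism $\nu^{-}_{A^{\rm op}}DP^\bullet={\rm Hom}_{A^{\rm op}}(D({_AA}),DP^\bullet)\simeq D(\nu P^\bullet)$ and the acyclicity of $\nu P^\bullet$, and your reduction via Lemma \ref{lem:well-known} to the single cogenerator $D({_AA})$ and exactness of products is a fine way to package it. Your (1b) is also identical to the paper's: $M^*\simeq D(\nu M)$ and then apply (2) to $\nu M$. For (2), however, you take a genuinely different route. The paper observes that any cototally acyclic complex $I^\bullet$ of injective left modules has the form $I^\bullet=\nu P^\bullet$ for a totally acyclic complex $P^\bullet$ of projectives, and then simply $DI^\bullet=D\nu P^\bullet\simeq \nu^{-}_{A^{\rm op}}DP^\bullet$, which is totally acyclic because $DP^\bullet$ is cototally acyclic by (1a); this re-uses the one adjunction isomorphism already established and needs nothing further. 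You instead dualize $I^\bullet$ directly and propose to verify total acyclicity of $DI^\bullet$ via the compatibility $\nu_{A^{\rm op}}\comp D\simeq D\comp\nu^{-}$, whose validity you correctly flag as non-formal. In fact it does hold: the canonical map $DX\otimes_A B\to D({\rm Hom}_A(B,X))$, $\psi\otimes\phi\mapsto (f\mapsto\psi(f(\phi)))$, is an isomorphism for every left $A$-module $X$ whenever the left $A$-module $B$ is finitely presented (compare on a finite free presentation of $B$, using that both sides are right exact in $B$ and $D$ is exact), and $B=DA$ is finitely presented over the artin algebra $A$. So your route is valid, and your two suggested fallbacks are both workable, but the paper's avoids the second isomorphism and its finiteness hypothesis altogether. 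One small citation remark: the total acyclicity of $\nu^{-}I^\bullet$ for a large cototally acyclic $I^\bullet$ should be cited to the unnumbered large-module dual of Lemma \ref{lem:totallyacyclic,large}(4) rather than to the finitely generated Lemma \ref{lem:cotatallyacyclic}.
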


\begin{proof}
Observe that for a complex $P^\bullet$ of projective modules, we
have $\nu^{-} DP^\bullet\simeq D\nu P^\bullet$. It follows that for
a totally acyclic complex $P^\bullet$ the complex $DP^\bullet$ is
cototally acyclic; see Lemma \ref{lem:cotatallyacyclic}. Then for a
Gorenstein-projective $A$-module $M$, $DM$ is Gorenstein-injective.
Dually observe that for a cototally acyclic complex $I^\bullet$
there is a totally acyclic complex $P^\bullet$ such that  $\nu
P^\bullet=I^\bullet$, and note that $DI^\bullet=\nu^{-}DP^\bullet$
is totally acyclic; see Lemma \ref{lem:cotatallyacyclic}. This
proves that for a Gorenstein-injective module $N$, $DN$ is
Gorenstein-projective. Note that $M^*=D\nu M$. Since $\nu M$ is
Gorenstein-injective, then by (2) $M^*$ is Gorenstein-projective.
\end{proof}

The category $A\mbox{-GProj}$ is a Frobenius exact category such
that its projective objects are equal to (large) projective
$A$-modules. We denote by $A\mbox{-\underline{GProj}}$ its stable
category modulo projective modules which is triangulated; compare
Proposition \ref{prop:Gprojtriangulated}.

 Note that the shift functor on
$A\mbox{-\underline{GProj}}$ is given by
$\Sigma=\nu^{-}\Omega^{-}\nu$ whose quasi-inverse is given by
$\Omega$ (compare Remark \ref{rem:shiftfunctor}). Note that the inclusion
$A\mbox{-Gproj}\subseteq A\mbox{-GProj}$ induces a fully faithful
triangle functor $A\mbox{-\underline{\rm Gproj}} \hookrightarrow
A\mbox{-\underline{\rm GProj}}$.

Dually we have that the stable category $A\mbox{-}\overline{\rm
GInj}$ of $A\mbox{-GInj}$ modulo injective $A$-modules is
triangulated and that the inclusion $A\mbox{-inj}\hookrightarrow
A\mbox{-GInj}$ induces a full embedding $A\mbox{-}\overline{\rm
Ginj} \hookrightarrow A\mbox{-}\overline{\rm GInj}$ of triangulated
categories.

Observe that the equivalence $\nu\colon
A\mbox{-GProj}\stackrel{\sim}\longrightarrow A\mbox{-GInj}$ and its
quasi-inverse $\nu^{-}$ are both exact. Hence we have

\begin{lem}
Let $A$ be an artin algebra. Then the Nakayama functor induces a
triangle equivalence $\nu\colon A\mbox{-\underline{\rm
GProj}}\stackrel{\sim}\longrightarrow A\mbox{-}\overline{\rm GInj}$
with its quasi-inverse given by $\nu^{-}$. \hfill $\square$
\end{lem}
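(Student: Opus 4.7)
The plan is to assemble the triangle equivalence from three pieces: (i) $\nu$ and $\nu^{-}$ are already mutually quasi-inverse equivalences of additive categories $A\mbox{-GProj} \to A\mbox{-GInj}$ (this is the large-module analogue of Proposition \ref{prop:GinjGproj} invoked in the text just before the lemma); (ii) they are exact functors between the ambient Frobenius exact categories and they send projective-injective objects to projective-injective objects; (iii) they intertwine the two shift functors up to natural isomorphism. Once these three items are in place, the conclusion is a standard consequence of the fact that an exact equivalence of Frobenius exact categories preserving the distinguished projective-injective classes descends to a triangle equivalence of the stable categories (see \cite[Chapter I, section 2]{Hap}).

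For the exactness in (ii), let $0 \to X \to Y \to Z \to 0$ be a conflation in $A\mbox{-GProj}$, i.e., a short exact sequence in $A\mbox{-Mod}$ whose three terms all lie in $A\mbox{-GProj}$. Applying $\nu = DA \otimes_A -$ I must check exactness of the resulting sequence of Gorenstein-injective modules; this is immediate from ${\rm Tor}_1^A(DA, X) = 0$, one of the vanishings characterizing $X \in A\mbox{-GProj}$ by Lemma \ref{lem:GProj}(3). Dually, $\nu^{-} = {\rm Hom}_A(DA, -)$ is exact on conflations in $A\mbox{-GInj}$ by the ${\rm Ext}$-vanishing side of the analogous characterization for Gorenstein-injective modules. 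Since the restriction $\nu\colon A\mbox{-Proj} \to A\mbox{-Inj}$ is itself an equivalence, projective modules go to injective modules and vice versa; hence both functors pass to the respective stable categories, and the induced functors $\underline\nu \colon A\mbox{-\underline{GProj}} \to A\mbox{-}\overline{\rm GInj}$ and $\underline{\nu^{-}}$ are mutually quasi-inverse additive equivalences.

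For (iii), the shift on $A\mbox{-\underline{GProj}}$ is $\Sigma = \nu^{-}\Omega^{-}\nu$ as recalled just before the lemma, whereas the shift on $A\mbox{-}\overline{\rm GInj}$ is the cosyzygy $\Omega^{-}$; post-composing gives $\nu \circ \Sigma = \nu\nu^{-}\Omega^{-}\nu \simeq \Omega^{-}\circ \nu$, which is exactly the natural isomorphism of shift functors that I need. Because distinguished triangles on both sides arise directly from the conflations in the respective Frobenius structures, and $\nu$ preserves both conflations and projective-injective objects, distinguished triangles are carried to distinguished triangles, so $\underline\nu$ is a triangle functor and hence a triangle equivalence. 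I do not anticipate a genuine obstacle here: the one substantive input is the ${\rm Tor}$/${\rm Ext}$ vanishing used in (ii) to verify exactness; otherwise the proof is a bookkeeping assembly of facts that the preceding text has already asserted (including the large-module analogues of Proposition \ref{prop:GinjGproj} and of the shift-functor formula).
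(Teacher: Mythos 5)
Your proposal is correct and follows essentially the same route as the paper, which simply observes that $\nu$ and $\nu^{-}$ are exact equivalences between the two Frobenius exact categories and then invokes Happel's descent to stable categories; you have merely spelled out the bookkeeping. One small slip: to see that $\nu$ applied to a conflation $0 \to X \to Y \to Z \to 0$ in $A\mbox{-GProj}$ stays exact, the long exact sequence of ${\rm Tor}$ requires ${\rm Tor}_1^A(DA,Z)=0$ (not ${\rm Tor}_1^A(DA,X)=0$ as you wrote), but this vanishing also holds since $Z$ is Gorenstein-projective, so the argument goes through unchanged.
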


Recall that in a triangulated category $\mathcal{T}$ with arbitrary
coproducts an object $C$ is \emph{compact}\index{compact object} if
the functor ${\rm Hom}_\mathcal{T}(C, -)\colon
\mathcal{T}\rightarrow {\rm Ab}$ commutes with coproducts. Here
``Ab" denotes the category of abelian groups. Denote by
$\mathcal{T}^c$ the full subcategory consisting of compact objects;
it is a thick triangulated subcategory.

The following observation is easy; see Proposition
\ref{prop:GProj}(1).

\begin{lem}
Let $A$ be artin algebra. Then the triangulated category
$A\mbox{-\underline{\rm GProj}}$ has arbitrary coproducts and
products, and the natural full embedding $A\mbox{-\underline{\rm
Gproj}} \hookrightarrow A\mbox{-\underline{\rm GProj}}$ induces
$A\mbox{-\underline{\rm Gproj}} \hookrightarrow
(A\mbox{-\underline{\rm GProj}})^c$. \hfill $\square$
\end{lem}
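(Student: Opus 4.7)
The plan is to handle the two parts separately: first the existence of arbitrary coproducts and products in $A\mbox{-\underline{\rm GProj}}$, then the compactness of finitely generated Gorenstein-projective modules.

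For the first part, I would invoke Proposition \ref{prop:GProj}(1), which says that $A\mbox{-GProj}\subseteq A\mbox{-Mod}$ is closed under arbitrary coproducts and products. Thus coproducts and products of families in $A\mbox{-GProj}$, formed in $A\mbox{-Mod}$, again lie in $A\mbox{-GProj}$. By Lemma \ref{lem:well-known}, arbitrary coproducts and products of projective $A$-modules remain projective, so passing to the stable category $A\mbox{-\underline{\rm GProj}}$ preserves the relevant universal properties. The Frobenius exact structure on $A\mbox{-GProj}$ is manifestly compatible with (co)products computed componentwise, so the induced triangulated structure is as well.

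For the second part, let $M \in A\mbox{-Gproj}$ and let $\{N_i\}_{i\in I}$ be a family in $A\mbox{-GProj}$ whose coproduct $\coprod_i N_i$ is formed in $A\mbox{-Mod}$. I need to show that the canonical map
$$\bigoplus_i \underline{\rm Hom}_A(M, N_i) \longrightarrow \underline{\rm Hom}_A\bigl(M, \coprod_i N_i\bigr)$$
is an isomorphism. Since $M$ is finitely generated, the analogous statement for ${\rm Hom}_A$ is the classical fact that ${\rm Hom}_A(M,-)$ commutes with coproducts. Hence it suffices to verify that the ideal $P(M,-)$ of morphisms factoring through projective modules also commutes with the coproduct decomposition, i.e., that a morphism $f\colon M \to \coprod_i N_i$ lies in $P(M,\coprod_i N_i)$ if and only if every component $f_i = \pi_i \circ f \colon M \to N_i$ lies in $P(M, N_i)$.

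The ``only if'' direction is immediate: if $f = h\circ g$ with $g\colon M\to P$ and $P$ projective, then $f_i = (\pi_i\circ h)\circ g$ factors through the same $P$. For the ``if'' direction, because $M$ is finitely generated, the image of $f$ is contained in a finite subcoproduct of $\coprod_i N_i$, so only finitely many $f_i$ are nonzero, say for $i$ in a finite subset $J$. Choosing factorizations $f_i = h_i \circ g_i$ through projective modules $P_i$ for $i\in J$, one assembles $g = (g_i)_{i\in J}\colon M \to \bigoplus_{i\in J} P_i$ and $h = \sum_{i\in J} \iota_i \circ h_i \colon \bigoplus_{i\in J} P_i \to \coprod_i N_i$ (with $\iota_i$ the coproduct inclusion), so that $f = h\circ g$ factors through the projective module $\bigoplus_{i\in J} P_i$. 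The main obstacle I anticipate is precisely this bookkeeping in the ``if'' direction — checking that the assembled maps really recover $f$ — everything else follows routinely from Proposition \ref{prop:GProj}, Lemma \ref{lem:well-known}, and the standard Frobenius-category formalism already in place.
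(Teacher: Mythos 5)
Your proof is correct and fills in, with full detail, exactly the sketch the paper has in mind (the paper just says the observation is ``easy'' and points to Proposition~\ref{prop:GProj}(1)). Your reduction of compactness to the compatibility of the ideal $P(M,-)$ with finite supports in a coproduct, plus the standard ``pass (co)products to the stable category because projectives are closed under them'' argument for the first part, is the intended route.
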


In what follows we will discuss very briefly strongly
Gorenstein-projective modules. They play the role as ``free objects"
in Gorenstein homological algebra. Let us first study finitely
generated strongly Gorenstein-projective modules.

Let $A$ be an artin algebra and let $n\geq 1$. Following Bennis and
Mahdou (\cite{BM2}) a totally acyclic complex is said to  be
\emph{$n$-strong} \index{complex!totally acyclic!$n$-strong}if it is
of the following form
$$\cdots \rightarrow P^{-1}\stackrel{d^{n-1}}\rightarrow P^0
\stackrel{d^0}\rightarrow P^1\stackrel{d^1}\rightarrow P^2
\rightarrow \cdots \rightarrow P^{n-1}\stackrel{d^{n-1}}\rightarrow
P^n \stackrel{d^0}\rightarrow P^{n+1}\rightarrow \cdots$$

A finitely generated $A$-module $M$ is said to be \emph{$n$-strongly
Gorenstein-projective}\index{Gorenstein-projective
module!$n$-strongly} provided that it is the zeroth cocycle of a
totally acyclic complex which is $n$-strong. Denote by
$n\mbox{-}A\mbox{-SGproj}$ the full subcategory of $A\mbox{-mod}$
consisting of such modules. $1$-strongly Gorenstein projective
modules are called \emph{strongly
Gorenstein-projective}\index{Gorenstein-projective module!strongly}
and $1\mbox{-}A\mbox{-SGproj}$ is also denoted by $A\mbox{-SGproj}$
(\cite{BM}). Observe that if $n$ divides $m$ we have
$n\mbox{-}A\mbox{-SGproj}\subseteq m\mbox{-}A\mbox{-SGproj}$.
Observe that a projective module $P$ is strongly
Gorenstein-projective, since we may take its complete
resolution $$\cdots \rightarrow P\oplus P \xrightarrow{\begin{pmatrix}0 & 1 \\
0 & 0
\end{pmatrix}} P\oplus P \xrightarrow{\begin{pmatrix}0 & 1 \\
0 & 0
\end{pmatrix}} P\oplus P \rightarrow \cdots$$
which is $1$-strong. Then we have  $A\mbox{-proj}\subseteq
A\mbox{-SGproj} \subseteq n\mbox{-}A\mbox{-SGproj}\subseteq
A\mbox{-Gproj}$.

The following characterizes $n$-strongly Gorenstein-projective
modules.

\begin{prop}\label{prop:strongGproj}
An $A$-module $M$ is $n$-strongly Gorenstein-projective if and only
if $\Omega^n M\simeq M$ (in the stable category) and  ${\rm
Ext}^i_A(M, A)=0$ for $1\leq i\leq n$.
\end{prop}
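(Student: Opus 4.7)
My plan is to handle the two directions separately. For $(\Rightarrow)$, starting from an $n$-strong complete resolution $P^\bullet$ of $M$, the periodicity $P^{i+n} = P^i$ and $d^{i+n} = d^i$ immediately gives $Z^{-n}(P^\bullet) = Z^0(P^\bullet) = M$; since the leftward truncation $\cdots \to P^{-2} \to P^{-1} \to M \to 0$ is a projective resolution of $M$, the cocycle $Z^{-n}(P^\bullet)$ represents the $n$-th syzygy, so $\Omega^n M \simeq M$ in the stable category. The Ext vanishing then follows from Corollary \ref{cor:infinitedimension}(1), since $M$ is in particular Gorenstein-projective.

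For $(\Leftarrow)$, my goal is to construct a length-$n$ exact sequence $0 \to M \to T^0 \to \cdots \to T^{n-1} \to M \to 0$ of projectives and splice it periodically into a totally acyclic complex. First I would bootstrap the Ext vanishing to all positive degrees by iterating the dimension-shift isomorphism ${\rm Ext}^{i+n}_A(M, A) \simeq {\rm Ext}^i_A(\Omega^n M, A) \simeq {\rm Ext}^i_A(M, A)$ for $i \geq 1$ (the second step using the stable iso), which combined with the base hypothesis yields $M \in {^\perp A}$.

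The hard part will be converting the stable isomorphism $\Omega^n M \simeq M$ into an honest module isomorphism. My approach is to invoke Krull-Schmidt (available over the artin algebra $A$) to decompose $M = N \oplus R$ with $R$ projective and $N$ having no projective direct summand; the hypotheses then transfer to $N$. A minimality argument shows that the minimal $n$-th syzygy $\Omega^n N$ also has no projective direct summand: any such summand would split off the corresponding projective cover and contradict its minimality. Since $N$ and $\Omega^n N$ are stably isomorphic with no projective summands on either side, Krull-Schmidt forces an honest isomorphism $N \simeq \Omega^n N$, and the minimal projective resolution of $N$ then yields the required exact sequence $0 \to N \to Q_{n-1} \to \cdots \to Q_0 \to N \to 0$ with $Q_i$ projective.

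Splicing this sequence periodically produces a bi-infinite acyclic complex $P^\bullet$ of projectives with period $n$ and $Z^0(P^\bullet) = N$, whose cocycles are the syzygies $\Omega^k N$ for $0 \leq k \leq n-1$; each lies in ${^\perp A}$ by a dimension shift from $N \in {^\perp A}$, so Lemma \ref{lem:totallyacyclic}(2) gives total acyclicity and shows $N$ is $n$-strongly Gorenstein-projective. Taking the direct sum with the $1$-strong (hence $n$-strong) complete resolution of $R$ displayed earlier in the paper then produces an $n$-strong complete resolution of $M = N \oplus R$. The main obstacle is paragraph three: without the Krull-Schmidt reduction, the stable iso alone would only yield an $n$-strong complete resolution of some augmentation $M \oplus Q$, not of $M$ itself.
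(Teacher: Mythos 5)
Your proof is correct, and its overall shape matches the paper's: bootstrap the vanishing of ${\rm Ext}_A^i(M,A)$ to all $i\geq 1$ via dimension shift and the stable isomorphism, then produce an honest length-$n$ exact sequence $0\to M\to Q_{n-1}\to\cdots\to Q_0\to M\to 0$ of projectives to splice periodically, with total acyclicity of the spliced complex coming from Lemma~\ref{lem:totallyacyclic}(2). Where you diverge is in upgrading the stable isomorphism $\Omega^n M\simeq M$ to a genuine one. The paper starts from an arbitrary exact sequence $0\to K\to P^{n-1}\to\cdots\to P^0\to M\to 0$ of projectives, enlarges $P^{n-1}$ so that $K\simeq M\oplus Q$ with $Q$ projective, and then peels $Q$ off $P^{n-1}$ using ${\rm Ext}_A^1(M',Q)=0$, where $M'$ is the image of $P^{n-1}\to P^{n-2}$. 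You instead reduce by Krull--Schmidt to $M=N$ with no projective summand and show that the minimal syzygies of $N$ again have no projective summand, so the stable isomorphism $\Omega^n N\simeq N$ is automatically honest. Two caveats about your route. First, ``the minimal syzygy of a module with no projective summand has no projective summand'' is not a general fact (for the quiver $1\to 2$, the simple $S_1$ has no projective summand while its minimal syzygy is $P_2$, which is projective); the argument crucially uses $N\in{^\perp A}$: if $Q$ were a projective summand of $\Omega N$, then ${\rm Ext}_A^1(N,Q)=0$ lets the projection $\Omega N\to Q$ extend across $\Omega N\hookrightarrow P(N)$, exhibiting $Q$ as a summand of $P(N)$ lying inside the radical of $P(N)$, which forces $Q=0$. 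Since you establish $N\in{^\perp A}$ beforehand this is not a gap, but you should invoke it explicitly in the minimality step. Second, Krull--Schmidt is unavailable in $A\mbox{-Mod}$, so your argument does not carry over to the large-module version of this proposition that the paper appeals to in the remark immediately after it, whereas the paper's argument does.
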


\begin{proof}
The ``only if" part is easy. For the ``if" part, by dimension-shift
we infer that $M\in {^\perp A}$. Take an exact sequence
$0\rightarrow K\rightarrow P^{n-1}\rightarrow \cdots \rightarrow
P^0\rightarrow M\rightarrow 0$ such that each $P^i$ is projective.
By assumption $K$ and $M$ are isomorphic in
$A\mbox{-\underline{mod}}$. Then there exist projective modules $P$
and $Q$ such that $K\oplus P\simeq M\oplus Q$. By adding $P$ to
$P^{n-1}$ we may assume that $K\simeq M\oplus Q$.  Denote by $M'$
the image of $P^{n-1}\rightarrow P^{n-2}$. By dimension-shift we
have ${\rm Ext}^1_A(M', Q)\simeq {\rm Ext}^{n}_A(M, Q)=0$. Consider
the short exact sequence $0\rightarrow M\oplus Q \rightarrow
P^{n-1}\rightarrow M'\rightarrow 0$. We conclude from ${\rm
Ext}_A^1(M', Q)=0$ that there is a decomposition $P^{n-1}=P'\oplus
Q$ such that there is a  short exact sequence $0\rightarrow
M\rightarrow P'\rightarrow M'\rightarrow 0$. Then we get a long
exact sequence $0\rightarrow M\rightarrow P'\rightarrow
P^{n-2}\rightarrow \cdots \rightarrow P^0\rightarrow M\rightarrow 0$
from which we construct an $n$-strong complete resolution for $M$
immediately.
\end{proof}

We note the following immediate consequence.

\begin{cor}\label{cor:strongGproj}
An $A$-module $M$ is a direct summand of an $n$-strongly
Gorenstein-projective module if and only if $M$ is
Gorenstein-projective and $\Omega^{nd} M\simeq M$ for some $d\geq
1$.
\end{cor}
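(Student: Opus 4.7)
The plan is to prove both directions using Proposition \ref{prop:strongGproj} together with a Krull–Schmidt style argument on $\Omega^n$ acting on finite sets of indecomposable summands.

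For the ``if'' direction, suppose $M$ is Gorenstein-projective with $\Omega^{nd}M\simeq M$ (in the stable category) for some $d\geq 1$. I would construct an explicit $n$-strongly Gorenstein-projective module containing $M$ as a summand, namely
\[
N \;=\; M\oplus \Omega^{n}M \oplus \Omega^{2n}M \oplus \cdots \oplus \Omega^{(d-1)n}M.
\]
A direct computation using $\Omega^{nd}M\simeq M$ shows $\Omega^{n}N\simeq N$ in $A\mbox{-\underline{mod}}$. By Corollary \ref{cor:syzygy}(1) each summand of $N$ is Gorenstein-projective, so $N$ itself is (by Proposition \ref{prop:Gproj}(1), or just because the class is closed under direct sums), and Corollary \ref{cor:infinitedimension}(1) gives ${\rm Ext}^{i}_{A}(N,A)=0$ for all $i\geq 1$, in particular for $1\leq i\leq n$. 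Proposition \ref{prop:strongGproj} now applies to conclude $N$ is $n$-strongly Gorenstein-projective, and $M$ is evidently a direct summand.

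For the ``only if'' direction, let $N$ be $n$-strongly Gorenstein-projective with $M$ a direct summand. Since $A\mbox{-Gproj}$ is closed under direct summands (Proposition \ref{prop:Gproj}(2)), $M$ is Gorenstein-projective. By Proposition \ref{prop:strongGproj} we have $\Omega^{n}N\simeq N$ in $A\mbox{-\underline{mod}}$. The syzygy functor $\Omega$ is an auto-equivalence of $A\mbox{-\underline{Gproj}}$ (see Remark \ref{rem:shiftfunctor}), so $\Omega^{n}$ induces a bijection on isomorphism classes of indecomposable objects in $A\mbox{-\underline{Gproj}}$. The key observation is that the (finite) set $S$ of isomorphism classes of indecomposable non-projective summands appearing in a Krull–Schmidt decomposition of $N$ is stable under $\Omega^{n}$, because $\Omega^{n}N\simeq N$. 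Thus $\Omega^{n}$ restricts to a permutation of the finite set $S$, and this permutation has finite order $d\geq 1$, giving $\Omega^{nd}X\simeq X$ in the stable category for every indecomposable summand $X$ of $N$, hence $\Omega^{nd}M\simeq M$.

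The main obstacle is the last step of the forward direction: one must argue that $\Omega^n$ genuinely permutes (rather than just maps into) the finite set of indecomposable summands of $N$ and that this forces a common period $d$ working for all summands at once; this is where Krull–Schmidt together with the invertibility of $\Omega$ on $A\mbox{-\underline{Gproj}}$ (Remark \ref{rem:shiftfunctor}) is essential. The rest is routine.
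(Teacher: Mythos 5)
Your proof is correct and follows essentially the same route as the paper: the ``if'' direction uses the identical construction $N=\bigoplus_{i=0}^{d-1}\Omega^{ni}M$ verified via Proposition \ref{prop:strongGproj}, and the ``only if'' direction is the same Krull--Schmidt finiteness argument combined with the invertibility of $\Omega$ on $A\mbox{-\underline{Gproj}}$. The paper phrases that step as a pigeonhole repetition among the $\Omega^{ni}M$ (after reducing to $M$ indecomposable) rather than as a finite-order permutation of the summand set, but the two formulations are equivalent.
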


\begin{proof}
For the ``if" part, take $N=\oplus_{i=0}^{d-1} \Omega^{ni} M$; it is
$n$-strongly Gorenstein-projective by Proposition
\ref{prop:strongGproj}. Conversely, assume that $M$ is a direct
summand of an $n$-strongly Gorenstein-projective $N$. It suffices to
show the result in the case that $M$ is indecomposable. Note that by
Proposition \ref{prop:strongGproj} we have $\Omega^{ni}N\simeq N$
and then for all $i\geq 1$, $\Omega^{ni}M$ are direct summands of
$N$. By Krull-Schmidt Theorem we infer that $M\simeq \Omega^{nd} M$
for some $d\geq 1$.
\end{proof}

For an additive subcategory $\mathcal{X}$ of $A\mbox{-mod}$, denote
by ${\rm add}\; \mathcal{X}$ its \emph{additive closure}\index{additive closure}, that is,
${\rm add}\; \mathcal{X}$ consists of direct summands of the modules
in $\mathcal{X}$. Then it follows from the above results that
$${\rm add}\;
A\mbox{-SGproj}=\bigcup_{n\geq 1} \; n\mbox{-}A\mbox{-SGproj}.$$

There is a large version of $n$-strongly Gorenstein-projective
$A$-modules. The subcategory of $A\mbox{-Mod}$ consisting of (large)
$n$-strongly Gorenstein-projective modules is denoted by
$n\mbox{-}A\mbox{-SGProj}$; $1\mbox{-}A\mbox{-SGProj}$ is also
denoted by $A\mbox{-SGProj}$. As above we have inclusions
$A\mbox{-Proj}\subseteq A\mbox{-SGProj} \subseteq
n\mbox{-}A\mbox{-SGProj}\subseteq A\mbox{-GProj}$.

Note that Proposition \ref{prop:strongGproj} works for any module
$M\in A\mbox{-Mod}$. Then it follows that
$n\mbox{-}A\mbox{-SGProj}\cap A\mbox{-mod}=n\mbox{-}A\mbox{-SGproj}$
for all $n\geq 1$.

The following result is of interest; see \cite[Theorem 2.7]{BM}.
Note that  Corollary \ref{cor:strongGproj} does not apply to large
modules.

\begin{prop} \label{prop:SGdirectsummands}
Let $A$ be an artin algebra. Then we have ${\rm add}\; A\mbox{-{\rm
SGProj}}=A\mbox{-{\rm GProj}}$.
\end{prop}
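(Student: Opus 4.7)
The plan is to establish both inclusions. The easy direction, ${\rm add}\; A\mbox{-{\rm SGProj}} \subseteq A\mbox{-{\rm GProj}}$, is immediate: a $1$-strong complete resolution is in particular a totally acyclic complex of projectives, so $A\mbox{-{\rm SGProj}} \subseteq A\mbox{-{\rm GProj}}$, and the paper has already noted (the large-module analogue of Proposition \ref{prop:Gproj}(2)) that $A\mbox{-{\rm GProj}}$ is closed under direct summands.

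For the nontrivial inclusion, let $M \in A\mbox{-{\rm GProj}}$ and fix a complete resolution $P^\bullet = (P^k, d^k)_{k \in \mathbb{Z}}$ with $Z^0(P^\bullet) \simeq M$. The key idea is to pack all shifts of $P^\bullet$ into a single periodic complex. I would set $Q := \bigoplus_{k \in \mathbb{Z}} P^k$, which is projective as a coproduct of projectives, and define $d \colon Q \to Q$ to be the endomorphism whose restriction to the $k$-th summand is the composite $P^k \xrightarrow{d^k} P^{k+1} \hookrightarrow Q$. Since each $d^{k+1}\circ d^k = 0$, we have $d \circ d = 0$, and we obtain a $1$-strong sequence
\[
Q^\bullet\colon \quad \cdots \longrightarrow Q \xrightarrow{d} Q \xrightarrow{d} Q \longrightarrow \cdots
\]
of projective modules with the same module $Q$ and the same differential $d$ in every degree.

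The crucial step, and the only real obstacle, is to verify that $Q^\bullet$ is totally acyclic in the large sense. Grouping the summands along diagonals yields a natural identification of complexes $Q^\bullet \cong \bigoplus_{n \in \mathbb{Z}} P^\bullet[n]$. Acyclicity of $Q^\bullet$ then follows because coproducts of acyclic complexes in $A\mbox{-{\rm Mod}}$ are acyclic. For the Hom-condition, using Lemma \ref{lem:well-known} one reduces total acyclicity to the acyclicity of ${\rm Hom}_A(Q^\bullet, A)$, and one computes
\[
{\rm Hom}_A(Q^\bullet, A) \;\cong\; \prod_{n \in \mathbb{Z}} {\rm Hom}_A(P^\bullet[n], A),
\]
a product of acyclic complexes of abelian groups, hence acyclic.

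It follows that $N := Z^0(Q^\bullet)$ is strongly Gorenstein-projective. Since the kernel commutes with coproducts, $N = {\rm Ker}(d)$ splits as $\bigoplus_{k \in \mathbb{Z}} Z^k(P^\bullet)$, which in particular exhibits $M \simeq Z^0(P^\bullet)$ as a direct summand of $N$. Hence $M \in {\rm add}\; A\mbox{-{\rm SGProj}}$, and the proof is complete. The step deserving most attention is the closure of totally acyclic complexes under arbitrary coproducts, which is precisely what makes the argument work in the large setting; the analogous reduction in $A\mbox{-{\rm mod}}$ is unavailable, as reflected in Corollary \ref{cor:strongGproj}.
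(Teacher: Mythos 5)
Your proof is correct and follows essentially the same route as the paper: take a complete resolution $P^\bullet$ of $M$, form the coproduct of all its shifts, observe this is a $1$-strong totally acyclic complex, and note that $M \simeq Z^0(P^\bullet)$ is a direct summand of its zeroth cocycle $\bigoplus_{k} Z^k(P^\bullet)$. You spell out the verification of total acyclicity (coproducts of acyclic complexes are acyclic; $\mathrm{Hom}_A(\bigoplus_n P^\bullet[n], A) \cong \prod_n \mathrm{Hom}_A(P^\bullet[n], A)$ is a product of acyclic complexes) in slightly more detail than the paper does, but the idea is identical.
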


\begin{proof}
The inclusion ${\rm add}\; A\mbox{-{\rm SGProj}}\subseteq
A\mbox{-{\rm GProj}}$ is clear. On the other hand, we need to show
that each Gorenstein-projective module $M$ is a direct summand of a strong Gorenstein-projective
module. Take a complete resolution $P^\bullet$ for $M$.  For each $i\in \mathbb{Z}$
denote by $P^\bullet(i)$ the shifted complex of $P^\bullet$, which is defined by
$(P^\bullet(i))^n=P^{n+i}$ and $d^n_{P(i)}=d_P^{n+i}$.
Consider the
complex $\oplus_{i\in \mathbb{Z}} P^\bullet(i)$; it is a strong totally acyclic
complex. Note that its zeroth cocycle is $N= \oplus_{i\in \mathbb{Z}} Z^i(P^\bullet)$ and by
definition $N$ is strong Gorenstein-projective. Observe that $M$ is a direct summand of $N$.
\end{proof}

\section{Gorenstein Algebras}

In this section we will study Gorenstein-projective modules over
Gorenstein algebras. This is the case where  Gorenstein-projective
modules behave the best. Other related notions such as virtually
Gorenstein algebras, CM-finite algebras and CM-free algebras will be
discussed very briefly.

Recall that an artin algebra $A$ is \emph{self-injective}\index{algebra!self-injective} provided
that its regular module $_AA$ is injective; this is equivalent to
that projective modules are injective and vice verse; see
\cite[Chapter IV, section 3]{ARS}.

The following result is easy.

\begin{prop}\label{prop:self-injective}
Let $A$ be an artin algebra. Then the following statements are
equivalent:
\begin{enumerate}
\item[(1)] the algebra $A$ is self-injective;
\item[(2)] $A\mbox{-{\rm mod}}=A\mbox{-{\rm Gproj}}$;
\item[(2')] $A\mbox{-{\rm mod}}=A\mbox{-{\rm Ginj}}$;
\item[(3)] $A\mbox{-{\rm inj}}\subseteq A\mbox{-{\rm Gproj}}$;
\item[(3')] $A\mbox{-{\rm proj}}\subseteq A\mbox{-{\rm Ginj}}$.
\end{enumerate}
\end{prop}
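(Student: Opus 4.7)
My plan is to establish the two cycles $(1) \Rightarrow (2) \Rightarrow (3) \Rightarrow (1)$ and $(1) \Rightarrow (2') \Rightarrow (3') \Rightarrow (1)$; once the first is in place, the second will follow by the symmetric argument applied on the right, or equivalently by invoking the duality $D\colon A\mbox{-{\rm Ginj}} \stackrel{\sim}\longrightarrow A^{\rm op}\mbox{-{\rm Gproj}}$ from Remark \ref{rem:D} together with the elementary fact that $A$ is self-injective if and only if $A^{\rm op}$ is.

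For $(1) \Rightarrow (2)$, I would use that self-injectivity forces $A\mbox{-{\rm proj}} = A\mbox{-{\rm inj}}$ and hence that ${\rm Hom}_A(-, A)$ is exact on $A\mbox{-{\rm mod}}$. Given any $M \in A\mbox{-{\rm mod}}$, I splice a projective resolution of $M$ with an injective coresolution of $M$; since injective modules are projective, the result is an acyclic complex $C^\bullet$ of projective modules with $Z^0(C^\bullet) \simeq M$. Exactness of ${\rm Hom}_A(-, A)$ immediately yields that ${\rm Hom}_A(C^\bullet, A)$ is acyclic, so $C^\bullet$ is totally acyclic and $M \in A\mbox{-{\rm Gproj}}$. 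The implication $(2) \Rightarrow (3)$ is then the tautological inclusion $A\mbox{-{\rm inj}} \subseteq A\mbox{-{\rm mod}} = A\mbox{-{\rm Gproj}}$.

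The substantive step is $(3) \Rightarrow (1)$. I would take an indecomposable injective $A$-module $I$; by hypothesis it is Gorenstein-projective, so Lemma \ref{lem:Gproj}(2) supplies a monomorphism $I \hookrightarrow P^0$ with $P^0$ projective. Injectivity of $I$ splits this monomorphism, so $I$ is a direct summand of $P^0$ and hence projective. Thus every indecomposable injective $A$-module is projective. Since for the artin algebra $A$ both the isomorphism classes of indecomposable projective modules and those of indecomposable injective modules are in bijection with the (finite) set of isomorphism classes of simple $A$-modules, this forces $A\mbox{-{\rm inj}} = A\mbox{-{\rm proj}}$ as subcategories of $A\mbox{-{\rm mod}}$. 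In particular $_AA$ is injective, i.e.\ $A$ is self-injective.

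The arguments on the injective side, namely $(1) \Rightarrow (2') \Rightarrow (3') \Rightarrow (1)$, are strictly dual (replace projective by injective, $A$ by $DA$, and use Lemma \ref{lem:Ginj} and Remark \ref{rem:D}). The only point that requires a moment of thought is the closing of the cycle at $(3) \Rightarrow (1)$: the splitting of the monomorphism is immediate, but the pigeonhole step is what actually promotes the inclusion $A\mbox{-{\rm inj}} \subseteq A\mbox{-{\rm proj}}$ to an equality, and this relies on the standard parametrization of indecomposable projectives and injectives of an artin algebra by simple modules rather than on anything Gorenstein-theoretic.
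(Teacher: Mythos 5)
Your proof is correct and follows essentially the same route as the paper: splice projective and injective resolutions for $(1)\Rightarrow(2)$, observe $(2)\Rightarrow(3)$ is trivial, and for $(3)\Rightarrow(1)$ use that a Gorenstein-projective module embeds in a projective, which splits when the module is injective. The only cosmetic differences are that the paper works directly with $D(A_A)$ rather than decomposing into indecomposable injectives, and it leaves the dual chain $(1)\Rightarrow(2')\Rightarrow(3')\Rightarrow(1)$ implicit where you spell it out via Remark \ref{rem:D}.
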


\begin{proof} Note that for a self-injective algebra $A$ and an $A$-module $M$,
we may splice the projective resolution and the injective resolution
of $M$ to get a complete resolution for $M$. This shows ``
$(1)\Rightarrow(2)$". The implication ``$(2)\Rightarrow (3)$" is
trivial. For ``$(3)\Rightarrow (1)$", note that then $D(A_A)$ is
Gorenstein-projective, in particular, it is a submodule of a
projective module. Since $D(A_A)$ is injective, the submodule is
necessarily a direct summand. Hence $D(A_A)$ is projective and then
the algebra $A$ is self-injective.
\end{proof}

Recall that an artin algebra $A$ is
\emph{Gorenstein}\index{algebra!Gorenstein} provided that the
regular module $A$ has finite injective dimension on both sides
(\cite{Hap2}).  We have that an algebra $A$ is Gorenstein is
equivalent to that any $A$-module has finite projective dimension if
and only if it has finite injective dimension.

Observe that for a Gorenstein algebra $A$ we have ${\rm inj.dim}\;
_AA={\rm inj.dim}\; A_A$ (\cite[Lemma 6.9]{AR}); the common value is
denoted by ${\rm G.dim}\; A$. If ${\rm G.dim}\; A\leq d$, we say
that $A$ is \emph{$d$-Gorenstein}\index{algebra!Gorenstein!$d$-Gorenstein}.\footnote{In the literature there
is a different notion of $d$-Gorenstein algebra; see \cite{AR1}.}
Note that $0$-Gorenstein algebras are the same as self-injective
algebras. An algebra of finite global dimension $d$ is
$d$-Gorenstein.

Let us begin with the following observation.

\begin{lem}\label{lem:fpd=fid}
Let $A$ be a $d$-Gorenstein algebra and let $M\in A\mbox{-{\rm
Mod}}$. If $M$ has finite projective dimension, then ${\rm
proj.dim}\; M\leq d$ and ${\rm inj.dim}\; M\leq d$.
\end{lem}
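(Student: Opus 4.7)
The plan is to prove the two bounds separately, exploiting a key sharpening of the vanishing axiom. Since $A$ is $d$-Gorenstein, the injective resolution of ${}_AA$ has length at most $d$, so ${\rm Ext}_A^i(X, A) = 0$ for every $X \in A\mbox{-Mod}$ and every $i > d$. Because every projective $A$-module is a direct summand of a product of copies of $A$, and ${\rm Ext}_A^i(X, -)$ commutes with products in its second argument, this upgrades to ${\rm Ext}_A^i(X, Q) = 0$ for every projective $Q$ and every $i > d$. In particular, using the identification noted in the excerpt, $^\perp(A\mbox{-{\rm Proj}})$ coincides with the class of modules $X$ satisfying ${\rm Ext}_A^i(X, A) = 0$ for all $i \geq 1$.

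For the bound on projective dimension, set $K = \Omega^d M$. A dimension-shift argument along the projective resolution of $M$ yields ${\rm Ext}_A^i(K, A) \simeq {\rm Ext}_A^{i+d}(M, A) = 0$ for $i \geq 1$, so $K \in {^\perp(A\mbox{-{\rm Proj}})}$; moreover, $K$ inherits finite projective dimension from $M$. It therefore suffices to prove the following key claim: \emph{any module $K \in {^\perp(A\mbox{-{\rm Proj}})}$ of finite projective dimension is already projective}. Once this is established, $\Omega^d M$ is projective, and hence ${\rm proj.dim}\; M \leq d$.

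I would prove this claim by induction on $m = {\rm proj.dim}\; K$. The case $m = 0$ is trivial. For $m \geq 1$, choose a short exact sequence $0 \to \Omega K \to P \to K \to 0$ with $P$ projective; a routine long-exact-sequence check (using the upgraded vanishing) shows that $^\perp(A\mbox{-{\rm Proj}})$ is closed under syzygies, so $\Omega K \in {^\perp(A\mbox{-{\rm Proj}})}$ and has projective dimension $m-1$. By induction $\Omega K$ is projective, whence ${\rm Ext}_A^1(K, \Omega K) = 0$ and the sequence splits; thus $K$ is a direct summand of $P$, hence projective, contradicting $m \geq 1$. So $m = 0$, completing the first bound.

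For the bound on injective dimension, let $n = {\rm proj.dim}\; M$ (finite by hypothesis) and fix a projective resolution $0 \to P^{-n} \to \cdots \to P^0 \to M \to 0$. Using ${\rm Ext}_A^i(N, Q) = 0$ for every $N$, every projective $Q$, and every $i > d$, repeated dimension shifting in the second slot gives
\[
{\rm Ext}_A^{d+1}(N, M) \simeq {\rm Ext}_A^{d+2}(N, \Omega M) \simeq \cdots \simeq {\rm Ext}_A^{d+1+n}(N, P^{-n}) = 0,
\]
the last equality because $d+1+n > d$. Since $N$ is arbitrary, ${\rm inj.dim}\; M \leq d$. The main obstacle I expect lies in the key claim used in the first bound: its proof depends on upgrading the vanishing ${\rm Ext}^i(K, A) = 0$ to vanishing against arbitrary, possibly large, projectives, and this is precisely what permits the splitting of $0 \to \Omega K \to P \to K \to 0$ even when the projective $\Omega K$ is not finitely generated.
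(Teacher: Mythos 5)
Your proof is correct and rests on the same root observation as the paper's: since ${\rm inj.dim}\;{}_AA \leq d$, one has ${\rm Ext}_A^{i}(X,A)=0$ for all $X$ and $i>d$, and you correctly upgrade this via Lemma~\ref{lem:well-known} and the compatibility of ${\rm Ext}_A^i(X,-)$ with products to vanishing against arbitrary (possibly infinitely generated) projectives. Where you differ is in how the two bounds are packaged. For the projective-dimension bound the paper simply cites the standard fact that a module $M$ of finite projective dimension $n$ has ${\rm Ext}_A^n(M,Q)\neq 0$ for some projective $Q$; you instead unpack this as the key claim that a module of finite projective dimension lying in $^\perp(A\mbox{-{\rm Proj}})$ is projective, proved by induction along syzygies, and apply it to $\Omega^d M$ --- an equivalent but more self-contained argument. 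For the injective-dimension bound the paper leaves things at ``similarly,'' whereas you run a direct dimension shift in the second variable along a finite projective resolution of $M$ to obtain ${\rm Ext}_A^{d+1}(-,M)=0$; this is in fact the cleaner route, since the naive mirror of the projective-dimension argument would first require knowing that ${\rm inj.dim}\;M$ is finite, a point your version sidesteps.
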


\begin{proof}
Note that ${\rm inj.dim} \; _AA={\rm proj.dim}\; D(A_A)\leq d$. We
use the following fact: an $A$-module $M$ of finite projective
dimension $n$ satisfies that ${\rm Ext}_A^n(M,Q)\neq 0$ for some
projective $A$-module $Q$; by Lemma \ref{lem:well-known} this is
equivalent to ${\rm Ext}_A^n(M, A)\neq 0$. This shows that ${\rm
proj.dim}\; M\leq d$. Similarly one shows that ${\rm inj.dim}\;
M\leq d$.
\end{proof}

 For each $d\geq 1$ denote by $\Omega^d(A\mbox{-mod})$ the class of
modules of the form $\Omega^d M$ for a module $M$. In addition we
identify $\Omega^0(A\mbox{-mod})$ with $A\mbox{-mod}$. Note that
$A\mbox{-Gproj}\subseteq \Omega^d(A\mbox{-mod})$ for all $d\geq 0$.
Dually we have the notations  $\Omega^{-d}(A\mbox{-mod})$ for $d\geq
0$.

The following result, which is implicitly contained in \cite[Theorem
3.2]{AM}, characterizes $d$-Gorenstein algebras; it generalizes part
of  of Proposition \ref{prop:self-injective}.

\begin{thm}\label{thm:GorensteinalgebraI}
Let $A$ be an artin algebra and let $d\geq 0$. Then the following
statements are equivalent:
\begin{enumerate}
\item[(1)] the algebra $A$ is $d$-Gorenstein;
\item[(2)] $A\mbox{-{\rm Gproj}}=\Omega^d(A\mbox{-{\rm mod}})$;
\item[(2')] $A\mbox{-{\rm Ginj}}=\Omega^{-d}(A\mbox{-{\rm mod}})$.
\end{enumerate}
In this case, we have $A\mbox{-{\rm Gproj}}={^\perp A}$ and
$A\mbox{-{\rm Ginj}}=(DA)^\perp$.
\end{thm}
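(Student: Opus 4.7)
The plan is to organize the argument around a main technical lemma and derive the three-way equivalence by dimension shifting together with dual arguments. Specifically, the plan is to prove (1) $\Rightarrow$ (2) via the identity $A\mbox{-Gproj} = {^\perp A}$, to prove (2) $\Rightarrow$ (1) via dimension shifting on both sides of $A$ (aided by the transpose duality), and to deduce (1) $\Leftrightarrow$ (2') by an analogous dual argument.

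The main lemma I plan to establish first is that under (1) one has $A\mbox{-Gproj} = {^\perp A}$. The inclusion $A\mbox{-Gproj} \subseteq {^\perp A}$ is immediate from Corollary \ref{cor:infinitedimension}. For the reverse, given $M \in {^\perp A}$, I will verify Lemma \ref{lem:Gproj}(3). Take a projective resolution $\cdots \to P_1 \to P_0 \to M \to 0$; since $M \in {^\perp A}$, the dualized complex $0 \to M^* \to P_0^* \to P_1^* \to \cdots$ is exact in $A^{\rm op}\mbox{-mod}$. Writing $K^j$ for the $j$-th cocycle, the short exact sequences $0 \to K^j \to P_j^* \to K^{j+1} \to 0$ give ${\rm Ext}^i_{A^{\rm op}}(K^j, A) \simeq {\rm Ext}^{i+1}_{A^{\rm op}}(K^{j+1}, A)$ for $i \geq 1$; iterating, ${\rm Ext}^i_{A^{\rm op}}(M^*, A) \simeq {\rm Ext}^{i+d}_{A^{\rm op}}(K^d, A)$, which vanishes for $i \geq 1$ using ${\rm inj.dim}\; A_A \leq d$. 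Hence $M^* \in {^\perp(A_A)}$, and likewise each $K^j \in {^\perp(A_A)}$. Dualizing the short exact sequences once more preserves exactness, and splicing reconstructs the projective resolution with $M^{**}$ in place of $M$, forcing the evaluation map $M \to M^{**}$ to be an isomorphism.

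For (1) $\Rightarrow$ (2), the bound ${\rm inj.dim}\; {_AA} \leq d$ yields $\Omega^d M \in {^\perp A}$ for any $M$, whence $\Omega^d M \in A\mbox{-Gproj}$ by the main lemma; the reverse inclusion is trivial via the complete resolution of a Gorenstein-projective module. For (2) $\Rightarrow$ (1), the inclusion $\Omega^d(A\mbox{-mod}) \subseteq A\mbox{-Gproj} \subseteq {^\perp A}$ yields ${\rm inj.dim}\; {_AA} \leq d$ at once. To obtain ${\rm inj.dim}\; A_A \leq d$ I plan to use the duality $(-)^*\colon A\mbox{-Gproj} \simeq A^{\rm op}\mbox{-Gproj}$ of Corollary \ref{cor:duality}: given a right $A$-module $N$, apply (2) to ${\rm Tr}\,N \in A\mbox{-mod}$ to get $\Omega^d({\rm Tr}\,N) \in A\mbox{-Gproj}$, then transport back via Proposition \ref{prop:transpose} and the stable isomorphism $\Omega^2 X \simeq ({\rm Tr}\,X)^*$ to produce a Gorenstein-projective $d$-th syzygy structure on $N$, after which the identical dimension-shift argument gives ${\rm inj.dim}\; A_A \leq d$. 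The equivalence (1) $\Leftrightarrow$ (2') will proceed dually, with Gorenstein-injective modules, cosyzygies, and $(DA)^\perp$ replacing their projective counterparts; the parallel main lemma yields $A\mbox{-Ginj} = (DA)^\perp$.

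The chief obstacle is the right-sided half of (2) $\Rightarrow$ (1): condition (2) is purely left-handed while (1) is two-sided, so transporting the syzygy hypothesis to the right side requires careful use of the transpose duality and its interaction with syzygies. Once this transfer is in place, the remainder is routine dimension shifting plus repeated application of Lemma \ref{lem:Gproj}.
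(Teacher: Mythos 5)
Your main lemma ($A\mbox{-Gproj}={^\perp A}$ under (1)) and the derivation of $(1)\Rightarrow(2)$ from it coincide with the paper: dualize a projective resolution of $M\in{^\perp A}$, use ${\rm inj.dim}\;A_A\leq d$ and dimension shifting to see $M^*$ and all cocycles of the dualized complex lie in ${^\perp(A_A)}$, dualize again to recover the reflexivity of $M$, then invoke Lemma \ref{lem:Gproj}(3). The first half of $(2)\Rightarrow(1)$ — deriving ${\rm inj.dim}\;{_AA}\leq d$ from ${\rm Ext}^{d+k}_A(M,A)\simeq{\rm Ext}^k_A(\Omega^dM,A)=0$ — is also the paper's argument.

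The genuine gap is exactly where you flag it: the right-sided half of $(2)\Rightarrow(1)$. Your plan is to pass from a right module $N$ to ${\rm Tr}\,N$, apply $(2)$ to get $\Omega^d({\rm Tr}\,N)\in A\mbox{-Gproj}$, and then ``transport back'' via ${\rm Tr}$ and $\Omega^2 X\simeq({\rm Tr}\,X)^*$. This does not go through as stated, and the reason is structural: ${\rm Tr}$ and $\Omega^d$ do not commute up to stable isomorphism in general. Writing the dualized projective resolution $(P^0)^*\to(P^{-1})^*\to(P^{-2})^*\to\cdots$ of $M={\rm Tr}\,N$, the module ${\rm Tr}(\Omega^dM)$ is the cokernel $d+1$ steps in, while $\Omega^{-d}{\rm Tr}\,M$ would require this complex to be exact in the intervening degrees — i.e.\ ${\rm Ext}^i_A({\rm Tr}\,N,A)=0$ for $1\leq i\leq d$ — which is not available; knowing $\Omega^d({\rm Tr}\,N)\in{^\perp A}$ only kills the ${\rm Ext}$'s above degree $d$, not the low ones. (Indeed for $d=1$ one computes $(\Omega\,{\rm Tr}\,N)^*\simeq\Omega(N^{**})$ stably, and $N^{**}$ is not $N$.) Your route works when $d=0$, because there ${\rm Tr}$ is simply an involution, but for $d\geq1$ the transfer breaks. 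The paper avoids this entirely: it applies $(2)$ directly to the injective cogenerator $D(A_A)$, extends a complete resolution $0\to\Omega^dD(A_A)\xrightarrow{\varepsilon}Q^0\to\cdots\to Q^d\to G\to0$, and shows ${\rm Ext}^1_A({\rm coker}\,\varepsilon,\Omega^dD(A_A))\simeq{\rm Ext}^{d+1}_A(G,\Omega^dD(A_A))\simeq{\rm Ext}^1_A(G,D(A_A))=0$ by two dimension shifts (the second using $G\in{^\perp A}$ and the injectivity of $D(A_A)$), so $\varepsilon$ splits and $\Omega^dD(A_A)$ is projective, giving ${\rm proj.dim}\;D(A_A)\leq d$, i.e.\ ${\rm inj.dim}\;A_A\leq d$. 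I would recommend replacing your transpose transfer with this argument.
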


\begin{proof}
We only show the result concerning Gorenstein-projective modules.

For ``$(1)\Rightarrow (2)$", assume that $A$ is $d$-Gorenstein. Note
that by dimension-shift we have $\Omega^{d}(A\mbox{-mod})\subseteq
{^\perp A}$. We have already observed that $A\mbox{-Gproj}\subseteq
\Omega^d(A\mbox{-mod})$. Hence it suffices to show that any module
$M$ in $^\perp A$ is Gorenstein-projective. For this, take a
projective resolution $\cdots \rightarrow P^{-1}\rightarrow
P^0\rightarrow M\rightarrow 0$. By assumption we apply $(-)^*$ to
get an exact sequence $\xi\colon 0\rightarrow M^*\rightarrow
(P^0)^*\rightarrow (P^{-1})^*\rightarrow \cdots$. Since $A_A$ has
finite injective dimension, using dimension-shift on $\xi$ we infer
that $M^*\in {^\perp (A_A)}$; moreover, all the cocylces in $\xi$
lie in $^\perp (A_A)$. Hence applying $(-)^*$ to $\xi$ we still get
an exact sequence. Note that each $P^{-i}$ is reflexive. From this
we conclude that $M$ is reflexive. Then by Lemma \ref{lem:Gproj}(3)
$M$ is Gorenstein-projective.

To show ``$(2)\Rightarrow (1)$", assume that $A\mbox{-{\rm
Gproj}}=\Omega^d(A\mbox{-{\rm mod}})$. For each module $M$ and
$k\geq 1$, we have ${\rm Ext}_A^{d+k}(M, A)\simeq {\rm
Ext}_A^k(\Omega^d M, A)=0$, since $\Omega^d M$ is
Gorenstein-projective. Therefore ${\rm inj.dim}\; _AA\leq d$. On the
other hand, consider the long exact sequence $0\rightarrow \Omega^d
D(A_A)\rightarrow P^{1-d}\rightarrow \cdots \rightarrow
P^{-1}\rightarrow P^0\rightarrow D(A_A)\rightarrow 0$. For any
Gorenstein-projective module $G$, by dimension-shift we infer that
${\rm Ext}^{d+1}_A(G, \Omega^d D(A_A))\simeq {\rm Ext}^1_A(G,
D(A_A))=0$ (we can apply dimension-shift because of $G\in {^\perp
A}$). Note that by assumption $\Omega^d D(A_A)$ is
Gorenstein-projective, and then we may take a long exact sequence
$0\rightarrow \Omega^d D(A_A)\stackrel{\varepsilon}\rightarrow
Q^0\rightarrow Q^1\rightarrow \cdots \rightarrow Q^d \rightarrow
G\rightarrow 0$ with each $Q^i$ projective and $G$
Gorenstein-projective. Take $G'$ to be the cokernel of
$\varepsilon$. Then by dimension-shift again we have ${\rm
Ext}_A^1(G', \Omega^d D(A_A))\simeq {\rm Ext}_A^{d+1}(G, \Omega^d
D(A_A))=0$. It follows that $\varepsilon$ splits and then $\Omega^d
D(A_A)$ is projective. Hence ${\rm proj.dim}\; D(A_A)\leq d$ and
then ${\rm inj.dim}\; A_A\leq d$, completing the proof.

The equality $A\mbox{-{\rm Gproj}}={^\perp A}$ is shown in the proof
of ``$(1)\Rightarrow (2)$".
\end{proof}

\begin{rem}
For a Gorenstein algebra $A$, the modules in $^\perp A$ are often
called \emph{maximal Cohen-Macaulay modules}
(\cite{Buc})\index{module!maximal Cohen-Macaualy}.
\end{rem}

For an artin algebra $A$ and $n\geq 0$, denote by $\mathcal{P}^{\leq
n}(A\mbox{-mod})$ the full subcategory of $A\mbox{-mod}$ consisting
of modules having projective dimension at most $n$. Denote by
$\mathcal{P}^{<\infty}(A\mbox{-mod})$ the union of these categories.
Dually we have the notations  $\mathcal{I}^{\leq n}(A\mbox{-mod})$
and $\mathcal{I}^{<\infty}(A\mbox{-mod})$.

For a $d$-Gorenstein algebra $A$, by Lemma \ref{lem:fpd=fid} we have
$$\mathcal{P}^{<\infty}(A\mbox{-mod})=\mathcal{P}^{\leq
d}(A\mbox{-mod})=\mathcal{I}^{\leq
d}(A\mbox{-mod})=\mathcal{I}^{<\infty}(A\mbox{-mod}).$$

By Theorem \ref{thm:GorensteinalgebraI} we may apply
Auslander-Buchweitz's result (Theorem \ref{thm:AB}) to obtain the
following important result; compare \cite[Proposition 3.10]{Bel2}.

\begin{prop}\label{prop:cotorsionpair}
Let $A$ be a $d$-Gorenstein algebra. Then $(A\mbox{-{\rm Gproj}},
\mathcal{P}^{\leq d}(A\mbox{-{\rm mod}}))$ and $(\mathcal{I}^{\leq
d}(A\mbox{-{\rm mod}}), A\mbox{-{\rm Ginj}})$ are cotorsion pairs in
$A\mbox{-{\rm  mod}}$. \hfill $\square$
\end{prop}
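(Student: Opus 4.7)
The plan is to apply the Auslander--Buchweitz theorem (Theorem~\ref{thm:AB} of Appendix~A) with resolving subcategory $\mathcal{X} = A\mbox{-Gproj}$ and injective cogenerator $\omega = A\mbox{-proj}$; the first cotorsion pair will then fall out directly, and the second will follow by duality.

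First I would assemble the input data that does not yet use the Gorenstein hypothesis. That $A\mbox{-Gproj} \subseteq A\mbox{-mod}$ is resolving and closed under direct summands is Proposition~\ref{prop:Gproj}. The orthogonality $\mbox{Ext}_A^i(G, L) = 0$ for all $G \in A\mbox{-Gproj}$, $L \in \mathcal{P}^{<\infty}(A\mbox{-mod})$ and $i \geq 1$ is given by Corollary~\ref{cor:infinitedimension}(1). Finally, $A\mbox{-proj}$ is an injective cogenerator of $A\mbox{-Gproj}$: reading off a complete resolution at each cocycle supplies a short exact sequence $0 \to G \to P \to G' \to 0$ with $P$ projective and $G, G'$ Gorenstein-projective.

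The step that genuinely uses the $d$-Gorenstein hypothesis is producing a special $\mathcal{X}$-precover for every module. Theorem~\ref{thm:GorensteinalgebraI} gives $A\mbox{-Gproj} = \Omega^d(A\mbox{-mod})$, so the $d$-th syzygy $\Omega^d M$ of any module $M$ is Gorenstein-projective, sitting in an exact sequence
\[
0 \longrightarrow \Omega^d M \longrightarrow P^{d-1} \longrightarrow \cdots \longrightarrow P^0 \longrightarrow M \longrightarrow 0.
\]
Using the injective cogenerator property above, I would perform an iterated pushout along short exact sequences $0 \to \Omega^d M \to Q \to G \to 0$ with $Q$ projective and $G$ Gorenstein-projective, working back through the syzygy chain to obtain a short exact sequence
\[
0 \longrightarrow L \longrightarrow X \longrightarrow M \longrightarrow 0
\]
with $X$ Gorenstein-projective and $L$ of projective dimension at most $d$. (This is really just the standard Auslander--Buchweitz dimension-shift, repeatedly applying Proposition~\ref{prop:Gproj}(1) to see that each intermediate middle term is Gorenstein-projective.)

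With all hypotheses of Theorem~\ref{thm:AB} in place, that theorem yields the cotorsion pair $(A\mbox{-Gproj}, \hat{\omega})$, where $\hat{\omega}$ is the class of modules admitting a finite resolution by modules in $\omega = A\mbox{-proj}$, that is, $\mathcal{P}^{<\infty}(A\mbox{-mod})$. By Lemma~\ref{lem:fpd=fid} the latter equals $\mathcal{P}^{\leq d}(A\mbox{-mod})$, giving the first cotorsion pair. For the second, I would dualize: either transport the first pair across Matlis duality $D$ using Remark~\ref{rem:D} and the fact that $A$ is $d$-Gorenstein iff $A^{\mbox{op}}$ is, or replay the same argument verbatim for the coresolving subcategory $A\mbox{-Ginj}$ with cogenerator $A\mbox{-inj}$ and the identification $A\mbox{-Ginj} = \Omega^{-d}(A\mbox{-mod})$ also provided by Theorem~\ref{thm:GorensteinalgebraI}. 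The only step that is not pure bookkeeping is the iterated-pushout construction of the precover, and that will be the main obstacle; once it is in hand, Theorem~\ref{thm:AB} does the rest.
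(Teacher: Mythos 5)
Your proposal is correct and follows the same route the paper takes: verify the hypotheses of Theorem~\ref{thm:AB} with $\mathcal{X}=A\mbox{-Gproj}$ and $\omega=A\mbox{-proj}$, using Theorem~\ref{thm:GorensteinalgebraI} to get $\widehat{\mathcal{X}}=A\mbox{-mod}$, then read off $\widehat\omega=\mathcal{P}^{\leq d}(A\mbox{-mod})$ via Lemma~\ref{lem:fpd=fid}, and dualize for the Gorenstein-injective pair. The only quibble is that the iterated-pushout construction of the special precover that you highlight as ``the main obstacle'' is exactly the content of Theorem~\ref{thm:AB} (via Auslander--Buchweitz's Proposition), so once the hypotheses are checked you need not reproduce it.
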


Recall that a full additive subcategory $\mathcal{X}\subseteq
A\mbox{-mod}$ is said to be \emph{contravariantly
finite}\index{subcategory!contravariantly finite} provided that each
module $M$ admits a morphism $f_M\colon X_M\rightarrow M$ with
$X_M\in \mathcal{X}$ such that each morphism from a module in
$\mathcal{X}$ to $M$ factors through $f_M$. Such a morphism $f_M$ is
called a \emph{right $\mathcal{X}$-approximation}
\index{approximation!right}of $M$. Dually one has the notion of
\emph{covariantly finite subcategory}\index{subcategory!covariantly
finite}. A subcategory is said to be \emph{functorially
finite}\index{subcategory!functorially finite} provided that it is
both contravariantly finite and covariantly finite.

The following is well known; compare \cite[Corollary 5.10(1)]{AR}.
It is contained in \cite[Theorem 5]{BK}.

\begin{cor}
Let $A$ be a $d$-Gorenstein artin algebra. Then all the three
subcategories $A\mbox{-{\rm Gproj}}$, $\mathcal{P}^{\leq
d}(A\mbox{-{\rm mod}})=\mathcal{I}^{\leq d}(A\mbox{-{\rm mod}})$,
$A\mbox{-{\rm Ginj}}$ are functorially finite in $A\mbox{-{\rm
mod}}$.
\end{cor}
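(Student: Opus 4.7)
My plan is to extract functorial finiteness directly from the two cotorsion pairs supplied by Proposition \ref{prop:cotorsionpair}, and then close the remaining gap with an Auslander-Reiten argument that exploits the core of each pair.

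To begin, I would apply Proposition \ref{prop:cotorsionpair} and exploit completeness (from the underlying Auslander-Buchweitz machinery of Theorem \ref{thm:AB}): for each $M\in A\mbox{-mod}$, the first pair supplies short exact sequences $0\to L\to G\to M\to 0$ and $0\to M\to L'\to G'\to 0$ with $G,G'\in A\mbox{-Gproj}$ and $L,L'\in \mathcal{P}^{\leq d}(A\mbox{-mod})$, and the second pair supplies analogous sequences involving $A\mbox{-Ginj}$ and $\mathcal{I}^{\leq d}(A\mbox{-mod})$. Reading the approximation morphisms off these sequences immediately yields that $A\mbox{-Gproj}$ is contravariantly finite, $\mathcal{P}^{\leq d}(A\mbox{-mod})$ is covariantly finite, $\mathcal{I}^{\leq d}(A\mbox{-mod})$ is contravariantly finite, and $A\mbox{-Ginj}$ is covariantly finite. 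Combined with the identity $\mathcal{P}^{\leq d}(A\mbox{-mod})=\mathcal{I}^{\leq d}(A\mbox{-mod})$ from Lemma \ref{lem:fpd=fid}, the middle subcategory is already functorially finite.

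It then remains to upgrade $A\mbox{-Gproj}$ to covariantly finite and $A\mbox{-Ginj}$ to contravariantly finite, and this is where the main work will lie. I would exploit two further facts: by Corollary \ref{cor:infinitedimension}(2) the core of the first cotorsion pair is $A\mbox{-Gproj}\cap \mathcal{P}^{\leq d}(A\mbox{-mod})=A\mbox{-proj}$, which is functorially finite in $A\mbox{-mod}$ since $A$ is artin (only finitely many indecomposable projectives, with $R$-finite Hom spaces); moreover, by Theorem \ref{thm:GorensteinalgebraI} every $M$ satisfies $\Omega^d M\in A\mbox{-Gproj}$, i.e.\ $M$ has Gorenstein-projective resolution dimension at most $d$. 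Given a functorially finite core and bounded resolution dimension, a standard Auslander-Reiten-style assembly of a left $A\mbox{-proj}$-approximation of $M$ with the special left $\mathcal{P}^{\leq d}$-approximation from the cotorsion pair produces a left $A\mbox{-Gproj}$-approximation of $M$. The dual construction, using $A\mbox{-Ginj}\cap \mathcal{I}^{\leq d}(A\mbox{-mod})=A\mbox{-inj}$ and bounded Gorenstein-injective coresolution dimension, handles $A\mbox{-Ginj}$. The main obstacle is precisely this final assembly: the cotorsion pair supplies only half of the approximations for each of $A\mbox{-Gproj}$ and $A\mbox{-Ginj}$, and the missing half needs the combination of functorially finite core plus bounded Gorenstein dimension—i.e.\ the full Gorenstein hypothesis beyond the mere existence of the cotorsion pairs.
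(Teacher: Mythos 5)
Your first step — extracting contravariant finiteness of $A\mbox{-{\rm Gproj}}$, covariant finiteness of $A\mbox{-{\rm Ginj}}$, and functorial finiteness of $\mathcal{P}^{\leq d}(A\mbox{-{\rm mod}})=\mathcal{I}^{\leq d}(A\mbox{-{\rm mod}})$ directly from the two cotorsion pairs of Proposition~\ref{prop:cotorsionpair} — is exactly what the paper does. You also correctly identify that this is only half the job and that the crux is promoting $A\mbox{-{\rm Gproj}}$ to covariantly finite and $A\mbox{-{\rm Ginj}}$ to contravariantly finite.

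The gap is in your ``assembly'' step. You propose to combine a left $A\mbox{-{\rm proj}}$-approximation of $M$ with the special left $\mathcal{P}^{\leq d}$-approximation $0\to M\to L'\to G'\to 0$ coming from the cotorsion pair, and claim this yields a left $A\mbox{-{\rm Gproj}}$-approximation. But the special left approximation you have in hand is an approximation for the wrong subcategory: it guarantees that morphisms from $M$ into $\mathcal{P}^{\leq d}$-modules factor through $M\to L'$, not that morphisms into Gorenstein-projective modules do. Given an arbitrary $f\colon M\to G$ with $G\in A\mbox{-{\rm Gproj}}$, there is no a priori reason for $f$ to lift along $M\hookrightarrow L'$, since ${\rm Ext}^1_A(G', G)$ is in general nonzero for $G,G'\in A\mbox{-{\rm Gproj}}$ (it vanishes only when $G$ has finite projective dimension, i.e.\ is projective). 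Factoring through a left $A\mbox{-{\rm proj}}$-approximation kills precisely the maps into projectives, so combining the two still leaves all of $\underline{\rm Hom}_A(M,G)$ unaccounted for. Concretely, the Auslander--Buchweitz machinery underlying Theorem~\ref{thm:AB} produces, for each $M$, a special \emph{right} $\mathcal{X}$-approximation and a special \emph{left} $\widehat{\omega}$-approximation, and a left $\mathcal{X}$-approximation simply is not part of that output. Bounded Gorenstein dimension together with a functorially finite core is not enough by itself to manufacture one.

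The paper closes exactly this gap by invoking a genuinely nontrivial theorem: \cite[Corollary 0.3]{KS}, which says that a resolving contravariantly finite subcategory of $A\mbox{-mod}$ (for $A$ artin) is automatically functorially finite, together with its dual for the coresolving subcategory $A\mbox{-{\rm Ginj}}$. Since $A\mbox{-{\rm Gproj}}$ is resolving (Proposition~\ref{prop:Gproj}) and you already have contravariant finiteness, this finishes the proof in one stroke. You should replace your heuristic assembly by an appeal to that result, or, if you want a self-contained argument, you need to reproduce the Krause--Solberg/Auslander--Reiten construction of left approximations from minimal right ones — which is a substantive piece of work, not a routine patching of the two approximations you listed.
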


\begin{proof}
It follows from the two cotorsion pairs above that $A\mbox{-Gproj}$
is contravriantly finite, $\mathcal{P}^{\leq d}(A\mbox{-{\rm
mod}})=\mathcal{I}^{\leq d}(A\mbox{-{\rm mod}})$ is functorially
finite and $A\mbox{-Ginj}$ is covariantly finite. The rest follows
from  the following fact (and its dual): a resolving contravariantly
finite subcategory in $A\mbox{-mod}$ is functorially finite; see
\cite[Corollary 0.3]{KS}.
\end{proof}

There are analogues of the results above for large modules. For each
$d\geq 0$ denote by $\Omega^d(A\mbox{-Mod})$ the class of modules of
the form $\Omega^d M$ for an $A$-module $M$. Similarly we have the
notation $\Omega^{-d}(A\mbox{-Mod})$.

Then we have the following result.

\begin{thm}\label{thm:GorensteinalgebraII}
Let $A$ be an artin algebra and let $d\geq 0$. Then the following
statements are equivalent:
\begin{enumerate}
\item[(1)] the algebra $A$ is $d$-Gorenstein;
\item[(2)] $A\mbox{-{\rm GProj}}=\Omega^d(A\mbox{-{\rm Mod}})$;
\item[(2')] $A\mbox{-{\rm GInj}}=\Omega^{-d}(A\mbox{-{\rm Mod}})$.
\end{enumerate}
In this case, we have $A\mbox{-{\rm GProj}}=\{M\in A\mbox{-{\rm
Mod}}\; |\; {\rm Ext}_A^i(M, A)=0, i\geq 1\}$ and $A\mbox{-{\rm
GInj}}=\{M\in A\mbox{-{\rm Mod}}\; |\; {\rm Ext}_A^i(DA, M)=0, i\geq
1\}$. Moreover, we have two cotorsion pairs $(A\mbox{-{\rm GProj}},
\mathcal{P}^{\leq d}(A\mbox{-{\rm Mod}}))$ and $(\mathcal{I}^{\leq
d}(A\mbox{-{\rm Mod}}), A\mbox{-{\rm GInj}})$  in $A\mbox{-{\rm
Mod}}$.
\end{thm}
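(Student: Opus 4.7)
The plan is to parallel Theorem \ref{thm:GorensteinalgebraI}, using the large-module analogues Lemma \ref{lem:GProj} and Lemma \ref{lem:totallyacyclic,large} in place of Lemmas \ref{lem:Gproj} and \ref{lem:totallyacyclic}.  The text already identifies
\[
{}^\perp(A\mbox{-Proj}) \;=\; \{M \in A\mbox{-Mod} \mid {\rm Ext}_A^i(M,A)=0,\ i\geq 1\},
\]
so that the stated characterisation of $A\mbox{-{\rm GProj}}$ (and dually of $A\mbox{-{\rm GInj}}$) reduces to establishing $A\mbox{-{\rm GProj}} = {}^\perp(A\mbox{-Proj})$ under the $d$-Gorenstein hypothesis.

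For $(1)\Rightarrow(2)$: the inclusion $A\mbox{-{\rm GProj}} \subseteq \Omega^d(A\mbox{-Mod})$ is immediate, since a totally acyclic complex $P^\bullet$ with $Z^0(P^\bullet) = M$ exhibits $M$ as $\Omega^d Z^{-d}(P^\bullet)$.  For the reverse inclusion, write $M = \Omega^d N$; dimension-shift together with ${\rm inj.dim}\,{}_AA \leq d$ gives ${\rm Ext}_A^i(M, A) \simeq {\rm Ext}_A^{i+d}(N, A) = 0$ for $i \geq 1$, hence $M \in {}^\perp(A\mbox{-Proj})$.  To promote this to membership in $A\mbox{-{\rm GProj}}$ I would verify the three conditions of Lemma \ref{lem:GProj}(3).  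First, ${\rm Tor}_i^A(DA, M) = 0$ is immediate from $D{\rm Tor}_i^A(DA, M) \simeq {\rm Ext}_A^i(M, A) = 0$.  Next, the canonical map $M \to \nu^{-}\nu M$ is an isomorphism by the following diagram chase: choose a projective resolution $P^\bullet \to M$; the established Tor-vanishing makes $\nu P^\bullet \to \nu M$ exact, the fact that each $\nu P^j$ is injective makes $\nu^{-}(\nu P^\bullet) \to \nu^{-}\nu M$ exact (as ${\rm Ext}_A^i(DA,-)$ vanishes on injectives automatically), the identification $\nu^{-}\nu P^j \simeq P^j$ for projective $P^j$ closes the diagram, and the five lemma concludes.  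Finally, ${\rm Ext}_A^i(DA, \nu M) = 0$ is derived from the finite projective dimension of $DA$ (of length $\leq d$, guaranteed by ${\rm inj.dim}\,A_A \leq d$), combined with the adjunction identity $\mathrm{Hom}_A(Q, DA \otimes_A M) \simeq \mathrm{Hom}_A(Q, DA) \otimes_A M$ for finitely generated projective $Q$, reducing the claim to the Tor-vanishing above.

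For $(2)\Rightarrow(1)$: for any $M \in A\mbox{-Mod}$ and $k \geq 1$, one has ${\rm Ext}_A^{d+k}(M, A) \simeq {\rm Ext}_A^k(\Omega^d M, A) = 0$ since $\Omega^d M \in A\mbox{-{\rm GProj}} \subseteq {}^\perp(A\mbox{-Proj})$ by Lemma \ref{lem:totallyacyclic,large}(2); hence ${\rm inj.dim}\,{}_AA \leq d$.  The bound ${\rm inj.dim}\,A_A \leq d$ follows exactly as in the finite case: apply (2) to $\Omega^d D(A_A)$, which is Gorenstein-projective, and the Ext-vanishing argument forces the initial arrow in its two-sided projective resolution to split, yielding ${\rm proj.dim}\,D(A_A) \leq d$.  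The equivalence $(1)\Leftrightarrow(2')$ follows either by the dual proof (using the large-module analogue of Lemma \ref{lem:Ginj}) or by transport through the Nakayama equivalence $\nu\colon A\mbox{-{\rm GProj}} \xrightarrow{\sim} A\mbox{-{\rm GInj}}$.  The cotorsion pair $(A\mbox{-{\rm GProj}}, \mathcal{P}^{\leq d}(A\mbox{-Mod}))$ then follows from Auslander--Buchweitz (Theorem \ref{thm:AB} of Appendix A) applied in $A\mbox{-Mod}$: $A\mbox{-{\rm GProj}}$ is resolving (by the large analogue of Proposition \ref{prop:Gproj}), contains $A\mbox{-Proj}$, is Ext-orthogonal to $\mathcal{P}^{\leq d}(A\mbox{-Mod})$ by the characterisation just established, and every module admits a $d$-th syzygy in $A\mbox{-{\rm GProj}}$ by (2); the dual cotorsion pair follows analogously.

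The main obstacle will be the verification of ${\rm Ext}_A^i(DA, \nu M) = 0$ in the $(1)\Rightarrow(2)$ step.  While the $d$-Gorenstein hypothesis delivers finite projective dimension of $DA$ on both sides, translating Ext-vanishing against $A$ for $M$ into Ext-vanishing against $DA$ for $\nu M$ requires careful interplay between Hom and tensor, and it is here that the symmetric bound ${\rm inj.dim}\,A_A \leq d$ (as opposed to merely ${\rm inj.dim}\,{}_AA \leq d$) is used decisively.
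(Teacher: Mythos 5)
Your overall plan — verify the three conditions of Lemma \ref{lem:GProj}(3) for a module $M$ with ${\rm Ext}_A^{\geq 1}(M,A)=0$, paralleling Theorem \ref{thm:GorensteinalgebraI} with $(-)^*$ replaced by $\nu$ — is exactly the paper's approach. The treatment of $(2)\Rightarrow(1)$, of $(2')$, and of the cotorsion pairs is fine. But the verification of the last two conditions in $(1)\Rightarrow(2)$ has two genuine gaps.

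For the isomorphism $M\to\nu^-\nu M$ you argue that ``the fact that each $\nu P^j$ is injective makes $\nu^{-}(\nu P^\bullet)\to\nu^{-}\nu M$ exact, as ${\rm Ext}_A^i(DA,-)$ vanishes on injectives.'' This is not a valid deduction: $\nu^-={\rm Hom}_A(DA,-)$ is only left exact, and applying a left exact functor to the exact sequence $\nu P^\bullet\to\nu M\to 0$ stays exact only if ${\rm Ext}_A^1(DA,-)$ vanishes on the \emph{cocycles} $\nu Z^{-j}(P^\bullet)=\nu(\Omega^j M)$, which are generally not injective. Injectivity of the terms $\nu P^j$ alone gives nothing about surjectivity of, say, $\nu^-\nu P^0\to\nu^-\nu M$.

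For the vanishing ${\rm Ext}_A^i(DA,\nu M)=0$, the tensor identity $\mathrm{Hom}_A(Q,DA\otimes_A M)\simeq\mathrm{Hom}_A(Q,DA)\otimes_A M$ transforms the Ext-complex into $\mathrm{Hom}_A(Q^\bullet,DA)\otimes_A M$. But $\mathrm{Hom}_A(Q^j,DA)\simeq DQ^j$ is an \emph{injective} right $A$-module, so $DQ^\bullet$ is a bounded injective coresolution of $A_A$, not a projective resolution; taking cohomology of $DQ^\bullet\otimes_A M$ is not $\mathrm{Tor}$. The assertion ``reducing the claim to the Tor-vanishing above'' therefore does not hold as stated. (It can be salvaged: by Lemma \ref{lem:well-known} injective right modules are summands of coproducts of $D({}_AA)$, so ${\rm Tor}_i^A(DA,M)=0$ propagates to all injectives and then, by dimension-shift, to all right modules of finite injective dimension, forcing $-\otimes_A M$ to be exact on $DQ^\bullet$. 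But you did not supply this extra argument.)

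Both gaps are closed simultaneously by the paper's hint, which you quote but do not fully execute: first dimension-shift along the exact sequence $\nu P^\bullet\to\nu M\to 0$, whose terms $\nu P^j$ are injective, using $\mathrm{proj.dim}\,{}_ADA\leq d$ to obtain ${\rm Ext}_A^i(DA,\nu X)=0$ for \emph{every} $X\in{}^\perp(A\mbox{-Proj})$, in particular for $M$ and all its syzygies; then $\nu^-$ applied to the resolution is exact because the cocycles $\nu(\Omega^j M)$ have vanishing higher Ext, and the five lemma gives $M\simeq\nu^-\nu M$. Your chosen order — the isomorphism first, the Ext-vanishing second — is also the wrong way round, since the isomorphism relies on the Ext-vanishing applied to the syzygies.
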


\begin{proof}
We just comment on the proof of  the results concerning
Gorenstein-projective modules. Note that the condition (2) implies
that $A\mbox{-Gproj}=\Omega^d(A\mbox{-mod})$ and then
``$(2)\Rightarrow (1)$" follows from Theorem
\ref{thm:GorensteinalgebraI}. To see ``$(1)\Rightarrow (2)$", first
observe that $A\mbox{-GProj}\subseteq
\Omega^d(A\mbox{-Mod})\subseteq \{M\in A\mbox{-{\rm Mod}}\; |\; {\rm
Ext}_A^i(M, A)=0, i\geq 1\}$. Take an $A$-module $M$ with the
property ${\rm Ext}_A^i(M, A)=0$ for all $i\geq 1$. We have to show
that $M$ is Gorenstein-projective. We apply Lemma
\ref{lem:GProj}(3). Note that $M$ satisfies that ${\rm Tor}_i^A(DA,
M)=0$ for all $i\geq 1$. We replace $(-)^*$ by $\nu$ in the proof of
``$(1)\Rightarrow (2)$" in Theorem \ref{thm:GorensteinalgebraI}.
\end{proof}

Following Beligiannis and Reiten an artin algebra $A$ is said to be
\emph{virtually Gorenstein}\index{algebra!virtually Gorenstein} provided that $(A\mbox{-{\rm
GProj}})^\perp={^\perp(A\mbox{-GInj})}$; see \cite[Chapter X,
Definition 3.3]{BR}. Then it follows from the two cotorsion pairs
above that a Gorenstein artin algebra is virtually Gorenstein. For
more on virtually Gorenstein algebras, see \cite{Bel2, BK}.

For an additive subcategory $\mathcal{X}$ of $A\mbox{-Mod}$ we
denote by $\underrightarrow{\rm lim}\; \mathcal{X}$ the full
subcategory of $A\mbox{-Mod}$ consisting of filtered colimits of
modules in $\mathcal{X}$, or equivalently, consisting of direct
limits of modules in $\mathcal{X}$ (\cite[Chapter 1, Theorem
1.5]{AR}). By Proposition \ref{prop:GProj} the full subcategory
$A\mbox{-GProj}$ is closed under filtered colimits. In particular we
have $\underrightarrow{\rm lim}\; A\mbox{-Gproj}\subseteq
A\mbox{-GProj}$. Similarly we have $\underrightarrow{\rm lim}\;
A\mbox{-Ginj}\subseteq A\mbox{-GInj}$.

The following result is of interest. It is contained in
\cite[Theorem 5]{BK}, while its first part is contained in
\cite{EJ?}.

\begin{prop}
Let $A$ be a Gorenstein artin algebra. Then we have
$\underrightarrow{\rm lim}\; A\mbox{-{\rm Gproj}}=A\mbox{-{\rm
GProj}}$ and $\underrightarrow{\rm lim}\; A\mbox{-{\rm
Ginj}}=A\mbox{-{\rm GInj}}$.
\end{prop}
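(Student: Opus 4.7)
The inclusions $\underrightarrow{\rm lim}\; A\mbox{-{\rm Gproj}} \subseteq A\mbox{-{\rm GProj}}$ and $\underrightarrow{\rm lim}\; A\mbox{-{\rm Ginj}} \subseteq A\mbox{-{\rm GInj}}$ are immediate from Proposition \ref{prop:GProj}(1) and its dual, which say that $A\mbox{-{\rm GProj}}$ and $A\mbox{-{\rm GInj}}$ are closed under filtered colimits. The substance of the proposition is the reverse inclusions, and I focus on $A\mbox{-{\rm GProj}} \subseteq \underrightarrow{\rm lim}\; A\mbox{-{\rm Gproj}}$.

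Let $A$ be $d$-Gorenstein and take $M \in A\mbox{-{\rm GProj}}$. Since $A$ is noetherian, I write $M = \underrightarrow{\rm lim}\; F_\alpha$ as the filtered colimit of its finitely generated submodules. For each $\alpha$, the cotorsion pair $(A\mbox{-{\rm Gproj}}, \mathcal{P}^{\leq d}(A\mbox{-{\rm mod}}))$ from Proposition \ref{prop:cotorsionpair} supplies a short exact sequence
$$0 \longrightarrow L_\alpha \longrightarrow G_\alpha \longrightarrow F_\alpha \longrightarrow 0$$
with $G_\alpha \in A\mbox{-{\rm Gproj}}$ and $L_\alpha \in \mathcal{P}^{\leq d}(A\mbox{-{\rm mod}})$.

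The principal, and technically most delicate, step is to promote these pointwise approximations into an actual directed system $(G_\alpha)_\alpha$ compatible with the transition maps of $(F_\alpha)_\alpha$. Whenever $\alpha \leq \beta$, the vanishing ${\rm Ext}^1_A(G_\alpha, L_\beta) = 0$---available from Corollary \ref{cor:infinitedimension}(1) because $G_\alpha \in {^\perp A}$ and $L_\beta$ has finite projective dimension---lifts the composite $G_\alpha \to F_\alpha \to F_\beta$ along the surjection $G_\beta \twoheadrightarrow F_\beta$ to some morphism $G_\alpha \to G_\beta$; a transfinite/small-object argument (or a cofinal reindexing) organizes these non-canonical lifts into a bona fide directed system. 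Passing to the filtered colimit yields a short exact sequence
$$0 \longrightarrow L \longrightarrow G \longrightarrow M \longrightarrow 0$$
with $G = \underrightarrow{\rm lim}\; G_\alpha \in \underrightarrow{\rm lim}\; A\mbox{-{\rm Gproj}}$ and $L = \underrightarrow{\rm lim}\; L_\alpha$. To see that $L$ lies in $\mathcal{P}^{\leq d}(A\mbox{-{\rm Mod}})$, observe that each $L_\alpha$ has projective dimension $\leq d$ in $A\mbox{-{\rm Mod}}$; since the artin algebra $A$ is perfect, flat modules are projective, and thus the filtered colimit $L$, which has flat dimension $\leq d$ because ${\rm Tor}$ commutes with filtered colimits, also has projective dimension $\leq d$. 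The large cotorsion pair of Theorem \ref{thm:GorensteinalgebraII} then forces ${\rm Ext}^1_A(M, L) = 0$, so the sequence splits and $M$ is a direct summand of $G$. Because $A\mbox{-{\rm Gproj}}$ is closed under finite direct sums and direct summands (Proposition \ref{prop:Gproj}(2)), so is $\underrightarrow{\rm lim}\; A\mbox{-{\rm Gproj}}$ by the standard lemma of Crawley-Boevey; hence $M \in \underrightarrow{\rm lim}\; A\mbox{-{\rm Gproj}}$.

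The second equality falls out by applying the Nakayama functor $\nu = DA \otimes_A -$: as a left adjoint it commutes with filtered colimits, and by Proposition \ref{prop:GinjGproj} together with its large-module analogue (recorded just after Proposition \ref{prop:GProj}) it restricts to equivalences $A\mbox{-{\rm Gproj}} \simeq A\mbox{-{\rm Ginj}}$ and $A\mbox{-{\rm GProj}} \simeq A\mbox{-{\rm GInj}}$. The main obstacle throughout the plan is the coherence step flagged above; once the lifts $G_\alpha \to G_\beta$ are arranged consistently, the rest reduces to a routine application of the two cotorsion pairs.
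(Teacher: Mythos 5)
Your strategy differs from the paper's: the paper cites Krause--Solberg \cite[Theorem 2.4(2)]{KS}, which for the cotorsion pair $(A\mbox{-{\rm Gproj}}, \mathcal{P}^{\leq d}(A\mbox{-{\rm mod}}))$ identifies $\underrightarrow{\rm lim}\; A\mbox{-{\rm Gproj}}$ with $\{M\in A\mbox{-{\rm Mod}} : {\rm Ext}_A^i(M, L)=0 \ \text{for all} \ i\geq 1, L\in \mathcal{P}^{\leq d}(A\mbox{-{\rm mod}})\}$, and then matches this with $A\mbox{-{\rm GProj}}$ by dimension-shift and Theorem \ref{thm:GorensteinalgebraII}. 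You instead try to build the colimit presentation from scratch, and the step you flag as ``the principal, and technically most delicate'' is precisely where the argument has a genuine gap. Picking, for each $\alpha\leq\beta$, a lift $g_{\alpha\beta}\colon G_\alpha\to G_\beta$ of the composite $G_\alpha\to F_\alpha\to F_\beta$ does not yield a functor from the index poset: nothing forces $g_{\beta\gamma}\circ g_{\alpha\beta}=g_{\alpha\gamma}$, and the lifts are not unique, only unique up to a map factoring through $L_\gamma$. Neither ``transfinite/small-object argument'' nor ``cofinal reindexing'' resolves this non-commutativity as stated; this coherence problem is exactly the content that the Lenzing/Krause--Solberg machinery handles (for example by reindexing over a richer filtered category of triples $(F, G, g)$, or by appealing to purity and definability), and it cannot be waved away.

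The remaining ingredients you use are sound: $\underrightarrow{\rm lim}\; A\mbox{-{\rm Gproj}} \subseteq A\mbox{-{\rm GProj}}$ follows from Proposition \ref{prop:GProj}(1); ${\rm Ext}^1_A(G_\alpha, L_\beta)=0$ follows from Corollary \ref{cor:infinitedimension}(1); $L$ has projective dimension $\leq d$ since $A$ is perfect and ${\rm Tor}$ commutes with filtered colimits; the splitting via the large cotorsion pair in Theorem \ref{thm:GorensteinalgebraII} is correct; and closure of $\underrightarrow{\rm lim}\; A\mbox{-{\rm Gproj}}$ under direct summands and the reduction of the $A\mbox{-{\rm Ginj}}$ case via $\nu$ are both fine. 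But as written the argument is incomplete without a proof that a compatible directed system of approximations can actually be chosen, and the most economical repair is simply to invoke \cite[Theorem 2.4(2)]{KS} as the paper does.
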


\begin{proof}
Assume that the algebra $A$ is d-Gorenstein. Recall
from Proposition \ref{prop:cotorsionpair}
 the cotorsion pair $(A\mbox{-Gproj}, \mathcal{P}^{\leq d
}(A\mbox{-mod}))$ in $A\mbox{-mod}$. Then by \cite[Theorem
2.4(2)]{KS} we have $\underrightarrow{\rm lim}\; A\mbox{-{\rm
Gproj}}=\{M\in A\mbox{-Mod}\; |\; {\rm Ext}_A^i(M, L)=0, i\geq 1,
L\in \mathcal{P}^{\leq d}(A\mbox{-mod})\}$. By dimension-shift one
infer that an $A$-module $M$ has the property that ${\rm Ext}_A^i(M,
L)=0$ for all $ i\geq 1$ and all $ L$ in $\mathcal{P}^{\leq
d}(A\mbox{-mod})$ if and only if $M$ satisfies ${\rm Ext}_A^i(M,
A)=0$ for all $i\geq 1$. Then it follows from Theorem
\ref{thm:GorensteinalgebraII} that this is equivalent to that $M$
lies in $A\mbox{-GProj}$.
\end{proof}

Recall that finitely generated Gorenstein-projective modules over an
artin algebra $A$ are also called maximal Cohen-Macaulay modules.
Following \cite[Example 8.4(2)]{Bel2} an artin algebra $A$ is said
to be \emph{CM-finite}\index{algebra!CM-finite} provided that up to
isomorphism there are only finitely many indecomposable modules in
$A\mbox{-Gproj}$. Observe that an algebra of finite representation
type is CM-finite. By Corollary \ref{cor:duality} $A$ is CM-finite
if and only if so is $A^{\rm op}$.

Recently CM-finite Gorenstein artin algebras attract considerable
attentions; see \cite{Ch1, Bel3, LZ1}. It is believed that the
theory of Gorenstein-projective modules over a CM-finite Gorenstein
artin algebra is the simplest one among all the interesting cases.

As an extreme case of CM-finite algebras, we call that an artin
algebra $A$ is \emph{CM-free}\index{algebra!CM-free} provided that
$A\mbox{-Gproj}=A\mbox{-proj}$ (compare \cite{MO}). Roughly
speaking, for these kinds of algebras the theory of
Gorenstein-projective modules is boring. Note that by Corollary
\ref{cor:infinitedimension}(2) an algebra of finite global dimension
is CM-finite. On the other hand, by Theorem
\ref{thm:GorensteinalgebraI}(2) a Gorenstein algebra is CM-finite if
and only if it has finite global dimension.

Take $k$ to be a field and consider the three dimensional truncated
polynomial algebra $A=k[x,y]/(x^2, xy, y^2)$. It is well-known that
the algebra $A$ is CM-free. In what follows we will give a general
result, which generalizes \cite[Proposition 2.4]{Yoh}.

Recall that for an artin algebra $A$ its \emph{Ext-quiver}\index{Ext-quiver} $Q(A)$ is
defined such that the vertices are given by a complete set $\{S_1,
S_2, \cdots, S_n\}$ of pairwise non-isomorphic simple $A$-modules
and there is an arrow from $S_i$ to $S_j$ if and only if ${\rm
Ext}_A^1(S_i, S_j)\neq 0$. Recall that the algebra $A$ is connected
if and only if the underlying graph of $Q(A)$ is connected.

\begin{thm}
Let $A$ be a connected artin algebra such that $\mathbf{r}^2=0$.
Here $\mathbf{r}$ is the Jacobson radical of $A$. Then either $A$ is
self-injective or CM-free.
\end{thm}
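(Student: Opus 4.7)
My plan is to prove the contrapositive: assuming $A$ is not CM-free, I will deduce that $A$ is self-injective. Pick an indecomposable non-projective Gorenstein-projective module $M$ and look at its projective cover $\pi\colon P(M)\to M$. Since $\mathbf{r}^2=0$, the kernel $\operatorname{Ker}\pi\subseteq\mathbf{r}P(M)$ is semisimple, while Lemma \ref{lem:syzygyofGproj} asserts it is non-projective and indecomposable; together these force it to be a simple non-projective Gorenstein-projective module. Iterating the syzygy functor from any such simple (using Corollary \ref{cor:syzygy}(1) to stay inside $A\mbox{-Gproj}$ and Lemma \ref{lem:syzygyofGproj} to preserve being non-projective and indecomposable), one produces a finite cycle $S_{i_0},S_{i_1},\ldots,S_{i_{n-1}},S_{i_n}=S_{i_0}$ of pairwise non-isomorphic simple non-projective Gorenstein-projective modules satisfying $\Omega S_{i_k}\simeq S_{i_{k+1}}$. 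Each projective cover $P_{i_k}$ is thereby uniserial of length $2$ with top $S_{i_k}$ and socle $S_{i_{k+1}}$, and the arrows $i_0\to i_1\to\cdots\to i_{n-1}\to i_0$ form a cycle in the Ext-quiver $Q(A)$ with out-degree $1$ at every participating vertex.

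Let $C$ denote the set of all vertices that appear in such cycles. The heart of the plan is to show $C$ is a union of connected components of $Q(A)$. Out-arrows from $C$ stay inside $C$ by construction. To control in-arrows I will pass to the right side via the duality $(-)^*\colon A\mbox{-Gproj}\stackrel{\sim}\to A^{\rm op}\mbox{-Gproj}$ of Corollary \ref{cor:duality}. Applying $\operatorname{Hom}_A(-,A)$ to the short exact sequence $0\to S_{i_{k+1}}\to P_{i_k}\to S_{i_k}\to 0$, and invoking $\operatorname{Ext}^1_A(S_{i_k},A)=0$ from Corollary \ref{cor:infinitedimension}, gives an exact sequence of right $A$-modules
$$0\to S_{i_k}^*\to e_{i_k}A\to S_{i_{k+1}}^*\to 0.$$
Since the out-degree at $i_k$ in $Q(A)$ is $1$, a dual analysis shows $e_{i_k}A$ is uniserial of length $2$ with top $S_{i_k}^{\rm op}$ and socle $S_{i_{k+1}}^{\rm op}$. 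Reflexivity of $S_{i_k}$ (Lemma \ref{lem:Gproj}) rules out $S_{i_k}^*=0$, so $S_{i_k}^*\simeq S_{i_{k+1}}^{\rm op}$. In particular, $S_m^{\rm op}$ is a simple non-projective Gorenstein-projective right $A$-module for every $m\in C$.

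Now I apply the same syzygy argument to $A^{\rm op}$, whose radical also squares to zero: the right syzygy $\Omega_{A^{\rm op}}S_m^{\rm op}=e_m\mathbf{r}$ must be a simple right $A$-module, which is precisely the statement that the in-degree at $m$ in $Q(A)$ equals $1$. Since the cycle already supplies an incoming arrow at $m$ from within $C$, this is the unique in-arrow. Hence no arrow of $Q(A)$ bridges $C$ and $Q_0\setminus C$, so $C$ is a union of connected components, and the connectedness of $A$ forces $C=Q_0$. With every vertex on a cycle, each $P_j$ and each $I_j$ is uniserial of length $2$, and the embedding $P_j\hookrightarrow I_{\sigma(j)}$ into the injective envelope of its socle is a length-preserving monomorphism, hence an isomorphism. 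Therefore every indecomposable projective is injective, and $A$ is self-injective.

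The step I expect to require the most care is the identification $S_{i_k}^*\simeq S_{i_{k+1}}^{\rm op}$: it combines the left-side out-degree information (which controls the length of $e_{i_k}A$), reflexivity (which rules out vanishing), and the $(-)^*$-duality for Gorenstein-projective modules. Once this symmetry between left and right cycles is secured, the in-degree analysis on $A^{\rm op}$ runs in parallel and the rest is packaged by the connectedness hypothesis.
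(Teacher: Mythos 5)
Your opening moves match the paper's: reduce to a non-projective indecomposable Gorenstein-projective $M$, note $\Omega M\subseteq\mathbf{r}P(M)$ is semisimple hence simple, and iterate to get a cycle of simple GP modules with uniserial length-two projective covers. From there the paper establishes that each $S_{i_k}$ has in-degree one in $Q(A)$ by a direct lifting argument (using $\mathrm{Ext}^1_A(S_{i_k},P)=0$ to lift a map $S_{i_{k+1}}\to P$ to $P_{i_k}\to P$ and deriving a contradiction from the semisimplicity of the syzygy of a hypothetical inbound simple), whereas you propose to dualize to $A^{\rm op}$ and run the same syzygy argument on the right. That is a genuinely different route, and a natural one. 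However, there is a gap at its crux. You assert that ``since the out-degree at $i_k$ in $Q(A)$ is $1$, a dual analysis shows $e_{i_k}A$ is uniserial of length $2$.'' This is circular: out-degree at $i_k$ in $Q(A)$ controls the radical of $Ae_{i_k}$, not of $e_{i_k}A$. The length of $e_{i_k}A$ equals $1$ plus the in-degree at $i_k$ in $Q(A)$, which is exactly the quantity you are trying to bound. Nothing purely formal transports a length bound across $(-)^*$ on projectives (in general $\dim_R Ae_i\ne\dim_R e_iA$). There is also an indexing slip: the arrows in your cycle run $i_{k-1}\to i_k\to i_{k+1}$, so the simple submodule $S_{i_k}^*\subseteq e_{i_k}\mathbf{r}$ is (once the in-degree claim is settled) isomorphic to $S_{i_{k-1}}^{\rm op}$, not $S_{i_{k+1}}^{\rm op}$.

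The approach can nevertheless be saved, and the missing ingredient is close to something you already invoke. The submodule $S_{i_k}^*$ of $e_{i_k}A$ is proper (since $S_{i_{k+1}}^*\neq 0$ by reflexivity of $S_{i_{k+1}}$), hence lies in the semisimple radical $e_{i_k}\mathbf{r}$, hence is semisimple; it is also Gorenstein-projective by Corollary~\ref{cor:duality}. Writing $S_{i_k}^*\simeq T_1\oplus\cdots\oplus T_r$ with each $T_j$ simple, each $T_j$ is a direct summand of a GP module hence GP hence reflexive, so $T_j^*\neq 0$ for every $j$; but $(S_{i_k}^*)^*\simeq S_{i_k}$ is simple, forcing $r=1$. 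Thus $S_{i_k}^*$ is a simple non-projective GP right module, and by the same token $S_{i_{k+1}}^*$ is simple, so the exact sequence $0\to S_{i_k}^*\to e_{i_k}A\to S_{i_{k+1}}^*\to 0$ gives length two after all. With that repaired (and the index corrected to $S_{i_k}^*\simeq S_{i_{k-1}}^{\rm op}$), the rest of your plan goes through and is a pleasant alternative to the paper's lifting argument.
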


\begin{proof}
Assume that $A$ is not CM-finite. Take $M\in A\mbox{-Gproj}$ to be
non-projective and indecomposable. Note that there is a short exact
sequence $0\rightarrow M\rightarrow P\stackrel{\pi}\rightarrow
M'\rightarrow 0$ with $P$ projective and $M'\in A\mbox{-Gproj}$. It
follows that $\pi$ is a projective cover and then $M\subseteq
\mathbf{r}P$. Note that $\mathbf{r}^2=0$. Hence $\mathbf{r}M=0$ and
then $M$ is semisimple. Note that $M$ is indecomposable. Then we
conclude that $M$ is a simple module.

Let $S_1=M$ be the above simple module. Take a short exact sequence
$\eta\colon 0\rightarrow S_2\stackrel{i_2}\rightarrow
P_1\stackrel{\pi_1}\rightarrow S_1\rightarrow 0$ such that $\pi_1$
is a projective cover. Observe that $S_2\neq 0$. By Lemma
\ref{lem:syzygyofGproj} $S_2$ is indecomposable. Then by above we
infer that $S_2$ is simple. Moreover we claim that a simple
$A$-module $S$ with ${\rm Ext}_A^1(S, S_2)\neq 0$ is isomorphic to
$S_1$.

To prove the claim, let us assume on the contrary that $S$ is not
isomorphic to $S_1$. Take a short exact sequence $0\rightarrow
K\rightarrow P\stackrel{\pi}\rightarrow S\rightarrow 0$ such that
$\pi$ is a projective cover. As above we infer that $K$ is
semisimple. Observe that ${\rm Ext}_A^1(S, S_2)\neq 0$ implies that
${\rm Hom}_A(K, S_2)\neq 0$. Then $S_2$ is a direct summand of $K$.
Thus we get a nonzero morphism $S_2\hookrightarrow K\hookrightarrow
P$ which is denoted by $l$. Note that ${\rm Ext}_A^1(S_1, P)=0$
since $S_1$ is Gorenstein-projective. By the long exact sequence
obtained by applying ${\rm Hom}_A(-, P)$ to $\eta$ we have an
epimorphism ${\rm Hom}_A(P_1, P)\rightarrow {\rm Hom}_A(S_2, P)$
induced by $i_2$. It follows then there exists a morphism $a\colon
P_1\rightarrow P$ such that $a\circ i_2=l$. Note that $S_2$ is the
socle of $P_1$ on which $a$ is nonzero. It follows that the morphism
$a$ is monic. On the other hand, since $S$ is not isomorphic to $S_1$,
the composite $P_1\stackrel{a}\rightarrow P\stackrel{\pi}\rightarrow
S$ is zero. This implies that the monomorphism $a$ factors through
$K$. Note that $K$ is semisimple while the module $P_1$ is not
semisimple. This is absurd. We are done with the claim.

Similarly we define $S_3$ by the short exact sequence $0\rightarrow
S_3\stackrel{i_3}\rightarrow P_2\stackrel{\pi_2}\rightarrow
S_2\rightarrow 0$ such that $\pi_2$ is a projective cover. As  above
$S_3$ is simple and satisfies that any simple $A$-module $S$ with
${\rm Ext}_A^1(S, S_3)\neq 0$ is isomorphic  to $S_2$. In this way
we define $S_n$ for $n\geq 1$.

Choose $n\geq 1$ minimal with the property that $S_n\simeq S_m$ for
 some $m<n$. Then such an $m$ is unique. Note that $m=1$. Otherwise
 ${\rm Ext}_A^1(S_{m-1}, S_n)\simeq {\rm Ext}_A^1(S_{m-1}, S_m)\neq 0$ while
 $S_{m-1}$ is not isomorphic to $S_{n-1}$. This contradicts the
 claim above for $S_n$. Then we get a set $\{S_1, S_2, \cdots, S_{n-1}\}$
 of pairwise non-isomorphic simple $A$-modules; moreover each $S_i$
 satisfies that any simple $A$-module $S$ with
${\rm Ext}_A^1(S, S_i)$ is isomorphic  to $S_{i-1}$, and clearly
from the construction of $S_i$'s  any simple $A$-module $S$ with
${\rm Ext}_A^1(S_i, S)$ is isomorphic  to $S_{i+1}$ (here we
identify $S_{0}$ with $S_{n-1}$, $S_{n}$ with $S_1$). It follows
then that the full sub quiver of $Q(A)$ with vertices $\{S_1, S_2,
\cdots, S_{n-1}\}$ is a connected component. Since $A$ is connected,
these are all the simple $A$-modules. Then all the indecomposable
projective $A$-modules are given by $\{P_1, P_2, \cdots, P_{n-1}\}$.
Observe that each of them  is of length $2$ and has a different
simple socle. It follows immediately that the algebra $A$ is
self-injective either by \cite[1.6, Ex.2]{Ben} or by \cite[Chapter
IV, Ex.12]{ARS}.
\end{proof}

\chapter{Gorenstein Homological Algebra}
In this chapter we will study the central topic of Gorenstein
homological algebra: we study proper Gorenstein-projective
resolutions and Gorenstein-projective dimensions; we study the class
of modules having finite Gorenstein-projective dimension; we also
briefly discuss Gorenstein derived categories.

\section{Gorenstein Resolutions and Dimensions}

In this section we will study proper Gorenstein-projective
resolutions of modules and then various Gorenstein dimensions of
modules and algebras.

Let $A$ be an artin $R$-algebra where $R$ is a commutative artinian
ring. Denote by $A\mbox{-GProj}$ the category of
Gorenstein-projective $A$-modules. Recall that the stable category
$A\mbox{-\underline{GProj}}$ modulo projective modules is
triangulated with arbitrary coproducts.

Following Neeman (\cite{Nee0} and \cite[Definition 1.7]{Nee1}) a
triangulated category $\mathcal{T}$ with arbitrary coproducts is
\emph{compactly generated}\index{category!compactly generated}
provided that the full subcategory $\mathcal{T}^c$ consisting of
compact objects is essentially small and for any nonzero object
$X\in\mathcal{T}$ there exists a compact object $C$ with ${\rm
Hom}_\mathcal{T}(C, X)\neq 0$. In this case the smallest
triangulated subcategory of $\mathcal{T}$ which contains
$\mathcal{T}^c$ and is closed under coproducts is $\mathcal{T}$
itself; see \cite[Lemma 3.2]{Nee1}. One of the main features of
compactly generated triangulated categories is that the Brown
representability theorem and its dual hold form them; see
\cite[Theorem 3.1]{Nee1} and \cite{Nee98}; also see \cite[Theorems
8.3.3 and 8.6.1]{Nee2001}.

The following result, due to Beligiannis (\cite[Theorem 6.6]{Bel2}),
is one of the basic results in Gorenstein homological algebra.
Observe that it is contained in \cite[Theorem 5.4]{IK}.

\begin{thm}\label{thm:existenceofresolution} {\rm (Beligiannis)}
Let $A$ be an artin algebra. Then the triangulated category
$A\mbox{-\underline{\rm GProj}}$ is compactly generated. \hfill
$\square$
\end{thm}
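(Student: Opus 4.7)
The plan is to take (a skeleton of) the finitely generated Gorenstein-projectives $A\mbox{-\underline{\rm Gproj}}$ as the candidate set $\mathcal{S}$ of compact generators and then verify the two defining properties of a compactly generated triangulated category. By the lemma immediately preceding the theorem, the embedding $A\mbox{-\underline{\rm Gproj}} \hookrightarrow (A\mbox{-\underline{\rm GProj}})^c$ is fully faithful, so $\mathcal{S}$ consists of compact objects; since $A$ is artin, $A\mbox{-mod}$ has only a set of isomorphism classes, so $\mathcal{S}$ is essentially small. Once generation by $\mathcal{S}$ is in hand, essential smallness of the full subcategory of all compact objects follows from a standard theorem of Neeman, namely that the compacts are precisely the direct summands of objects in the thick subcategory generated by any set of compact generators.

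The heart of the proof is thus to show generation: if $X \in A\mbox{-GProj}$ satisfies $\underline{\rm Hom}_A(C, \Sigma^n X) = 0$ for every $C \in A\mbox{-Gproj}$ and every $n \in \mathbb{Z}$, then $X$ must be projective. I would translate this via complete resolutions: fix a totally acyclic complex $P^\bullet$ of large projectives with $Z^0(P^\bullet) \cong X$. The assignment $Z^0$ realizes a triangle equivalence between the homotopy category of totally acyclic complexes of large projectives and $A\mbox{-\underline{\rm GProj}}$, under which $A\mbox{-\underline{\rm Gproj}}$ corresponds to totally acyclic complexes of finitely generated projectives. The vanishing hypothesis then becomes: every chain map from a totally acyclic complex of finitely generated projectives into $P^\bullet$ is null-homotopic.

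The main obstacle is extracting from this null-homotopy condition that $P^\bullet$ itself is null-homotopic, equivalently that $X$ is projective. My approach is to exploit that every large projective $P^n$ is a direct summand of a product of copies of $_AA$ (Lemma \ref{lem:well-known}), so that finitely generated pieces of the cocycles of $P^\bullet$ can be covered by finitely generated projective submodules inside these products. Building, by bidirectional induction using projective covers of successive cocycles of $P^\bullet$ together with the vanishing ${\rm Ext}^i_A(G, A) = 0$ for $G$ Gorenstein-projective (Corollary \ref{cor:infinitedimension}), one should produce a non-null-homotopic chain map $Q^\bullet \to P^\bullet$ with $Q^\bullet$ a totally acyclic complex of finitely generated projectives whenever $X$ is non-projective, contradicting the hypothesis. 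The technical core lies in synchronizing the finitely generated approximations across all degrees while simultaneously preserving total acyclicity, which is where artin-ness of $A$ and the duality between finitely generated and large projectives afforded by the Nakayama functor should enter essentially.
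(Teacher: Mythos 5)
Your proposal has a genuine and fatal gap: you take $\mathcal{S}$ to be (a skeleton of) $A\mbox{-\underline{Gproj}}$ and then set out to prove that $\mathcal{S}$ \emph{generates} $A\mbox{-\underline{GProj}}$, i.e.\ that any $X\in A\mbox{-GProj}$ with $\underline{\rm Hom}_A(C, \Sigma^n X)=0$ for all $C\in A\mbox{-Gproj}$ and all $n$ is already projective. This statement is not true for an arbitrary artin algebra; it is essentially equivalent to $A$ being virtually Gorenstein. Indeed, since $A\mbox{-\underline{Gproj}}$ sits inside the compacts and is itself a thick triangulated subcategory (closed under shifts, cones, and direct summands), if it generated the large stable category then by Neeman's theorem it would coincide with $(A\mbox{-\underline{GProj}})^c$, and that identification is precisely a characterization of virtually Gorenstein algebras in the sense of Beligiannis--Reiten and Beligiannis--Krause. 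The paper itself flags (in Appendix C, Problems A and B) that whether even CM-finite algebras are virtually Gorenstein is open. So the claim you must prove in the ``heart of the proof'' would, if correct, establish that \emph{every} artin algebra is virtually Gorenstein --- a far stronger assertion than Beligiannis's theorem and not one that can be obtained by the bidirectional-approximation argument you sketch. The remark that the ``technical core'' remains to be carried out is not a small detail: it is the place where the plan breaks.

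The paper avoids this trap entirely. Its compact generators are not finitely generated Gorenstein-projective modules at all. It realizes $A\mbox{-\underline{GProj}}$ as $\mathbf{K}_{\rm tac}(A\mbox{-Proj})$ inside the homotopy category $\mathbf{K}(A\mbox{-Proj})$; uses J{\o}rgensen's theorem that $\mathbf{K}(A\mbox{-Proj})$ is compactly generated with compacts identified as $(P^\bullet)^*$ for $P^\bullet\in \mathbf{K}^{-,b}(A^{\rm op}\mbox{-proj})$; exhibits $\mathbf{K}_{\rm tac}(A\mbox{-Proj})$ as the right orthogonal $\{A, \nu^{-}\mathbf{i}A\}^\perp$ of two compact objects (using $\nu^{-}\mathbf{i}A\simeq (\mathbf{p}DA)^*$); and finally invokes Neeman's localization theorem (the right orthogonal of a set of compacts in a compactly generated category is again compactly generated). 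The compact generators this produces live in $\mathbf{K}_{\rm tac}(A\mbox{-Proj})$ but need not correspond to finitely generated Gorenstein-projectives, which is exactly why the argument works without deciding the virtually-Gorenstein question. If you want a correct proof along your lines you should abandon the claim that $A\mbox{-\underline{Gproj}}$ generates and instead follow this localization route.
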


We will sketch a proof of this theorem in Appendix B. Here we will
give an application of the theorem.

The following  application of Beligiannis's Theorem  is contained
in \cite[Theorem 2.11]{Jor2}. We will present a stronger result due
to Beligiannis-Reiten in Appendix B. \index{subcategory!contravariantly finite}

\begin{cor}\label{cor:GProjresolution}
Let $A$ be an artin algebra. Then the subcategory $A\mbox{-{\rm
GProj}}\subseteq A\mbox{-{\rm Mod}}$ is contravariantly finite.
\end{cor}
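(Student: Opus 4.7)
The plan is to apply Brown representability in the compactly generated triangulated category $A\mbox{-\underline{GProj}}$ (Theorem \ref{thm:existenceofresolution}) to the stable contravariant Hom functor into $M$, and then convert the resulting representing object into an honest right approximation by summing with a surjection from a projective.

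Concretely, for $M \in A\mbox{-Mod}$ I would consider
$$F \colon (A\mbox{-\underline{GProj}})^{\rm op} \longrightarrow R\mbox{-Mod}, \qquad F(X) = \underline{\rm Hom}_A(X, M),$$
where $\underline{\rm Hom}_A(X, M) = {\rm Hom}_A(X, M)/P(X, M)$. First I would verify that $F$ is well-defined on the stable category and is cohomological: a triangle in $A\mbox{-\underline{GProj}}$ is induced by a short exact sequence $0 \to X \to Y \to Z \to 0$ in $A\mbox{-GProj}$, and the argument of Proposition \ref{prop:cohomologicalfunctor} for the contravariant variable uses only the universal property of projectives, not any Gorenstein hypothesis on $M$, so it applies unchanged. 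Second, $F$ sends coproducts in $A\mbox{-\underline{GProj}}$ to products in $R\mbox{-Mod}$: ${\rm Hom}_A(-, M)$ already does so, and the subspace $P(-, M)$ of morphisms factoring through projectives behaves additively because a coproduct of projective $A$-modules is projective. Brown representability then produces $G \in A\mbox{-\underline{GProj}}$ and a natural isomorphism $\underline{\rm Hom}_A(-, G) \xrightarrow{\sim} F(-)$; by Yoneda this corresponds to a class $\bar{f} \in \underline{\rm Hom}_A(G, M)$, which I lift to an actual morphism $f \colon G \to M$ in $A\mbox{-Mod}$.

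To promote $f$ to a genuine right approximation I would fix any surjection $p \colon P_M \to M$ with $P_M$ projective (for instance the canonical surjection from a free module on the underlying set of $M$) and argue that
$$(f, p) \colon G \oplus P_M \longrightarrow M$$
is the desired right $A\mbox{-GProj}$-approximation. Given $\alpha \colon X \to M$ with $X \in A\mbox{-GProj}$, the previous step supplies $\beta \colon X \to G$ such that $\alpha - f\beta$ factors as $\pi \circ q$ through some projective $Q$; projectivity of $Q$ applied to the epimorphism $p$ then yields $s \colon Q \to P_M$ with $ps = \pi$, whence $\alpha = f\beta + p(sq) = (f, p) \circ (\beta, sq)$. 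The codomain $G \oplus P_M$ is Gorenstein-projective because $A\mbox{-GProj}$ is closed under coproducts (Proposition \ref{prop:GProj}). The principal obstacle is the verification that $F$ is cohomological and sends coproducts to products on the stable quotient for a general $M$ (not itself assumed Gorenstein-projective); once this is secured, the rest of the argument is a formal combination of Brown representability with the lifting property of projectives.
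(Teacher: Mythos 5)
Your proposal is correct and follows essentially the same route as the paper: invoke Beligiannis's Theorem \ref{thm:existenceofresolution} to get compact generation of $A\mbox{-\underline{{\rm GProj}}}$, apply Brown representability to the cohomological, coproduct-to-product functor $\underline{\rm Hom}_A(-,M)$ to obtain $G$ and $f\colon G\to M$, and then use $(f,\pi)\colon G\oplus P\to M$ with $\pi\colon P\to M$ a projective cover-up epimorphism as the right $A\mbox{-{\rm GProj}}$-approximation. The paper's proof is identical, including the final step of factoring the discrepancy (which vanishes in the stable category) through the epimorphism $\pi$.
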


\begin{proof}
Let $M$ be an $A$-module. Consider the contravariant functor
$$\underline{\rm Hom}_A(-, M)\colon A\mbox{-\underline{\rm
GProj}}\rightarrow R\mbox{-Mod},$$ where $R\mbox{-Mod}$ denotes the
category of (left) $R$-modules. This functor sends coproducts to
products and by Proposition \ref{prop:cohomologicalfunctor} it is
cohomological. We apply Theorem \ref{thm:existenceofresolution} and
Brown representability theorem (\cite[Theorem 3.1]{Nee1}). There
exists a Gorenstein-projective module $G$ with an isomorphism
$\eta\colon \underline{\rm Hom}_A(-, G)\simeq \underline{\rm
Hom}_A(-, M)$. This yields a morphism $f\colon G\rightarrow M$. Take
an epimorphism $\pi\colon P\rightarrow M$ with $P$ projective.

We claim that $(f, \pi)\colon G\oplus P\rightarrow M$ is a right
$A\mbox{-GProj}$-approximation of $M$. In fact, given a morphism
$g\colon G'\rightarrow M$ with $G'$ Gorenstein-projective, by the
isomorphism $\eta$ there exists a morphism $t\colon G'\rightarrow G$
such that $g-f\circ t$ becomes zero in the stable category, that is,
it factors through a projective module. Since $\pi$ is epic,
$g-f\circ t$ necessarily factors through $\pi$. Hence $g$ factors
though $(f, \pi)$.
\end{proof}

\begin{rem}
Recall from Proposition \ref{prop:GProj}(1) that the subcategory
$A\mbox{-{\rm GProj}}$ is closed under filtered colimits. Combining
the corollary above with \cite[Theorem 2.2.8]{Xu} we infer that the
subcategory $A\mbox{-{\rm GProj}}$ is covering in $A\mbox{-{\rm
Mod}}$. Using Wakamatsu's Lemma and this remark one deduces (part
of) Theorem \ref{thm:BR-AppendixB} directly.
\end{rem}

\begin{rem}
By a similar argument as above we prove that the subcategory
$A\mbox{-{\rm GProj}}$ is covariantly finite in $A\mbox{-{\rm
Mod}}$, and then $A\mbox{-{\rm GProj}}$ is functorially finite in
$A\mbox{-{\rm Mod}}$. For each $A$-module consider the covariant
functor $\underline{\rm Hom}_A(M, -)\colon A\mbox{-\underline{\rm
GProj}}\rightarrow R\mbox{-{\rm Mod}}$.  Then we apply the dual
Brown representability theorem to this functor to get a left
$A\mbox{-{\rm GProj}}$-approximation of $M$. Here one needs to use
the fact that the category $A\mbox{-{\rm Proj}}$ of projective
modules is covariantly finite in $A\mbox{-{\rm Mod}}$.
\end{rem}

Note that for an artin algebra $A$ the category $A\mbox{-Gproj}$ of
finitely generated Gorenstein-projective modules is  not necessarily
contravariantly finite in $A\mbox{-mod}$; see \cite{Yoh, BK}. While
Beligiannis's Theorem enables us to define Gorenstein extension
groups via resolutions by large Gorenstein-projective modules. This
is the main reason why we study Gorenstein homological algebra in
the category $A\mbox{-Mod}$ of large modules.

A complex $X^\bullet=(X^n, d^n_X)_{n\in \mathbb{Z}}$ of $A$-modules
is said to be \emph{right {\rm GP}-acyclic} \index{complex!right
GP-acyclic} provided that for each Gorenstein-projective module $G$
the Hom complex ${\rm Hom}_A(G, X^\bullet)$ is acyclic. A right
GP-acyclic complex is necessarily acyclic; moreover, an acyclic
complex $X^\bullet$ is right GP-acyclic if and only if each induced
morphism $X^n\rightarrow Z^{n+11}(X^\bullet)$  induces for each Gorenstein-projective
module $G$ a surjective map ${\rm Hom}_A(G, X^n)\rightarrow
{\rm Hom}_A(G, Z^{n+1}(X^\bullet))$.

Let $M$ be an $A$-module. By a \emph{Gorenstein-projective
resolution}, or a \emph{{\rm GP}-resolution}
\index{resolution!Gorenstein-projective, GP} in short, of $M$ we mean
an acyclic complex $\cdots \rightarrow G^{-2}\rightarrow
G^{-1}\rightarrow G^0\rightarrow M\rightarrow 0$ with each
$G^{-i}\in A\mbox{-GProj}$; sometimes we write this resolution as
$G^\bullet \rightarrow M$. A GP-resolution is \emph{proper}
\index{resolution!Gorenstein-projective, GP!proper} provided that in
addition it is right GP-acyclic (\cite[section 4]{AM}). It follows
from Corollary \ref{cor:GProjresolution} that each $A$-module admits
a proper GP-resolution. Such a proper GP-resolution is necessarily
unique.

The following two lemmas are well known.

\begin{lem}
{\rm (Comparison Theorem)} Let $M$ and $N$ be $A$-modules. Consider
two proper GP-resolutions $G_M^\bullet\rightarrow M$ and
$G_N^\bullet \rightarrow N$. Let $f\colon M\rightarrow N$ be a
morphism. Then there is a chain map $f^\bullet\colon
G_M^\bullet\rightarrow G_N^\bullet$ filling into the following
commutative diagram
\[\xymatrix{
\cdots \ar[r] & G_M^{-2} \ar@{.>}[d]^{f^{-2}} \ar[r] & G_M^{-1}
\ar@{.>}[d]^{f^{-1}} \ar[r] & G_M^0 \ar@{.>}[d]^{f^{0}}  \ar[r] & M
\ar[d]^{f}
\ar[r] & 0\\
\cdots \ar[r] & G_N^{-2} \ar[r] & G_N^{-1} \ar[r] & G_N^0 \ar[r] & N
\ar[r] & 0
 }\] Such a chain map is unique up to homotopy. \hfill $\square$
\end{lem}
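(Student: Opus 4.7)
The plan is to imitate the classical comparison theorem for projective resolutions, replacing ``projectivity of $G_M^{-i}$'' by the combination of two things: $G_M^{-i}$ is Gorenstein-projective, and the augmented target complex $G_N^\bullet\to N\to 0$ is right GP-acyclic. The latter, unpacked, says that for every Gorenstein-projective module $G$, the Hom complex
\[
\cdots \longrightarrow \mathrm{Hom}_A(G,G_N^{-1})\longrightarrow \mathrm{Hom}_A(G,G_N^{0})\longrightarrow \mathrm{Hom}_A(G,N)\longrightarrow 0
\]
is acyclic. Equivalently, for each $i\geq 0$ the canonical map $\mathrm{Hom}_A(G,G_N^{-i})\twoheadrightarrow \mathrm{Hom}_A(G,Z^{-i+1}(G_N^\bullet))$ is surjective (with the convention $Z^{1}(G_N^\bullet)=N$). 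This surjectivity is the precise lifting property that will play the role of ``projectivity'' in the usual argument.

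First I would construct $f^{0}$: applying $\mathrm{Hom}_A(G_M^0,-)$ to the augmented $G_N^\bullet$ yields surjectivity of $\mathrm{Hom}_A(G_M^0,G_N^0)\twoheadrightarrow \mathrm{Hom}_A(G_M^0,N)$, so the composite $f\circ \varepsilon_M\colon G_M^0\to N$ lifts to $f^0\colon G_M^0\to G_N^0$ with $\varepsilon_N\circ f^0=f\circ\varepsilon_M$. Then I would proceed by induction. Assuming $f^{0},f^{-1},\ldots,f^{-i}$ are defined and commute with the differentials, the composite $f^{-i}\circ d^{-i-1}_{G_M}\colon G_M^{-i-1}\to G_N^{-i}$ lands inside $Z^{-i}(G_N^\bullet)$ (because pushing it further along $d^{-i}_{G_N}$ vanishes by the induction hypothesis and the complex relation); since $G_M^{-i-1}$ is Gorenstein-projective, the surjection $\mathrm{Hom}_A(G_M^{-i-1},G_N^{-i-1})\twoheadrightarrow \mathrm{Hom}_A(G_M^{-i-1},Z^{-i}(G_N^\bullet))$ produces the required $f^{-i-1}$.

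For uniqueness up to homotopy, suppose $f^\bullet$ and $g^\bullet$ are two chain maps lifting $f$; set $h^\bullet=f^\bullet-g^\bullet$, a chain map lifting $0$. I would build a homotopy $s^{-i}\colon G_M^{-i}\to G_N^{-i-1}$ inductively. Since $\varepsilon_N\circ h^0=0$, the morphism $h^0$ factors through $Z^0(G_N^\bullet)$, and Gorenstein-projectivity of $G_M^0$ together with the surjection $\mathrm{Hom}_A(G_M^0,G_N^{-1})\twoheadrightarrow \mathrm{Hom}_A(G_M^0,Z^0(G_N^\bullet))$ yields $s^0$ with $d^{-1}_{G_N}\circ s^0=h^0$. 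In the inductive step, the morphism $h^{-i}-s^{-i+1}\circ d^{-i}_{G_M}$ is readily checked to factor through $Z^{-i}(G_N^\bullet)$, and the same surjectivity provides $s^{-i}$ fulfilling the homotopy equation at level $-i$.

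Essentially no step is hard: the whole argument is the standard dimension-shift/lifting trick, once one has identified the correct ``projectivity-like'' statement. The only point that must be handled carefully is the bookkeeping at the boundary between the resolution $G_N^\bullet$ and the augmentation $N$, and the verification that the cocycle $Z^{-i+1}(G_N^\bullet)$ is really where the relevant morphisms land at each stage; this is what the right GP-acyclicity (rather than mere acyclicity) of the target resolution is designed to accommodate.
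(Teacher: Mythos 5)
Your proof is correct and is exactly the standard comparison-theorem argument transported to the GP setting, with right GP-acyclicity of the augmented target complex supplying precisely the surjectivity of $\mathrm{Hom}_A(G,G_N^{-i})\to\mathrm{Hom}_A(G,Z^{-i+1}(G_N^\bullet))$ needed to replace ordinary projectivity; this is the argument the paper leaves to the reader (the lemma is stated with $\square$, i.e.\ without proof). The induction steps, including the verification that the relevant morphisms land in the cocycles, are handled correctly.
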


\begin{lem}
{\rm (Horseshoe Lemma)} Let $0\rightarrow L \rightarrow M\rightarrow
N\rightarrow 0$ be a right GP-acyclic sequence. Take two proper
GP-resolutions $G_L^\bullet\rightarrow L$ and $G_N^\bullet
\rightarrow N$. Then there exists a commutative diagram
\[\xymatrix{
 L \ar[r] & M \ar[r]  & N \\
 G_L^0 \ar[u] \ar[r]^-{\binom{1}{0}} & G_L^0\oplus G_N^0
\ar@{.>}[u] \ar[r]^-{(0, 1)} & G_N^0 \ar[u]\\
 G_L^{-1} \ar[u] \ar[r]^-{\binom{1}{0}} & G_L^{-1}\oplus
G_N^{-1}
\ar@{.>}[u] \ar[r]^-{(0, 1)} & G_N^{-1} \ar[u]\\
 G_L^{-2} \ar[u] \ar[r]^-{\binom{1}{0}} & G_L^{-2}\oplus
G_N^{-2} \ar@{.>}[u] \ar[r]^-{(0, 1)} & G_N^{-2} \ar[u]\\
 \vdots \ar[u]& \vdots\ar@{.>}[u] & \vdots \ar[u] }\]
such that the middle column is a proper GP-resolution. \hfill
$\square$
\end{lem}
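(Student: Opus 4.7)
The plan is to prove this by induction on the depth of the resolution, constructing the middle column one term at a time exactly as in the classical Horseshoe Lemma, but with the role played there by projectivity replaced here by the right GP-acyclic hypotheses. The invariant to maintain through the induction is that the short exact sequence of ``current syzygies'' remains right GP-acyclic, so that the next step can be carried out.

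For the base step, because $0\to L\to M\to N\to 0$ is right GP-acyclic and $G_N^0$ is Gorenstein-projective, the augmentation $\varepsilon_N\colon G_N^0\to N$ lifts through the surjection $M\to N$ to a morphism $\tilde\varepsilon_N\colon G_N^0\to M$. I would then define $\varepsilon_M\colon G_L^0\oplus G_N^0\to M$ by $(\iota\varepsilon_L,\tilde\varepsilon_N)$, where $\iota\colon L\hookrightarrow M$ is the inclusion. This fits into a commutative $3\times 3$ square whose middle row is the split short exact sequence $0\to G_L^0\to G_L^0\oplus G_N^0\to G_N^0\to 0$, whose bottom row is the given sequence, and whose three columns are all surjective. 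The Snake Lemma then produces a short exact sequence $0\to K_L\to K_M\to K_N\to 0$, where $K_L$ and $K_N$ are the usual first syzygies of $L$ and $N$ in their chosen resolutions.

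The main obstacle is showing that this kernel sequence is itself right GP-acyclic, which is what allows the induction to continue; this will be handled by a direct diagram chase. Given $G\in A\mbox{-GProj}$ and $\phi\colon G\to K_N$, compose $\phi$ with the inclusion $K_N\hookrightarrow G_N^0$ and with the second-coordinate inclusion $G_N^0\hookrightarrow G_L^0\oplus G_N^0$; applying $\varepsilon_M$ produces a morphism $\alpha\colon G\to M$ whose image in $N$ equals $\varepsilon_N\circ\phi=0$, so $\alpha$ factors through $\iota$ as some $\alpha'\colon G\to L$. Because $G_L^\bullet\to L$ is a \emph{proper} GP-resolution and $G$ is Gorenstein-projective, $\alpha'$ lifts to $\beta\colon G\to G_L^0$. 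Then the morphism $(-\beta,\phi)\colon G\to G_L^0\oplus G_N^0$ is annihilated by $\varepsilon_M$ and hence factors through $K_M$, delivering the required lift of $\phi$.

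With the invariant verified, I iterate: replace $(L,M,N)$ by $(K_L,K_M,K_N)$ and the proper resolutions by the shifted tails of $G_L^\bullet$ and $G_N^\bullet$, then repeat the construction. This produces termwise direct sums $G_L^{-n}\oplus G_N^{-n}$ fitting into the commutative diagram claimed, with upper-triangular differentials. Acyclicity of the middle augmented complex, and its right GP-acyclicity, follow by a five-lemma argument applied at each level: for any Gorenstein-projective $G$, applying ${\rm Hom}_A(G,-)$ to the three augmented columns yields a termwise-split short exact sequence of complexes whose outer two augmented complexes are acyclic (by properness of the given resolutions of $L$ and $N$), so the middle one is acyclic as well.
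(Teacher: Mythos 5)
Your proof is correct. The paper states the Horseshoe Lemma without proof, labelling it ``well known," and your argument is precisely the standard adaptation of the classical Horseshoe Lemma to the relative setting: construct $\varepsilon_M$ by lifting $\varepsilon_N$ through the surjection $M\to N$ (using right GP-acyclicity), apply the Snake Lemma to obtain the syzygy sequence, and then verify the crucial inductive invariant that this syzygy sequence is again right GP-acyclic (your diagram chase using properness of $G_L^\bullet\to L$ is exactly the right argument). Two small phrasing remarks, neither of which is a gap: when you call the short exact sequence of ${\rm Hom}_A(G,-)$-applied augmented complexes ``termwise-split," the augmentation degree $0\to{\rm Hom}_A(G,L)\to{\rm Hom}_A(G,M)\to{\rm Hom}_A(G,N)\to 0$ is merely exact rather than split, which is all you actually use; and the acyclicity of the middle augmented complex is most cleanly seen by the long exact cohomology sequence of that short exact sequence of complexes rather than a five-lemma argument, though either works.
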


One of the central notions in Gorenstein homological algebra is
Gorenstein extension group defined below.

\begin{defn}
 Let $M$ and $N$ be $A$-modules. Let $n\geq 0$. Take a proper
GP-resolution $G^\bullet_M\rightarrow M$. Define the \emph{$n$-th
GP-extension group}\index{GP-extension group} of $N$ by $M$ to be
${\rm Ext}^n_{\rm GP}(M, N)=H^n({\rm Hom}_A(G^\bullet, N))$. We set
${\rm Ext}^{-n}_{\rm GP}(M, N)=0$ for $n\geq 1$.
\end{defn}

\begin{rem}
By Comparison Theorem the GP-extension groups do not depend on the
choice of the proper GP-resolution. As an immediate consequence we have
${\rm Ext}^0_{\rm GP}(M, N)={\rm Hom}_A(M, N)$, and ${\rm
Ext}^n_{\rm GP}(M, N)=0$ if $n\geq 1$ and $M$ is
Gorenstein-projective.
\end{rem}

Note that the $n$-th GP-extension groups ${\rm Ext}^n_{\rm GP}(M,
N)$ are functorial both in $M$ and $N$. Moreover we have the
following well-known results.

\begin{lem}\label{lem:LESTI}{\rm (Long Exact Sequence Theorem I)}
Let $0\rightarrow M'\rightarrow M\rightarrow M''\rightarrow 0$ be a
right GP-acyclic sequence and let $N$ be an $A$-module. Then there
is a long exact sequence
 \begin{align*}
0\rightarrow &{\rm Hom}_A(M'', N)
\rightarrow {\rm Hom}_A(M, N) \rightarrow {\rm Hom}_A(M,
N)\stackrel{c^0}\rightarrow {\rm Ext}^1_{\rm GP}(M'', N) \\
& \rightarrow {\rm Ext}^1_{\rm GP}(M, N) \rightarrow {\rm
Ext}^1_{\rm GP}(M', N) \stackrel{c^1}\rightarrow {\rm Ext}^2_{\rm
GP}(M'', N) \rightarrow {\rm Ext}^2_{\rm GP}(M, N) \rightarrow
\cdots
\end{align*}
where the morphisms $c^i$ are the \emph{connecting morphisms} and
the other morphisms are induced by the corresponding functors.
\hfill $\square$
\end{lem}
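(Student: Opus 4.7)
The plan is the standard ``resolve everything and take cohomology'' strategy, with the Horseshoe Lemma doing the heavy lifting.

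First, I would choose proper GP-resolutions $G^\bullet_{M'}\to M'$ and $G^\bullet_{M''}\to M''$, which exist and are unique up to homotopy by Corollary \ref{cor:GProjresolution} together with the Comparison Theorem. Applying the Horseshoe Lemma to the right GP-acyclic sequence $0\to M'\to M\to M''\to 0$, I obtain a proper GP-resolution $G^\bullet_M\to M$ of the form $G^{-i}_M = G^{-i}_{M'}\oplus G^{-i}_{M''}$ fitting into a commutative diagram in which each row
\[
0\longrightarrow G^{-i}_{M'}\xrightarrow{\binom{1}{0}} G^{-i}_{M'}\oplus G^{-i}_{M''}\xrightarrow{(0\;1)} G^{-i}_{M''}\longrightarrow 0
\]
is a split short exact sequence of $A$-modules.

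Next, I apply the functor ${\rm Hom}_A(-,N)$ termwise. Because each row in the constructed sequence of complexes is split exact as a sequence of $A$-modules, applying ${\rm Hom}_A(-,N)$ yields an honest short exact sequence of complexes
\[
0\longrightarrow {\rm Hom}_A(G^\bullet_{M''},N)\longrightarrow {\rm Hom}_A(G^\bullet_M,N)\longrightarrow {\rm Hom}_A(G^\bullet_{M'},N)\longrightarrow 0.
\]
Passing to the associated long exact sequence in cohomology and invoking the definition ${\rm Ext}^n_{\rm GP}(-,N)=H^n({\rm Hom}_A(G^\bullet_{-},N))$, together with ${\rm Ext}^0_{\rm GP}(X,N)={\rm Hom}_A(X,N)$ noted in the preceding remark, produces exactly the long exact sequence claimed; the connecting morphisms $c^i$ are the connecting morphisms of this cohomology long exact sequence.

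The only subtle point, and the one I would be careful about, is that the Horseshoe construction must indeed be built termwise split (not merely termwise short exact): this is what makes ${\rm Hom}_A(-,N)$ preserve exactness without any hypothesis on $N$. This is automatic from the explicit form $G^{-i}_M=G^{-i}_{M'}\oplus G^{-i}_{M''}$ delivered by the Horseshoe Lemma, so no additional argument is needed. The functoriality of the connecting morphisms and of the other maps follows from the functoriality inherent in the Horseshoe and Comparison Theorems; independence from the choice of resolutions follows from the Comparison Theorem as in the remark immediately preceding the lemma.
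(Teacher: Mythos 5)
Your proof is correct and is exactly the standard argument: the termwise split short exact sequence delivered by the Horseshoe Lemma makes $\mathrm{Hom}_A(-,N)$ exact on each degree, and the cohomology long exact sequence of the resulting short exact sequence of complexes gives the claim. The paper marks this lemma as well known and supplies no proof of its own, so there is nothing to contrast with; your identification of termwise splitting as the crucial point (making the argument work for arbitrary $N$) is precisely what needs to be observed.
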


\begin{lem}\label{lem:LESTII}{\rm (Long Exact Sequence Theorem II)}
Let $0\rightarrow N'\rightarrow N\rightarrow N''\rightarrow 0$ be a
right GP-acyclic sequence and let $M$ be an $A$-module. Then there
is a long exact sequence
 \begin{align*}
0\rightarrow &{\rm Hom}_A(M, N') \rightarrow {\rm Hom}_A(M, N)
\rightarrow {\rm Hom}_A(M,
N'')\stackrel{c^0}\rightarrow {\rm Ext}^1_{\rm GP}(M, N') \\
& \rightarrow {\rm Ext}^1_{\rm GP}(M, N) \rightarrow {\rm
Ext}^1_{\rm GP}(M, N'') \stackrel{c^1}\rightarrow {\rm Ext}^2_{\rm
GP}(M, N') \rightarrow {\rm Ext}^2_{\rm GP}(M, N) \rightarrow \cdots
\end{align*}
where the morphisms $c^i$ are the \emph{connecting morphisms} and
the other morphisms are induced by the corresponding functors.
\hfill $\square$
\end{lem}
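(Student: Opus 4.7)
The plan is to mimic the classical derivation of the long exact sequence of $\mathrm{Ext}$, but now using proper GP-resolutions on the first variable and exploiting the right GP-acyclicity of the given sequence on the second variable. First I would choose a proper GP-resolution $G^\bullet_M \to M$, whose existence is guaranteed by Corollary \ref{cor:GProjresolution} (and the comment that proper GP-resolutions always exist). The idea is to produce a short exact sequence of complexes
$$0 \longrightarrow \mathrm{Hom}_A(G^\bullet_M, N') \longrightarrow \mathrm{Hom}_A(G^\bullet_M, N) \longrightarrow \mathrm{Hom}_A(G^\bullet_M, N'') \longrightarrow 0$$
and then pass to cohomology.

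The key point is to verify exactness of the above sequence of complexes degree-wise; that is, for each $i \geq 0$, the sequence
$$0 \longrightarrow \mathrm{Hom}_A(G^{-i}_M, N') \longrightarrow \mathrm{Hom}_A(G^{-i}_M, N) \longrightarrow \mathrm{Hom}_A(G^{-i}_M, N'') \longrightarrow 0$$
is exact. This is exactly where the hypothesis is used: by definition, the sequence $0 \to N' \to N \to N'' \to 0$ being right GP-acyclic means that applying $\mathrm{Hom}_A(G, -)$ for any Gorenstein-projective $G$ yields an acyclic complex; for a short exact sequence this amounts to the preservation being short exact. Since each $G^{-i}_M$ is Gorenstein-projective, the desired exactness follows immediately.

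Once this short exact sequence of complexes is in hand, the standard long exact sequence in cohomology produces the claimed sequence, where the terms $H^n(\mathrm{Hom}_A(G^\bullet_M, -))$ are, by definition, $\mathrm{Ext}^n_{\mathrm{GP}}(M, -)$, and where the initial three terms $\mathrm{Hom}_A(M, -)$ arise from the fact that $G^\bullet_M \to M$ is acyclic so that $H^0(\mathrm{Hom}_A(G^\bullet_M, -)) = \mathrm{Hom}_A(M, -)$. The connecting morphisms $c^i$ are the snake-lemma maps coming from this short exact sequence of complexes, and the naturality of the snake construction makes everything functorial in the obvious way.

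The main (mild) obstacle is checking independence of choices and functoriality: although the snake-lemma produces connecting morphisms, one should verify they depend only on $0 \to N' \to N \to N'' \to 0$ and $M$, not on the chosen proper GP-resolution. This follows from the Comparison Theorem, which provides a chain map between any two proper GP-resolutions of $M$, unique up to homotopy; standard bookkeeping then shows that the induced maps on cohomology, and in particular the connecting maps, are identified under the resulting isomorphisms $\mathrm{Ext}^n_{\mathrm{GP}}(M, -) \cong H^n(\mathrm{Hom}_A(G^\bullet_M, -))$. With this in hand the proof is complete; it closely parallels the proof of Lemma \ref{lem:LESTI}, with the roles of the two variables interchanged and with the right GP-acyclicity of the sequence replacing the projectivity (Gorenstein-projectivity) of the resolution terms.
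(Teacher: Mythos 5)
Your proof is correct and is the standard derivation the paper implicitly invokes (the lemma is stated with a $\square$ and no written proof). You correctly identify that the hypothesis of right GP-acyclicity is exactly what makes $0 \to \mathrm{Hom}_A(G^{-i}_M, N') \to \mathrm{Hom}_A(G^{-i}_M, N) \to \mathrm{Hom}_A(G^{-i}_M, N'') \to 0$ exact termwise, so the long exact cohomology sequence of the resulting short exact sequence of Hom-complexes produces everything, with the identification $H^0(\mathrm{Hom}_A(G^\bullet_M,-)) \cong \mathrm{Hom}_A(M,-)$ coming from left exactness of $\mathrm{Hom}_A(-,N)$ applied to the right-exact tail $G^{-1}_M \to G^0_M \to M \to 0$; the independence-of-resolution and naturality points are handled, as you say, by the Comparison Theorem.
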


One of the main reasons to study the GP-extension groups is  that
they provide certain numerical invariants for modules and algebras.

\begin{defn}
For an $A$-module $M$ we define its \emph{Gorenstein-projective
dimension} \index{dimension!Gorenstein-projective} by ${\rm
GP.dim}\; M={\rm sup}\{n\geq 0\; |\; {\rm Ext}^n_{\rm GP}(M, -)\neq
0\}$. The \emph{global Gorenstein-projective dimension}
\index{dimension!Gorenstein-projective!global} of the algebra $A$,
denoted by ${\rm gl.GP.dim} \;A$, is defined to be the supreme of
the Gorenstein-projective dimensions of all modules. The large
(resp. small) \emph{finistic Gorenstein-projective dimension}
\index{dimension!Gorenstein-projective!(large/small) finistic} of
the algebra $A$, denoted by ${\rm Fin.GP.dim}\; A$ (resp. ${\rm
fin.Gp.dim}\; A$), is defined  to be the supreme of the
Gorenstein-projective dimensions of all (resp. finitely generated)
$A$-modules of finite Gorenstein-projective dimension.
\end{defn}

\begin{rem}
It follows from the definitions that ${\rm fin.Gp.dim}\; A\leq {\rm
Fin.GP.dim}\; A\leq {\rm gl.GP.dim}\; A$; if ${\rm gl.GP.dim}\; A<
\infty$ or ${\rm Fin.GP.dim}\; A=\infty$, then $ {\rm Fin.GP.dim}\;
A={\rm gl.GP.dim}\; A$  \hfill $\square$
\end{rem}

The following result is basic.

\begin{prop}\label{prop:GPdimension} Let $M$ be an $A$-module and let $n\geq 0$. Then
the following statements are equivalent:
\begin{enumerate}
\item[(1)] ${\rm GP.dim}\; M\leq n$;
\item[(2)] ${\rm Ext}_{\rm GP}^{n+1}(M, -)=0$;
\item[(3)] for each right GP-acyclic complex $0\rightarrow K\rightarrow G^{1-n}\rightarrow \cdots \rightarrow G^0\rightarrow M
\rightarrow 0$ with each $G^i$ Gorenstein-projective we have that
$K$ is Gorenstein-projective.
\end{enumerate}
\end{prop}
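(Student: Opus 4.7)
The plan is to prove the cycle $(1) \Rightarrow (2) \Rightarrow (3) \Rightarrow (1)$. The implication $(1) \Rightarrow (2)$ is immediate from the definition of ${\rm GP.dim}\, M$, since $n+1 > n$.

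For $(2) \Rightarrow (3)$, I would argue by iterated dimension-shift. Given a right GP-acyclic complex $0 \rightarrow K \rightarrow G^{1-n} \rightarrow \cdots \rightarrow G^0 \rightarrow M \rightarrow 0$ with each $G^i \in A\mbox{-{\rm GProj}}$, set $M_0 = M$ and let $M_{j+1} = \ker(G^{-j} \rightarrow G^{1-j})$ for $j = 0, 1, \ldots, n-1$ (with $G^{1}$ interpreted as $M$), so that $M_n = K$. Each short exact sequence $\xi_j\colon 0 \rightarrow M_{j+1} \rightarrow G^{-j} \rightarrow M_j \rightarrow 0$ is itself right GP-acyclic: for any $G \in A\mbox{-{\rm GProj}}$, the exactness of ${\rm Hom}_A(G,-)$ applied to the original long complex decomposes into exactness on each $\xi_j$. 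Since ${\rm Ext}^k_{\rm GP}(G^{-j}, -) = 0$ for $k \geq 1$, Lemma \ref{lem:LESTI} yields isomorphisms ${\rm Ext}^k_{\rm GP}(M_{j+1}, -) \cong {\rm Ext}^{k+1}_{\rm GP}(M_j, -)$ for all $k \geq 1$. Iterating gives
\begin{equation*}
{\rm Ext}^1_{\rm GP}(K, N) \;\cong\; {\rm Ext}^{n+1}_{\rm GP}(M, N) \;=\; 0
\end{equation*}
for every $A$-module $N$, by hypothesis (2). To promote this to Gorenstein-projectivity of $K$, apply Corollary \ref{cor:GProjresolution} to read off a right GP-acyclic short exact sequence $0 \rightarrow K' \rightarrow G \rightarrow K \rightarrow 0$ with $G \in A\mbox{-{\rm GProj}}$; then Lemma \ref{lem:LESTI} with $N = K'$, combined with ${\rm Ext}^1_{\rm GP}(K, K') = 0$, shows that ${\rm id}_{K'}$ lifts along the inclusion $K' \hookrightarrow G$. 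Hence the sequence splits, $K$ is a direct summand of the Gorenstein-projective module $G$, and the closure of $A\mbox{-{\rm GProj}}$ under direct summands (the large analogue of Proposition \ref{prop:Gproj}(2)) gives $K \in A\mbox{-{\rm GProj}}$.

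For $(3) \Rightarrow (1)$, by Corollary \ref{cor:GProjresolution} pick a proper GP-resolution $\cdots \rightarrow G^{-n-1} \rightarrow G^{-n} \rightarrow G^{1-n} \rightarrow \cdots \rightarrow G^0 \rightarrow M \rightarrow 0$ and let $K$ be the $n$-th cocycle, producing a truncated exact sequence $0 \rightarrow K \rightarrow G^{1-n} \rightarrow \cdots \rightarrow G^0 \rightarrow M \rightarrow 0$. This truncation inherits right GP-acyclicity from the full resolution (left exactness of ${\rm Hom}_A(G, -)$ takes care of the leftmost term). Condition (3) then makes $K$ Gorenstein-projective, so the truncation serves as a proper GP-resolution of $M$ of length $\leq n$; computing ${\rm Ext}^i_{\rm GP}(M, -)$ through it yields vanishing in all degrees $i > n$, which is precisely (1).

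The main technical point, and the one most prone to slip, is the assertion that right GP-acyclicity of the long complex passes to each short exact splice $\xi_j$ (equivalently, to the truncation in $(3) \Rightarrow (1)$); once this is granted, the rest is a routine dimension-shift combined with the standard splitting trick for ${\rm Ext}^1$-vanishing.
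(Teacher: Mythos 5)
Your proof is correct and follows essentially the same route as the paper's: both establish $(2)\Rightarrow (3)$ by splicing the long GP-acyclic sequence into short ones, dimension-shifting with Lemma \ref{lem:LESTI} to transport the vanishing ${\rm Ext}^{n+1}_{\rm GP}(M,-)=0$ down to ${\rm Ext}^1_{\rm GP}(K,-)=0$, appending a right GP-acyclic epimorphism from a Gorenstein-projective onto $K$ (via Corollary \ref{cor:GProjresolution}), and concluding that this epimorphism splits so $K$ is a direct summand of a Gorenstein-projective module. You spell out $(3)\Rightarrow(1)$ and the passage from a long right GP-acyclic complex to its short exact splices more explicitly than the paper (which labels $(3)\Rightarrow(1)$ trivial); those justifications are accurate, with the key observation — left exactness of ${\rm Hom}_A(G,-)$ handling the leftmost term of a truncated proper resolution — exactly as it should be.
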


\begin{proof}
The implications ``$(1)\Rightarrow (2)$" and ``$(3)\Rightarrow (1)$"
are trivial. To see ``$(2)\Rightarrow (3)$", assume that ${\rm
Ext}_{\rm GP}^{n+1}(M, -)=0$ and that we are given  a  right
GP-acyclic complex $0\rightarrow K\rightarrow
G^{1-n}\rightarrow \cdots \rightarrow G^0\rightarrow M \rightarrow
0$ with each $G^i$ Gorenstein-projective. Take a right GP-acyclic sequence
$0\rightarrow K'\stackrel{j}\rightarrow G^{-n}\rightarrow K\rightarrow 0$
with $G^{-n}$ Gorenstein-projective; see Corollary \ref{cor:GProjresolution}. Denote by $K^{-i}$ the
image of $G^{-i}\rightarrow G^{1-i}$;  we identify $K^{-n}$ with $K$, $K^0$ with $M$.
Note that each sequence
$0\rightarrow K^{-i}\rightarrow G^{1-i}\rightarrow
K^{1-i}\rightarrow 0$ is right GP-acyclic. Here we identify $K^{-n-1}$ with $K'$.
By Lemma \ref{lem:LESTI} we can apply
dimension-shift to the these sequences. Then we get ${\rm Ext}_{\rm
GP}^{1}(K^{-n}, K')\simeq {\rm Ext}_{\rm GP}^{n+1}(M, K')=0$. By
Lemma \ref{lem:LESTI} again this implies that the induced morphism
${\rm Hom}_A(G^{-n}, K')\rightarrow {\rm Hom}_A(K', K')$ by $j$ is
epic and then the monomorphism $j$ splits. Hence $K$ is a direct summand of $G^{-n}$ and then it is
Gorenstein-projective.
\end{proof}

Denote by $(A\mbox{-GProj})^\perp$ the full subcategory of
$A\mbox{-Mod}$ consisting of modules $M$ with the property that
${\rm Ext}_A^n(G, M)=0$ for all $n\geq 1$ and $G$
Gorenstein-projective.

The following observation is of interest.

\begin{lem}\label{lem:GPorthogonal}
Let $M$ be an $A$-module. The following statements are equivalent:
\begin{enumerate}
\item[(1)] $M\in (A\mbox{-{\rm GProj}})^\perp$;
\item[(2)] $\underline{\rm Hom}_A(G, M)=0$ for all
Gorenstein-projective modules $G$;
\item[(3)] any epimorphism $P\rightarrow M$ with $P$ projective is a
right $A\mbox{-{\rm GProj}}$-approximation.
\end{enumerate}
\end{lem}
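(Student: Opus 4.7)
The plan is to establish $(2) \Leftrightarrow (3)$ directly and then $(1) \Leftrightarrow (2)$. The equivalence of $(2)$ and $(3)$ is essentially formal: given a fixed epimorphism $\pi\colon P \twoheadrightarrow M$ with $P$ projective, and any morphism $h\colon G \to M$ with $G$ Gorenstein-projective that factors as $h = b\circ a$ through some projective $Q$, one can lift $b\colon Q\to M$ across $\pi$ by projectivity of $Q$, and hence $h$ factors through $\pi$. Conversely, $\pi$ itself is an instance of a factorization through a projective.

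For $(1) \Rightarrow (2)$, I would use Lemma \ref{lem:GProj}(2) to produce a short exact sequence $0 \to G \to P \to G'' \to 0$ with $P$ projective and $G''$ Gorenstein-projective (the first step in a coproper coresolution). Applying ${\rm Hom}_A(-, M)$ gives an exact sequence
\[ {\rm Hom}_A(P, M) \longrightarrow {\rm Hom}_A(G, M) \longrightarrow {\rm Ext}_A^1(G'', M), \]
and the right term vanishes by $(1)$, so every $f\colon G \to M$ extends to $P$ and hence becomes zero in $\underline{\rm Hom}_A(G, M)$.

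For $(2) \Rightarrow (1)$, it suffices to prove ${\rm Ext}_A^1(G, M) = 0$ for every Gorenstein-projective $G$; the higher degrees then follow by dimension-shifting using the syzygies $\Omega^k G$, which remain Gorenstein-projective by (the large-module analogue of) Corollary \ref{cor:syzygy} together with ${\rm Ext}_A^i(P, -) = 0$ for projective $P$. For the degree-one vanishing, pick a short exact sequence $0 \to G' \to P \to G \to 0$ with $P$ projective and $G'$ Gorenstein-projective. Given $g\colon G' \to M$, assumption $(2)$ yields a factorization $g = b\circ a$ with $a\colon G' \to Q$ and $b\colon Q \to M$, $Q$ projective. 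Form the pushout of the sequence along $a$; the resulting extension $0 \to Q \to E \to G \to 0$ represents an element of ${\rm Ext}_A^1(G, Q)$, which vanishes since Gorenstein-projective modules lie in ${}^\perp(A\mbox{-{\rm Proj}})$. Hence this pushout splits, and pushing further along $b$ shows that the pushout of the original sequence along $g$ also splits, so the connecting map $\delta\colon {\rm Hom}_A(G', M) \to {\rm Ext}_A^1(G, M)$ sends $g$ to zero.

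The one point requiring care is the dimension-shift in $(2) \Rightarrow (1)$: I must make sure that the iterated syzygies of a \emph{large} Gorenstein-projective module are again Gorenstein-projective, which will be handled by invoking the definition of a complete resolution in $A\mbox{-{\rm Mod}}$ and the closure properties recorded after Lemma \ref{lem:GProj}. This is the main conceptual step; everything else is pushout bookkeeping and a lift through the projective $Q$.
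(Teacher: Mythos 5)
Your proof is correct but proceeds differently from the paper. The paper dispatches $(1)\Leftrightarrow(2)$ in two lines by invoking Lemma~\ref{lem:basicpropertyofsyzygy}: since every Gorenstein-projective $G$ lies in $^\perp A$, the natural map $\underline{\rm Hom}_A(\Omega^k G, M)\to{\rm Ext}_A^k(G,M)$ is an isomorphism for all $k\geq 1$; combined with the facts that $A\mbox{-GProj}$ is closed under syzygies and that every Gorenstein-projective module is a syzygy of a Gorenstein-projective module, both directions are immediate. You instead rebuild this from scratch: for $(1)\Rightarrow(2)$ you use the cosyzygy sequence $0\to G\to P\to G''\to 0$ and the vanishing of ${\rm Ext}_A^1(G'',M)$; for $(2)\Rightarrow(1)$ you reduce to degree one by dimension-shift (justified since $\Omega^{n-1}G$ stays Gorenstein-projective) and run a pushout argument through a projective $Q$, using ${\rm Ext}_A^1(G,Q)=0$ to kill the connecting map. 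Your $(2)\Leftrightarrow(3)$ agrees with the paper's. Both arguments are sound; the paper's is shorter because Lemma~\ref{lem:basicpropertyofsyzygy} has already packaged the syzygy--Ext isomorphism, whereas your version is more self-contained and makes visible exactly which Ext vanishings are doing the work. One small remark: your pushout step can be replaced by an extension argument (since ${\rm Ext}_A^1(G,Q)=0$, the composite $a\colon G'\to Q$ extends across $G'\hookrightarrow P$, and postcomposing with $b$ extends $g$ itself), which avoids invoking functoriality of the connecting map; this is essentially what the paper does in the proof of Corollary~\ref{cor:GPorthogonal}.
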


\begin{proof}
Note that the full subcategory $A\mbox{-GProj}$ of $A\mbox{-Mod}$ is
closed under syzygies and that every Gorenstein-projective module is
a syzygy module. Then ``$(1)\Leftrightarrow (2)$" follows from Lemma
\ref{lem:basicpropertyofsyzygy}. Note that (2) just means that any
morphism from a Gorenstein-projective module to $M$ factors through
a projective module, and then factors though any fixed epimorphism
$P\rightarrow M$ with $P$ projective. Then ``$(2)\Leftrightarrow
(3)$" follows immediately.
\end{proof}

We observe the following result.

\begin{cor}\label{cor:GPorthogonal}
Let $M$ be an $A$-module. Then $M\in (A\mbox{-{\rm GProj}})^\perp$
if and only if $\Omega M\in(A\mbox{-{\rm GProj}})^\perp$.
\end{cor}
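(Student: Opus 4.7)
The plan is to exploit a short exact sequence $0 \to \Omega M \to P \to M \to 0$ with $P$ projective, together with the vanishing properties ${\rm Ext}^n_A(G, P) = 0$ for $n \geq 1$ (which hold by Lemma \ref{lem:GProj} since $G$ is Gorenstein-projective and $P$ is projective), to run a dimension-shift argument. Applying ${\rm Hom}_A(G, -)$ to this sequence yields a long exact sequence from which we extract isomorphisms
\[
{\rm Ext}^n_A(G, M) \simeq {\rm Ext}^{n+1}_A(G, \Omega M) \qquad \text{for all } n \geq 1,
\]
together with the exact segment
\[
{\rm Hom}_A(G, P) \longrightarrow {\rm Hom}_A(G, M) \longrightarrow {\rm Ext}^1_A(G, \Omega M) \longrightarrow 0.
\]

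For the ``if'' direction, assume $\Omega M \in (A\mbox{-GProj})^\perp$. Then the displayed isomorphism immediately gives ${\rm Ext}^n_A(G, M) = 0$ for every $n \geq 1$ and every Gorenstein-projective $G$, so $M \in (A\mbox{-GProj})^\perp$. This direction is routine.

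The ``only if'' direction requires more care: the dimension-shift isomorphism handles all $n \geq 2$ from the assumption that ${\rm Ext}^{n-1}_A(G, M) = 0$, but the $n = 1$ case does not follow from dimension shift alone and is the main obstacle. To settle it, I would invoke Lemma \ref{lem:GPorthogonal}: the hypothesis $M \in (A\mbox{-GProj})^\perp$ is equivalent (by part (3) of that lemma) to saying that the epimorphism $P \to M$ is a right $A\mbox{-GProj}$-approximation. This precisely means that for every Gorenstein-projective $G$ the induced map ${\rm Hom}_A(G, P) \to {\rm Hom}_A(G, M)$ is surjective, and hence from the exact segment above we obtain ${\rm Ext}^1_A(G, \Omega M) = 0$, completing the proof that $\Omega M \in (A\mbox{-GProj})^\perp$.
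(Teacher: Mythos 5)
Your proof is correct and follows essentially the same route as the paper's. The paper likewise dispatches the ``if'' direction by noting that $(A\mbox{-GProj})^\perp$ contains projectives and is closed under cokernels of monomorphisms (which is exactly your dimension-shift isomorphism ${\rm Ext}^n_A(G, M) \simeq {\rm Ext}^{n+1}_A(G, \Omega M)$ for $n\geq 1$), and for the ``only if'' direction invokes Lemma \ref{lem:GPorthogonal}(3) to identify $P\to M$ as a right $A\mbox{-GProj}$-approximation and then reads off the vanishing of ${\rm Ext}^n_A(G,\Omega M)$ from the long exact sequence; you have simply made explicit that the approximation property is precisely what handles the $n=1$ term that dimension-shifting alone does not reach.
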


\begin{proof}
Note that projective modules lie in $(A\mbox{-{\rm GProj}})^\perp$
and by dimension-shift the full subcategory $(A\mbox{-{\rm
GProj}})^\perp$ is closed under taking cokernels of monomorphisms. Then the
``if" part follows.

For the ``only if" part, assume that $M\in (A\mbox{-{\rm
GProj}})^\perp$. Take a Gorenstein-projective module $G$ and
consider the short exact sequence $\xi\colon 0\rightarrow \Omega
M\rightarrow P\stackrel{f}\rightarrow M\rightarrow 0$ with $P$
projective. By Lemma \ref{lem:GPorthogonal}(3) the morphism $f$ is a
right $A\mbox{-GProj}$-approximation. Then from the long exact
sequence obtained by applying ${\rm Hom}_A(G, -)$ to $\xi$ one
deduces that for all $n\geq 1$, ${\rm Ext}_A^n(G, \Omega M)$=0.
\end{proof}

We have the following comparison between GP-extension groups and the
usual extension groups.

\begin{prop}\label{prop:ExtGPExt} Let $M\in (A\mbox{-{\rm GProj}})^\perp$. Then any
projective resolution of $M$ is a proper GP-resolution.
Consequently, we have natural isomorphisms
$${\rm Ext}_{\rm GP}^n(M,
N)\simeq {\rm Ext}_A^n(M, N)$$
for all $n\geq 0$ and all modules
$N$.
\end{prop}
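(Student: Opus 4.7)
The plan is to show that a projective resolution of $M$ satisfies the extra acyclicity condition making it a proper GP-resolution, and then read off the isomorphism from the definition of the GP-extension groups.

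First, I would take an arbitrary projective resolution $\cdots \to P^{-2} \to P^{-1} \to P^{0} \to M \to 0$. Each $P^{-i}$ is projective and hence Gorenstein-projective, so this is in particular a GP-resolution. The cocycles of this complex (viewed as a complex ending with $M$ in degree $1$) are $M$ itself and the syzygy modules $\Omega^{i} M$ for $i \geq 1$. To verify the right GP-acyclicity, by the characterization recalled just before the definition of a GP-resolution, it suffices to check that for every Gorenstein-projective $G$ and every index $n$, the induced map $\operatorname{Hom}_{A}(G, P^{n}) \to \operatorname{Hom}_{A}(G, Z^{n+1})$ is surjective; this amounts to showing that the epimorphism $P^{-i} \twoheadrightarrow \Omega^{i-1} M$ (with the convention $\Omega^{0} M = M$) is a right $A\mbox{-GProj}$-approximation for every $i \geq 0$.

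By Lemma \ref{lem:GPorthogonal}(3), these epimorphisms are right $A\mbox{-GProj}$-approximations precisely when each $\Omega^{i-1} M$ belongs to $(A\mbox{-GProj})^{\perp}$. The base case $i = 1$ is the assumption $M \in (A\mbox{-GProj})^{\perp}$. For the inductive step, Corollary \ref{cor:GPorthogonal} tells us that $(A\mbox{-GProj})^{\perp}$ is closed under taking syzygies, so iterating gives $\Omega^{i} M \in (A\mbox{-GProj})^{\perp}$ for every $i \geq 0$. This is the main (but very short) step: the real content of the argument is packaged into Corollary \ref{cor:GPorthogonal}, and without that observation one would have to redo the epimorphism-splitting argument at each syzygy.

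Once the projective resolution is established to be a proper GP-resolution, the second statement is essentially formal. By Comparison Theorem any two proper GP-resolutions compute the same cohomology after applying $\operatorname{Hom}_{A}(-, N)$, so we may use the deleted projective resolution $P^{\bullet}$ to compute GP-extensions:
\[
\operatorname{Ext}_{\mathrm{GP}}^{n}(M, N) \;=\; H^{n}\bigl(\operatorname{Hom}_{A}(P^{\bullet}, N)\bigr) \;=\; \operatorname{Ext}_{A}^{n}(M, N),
\]
the second equality being the ordinary definition of $\operatorname{Ext}_{A}^{n}$. The naturality in $M$ and $N$ is inherited from the naturality of the identification between the two resolutions via the Comparison Theorem, so no extra work is required.
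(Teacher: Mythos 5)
Your proposal is correct and follows essentially the same route as the paper: iterate Corollary \ref{cor:GPorthogonal} to place every syzygy $\Omega^i M$ in $(A\mbox{-GProj})^\perp$, then invoke Lemma \ref{lem:GPorthogonal}(3) at each step to see that the projective resolution is right GP-acyclic, after which the isomorphism of Ext groups is immediate from the definition and the Comparison Theorem.
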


\begin{proof}
Consider a projective resolution $P^\bullet \rightarrow M$. By an
iterated application of  Corollary \ref{cor:GPorthogonal} we infer
that all the cocycles of $P^\bullet$ lie in $(A\mbox{-{\rm
GProj}})^\perp$. Then applying Lemma \ref{lem:GPorthogonal}(3)
repeatedly we infer that the projective resolution is a proper
GP-resolution.
\end{proof}

 For an artin algebra $A$ denote by ${\rm Fin.dim}\; A$ (resp. ${\rm fin.dim}\; A$) the
 large (resp. small)
 \emph{finistic dimension}\index{dimension!(large/small) finistic} of $A$. Observe that modules of finite projective
dimension lie in $(A\mbox{-{\rm GProj}})^\perp$; see Corollary
\ref{cor:infinitedimension}. Then we have the following immediate
consequence of the proposition above.

\begin{cor}\label{cor:Findim}
Let $A$ be an artin algebra. Then we have ${\rm Fin.dim}\; A\leq
{\rm Fin.GP.dim}\; A$ and ${\rm fin.dim}\; A\leq {\rm fin.Gp.dim}\;
A$. \hfill $\square$
\end{cor}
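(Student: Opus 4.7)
The plan is to deduce the corollary directly from Proposition~\ref{prop:ExtGPExt} together with the observation that modules of finite projective dimension are right-orthogonal to $A\mbox{-{\rm GProj}}$. The key point is that for such modules the Gorenstein-projective extensions collapse to the ordinary extensions, so the Gorenstein-projective dimension coincides with the projective dimension whenever the latter is finite.

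First I would verify that any module $M$ of finite projective dimension lies in $(A\mbox{-{\rm GProj}})^\perp$. This is a dimension-shift argument: given $G \in A\mbox{-{\rm GProj}}$, we have ${\rm Ext}_A^i(G, P) = 0$ for every projective $P$ and every $i \geq 1$ (by Corollary~\ref{cor:infinitedimension}, or by the definition of $^\perp(A\mbox{-{\rm Proj}})$ combined with Lemma~\ref{lem:GProj}(2)); applying ${\rm Ext}_A^*(G, -)$ to a finite projective resolution of $M$ and shifting gives ${\rm Ext}_A^i(G, M) = 0$ for all $i \geq 1$.

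Next I would apply Proposition~\ref{prop:ExtGPExt} to obtain natural isomorphisms ${\rm Ext}_{\rm GP}^n(M, N) \simeq {\rm Ext}_A^n(M, N)$ for all $N$ and all $n \geq 0$. If ${\rm proj.dim}\; M = d < \infty$, then ${\rm Ext}_A^{d+1}(M, -) = 0$, hence ${\rm Ext}_{\rm GP}^{d+1}(M, -) = 0$, and Proposition~\ref{prop:GPdimension} yields ${\rm GP.dim}\; M \leq d$. (In fact one gets equality, since there exists $N$ with ${\rm Ext}_A^d(M, N) \neq 0$, forcing ${\rm Ext}_{\rm GP}^d(M, N) \neq 0$ and thus ${\rm GP.dim}\; M \geq d$, but only the inequality is needed here.) In particular, a module of finite projective dimension automatically has finite Gorenstein-projective dimension, and the dimensions agree.

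Finally I would take suprema. Any $M$ with ${\rm proj.dim}\; M < \infty$ then satisfies ${\rm proj.dim}\; M = {\rm GP.dim}\; M < \infty$, so ${\rm proj.dim}\; M \leq {\rm Fin.GP.dim}\; A$; taking the supremum over all such $M$ yields ${\rm Fin.dim}\; A \leq {\rm Fin.GP.dim}\; A$. Restricting the same argument to finitely generated modules (noting that the relevant GP-resolution step lands in $A\mbox{-{\rm GProj}}$ and the characterization of GP-dimension applies verbatim) gives ${\rm fin.dim}\; A \leq {\rm fin.Gp.dim}\; A$. There is no serious obstacle; the only thing to be careful about is making sure that the orthogonality argument is applied in $A\mbox{-{\rm Mod}}$ (not merely in $A\mbox{-{\rm mod}}$) so that Proposition~\ref{prop:ExtGPExt}, stated for large modules, can be invoked.
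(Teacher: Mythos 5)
Your proof is correct and follows essentially the same route as the paper: observe that modules of finite projective dimension lie in $(A\mbox{-{\rm GProj}})^\perp$ via Corollary~\ref{cor:infinitedimension}, invoke Proposition~\ref{prop:ExtGPExt} to identify GP-extension groups with ordinary extension groups, conclude that ${\rm GP.dim}\,M = {\rm proj.dim}\,M$ for such $M$, and take suprema. One small note: the parenthetical remark that ``only the inequality ${\rm GP.dim}\,M\le d$ is needed'' is not quite accurate---the equality is what lets you conclude ${\rm proj.dim}\,M\le{\rm Fin.GP.dim}\,A$ in the last step, since ${\rm Fin.GP.dim}\,A$ only dominates ${\rm GP.dim}\,M$ a priori---but since you do prove and subsequently use the equality, the argument is sound.
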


Let us remark that the results and the arguments in this section
carry over to Gorenstein-injective modules without any difficulty.
In particular we define the GI-extension groups ${\rm Ext}^n_{\rm
GI}(M, N)$ by using the \emph{proper Gorenstein-injective
coresolution} \index{proper Gorenstein-injective
coresolution} of the module $N$.

\section{Modules of Finite Gorenstein Dimension}

In this section we study the class of modules having finite
Gorenstein-projective dimension.

Let $A$ be an artin algebra. Recall that for an $A$-module $M$ its
Gorenstein-projective dimension is denote by ${\rm GP.dim}\; M$. For
each $n\geq 0$ denote by ${\rm GP}^{\leq n}(A)$ the full subcategory
of $A\mbox{-Mod}$ consisting of modules $M$ with ${\rm GP.dim}\;
M\leq n$, and denote by ${\rm GP}^{<\infty}(A)$ the full subcategory
of $A\mbox{-Mod}$ consisting of modules of finite
Gorenstein-projective dimension. Observe that ${\rm GP}^{\leq
0}(A)=A\mbox{-GProj}$.

The following result is basic; also see \cite[Theorem 2.10]{Hol}.

\begin{lem} \label{lem:AusBuch} {\rm (Auslander-Buchweitz)}
Let $M$ be an $A$-module which fits into an exact sequence
$0\rightarrow G^{-n}\rightarrow \cdots \rightarrow G^{-1}\rightarrow
G^0\rightarrow M\rightarrow 0$ with each $G^{-i}$
Gorenstein-projective. Then there is a proper GP-resolution
$0\rightarrow P^{-n}\rightarrow \cdots \rightarrow P^{-2}\rightarrow
P^{-1}\rightarrow G\rightarrow M\rightarrow 0$ such that $G$ is
Gorenstein-projective and each $P^{-i}$ is projective. In
particular, we have ${\rm GP.dim}\; M\leq n$.
\end{lem}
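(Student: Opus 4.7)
Plan. Proceed by induction on $n$. For the base case $n=0$, the module $M=G^0$ is Gorenstein-projective, and the trivial sequence $0\to M\xrightarrow{{\rm id}} M\to 0$ (with $G:=M$) is already a proper GP-resolution.

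For $n\geq 1$, the strategy is to reduce the length by one by producing a right GP-acyclic short exact sequence $0\to L\to G\to M\to 0$ with $G$ Gorenstein-projective and $L$ of finite projective dimension at most $n-1$. Once such a sequence is in hand, modules of finite projective dimension lie in $(A\mbox{-GProj})^\perp$ by Corollary \ref{cor:infinitedimension}, so Proposition \ref{prop:ExtGPExt} guarantees that every projective resolution of $L$ is automatically a proper GP-resolution. Splicing a length-$(n-1)$ projective resolution of $L$ with $0\to L\to G\to M\to 0$ then yields the desired proper GP-resolution $0\to P^{-n}\to\cdots\to P^{-1}\to G\to M\to 0$ of the stated form, where $P^{-1},\dots,P^{-n}$ comprise the projective resolution of $L$ and $G$ is the Gorenstein-projective term adjacent to $M$.

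To build the sequence, I would invoke Corollary \ref{cor:GProjresolution} to produce a right GP-approximation of the form $(f,\pi)\colon G\oplus P\to M$ with $G$ Gorenstein-projective, $P$ projective and $\pi$ surjective; setting $L$ to be its kernel yields a right GP-acyclic sequence $0\to L\to G\oplus P\to M\to 0$ by the approximation property. The projective-dimension bound on $L$ would be obtained by a two-stage dimension-shift: first, splitting the given exact sequence into short exact pieces $0\to K_{i+1}\to G^{-i}\to K_i\to 0$ (with $K_0=M$, $K_n=G^{-n}$) and applying ${\rm Hom}_A(-,N)$ for $N$ of finite projective dimension while using Corollary \ref{cor:infinitedimension} to kill the Ext-groups ${\rm Ext}^k_A(G^{-i},N)$ for $k\geq 1$, chain these long exact sequences to obtain ${\rm Ext}^{n+1}_A(M,N)\cong{\rm Ext}^1_A(G^{-n},N)=0$; second, apply the same dimension-shift to $0\to L\to G\oplus P\to M\to 0$ (again using Corollary \ref{cor:infinitedimension} on $G\oplus P$) to deduce ${\rm Ext}^n_A(L,N)\cong{\rm Ext}^{n+1}_A(M,N)=0$ for every $N$ of finite projective dimension.

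The main obstacle will be upgrading this Ext-vanishing of $L$ against finite-projective-dimension modules to a genuine bound $\operatorname{proj.dim} L\leq n-1$, which formally requires vanishing against \emph{all} modules. This upgrade hinges on establishing $L\in(A\mbox{-GProj})^\perp$, after which Proposition \ref{prop:ExtGPExt} converts the Ext-vanishing into a true projective-dimension estimate; achieving this typically needs a more refined choice of approximation than the generic one from Corollary \ref{cor:GProjresolution}, in the spirit of the original Auslander-Buchweitz construction, guaranteeing that the kernel of the chosen approximation actually lands in the orthogonal.
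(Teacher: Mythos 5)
Your proposal identifies the right target (a right GP-acyclic short exact sequence $0\to L\to G\to M\to 0$ with $G$ Gorenstein-projective and $\operatorname{proj.dim}L\leq n-1$), and the step from such a sequence to the conclusion is sound: $L$ of finite projective dimension lies in $(A\mbox{-GProj})^\perp$ by Corollary~\ref{cor:infinitedimension}, so by Proposition~\ref{prop:ExtGPExt} (together with Lemma~\ref{lem:GPorthogonal}(3)) a projective resolution of $L$ is automatically proper, and splicing closes the argument. But the construction of that sequence is where the proof genuinely breaks down, and the gap is more serious than your closing remarks suggest.

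Taking a generic right $A\mbox{-GProj}$-approximation $(f,\pi)\colon G\oplus P\to M$ from Corollary~\ref{cor:GProjresolution} and setting $L=\operatorname{Ker}(f,\pi)$ gives you a right GP-acyclic sequence, and your dimension-shift correctly yields $\operatorname{Ext}_A^{k}(L,N)=0$ for $k\geq n$ and $N$ of finite projective dimension. However, this class of test objects is far too small to conclude $\operatorname{proj.dim}L\leq n-1$: one needs $\operatorname{Ext}^{n}_A(L,N)=0$ for \emph{all} $N$, and over a non-self-injective algebra almost no module has finite projective dimension. Worse, even knowing $L\in(A\mbox{-GProj})^\perp$ does not rescue the argument. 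That membership tells you (via Proposition~\ref{prop:ExtGPExt}) that a projective resolution of $L$ computes $\operatorname{Ext}_{\rm GP}(L,-)$, but it says nothing about $L$ having \emph{finite} projective dimension; $(A\mbox{-GProj})^\perp$ contains plenty of modules of infinite projective dimension. To deduce $\operatorname{proj.dim}L\leq n-1$ from the Ext-vanishing you would first need to know $L$ has finite Gorenstein-projective dimension (and then invoke Proposition~\ref{prop:GPdimensionoffiniteGPdimension}), but that proposition is proved later \emph{from} the present lemma, so the argument would be circular.

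The actual route, which the paper takes, is to quote the Auslander--Buchweitz decomposition theorem (\cite[Theorem 1.1]{ABu}; the version recorded in Appendix~A). That theorem does not take an arbitrary approximation and then analyze its kernel; it \emph{builds} the short exact sequence $0\to L\to G\to M\to 0$ inductively by pushouts, carrying along at each step a short exact sequence $0\to G'\to W\to G''\to 0$ with $W$ projective, so that the projective dimension bound on $L$ is part of the construction rather than something to be verified afterward. Once that sequence exists, the only thing left to check is properness, and the paper does this exactly the way you envisioned: each syzygy has finite projective dimension, hence lies in $(A\mbox{-GProj})^\perp$, hence each epimorphism onto it is a right $A\mbox{-GProj}$-approximation by Lemma~\ref{lem:GPorthogonal}(3). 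So the second half of your plan is essentially the paper's proof; the first half needs to be replaced by the explicit Auslander--Buchweitz induction rather than an abstract approximation plus dimension-shift.
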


\begin{proof}
The existence of the second exact sequence follows from
\cite[Theorem 1.1]{ABu}. We will show that the sequence
$0\rightarrow P^{-n}\stackrel{d^{-n}}\rightarrow \cdots \rightarrow
P^{-2}\stackrel{d^{-2}}\rightarrow
P^{-1}\stackrel{d^{-1}}\rightarrow
G\stackrel{\varepsilon}\rightarrow M\rightarrow 0$ is a proper
GP-resolution. Note that for each $1\leq i\leq n$ the module ${\rm
Im}\; d^{-i}$ has finite projective dimension, and hence it lies in
$(A\mbox{-GProj})^\perp$. By Lemma \ref{lem:GPorthogonal}(3) each
morphism $P^{-i}\rightarrow {\rm Im}\;d^{-i}$ is a right
$A\mbox{-GProj}$-approximation. Since ${\rm Ker}\; \varepsilon={\rm
Im}\; d^{-1}$ lies in $(A\mbox{-GProj})^\perp$, it follows
immediately that $\varepsilon$ is a right
$A\mbox{-GProj}$-approximation. From these we conclude that the
sequence is a proper GP-resolution.
\end{proof}

The following result is contained in \cite[Proposition 2.1]{ABu};
also see \cite[Theorem 2.20]{Hol}.

\begin{prop}\label{prop:GPdimensionoffiniteGPdimension}
Let $M$ be an $A$-module of finite Gorenstein-projective dimension
and let $n\geq 0$. The following statements are equivalent:
\begin{enumerate}
\item[(1)] ${\rm GP.dim}\; M \leq n$;
\item[(2)] ${\rm Ext}_A^i(M, L)=0$ for $i\geq n+1$ and $L$ of finite projective dimension;
\item[(3)] ${\rm Ext}_A^i(M,A)=0$ for $i\geq n+1$;
\item[(4)] ${\rm Ext}_A^{n+1}(M, L)=0$ for $L$ of finite projective
dimension.
\end{enumerate}
\end{prop}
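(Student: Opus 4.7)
The plan is to prove the chain $(1) \Rightarrow (2) \Rightarrow (3) \Rightarrow (4) \Rightarrow (1)$, relying throughout on Lemma~\ref{lem:AusBuch} (to produce GP-resolutions that are essentially projective resolutions capped by a single Gorenstein-projective module) together with Corollary~\ref{cor:infinitedimension}(1) (vanishing of ordinary Ext of a Gorenstein-projective module against modules of finite projective dimension).

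For $(1) \Rightarrow (2)$: Given ${\rm GP.dim}\; M \leq n$, apply Lemma~\ref{lem:AusBuch} to obtain a resolution
$0\rightarrow P^{-n}\rightarrow \cdots \rightarrow P^{-1}\rightarrow G \rightarrow M\rightarrow 0$
with $G$ Gorenstein-projective and each $P^{-i}$ projective. For any $L$ of finite projective dimension, each term in this resolution satisfies ${\rm Ext}_A^j(-,L)=0$ for $j\geq 1$ (projectivity for the $P^{-i}$, Corollary~\ref{cor:infinitedimension}(1) for $G$). Standard dimension-shifting through the $n+1$ short exact sequences obtained by splicing then forces ${\rm Ext}_A^i(M,L)=0$ for $i \geq n+1$.

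For $(2) \Rightarrow (3)$ and $(2) \Rightarrow (4)$: Immediate, since $A$ is projective. For the converse $(3) \Rightarrow (2)$ (which makes $(2)$ and $(3)$ equivalent), I would use that ${\rm Ext}_A^i(M,-)$ commutes with products, so by Lemma~\ref{lem:well-known} vanishing on $A$ forces vanishing on all projective modules; then a dimension shift along a projective resolution of any $L$ of finite projective dimension extends vanishing to $L$.

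The main obstacle is the implication $(4) \Rightarrow (1)$. By hypothesis ${\rm GP.dim}\; M \leq m$ for some $m$; assume $m > n$ (otherwise there is nothing to prove). Apply Lemma~\ref{lem:AusBuch} to get a proper GP-resolution
$0\rightarrow P^{-m}\rightarrow \cdots \rightarrow P^{-1}\rightarrow G\rightarrow M\rightarrow 0$
with $G$ Gorenstein-projective and the remaining terms projective. Truncating at position $-n$ yields the right GP-acyclic sequence
$0\rightarrow K\rightarrow P^{-n+1}\rightarrow \cdots \rightarrow P^{-1}\rightarrow G\rightarrow M\rightarrow 0,$
and from the tail $0\rightarrow P^{-m}\rightarrow \cdots\rightarrow P^{-n}\rightarrow K\rightarrow 0$ we read off that $K$ has finite projective dimension. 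Dimension-shifting ordinary Ext through the $n$ intervening terms (whose $\mathrm{Ext}$-vanishing against anything of finite projective dimension is supplied by projectivity and Corollary~\ref{cor:infinitedimension}(1)) gives
$${\rm Ext}_A^1(K,L)\simeq {\rm Ext}_A^{n+1}(M,L)$$
for every $L$ of finite projective dimension. Taking $L=\Omega K$ (which still has finite projective dimension) and applying (4), we find that the short exact sequence $0\rightarrow \Omega K\rightarrow P\rightarrow K\rightarrow 0$ splits, so $K$ is projective, hence Gorenstein-projective. Now the truncated sequence is a proper GP-resolution of length $n$ of $M$, proving ${\rm GP.dim}\; M\leq n$.

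The crucial technical point to watch is the reduction \emph{$K$ is Gorenstein-projective $\Leftrightarrow$ $K$ is projective} once one knows $K$ has finite projective dimension (Corollary~\ref{cor:infinitedimension}(2)); this is what allows the Ext-vanishing at the single degree $n+1$ to propagate into a structural statement about the GP-resolution. Everything else is bookkeeping with Horseshoe-style dimension shifts.
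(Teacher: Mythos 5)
Your proposal is correct and follows essentially the same strategy as the paper: dimension-shifting for $(1)\Rightarrow(2)$, Lemma~\ref{lem:well-known} (products of $A$) plus dimension-shifting along a projective resolution of $L$ to pass from (3) to the vanishing in (2)/(4), and for $(4)\Rightarrow(1)$ the Auslander--Buchweitz decomposition of Lemma~\ref{lem:AusBuch} into a projective tail capped by a Gorenstein-projective module, truncated at position $n$, followed by an $\mathrm{Ext}^1$-vanishing that splits the relevant short exact sequence. The only cosmetic difference is that the paper truncates one step further (its $K$ is your $\Omega K$) and takes $L=K$ rather than $L=\Omega K$ in (4), and both write the indices with $n\geq 1$ in mind (the $n=0$ boundary needs the obvious degenerate reading of the truncation); neither of these affects the substance.
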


\begin{proof}
For ``$(1)\Rightarrow (2)$", take an exact sequence $0\rightarrow
G^{-n}\rightarrow \cdots \rightarrow G^{-1}\rightarrow
G^0\rightarrow M\rightarrow 0$ with each $G^{-i}$
Gorenstein-projective. Note that ${\rm Ext}_A^i(G^{-j}, L)=0$ for
all $i\geq 1$ and $0\leq j\leq n$. By dimension-shift we have that
for $i\geq n+1$, ${\rm Ext}_A^i(M, L)\simeq {\rm
Ext}^{i-1}_A(M^{-1}, L)\simeq {\rm Ext}^{i-2}_A(M^{-2}, L) \simeq
\cdots \simeq {\rm Ext}_A^{i-n}(G^{-n}, L)=0$, where each $M^{-i}$
is the image of $G^{-i}\rightarrow G^{1-i}$.

The implication ``$(2)\Rightarrow (3)$" is trivial. For
``$(3)\Rightarrow (4)$", first note that ${\rm Ext}_A^i(M,P)=0$ for
$i\geq n+1$  and all projective modules $P$. Then $(4)$ follows by
applying dimension-shift to a projective resolution of $L$.

To see ``$(4)\Rightarrow (1)$", we apply the lemma above to get a
long exact sequence  $0\rightarrow K \rightarrow
P^{-n}\stackrel{d^{-n}}\rightarrow \cdots \rightarrow
P^{-1}\rightarrow G\rightarrow M\rightarrow 0$ such that $G$ is
Gorenstein-projective, each $P^{-j}$ is projective and $K$ has
finite projective dimension. Note that ${\rm Ext}_A^i(G, K)=0$ for
all $i\geq 1$. We can apply dimension-shift to get that ${\rm
Ext}_A^1({\rm Im}\; d^{-n}, K)\simeq {\rm Ext}_A^{n+1}(M, K)=0$.
Consequently, the monomorphism $K\rightarrow P^{-n}$ splits. Then we
get a GP-resolution of $M$ of length $n$; see Lemma
\ref{lem:AusBuch}.
\end{proof}

The following important result is contained in \cite[section
3]{ABu}; compare \cite[Theorem 2.24]{Hol}.

\begin{prop}\label{prop:AusBuch} {\rm (Auslander-Buchweitz)}
Let $A$ be an artin algebra. If any two of the three terms in a
short exact sequence of $A$-modules have finite
Gorenstein-projective dimension, then so does the remaining term.
Moreover, a direct summand of a module of finite
Gorenstein-projective dimension is of finite Gorenstein-projective
dimension. \hfill $\square$
\end{prop}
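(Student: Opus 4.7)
The plan is to reformulate ``${\rm GP.dim}(X)<\infty$'' via Lemma 3.2.1 as the existence of an \emph{AB-presentation}: a short exact sequence $0\to K\to G\to X\to 0$ with $G$ Gorenstein-projective and ${\rm proj.dim}(K)<\infty$. The whole argument then combines AB-presentations of two of the three terms via pullbacks. The key input is Corollary 2.1.5(1): if $G$ is Gorenstein-projective and $K$ has finite projective dimension, then ${\rm Ext}^{\geq 1}_A(G,K)=0$.

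The central case is: if $L$ and $N$ in $0\to L\to M\to N\to 0$ are AB-presentable, so is $M$. Choose AB-presentations $0\to K_L\to G_L\to L\to 0$ and $0\to K_N\to G_N\to N\to 0$. Pulling back $M\to N$ along $G_N\to N$ yields $M'$ fitting into $0\to L\to M'\to G_N\to 0$ and $0\to K_N\to M'\to M\to 0$. Since ${\rm Ext}^2_A(G_N,K_L)=0$, the long exact sequence for $0\to K_L\to G_L\to L\to 0$ under ${\rm Ext}^\bullet(G_N,-)$ makes ${\rm Ext}^1_A(G_N,G_L)\to {\rm Ext}^1_A(G_N,L)$ surjective, so the class $[M']$ lifts to $[M'']$ giving an extension $0\to G_L\to M''\to G_N\to 0$ together with a morphism covering $G_L\twoheadrightarrow L$ and ${\rm id}_{G_N}$. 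Snake Lemma produces $0\to K_L\to M''\to M'\to 0$, and composing with $M'\twoheadrightarrow M$ gives a surjection $M''\twoheadrightarrow M$ whose kernel is an extension of $K_N$ by $K_L$, hence of finite projective dimension. Meanwhile, $M''$ is Gorenstein-projective by Proposition 2.1.8(1). So $M$ admits an AB-presentation.

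Direct summand closure rests on the observation that if ${\rm GP.dim}(X)\leq n$, then any $n$-th syzygy $\Omega^n X$ is Gorenstein-projective. Indeed, applying Horseshoe to $0\to K\to G\to X\to 0$ (with $G$ Gorenstein-projective and $K$ of projective dimension $\leq n-1$), one obtains a projective resolution of $X$ whose $n$-th syzygy is $\Omega^n G$ (the $K$-contribution having vanished), Gorenstein-projective by Corollary 2.1.12(1). Syzygies distribute over direct sums up to projective summands, so if $M\oplus M'$ has GP-dimension $\leq n$ then $\Omega^n(M\oplus M')\cong \Omega^n M\oplus \Omega^n M'$ is Gorenstein-projective, whence each summand $\Omega^n M$ is by Proposition 2.1.8(2); then Lemma 3.2.1 yields ${\rm GP.dim}(M)\leq n$.

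The remaining two cases reduce to the central one via the splitting pullback $M\times_N P\cong L\oplus P$ for a projective surjection $P\twoheadrightarrow N$ with kernel $\Omega N$, which produces $0\to \Omega N\to L\oplus P\to M\to 0$. A preliminary step shows that $N$ AB-presentable implies $\Omega N$ AB-presentable: applying the same pullback trick to an AB-presentation $0\to K\to G\to N\to 0$ of $N$ gives $0\to \Omega N\to K\oplus P\to G\to 0$, and then pulling back along a projective cover of $G$ produces $0\to \Omega G\to \Omega N\oplus P^0\to K\oplus P\to 0$ with $\Omega G$ Gorenstein-projective (Corollary 2.1.12(1)) and $K\oplus P$ of finite projective dimension; the central case gives $\Omega N\oplus P^0$ AB-presentable, whence $\Omega N$ is by summand closure. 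With this in hand, ``$M,N$ finite $\Rightarrow L$ finite'' follows from the central case applied to $0\to \Omega N\to L\oplus P\to M\to 0$ together with summand closure, while ``$L,M$ finite $\Rightarrow N$ finite'' follows by the just-proved case to deduce $\Omega N$ finite, then splicing an AB-presentation of $\Omega N$ with $0\to \Omega N\to P\to N\to 0$ to obtain a finite exact sequence of Gorenstein-projective modules terminating in $N$, to which Lemma 3.2.1 applies. The main obstacle is the diagram chase in the central case, where one must simultaneously verify that $M''$ is Gorenstein-projective and that the kernel of $M''\twoheadrightarrow M$ truly has finite projective dimension.
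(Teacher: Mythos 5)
The paper itself offers no proof of this proposition---it cites Auslander--Buchweitz and Holm and records only the conclusion. Your argument is a correct, self-contained reconstruction of the Auslander--Buchweitz two-out-of-three proof that those references present. You rightly reformulate finiteness of Gorenstein-projective dimension, via Lemma~\ref{lem:AusBuch}, as the existence of a short exact sequence $0\to K\to G\to X\to 0$ with $G$ Gorenstein-projective and ${\rm proj.dim}\,K<\infty$, and you transport such presentations across a given short exact sequence by pullbacks and by lifting the extension class along the surjection ${\rm Ext}^1_A(G_N,G_L)\twoheadrightarrow{\rm Ext}^1_A(G_N,L)$; that surjectivity is exactly where Corollary~\ref{cor:infinitedimension}(1) and extension-closure of $A\mbox{-GProj}$ (Proposition~\ref{prop:Gproj}(1)) enter. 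The reductions of the remaining two cases through the preliminary observation that $\Omega N$ inherits finiteness, and through closure under direct summands, are sound and free of circularity.

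One local slip is worth flagging. In the direct-summand step you say that ``applying Horseshoe to $0\to K\to G\to X\to 0$ yields a projective resolution of $X$ whose $n$-th syzygy is $\Omega^n G$.'' The Horseshoe Lemma resolves the \emph{middle} term $G$ from resolutions of the two outer terms $K$ and $X$, not a resolution of $X$ from resolutions of $K$ and $G$; and if instead you take the mapping cone of a chain map between resolutions of $K$ and $G$ to resolve $X$, the $n$-th syzygy will in general still carry a $P_K$-summand. The clean version of what you need is: take projective resolutions of $K$ and $X$, apply Horseshoe to obtain one of $G$ together with short exact sequences $0\to\Omega^i K\to\Omega^i G\to\Omega^i X\to 0$ for all $i\geq 0$; since ${\rm proj.dim}\,K\leq n-1$ one may choose $\Omega^n K=0$, so $\Omega^n X\cong\Omega^n G$ is Gorenstein-projective by Corollary~\ref{cor:syzygy}(1). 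As syzygies of a direct sum decompose, $\Omega^n$ of a summand is a summand of a Gorenstein-projective module, hence Gorenstein-projective by Proposition~\ref{prop:Gproj}(2), and Lemma~\ref{lem:AusBuch} then bounds its Gorenstein-projective dimension. With this correction the whole argument goes through.
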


We get the following result by applying  Propositions
\ref{prop:AusBuch} and \ref{prop:GPdimensionoffiniteGPdimension};
compare \cite[Proposition 2.18]{Hol}.

\begin{cor}\label{cor:GP.dim}
Let $0\rightarrow L\rightarrow M\rightarrow N\rightarrow 0$ be a
short exact sequence of $A$-modules. Then we have ${\rm GP.dim}\; M
\leq {\rm max}\{{\rm GP.dim}\; L, {\rm GP.dim}\; N\}$, ${\rm
GP.dim}\; N\leq {\rm max}\{{\rm GP.dim}\;L+1, {\rm GP.dim}\; M\}$
and ${\rm GP.dim}\; L \leq {\rm max}\{{\rm GP.dim}\; M, {\rm
GP.dim}\; N-1\}$.\hfill $\square$
\end{cor}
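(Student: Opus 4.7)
The plan is to reduce each of the three inequalities to the case where all three modules have finite Gorenstein-projective dimension, and then deduce them from the long exact Ext-sequence via the characterization in Proposition \ref{prop:GPdimensionoffiniteGPdimension}.

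For the reduction, observe that in each of the three stated inequalities, if the right-hand side is $\infty$ there is nothing to prove. Otherwise the right-hand side is the maximum of two finite numbers, meaning the corresponding two terms of the short exact sequence have finite GP-dimension. Proposition \ref{prop:AusBuch} (Auslander--Buchweitz) then forces the third term to have finite GP-dimension as well. Thus in each case I may assume ${\rm GP.dim}\; L$, ${\rm GP.dim}\; M$, ${\rm GP.dim}\; N$ are all finite.

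Under this assumption I use Proposition \ref{prop:GPdimensionoffiniteGPdimension}, which says that for a module $X$ of finite Gorenstein-projective dimension we have ${\rm GP.dim}\; X \leq n$ if and only if ${\rm Ext}^{n+1}_A(X, Y) = 0$ for every module $Y$ of finite projective dimension. Thus, fixing a test module $Y$ of finite projective dimension, I apply ${\rm Hom}_A(-, Y)$ to the short exact sequence and use the resulting long exact sequence in ordinary Ext. For the first inequality, with $d = \max\{{\rm GP.dim}\; L, {\rm GP.dim}\; N\}$ and $i \geq d+1$, the middle term ${\rm Ext}^i_A(M, Y)$ sits between ${\rm Ext}^i_A(N, Y) = 0$ and ${\rm Ext}^i_A(L, Y) = 0$, forcing it to vanish. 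For the second, with $d = \max\{{\rm GP.dim}\; L + 1, {\rm GP.dim}\; M\}$ and $i \geq d+1$, the piece
$$ {\rm Ext}^{i-1}_A(L, Y) \longrightarrow {\rm Ext}^i_A(N, Y) \longrightarrow {\rm Ext}^i_A(M, Y) $$
of the long exact sequence has both ends zero since $i - 1 \geq {\rm GP.dim}\; L + 1$ and $i \geq {\rm GP.dim}\; M + 1$. The third inequality is symmetric, obtained from the piece ${\rm Ext}^i_A(M, Y) \to {\rm Ext}^i_A(L, Y) \to {\rm Ext}^{i+1}_A(N, Y)$.

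There is no serious obstacle here: the whole argument is formal once Propositions \ref{prop:AusBuch} and \ref{prop:GPdimensionoffiniteGPdimension} are in hand. The only point requiring care is the initial reduction to the finite-GP-dimension case, so that Proposition \ref{prop:GPdimensionoffiniteGPdimension} actually applies; otherwise it is just the standard bookkeeping for the three sides of the long exact Ext sequence.
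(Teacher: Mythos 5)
Your argument is correct and is exactly what the paper intends: the corollary is stated with the instruction to "apply Propositions \ref{prop:AusBuch} and \ref{prop:GPdimensionoffiniteGPdimension}," and your proposal carries this out precisely, first using Auslander--Buchweitz to reduce to the finite-GP-dimension case so that Proposition \ref{prop:GPdimensionoffiniteGPdimension} applies, and then reading off the three inequalities from the ordinary long exact Ext sequence against test modules of finite projective dimension.
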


The following result is of interest; compare \cite[Theorem
2.20]{Hol}.

\begin{thm}\label{thm:boundofGP.dim}
Let $M$ be an $A$-module and let $n\geq 0$. Then the following
statements are equivalent:
\begin{enumerate}
\item[(1)] ${\rm GP.dim}\; M \leq n$;
\item[(2)] there exists an exact sequence $0\rightarrow G^{-n}\rightarrow
\cdots \rightarrow G^{-1}\rightarrow G^0\rightarrow M\rightarrow 0$
with each $G^{-i}$ Gorenstein-projective;
\item[(3)] for each exact sequence  $0\rightarrow K\rightarrow G^{1-n}
\rightarrow \cdots \rightarrow G^{-1}\rightarrow G^0\rightarrow
M\rightarrow 0$ with each $G^{-i}$ Gorenstein-projective we have
that $K$ is Gorenstein-projective.
\end{enumerate}
\end{thm}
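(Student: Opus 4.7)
The plan is to prove the two equivalences (1) $\Leftrightarrow$ (2) and (2) $\Leftrightarrow$ (3) separately, so that each direction invokes exactly one of the prior tools: Proposition \ref{prop:GPdimension}, Lemma \ref{lem:AusBuch}, or Corollary \ref{cor:GP.dim}. No new ideas are needed; the content is really just organizing what has already been proved.

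For (1) $\Rightarrow$ (2), I would fix a proper GP-resolution $\cdots \to G^{-n-1} \to G^{-n} \to G^{1-n} \to \cdots \to G^0 \to M \to 0$, which exists by Corollary \ref{cor:GProjresolution}. Since ${\rm Ext}_{\rm GP}^{n+1}(M,-)=0$ by hypothesis, Proposition \ref{prop:GPdimension}(2)$\Rightarrow$(3) applied to the truncation $0 \to K \to G^{1-n} \to \cdots \to G^0 \to M \to 0$ gives that $K$ is Gorenstein-projective; relabeling $K$ as $G^{-n}$ produces the sequence in (2). Conversely, (2) $\Rightarrow$ (1) is immediate from Lemma \ref{lem:AusBuch}, which upgrades the hypothesized sequence to a proper GP-resolution of length $n$ and in particular yields ${\rm GP.dim}\; M \leq n$.

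For (2) $\Rightarrow$ (3), I would first note that (2) together with Lemma \ref{lem:AusBuch} gives ${\rm GP.dim}\; M \leq n$. Given any exact sequence $0 \to K \to G^{1-n} \to \cdots \to G^0 \to M \to 0$ with Gorenstein-projective $G^{-i}$, split it into short exact sequences $0 \to K^{-i-1} \to G^{-i} \to K^{-i} \to 0$ where $K^{-i}$ is the image of $G^{-i} \to G^{1-i}$, with $K^{0}=M$ and $K^{-n}=K$. Using the bound ${\rm GP.dim}\; L \leq \max\{{\rm GP.dim}\; M, {\rm GP.dim}\; N-1\}$ from Corollary \ref{cor:GP.dim} applied successively to each short exact sequence, one gets by induction on $i$ that ${\rm GP.dim}\; K^{-i} \leq n-i$, so that ${\rm GP.dim}\; K \leq 0$, i.e.\ $K$ is Gorenstein-projective. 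The implication (3) $\Rightarrow$ (2) is the easiest: take any projective resolution and truncate to obtain $0 \to \Omega^n M \to P^{1-n} \to \cdots \to P^0 \to M \to 0$; since projectives are Gorenstein-projective, (3) forces $\Omega^n M$ to be Gorenstein-projective, and setting $G^{-n}:=\Omega^n M$ and $G^{-i}:=P^{-i}$ for $0 \leq i \leq n-1$ yields the sequence in (2).

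There is no real obstacle; the only place requiring any care is the inductive bookkeeping in (2) $\Rightarrow$ (3), where one must track both indices of $K^{-i}$ and the decrement in the GP-dimension bound at each step. The case $n=0$ degenerates to the tautology that $M$ is Gorenstein-projective iff it is Gorenstein-projective, so nothing needs to be said there.
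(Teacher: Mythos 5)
Your proof is correct. The equivalence $(1)\Leftrightarrow (2)$ is handled as in the paper: $(1)\Rightarrow(2)$ by truncating a proper GP-resolution at degree $-n$ and invoking Proposition \ref{prop:GPdimension}, and $(2)\Rightarrow(1)$ by Lemma \ref{lem:AusBuch}; and $(3)\Rightarrow(2)$ via a truncated projective resolution is the same routine observation. The one genuine divergence is the remaining implication. The paper goes $(1)\Rightarrow(3)$: first Proposition \ref{prop:AusBuch} (applied repeatedly across the spliced short exact sequences) gives that $K$ has finite Gorenstein-projective dimension; then dimension-shift along the given sequence using $G^{-i}\in{^\perp A}$ yields ${\rm Ext}_A^i(K,A)\simeq{\rm Ext}_A^{i+n}(M,A)=0$ for $i\geq 1$; and Proposition \ref{prop:GPdimensionoffiniteGPdimension} concludes ${\rm GP.dim}\;K=0$. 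You instead prove $(2)\Rightarrow(3)$ by iterating the packaged bound ${\rm GP.dim}\;L\leq\max\{{\rm GP.dim}\;M,\,{\rm GP.dim}\;N-1\}$ of Corollary \ref{cor:GP.dim} along the chain $0\to K^{-i-1}\to G^{-i}\to K^{-i}\to 0$, descending from ${\rm GP.dim}\;K^0\leq n$ to ${\rm GP.dim}\;K^{-n}\leq 0$. Since Corollary \ref{cor:GP.dim} is itself deduced from Propositions \ref{prop:AusBuch} and \ref{prop:GPdimensionoffiniteGPdimension}, the two arguments are logically equivalent; yours is a compact repackaging that avoids the explicit ${\rm Ext}(-,A)$ dimension-shift, while the paper's version keeps the role of the finiteness result and the vanishing criterion more visible. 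Either route is clean and there is no gap.
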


\begin{proof}
We apply Lemma \ref{lem:AusBuch}. Then the equivalence
``$(1)\Leftrightarrow (2)$"  follows directly. The implication
``$(3)\Rightarrow (2)$" is trivial. To see ``$(1)\Rightarrow (3)$",
first note by applying Proposition \ref{prop:AusBuch} repeatedly we
get that the module $K$ has finite Gorenstein-projective dimension.
We apply dimension-shift to the given exact sequence. We get that
${\rm Ext}_A^i(K, A)\simeq {\rm Ext}_A^{i+n}(M, A)=0$ for $i\geq 1$.
By Proposition \ref{prop:GPdimensionoffiniteGPdimension}  we get
${\rm GP.dim}\; K=0$, that is, $K$ is Gorenstein-projective.
\end{proof}

The following immediate consequence of Theorem
\ref{thm:boundofGP.dim} is contained implicitly in \cite[Proposition 4.2]{AR1991};
also see\cite[Proposition 3.10]{Bel2}.

\begin{cor}
Let $A$ be an artin algebra. Then $A$ is Gorenstein if and only if
${\rm gl.GP.dim}\; A<\infty$. In this case we have ${\rm G.dim}\;
A={\rm gl.GP.dim}\; A$.
\end{cor}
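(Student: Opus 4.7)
The plan is to package the two ingredients already in hand: Theorem~\ref{thm:GorensteinalgebraII}, which identifies $d$-Gorensteinness with the equation $A\mbox{-{\rm GProj}}=\Omega^d(A\mbox{-{\rm Mod}})$; and Theorem~\ref{thm:boundofGP.dim}, which expresses ${\rm GP.dim}\; M\leq n$ as Gorenstein-projectivity of the $n$-th syzygy of $M$. The natural strategy is to prove a single refined equivalence, namely that for every $d\geq 0$,
\[
A \text{ is } d\text{-Gorenstein} \iff {\rm gl.GP.dim}\; A\leq d,
\]
from which both the ``iff'' statement and the equality of dimensions drop out by taking infima.

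For the forward direction, I would assume $A$ is $d$-Gorenstein and argue that, by Theorem~\ref{thm:GorensteinalgebraII}, for any $A$-module $M$ the $d$-th syzygy $\Omega^d M$ lies in $A\mbox{-{\rm GProj}}$. Feeding this into Theorem~\ref{thm:boundofGP.dim}(2) applied to the truncation $0\to \Omega^d M\to P^{1-d}\to\cdots\to P^0\to M\to 0$ of a projective resolution yields ${\rm GP.dim}\; M\leq d$. Since $M$ was arbitrary, ${\rm gl.GP.dim}\; A\leq d$.

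For the reverse direction, I would start from the assumption ${\rm gl.GP.dim}\; A\leq d$ and verify the set equality $A\mbox{-{\rm GProj}}=\Omega^d(A\mbox{-{\rm Mod}})$. The inclusion $\subseteq$ is free: if $G$ is Gorenstein-projective and $P^\bullet$ is a complete resolution with $Z^0(P^\bullet)\simeq G$, then $G\simeq \Omega^d(Z^d(P^\bullet))$, so $G\in \Omega^d(A\mbox{-{\rm Mod}})$. For the converse, given $M$ take a projective resolution $\cdots\to P^{-1}\to P^0\to M\to 0$; since ${\rm GP.dim}\; M\leq d$, Theorem~\ref{thm:boundofGP.dim}(3) applied to the $d$-step truncation of this resolution says the syzygy $\Omega^d M$ is Gorenstein-projective. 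Then Theorem~\ref{thm:GorensteinalgebraII} delivers that $A$ is $d$-Gorenstein.

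Finally, the ``iff'' statement follows by taking ``some $d$'' on both sides, and the equality ${\rm G.dim}\; A={\rm gl.GP.dim}\; A$ follows by letting $d_0={\rm G.dim}\; A$ and $e_0={\rm gl.GP.dim}\; A$: by the refined equivalence $A$ is $e_0$-Gorenstein, giving $d_0\leq e_0$, and $A$ is $d_0$-Gorenstein gives $e_0\leq d_0$. There is no real obstacle; the mildest care is needed in distinguishing ``$A$ is $d$-Gorenstein'' (a $\leq$ statement) from ``${\rm G.dim}\; A=d$'' (the exact value) when extracting the equality of dimensions, but once the refined equivalence is proved uniformly in $d$, this is automatic.
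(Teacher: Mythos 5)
Your proposal is correct and uses exactly the two ingredients the paper itself cites — Theorem~\ref{thm:GorensteinalgebraII} and Theorem~\ref{thm:boundofGP.dim} — merely unfolding the detail that the paper leaves implicit, namely the refined equivalence ``$A$ is $d$-Gorenstein $\iff {\rm gl.GP.dim}\,A\leq d$.'' This is essentially the same approach as the paper.
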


\begin{proof}
We apply Theorem \ref{thm:boundofGP.dim} and Theorem
\ref{thm:GorensteinalgebraII}.
\end{proof}

The following result is due to Holm; compare \cite[Theorem
2.28]{Hol}.

\begin{thm} {\rm (Holm)}
Let $A$ be an artin algebra. Then we have ${\rm Fin.dim}\; A={\rm
Fin.GP.dim}\; A$ and ${\rm fin.dim}\; A={\rm fin.Gp.dim}\; A$.
\end{thm}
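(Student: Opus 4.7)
The plan is to prove the two equalities by establishing, for each finitely generated or arbitrary module $M$ with $\mathrm{GP.dim}\, M = n < \infty$, the existence of a module $E$ (of the same size) whose ordinary projective dimension equals exactly $n$. Combined with Corollary \ref{cor:Findim}, which already gives $\mathrm{Fin.dim}\, A \leq \mathrm{Fin.GP.dim}\, A$ and $\mathrm{fin.dim}\, A \leq \mathrm{fin.Gp.dim}\, A$, this will suffice. The case $n=0$ is trivial, so assume $n \geq 1$.

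Starting from $M$ with $\mathrm{GP.dim}\, M = n$, Lemma \ref{lem:AusBuch} (Auslander--Buchweitz) produces a proper GP-resolution
\[
0 \to P^{-n} \to P^{1-n} \to \cdots \to P^{-1} \to G \to M \to 0
\]
with each $P^{-i}$ projective and $G$ Gorenstein-projective; in the finitely generated case, everything can be taken finitely generated. Setting $K = \ker(G \to M)$, the truncated sequence is a projective resolution of $K$ of length at most $n-1$, so $\mathrm{proj.dim}\, K \leq n-1 < \infty$. Now, using Lemma \ref{lem:Gproj}(2), embed $G$ into a (finitely generated, if applicable) projective $P$ with cokernel $G'$ also Gorenstein-projective, and form the pushout along $K \to G$:
\[
\xymatrix{
0 \ar[r] & K \ar@{=}[d] \ar[r] & G \ar[d] \ar[r] & M \ar[d] \ar[r] & 0 \\
0 \ar[r] & K \ar[r] & P \ar[r] & E \ar[r] & 0
}
\]
The right column yields a short exact sequence $0 \to M \to E \to G' \to 0$.

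The key point is to pin down $\mathrm{proj.dim}\, E$ from two sides. From the bottom row, $\mathrm{proj.dim}\, E \leq \mathrm{proj.dim}\, K + 1 \leq n$, so in particular $E$ has finite projective dimension. For the lower bound, apply Corollary \ref{cor:GP.dim} to $0 \to M \to E \to G' \to 0$: since $\mathrm{GP.dim}\, G' = 0$, we get
\[
n = \mathrm{GP.dim}\, M \leq \max\{\mathrm{GP.dim}\, E,\; \mathrm{GP.dim}\, G' - 1\} = \mathrm{GP.dim}\, E.
\]
The bridge between the two inequalities is the observation that for any module $N$ of finite projective dimension one has $\mathrm{GP.dim}\, N = \mathrm{proj.dim}\, N$: by Corollary \ref{cor:infinitedimension} such $N$ lies in $(A\mbox{-GProj})^\perp$, so Proposition \ref{prop:ExtGPExt} identifies the two graded Ext-functors and hence their cohomological degrees. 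Applied to $E$, this gives $\mathrm{proj.dim}\, E = \mathrm{GP.dim}\, E \geq n$, and combined with $\mathrm{proj.dim}\, E \leq n$ we conclude $\mathrm{proj.dim}\, E = n$.

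Therefore $\mathrm{Fin.dim}\, A \geq n$ for every $M$ of finite Gorenstein-projective dimension $n$, yielding $\mathrm{Fin.dim}\, A \geq \mathrm{Fin.GP.dim}\, A$; the finitely generated argument is identical and gives $\mathrm{fin.dim}\, A \geq \mathrm{fin.Gp.dim}\, A$. The main obstacle, conceptually, is recognizing the correct pushout: one needs $E$ to simultaneously inherit finite projective dimension (forced by $0 \to K \to P \to E \to 0$) and inherit a lower bound on Gorenstein dimension from $M$ (forced by $0 \to M \to E \to G' \to 0$). A routine direct attempt via the sequence $0 \to K \to G \to M \to 0$ only produces $\mathrm{proj.dim}\, K \leq n-1$, which need not be sharp; the embedding $G \hookrightarrow P$ is precisely what converts Gorenstein-projectivity of $G$ into the extra degree needed to realize the bound $n$ as an honest projective dimension.
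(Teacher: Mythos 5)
Your proposal is correct, and the construction is the same as the paper's: both build the same module (your $E$, the paper's $L$) by splicing $0 \to K \to G \to M \to 0$ (from Lemma \ref{lem:AusBuch}) against an embedding $G \hookrightarrow P$, yielding the pair of exact sequences $0\to K\to P\to E\to 0$ and $0\to M\to E\to G'\to 0$. Where you genuinely diverge from the paper is in how you pin down ${\rm proj.dim}\,E$ from below. The paper argues as follows: since $M$ is not Gorenstein-projective, Proposition \ref{prop:Gproj}(3) applied to $0\to M\to E\to G'\to 0$ shows $E$ is not Gorenstein-projective (in particular not projective); it also uses the sharp equality ${\rm proj.dim}\,K = m-1$ (which requires the implicit observation that a shorter GP-resolution of $M$ would force ${\rm GP.dim}\,M < m$), so that $E = \Omega^{-1}K$ being non-projective forces ${\rm proj.dim}\,E = m$. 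Your argument instead passes through the Gorenstein dimension of $E$: Corollary \ref{cor:GP.dim} applied to $0\to M\to E\to G'\to 0$ gives ${\rm GP.dim}\,E\geq n$ directly from ${\rm GP.dim}\,M = n$, and then you use Corollary \ref{cor:infinitedimension} to place $E\in (A\mbox{-GProj})^\perp$ and Proposition \ref{prop:ExtGPExt} to identify ${\rm GP.dim}\,E = {\rm proj.dim}\,E$. Your route is somewhat more systematic — it avoids the subsidiary observation that ${\rm proj.dim}\,K$ is exactly $n-1$ (you only use $\leq n-1$) and instead exploits the full machinery of GP-dimension inequalities plus the collapse of GP-Ext to ordinary Ext on $(A\mbox{-GProj})^\perp$. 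The paper's route is more elementary in that it only needs the closure properties of $A\mbox{-Gproj}$ under short exact sequences and doesn't invoke Corollary \ref{cor:GP.dim} or Proposition \ref{prop:ExtGPExt} at all. Both are valid; yours is cleaner conceptually, the paper's uses less technology.
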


\begin{proof}
We only show the first equality and the second is proved similarly.
We have observed that ${\rm Fin.dim}\; A \leq {\rm Fin.GP.dim}\; A$
in Corollary \ref{cor:Findim}. By Lemma \ref{lem:AusBuch} we infer
that ${\rm Fin.GP.dim}\; A\leq {\rm Fin.dim}\; A+1$. Hence if ${\rm
Fin.GP.dim}\; A$ is infinite we are done.

Now assume that ${\rm Fin.GP.dim}\; A=m$ such that $0< m< \infty$.
Take a module $M$ with ${\rm GP.dim}\; M=m$. By Lemma
\ref{lem:AusBuch} there is a short exact sequence $0\rightarrow
K\rightarrow G\rightarrow M\rightarrow 0$ such that $G$ is
Gorenstein-projective and ${\rm proj.dim}\; K=m-1$. Take a short exact sequence
$0\rightarrow G\rightarrow P\rightarrow G'\rightarrow 0$ with $P$
projective and $G'$ Gorenstein-projective. Hence we get two short
exact sequences
$$0\longrightarrow K\longrightarrow P\longrightarrow L\longrightarrow 0 \mbox{  and  }
0\longrightarrow M\longrightarrow L\longrightarrow G'\longrightarrow
0.$$ Since $M$ is not Gorenstein-projective, by the second exact sequence
we have that $L$ is not Gorenstein-projective; see Proposition
\ref{prop:Gproj}. In particular, it is not projective. By the first
exact sequence we have ${\rm proj.dim}\; L=m$. We are done.
\end{proof}

The following observation is rather easy.

\begin{prop}
Let $A$ be an artin algebra. Then we have $${\rm gl.GP.dim}\; A={\rm
sup}\{ {\rm GP.dim}\; M|\; M\in A\mbox{-{\rm mod}}\}.$$
\end{prop}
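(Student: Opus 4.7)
The plan is to prove the nontrivial direction, namely that the supremum of $\mathrm{GP.dim}\; M$ over $M\in A\mbox{-mod}$ bounds $\mathrm{gl.GP.dim}\; A$ (the reverse inequality holds tautologically since $A\mbox{-mod}\subseteq A\mbox{-Mod}$). Set $d=\sup\{\mathrm{GP.dim}\; M\mid M\in A\mbox{-mod}\}$. If $d=\infty$ there is nothing to prove, so assume $d<\infty$.

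The first step is to reduce the global statement to a structural statement about the algebra $A$ itself. For every finitely generated module $M$, the bound $\mathrm{GP.dim}\; M\leq d$ combined with Theorem \ref{thm:boundofGP.dim}(3) applied to a truncated projective resolution of $M$ shows that $\Omega^d M$ is finitely generated Gorenstein-projective. Hence $\Omega^d(A\mbox{-mod})\subseteq A\mbox{-Gproj}$, and the reverse inclusion was observed right after the definition of $\Omega^d(A\mbox{-mod})$. This equality $A\mbox{-Gproj}=\Omega^d(A\mbox{-mod})$ is precisely condition (2) of Theorem \ref{thm:GorensteinalgebraI}, so $A$ is a $d$-Gorenstein algebra.

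The second step uses the large-module version of the Gorenstein characterization. By Theorem \ref{thm:GorensteinalgebraII}, since $A$ is $d$-Gorenstein we have $A\mbox{-GProj}=\Omega^d(A\mbox{-Mod})$. Therefore, for an arbitrary (possibly large) $A$-module $N$, taking a projective resolution and truncating yields an exact sequence $0\rightarrow \Omega^d N\rightarrow P^{1-d}\rightarrow \cdots\rightarrow P^0\rightarrow N\rightarrow 0$ with $\Omega^d N\in A\mbox{-GProj}$. Applying Theorem \ref{thm:boundofGP.dim}(2) (whose statement and proof work equally well in $A\mbox{-Mod}$) gives $\mathrm{GP.dim}\; N\leq d$. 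Taking the supremum over all $A$-modules $N$ yields $\mathrm{gl.GP.dim}\; A\leq d$, completing the proof.

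I expect no real obstacle here; the only mild care needed is to confirm that Theorem \ref{thm:boundofGP.dim} applies to arbitrary modules (which it does, as its proof only invokes Lemma \ref{lem:AusBuch} and Proposition \ref{prop:GPdimensionoffiniteGPdimension}, both formulated in $A\mbox{-Mod}$). The substance of the argument is essentially the packaging lemma that a finite uniform bound on GP-dimension over finitely generated modules forces $A$ to be Gorenstein, after which the large-module theory collapses to the same bound.
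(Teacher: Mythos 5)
Your argument is correct, but it takes a genuinely different route from the paper's. You show that a finite bound $d$ on $\mathrm{GP.dim}$ over finitely generated modules already forces $A\mbox{-Gproj}=\Omega^d(A\mbox{-mod})$, hence $A$ is $d$-Gorenstein by Theorem \ref{thm:GorensteinalgebraI}; then Theorem \ref{thm:GorensteinalgebraII} gives $A\mbox{-GProj}=\Omega^d(A\mbox{-Mod})$, and Theorem \ref{thm:boundofGP.dim} transports the bound $d$ to arbitrary modules. The paper instead argues directly without invoking the Gorenstein characterization: it uses the radical filtration $M\supseteq\mathbf{r}M\supseteq\cdots\supseteq\mathbf{r}^\ell M=0$ (which exists for every module, large or not, since the radical of an artin algebra is nilpotent), the fact that $\mathrm{GP.dim}$ of a coproduct is the supremum of the summands' GP-dimensions, and Corollary \ref{cor:GP.dim} to bound $\mathrm{GP.dim}\;M$ by $\max_i\{\mathrm{GP.dim}\;S_i\}$ where $S_1,\ldots,S_n$ are the simples. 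The paper's filtration argument is more elementary and actually proves the sharper statement that the global GP-dimension is achieved on the simple modules; your route carries more machinery but makes visible the structural fact that a uniform finite GP-bound on $A\mbox{-mod}$ alone already characterizes Gorenstein algebras (a mild strengthening of the corollary preceding this proposition, which is stated with $\mathrm{gl.GP.dim}$ rather than the finitely generated supremum). Both proofs are valid; one small thing worth making explicit in your step one is that Theorem \ref{thm:boundofGP.dim}(3) gives $\Omega^d M\in A\mbox{-GProj}$, and you then need Proposition \ref{prop:GProj}(2), namely $A\mbox{-GProj}\cap A\mbox{-mod}=A\mbox{-Gproj}$, to land in the finitely generated category.
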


\begin{proof}
Choose a complete set of representatives of pairwise non-isomorphic
simple $A$-modules  $\{S_1, \cdots, S_n\}$. Note that each
$A$-module has a finite filtration with semisimple factors. Using
the fact that ${\rm GP.dim}\; \oplus_{i} M_i={\rm sup}\{{\rm
GP.dim}\; M_i\}$. We apply Corollary \ref{cor:GP.dim} repeatedly to
infer that ${\rm GP.dim}\; M\leq {\rm max}\{ {\rm GP.dim}\; S_1,
\cdots, {\rm GP.dim}\; S_n\}$  for all $A$-modules $M$ .
\end{proof}

We end this section with a discussion on a certain balanced property
of Gorenstein extension groups.

The following observation is contained in the proof of \cite[Lemma
3.4]{Hol2}.

\begin{lem}\label{lem:perpGInj}
Let $0\rightarrow L\rightarrow M\rightarrow N\rightarrow 0$ be a
short exact sequence with $L\in {^\perp(A\mbox{-{\rm GInj}})}$ and
let $X$ be a Gorenstein-projective module. Then the following
induced sequence
$$ 0\longrightarrow {\rm Hom}_A(N, X) \longrightarrow {\rm Hom}_A(M, X)
 \longrightarrow {\rm Hom}_A(L, X) \longrightarrow 0$$
is exact.
\end{lem}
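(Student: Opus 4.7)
Since the contravariant functor $\text{Hom}_A(-, X)$ is left exact, the sequence is automatically exact at the first two terms, so the only thing to verify is the surjectivity of the restriction map $\text{Hom}_A(M, X) \to \text{Hom}_A(L, X)$. Writing $\delta \colon \text{Hom}_A(L, X) \to \text{Ext}^1_A(N, X)$ for the connecting morphism in the long exact sequence of $\text{Hom}_A(-, X)$ applied to the given sequence, this surjectivity is equivalent to the vanishing of $\delta$; equivalently, for every $f \colon L \to X$ the pushout extension $0 \to X \to E \to N \to 0$ of the given sequence along $f$ must split.

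Given such an $f$, my plan is to embed $X$ into an injective module $I$ (for example its injective envelope), producing a short exact sequence $0 \to X \to I \to C \to 0$ in which $I$ is, in particular, Gorenstein-injective. Since $I$ is injective, the composition $L \to X \hookrightarrow I$ extends to some morphism $g \colon M \to I$. Composing with the projection $\pi \colon I \to C$ and noting that $\pi g$ vanishes on $L$ (since $\pi$ kills $X$), we obtain an induced map $\bar g \colon N \to C$ with $\pi g = \bar g p$, where $p \colon M \to N$. A diagram chase identifies $\delta(f) \in \text{Ext}^1_A(N, X)$ with the class of $\bar g$ in $\text{Hom}_A(N, C)/\mathrm{im}(\text{Hom}_A(N, I))$, so we are reduced to producing a lift of $\bar g$ along $\pi$.

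To produce such a lift I would extend $0 \to X \to I \to C \to 0$ to a full injective coresolution $0 \to X \to I^0 \to I^1 \to I^2 \to \cdots$, with $I^0 = I$. Each term $I^j$ is injective and hence Gorenstein-injective, so by the hypothesis $L \in {^\perp(A\text{-GInj})}$ we obtain $\text{Ext}^k_A(L, I^j) = 0$ for all $k, j \geq 1$. Applying $\text{Hom}_A(-, I^j)$ to the given SES yields, for each $j$, a short exact sequence
$$0 \to \text{Hom}_A(N, I^j) \to \text{Hom}_A(M, I^j) \to \text{Hom}_A(L, I^j) \to 0,$$
by the injectivity of $I^j$. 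These assemble into a short exact sequence of cochain complexes whose rows compute the various $\text{Ext}^\bullet_A(-, X)$. The associated long exact sequence of cohomologies, combined with the vanishing $\text{Ext}^k_A(L, I^j) = 0$ for $k \geq 1$, allows a dimension-shift along the coresolution to force the required lift of $\bar g$, giving $\delta(f) = 0$.

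The main obstacle is the final dimension-shift step: while each term $I^j$ of the injective coresolution is Gorenstein-injective and thus accessible via the hypothesis on $L$, the cocycles of the coresolution need not be. Consequently, a direct inductive argument fails, and the Gorenstein-projectivity of $X$ must be brought in via the Nakayama equivalence $\nu \colon A\mbox{-{\rm GProj}} \simeq A\mbox{-{\rm GInj}}$ of Proposition \ref{prop:GinjGproj}. This equivalence allows one to compare the injective coresolution of $X$, whose cocycles are \emph{a priori} unknown, to the cototally acyclic complex obtained by applying $\nu$ to a complete resolution of $X$, whose cocycles are of the form $\nu(\Omega^{-i}X)$ and hence \emph{are} Gorenstein-injective. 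This comparison supplies the missing cohomological control needed to complete the dimension-shift and annihilate $\delta(f)$.
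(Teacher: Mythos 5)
The lemma as printed contains a misprint: the module $X$ must be taken Gorenstein-\emph{injective}, not Gorenstein-projective. This is what the application immediately afterwards (Holm's balance theorem) actually needs — there $X$ runs over Gorenstein-injective modules — and it is what the paper's own one-paragraph proof uses: by the dual of Lemma \ref{lem:GPorthogonal}, the hypothesis $L\in{^\perp(A\mbox{-GInj})}$ forces every morphism $a\colon L\to X$ into a Gorenstein-\emph{injective} module $X$ to factor as $L\xrightarrow{a'}I\xrightarrow{i}X$ with $I$ injective, and then $a'$ extends to $b'\colon M\to I$ by injectivity, so $b=i\circ b'$ restricts to $a$. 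With $X$ Gorenstein-projective the statement is simply false. Take $A=kQ$ for the quiver $Q\colon 1\to 2$, the short exact sequence $0\to S_2\to P_1\to S_1\to 0$, and $X=S_2=P_2$. Then $L=S_2$ is projective, so $L\in{^\perp(A\mbox{-GInj})}$ trivially, and $X$ is projective, hence Gorenstein-projective; but $\mathrm{Hom}_A(P_1,S_2)=0$ while $\mathrm{Hom}_A(S_2,S_2)=k$, so the restriction map is not surjective.

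Your proposal attempts to prove the false version, and you yourself flag exactly where it breaks: in the dimension-shift along an injective coresolution $0\to X\to I^0\to I^1\to\cdots$ of $X$, the cocycles are not controlled by the hypothesis on $L$, because $X$ is not assumed Gorenstein-injective. The fix you sketch via the Nakayama equivalence $\nu$ does not go through: applying $\nu$ to a complete resolution of $X$ gives a cototally acyclic complex whose zeroth coboundary is $\nu X$, which is \emph{not} $X$ (indeed $X\ne\nu X$ whenever $X$ is, say, a non-injective projective), so there is no comparison between that complex and the injective coresolution of $X$ itself — and no such comparison can exist, since the conclusion is false for Gorenstein-projective $X$. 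Two further signposts that the argument isn't closing: the vanishings ${\rm Ext}^k_A(L,I^j)=0$ you invoke hold for $I^j$ injective with no hypothesis on $L$ at all, so the assumption on $L$ is never genuinely used; and the long exact sequence you extract from the short exact sequence of $\mathrm{Hom}(-,I^\bullet)$-complexes is just the ordinary long exact $\mathrm{Ext}^\ast_A(-,X)$-sequence of the given short exact sequence, giving no new information. Once the statement is repaired to $X$ Gorenstein-injective, the cocycles of a \emph{complete} coresolution of $X$ are Gorenstein-injective and the hypothesis on $L$ applies; but then none of this scaffolding is needed, since the paper's direct factoring argument above is essentially three lines.
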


\begin{proof}
Denote the morphism $L\rightarrow M$ by $f$. It suffices to show
that for each morphism $a\colon L\rightarrow X$ there exists a
morphism $b\colon M\rightarrow X$ such that $b\circ f=a$. Since
$L\in {^\perp(A\mbox{-GInj})}$, the morphism $a$ factors through an
injective module $I$, say there are morphisms $a'\colon N\rightarrow
I$ and $i\colon I\rightarrow X$ such that $i\circ a'=a$; compare
Lemma \ref{lem:GPorthogonal}. By the injectivity of $I$ there is a
morphism $b'\colon M\rightarrow I$ with $b'\circ f=a'$. Set
$b=i\circ b'$.
\end{proof}

The following balanced property of Gorenstein extension groups is
due to Holm; see \cite[Theorem 3.6]{Hol2}.

\begin{thm}{\rm (Holm)} Let $A$ be an artin algebra. Let $M$ and $N$ be
$A$-modules with finite Gorenstein-projective (resp.
Gorenstein-injective) dimension. Then for each $n\geq 0$ there is an
isomorphism
$${\rm Ext}_{\rm GP}^n(M, N)\simeq {\rm Ext}_{\rm GI}^n(M, N)$$
which is functorial in both $M$ and $N$.
\end{thm}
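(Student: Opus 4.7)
The plan is to realize both Ext groups as the cohomology of a single bounded bicomplex built from a proper GP-resolution of $M$ and a proper GI-coresolution of $N$. Using Lemma~\ref{lem:AusBuch} and its dual, first choose a proper GP-resolution $\mathbb{G}_\bullet\colon 0\to P^{-m}\to\cdots\to P^{-1}\to G\to M\to 0$ of $M$ of length $m={\rm GP.dim}\;M$, with $G\in A\mbox{-{\rm GProj}}$ and each $P^{-i}$ projective, and a proper GI-coresolution $\mathbb{H}^\bullet\colon 0\to N\to H\to I^1\to\cdots\to I^n\to 0$ of $N$ of length $n={\rm GI.dim}\;N$, with $H\in A\mbox{-{\rm GInj}}$ and each $I^j$ injective. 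By definition ${\rm Ext}_{\rm GP}^k(M,N)=H^k({\rm Hom}_A(\mathbb{G}_\bullet,N))$ and ${\rm Ext}_{\rm GI}^k(M,N)=H^k({\rm Hom}_A(M,\mathbb{H}^\bullet))$.

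Form the bounded first-quadrant bicomplex $C^{s,t}={\rm Hom}_A(\mathbb{G}_s,\mathbb{H}^t)$ with total complex ${\rm Tot}(C)$, together with the two natural augmentation chain maps $\alpha\colon {\rm Hom}_A(\mathbb{G}_\bullet,N)\to{\rm Tot}(C)$ and $\beta\colon {\rm Hom}_A(M,\mathbb{H}^\bullet)\to{\rm Tot}(C)$ induced by $\mathbb{G}_\bullet\twoheadrightarrow M$ and $N\hookrightarrow\mathbb{H}^\bullet$. The desired isomorphism will follow once both $\alpha$ and $\beta$ are quasi-isomorphisms, since then ${\rm Ext}_{\rm GP}^k(M,N)\cong H^k({\rm Tot}(C))\cong{\rm Ext}_{\rm GI}^k(M,N)$. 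Via the standard spectral sequence of a bicomplex (or a direct comparison argument), the quasi-isomorphism of $\alpha$ reduces to exactness of each column ${\rm Hom}_A(\mathbb{G}_s,\mathbb{H}^\bullet)$ augmented by ${\rm Hom}_A(\mathbb{G}_s,N)$, and dually for $\beta$. For projective $\mathbb{G}_s=P^{-s}$ or injective $\mathbb{H}^t=I^t$ the corresponding exactness is automatic; the essential content is concentrated at the Gorenstein terms $G$ and $H$, where one needs the vanishings ${\rm Ext}_{\rm GI}^i(G,N)=0$ and ${\rm Ext}_{\rm GP}^i(M,H)=0$ for all $i\geq 1$.

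These two vanishings constitute the main obstacle. I would prove them by induction on the relevant Gorenstein dimension, the two cases being symmetric. For example, to establish ${\rm Ext}_{\rm GI}^i(G,N)=0$ for $G\in A\mbox{-{\rm GProj}}$ and $i\geq 1$, induct on $n={\rm GI.dim}\;N$. The base $n=0$ is immediate, since the proper GI-coresolution of a Gorenstein-injective module is itself concentrated in degree zero. The inductive step uses the short exact sequence $0\to N\to H\to N_1\to 0$ extracted from the Auslander--Buchweitz form, with $N_1$ of finite injective dimension at most $n-1$, combined with the dual of Long Exact Sequence Theorem II (Lemma~\ref{lem:LESTII}) and the trivial fact that ${\rm Ext}_{\rm GI}^{\geq 1}(G,H)=0$. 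The delicate case is $i=1$: one must verify that every morphism $G\to N_1$ lifts along $H\twoheadrightarrow N_1$; here the key input is that $N_1$, having finite injective dimension, lies in ${}^\perp(A\mbox{-{\rm GInj}})$---a fact I would derive by combining the Nakayama-adjunction identity ${\rm Ext}_A^i(L,\nu Y)\cong D\,{\rm Tor}_i^A(Y^*,L)$ with Corollary~\ref{cor:infinitedimension}(1) applied via the opposite algebra and Corollary~\ref{cor:duality}, together with the Tor-symmetry ${\rm Tor}_i^A(Y^*,L)\cong{\rm Tor}_i^{A^{\rm op}}(L,Y^*)$. Once both vanishings are in hand, $\alpha$ and $\beta$ are quasi-isomorphisms, yielding the natural isomorphism, whose functoriality in both variables follows from the Comparison Theorem applied to each proper resolution.
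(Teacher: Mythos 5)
Your overall strategy coincides with the paper's: take an Auslander--Buchweitz-type proper GP-resolution of $M$ (projectives plus one Gorenstein-projective tail term $G$) and a proper GI-coresolution of $N$ (one Gorenstein-injective head term $H$ plus injectives), form the Hom bicomplex, and observe that both spectral sequences of the double complex collapse. The paper establishes the needed row and column exactness by applying Lemma \ref{lem:perpGInj} link by link to the syzygy sequences; you instead run an induction on the Gorenstein-injective dimension of $N$ to get ${\rm Ext}_{\rm GI}^i(G,N)=0$. That reorganization is legitimate, though it repackages exactly the same content, and it is worth noting that the short exact sequence $0\to N\to H\to N_1\to 0$ lives in the \emph{coresolved} (second) variable of ${\rm Ext}_{\rm GI}$, so what you invoke is the GI-dual of Long Exact Sequence Theorem I, not II.

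There is, however, a genuine gap at the ``delicate case $i=1$''. You want every morphism $a\colon G\to N_1$ with $G\in A\mbox{-GProj}$ to lift along the epimorphism $H\twoheadrightarrow N_1$, and you assert that the key input is $N_1\in{}^\perp(A\mbox{-GInj})$. That condition, which your Nakayama-adjunction computation does correctly establish, controls factorizations of maps \emph{out of} $N_1$ into Gorenstein-injective modules; it says nothing about morphisms \emph{into} $N_1$, and $H$ is only Gorenstein-injective, not injective, so the extension-over-injectives trick from Lemma \ref{lem:perpGInj} does not apply here. What actually drives the lifting is the other orthogonality: since $N_1$ has finite injective dimension, $N_1\in(A\mbox{-GProj})^\perp$, so by Lemma \ref{lem:GPorthogonal} the morphism $a$ factors through a projective module, and that projective map lifts along $H\twoheadrightarrow N_1$ because $H\to N_1$ is epic. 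Equivalently: embed $G$ into a projective $Q$ with Gorenstein-projective cokernel $G'$; then ${\rm Ext}^1_A(G',N_1)=0$ (finite injective dimension of $N_1$ plus $G'$ Gorenstein-projective) lets you extend $a$ to $Q\to N_1$, which lifts by projectivity of $Q$. Both $N_1\in{}^\perp(A\mbox{-GInj})$ and $N_1\in(A\mbox{-GProj})^\perp$ hold, so your conclusion is true, but the argument as written invokes the wrong one; for the ``rows'' side the paper really does use ${}^\perp(A\mbox{-GInj})$ (the cocycles of the GP-resolution lie there), and on this ``columns'' side the mirror condition $(A\mbox{-GProj})^\perp$ is what is required.
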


\begin{proof}
Take a proper GP-resolution $0\rightarrow P^{-n}\rightarrow \cdots
\rightarrow P^{-1}\rightarrow G^0 \rightarrow M \rightarrow 0$ with
$G$ Gorenstein-projective and each $P^{-i}$ projective. Write it as
$G^\bullet\rightarrow M$. Note that all the cocycles of $G^\bullet$
(but $M$) have finite projective dimension and then lie in $^\perp
(A\mbox{-GInj})$. Let $X$ be a Gorenstein-injective $A$-module.
Applying Lemma \ref{lem:perpGInj} repeatedly we get that the induced
sequence ${\rm Hom}_A(M, X)\rightarrow {\rm Hom}_A(G^\bullet, X)$ is
acyclic.

Take a proper GI-coresolution $0\rightarrow N\rightarrow R
\rightarrow I^{1}\rightarrow \cdots \rightarrow I^m\rightarrow 0 $
with $R$ Gorenstein-injective and each $I^i$ injective. Write it as
$N\rightarrow R^\bullet$. Similarly as above we have that for each
Gorenstein-projective module $G$ the induced sequence ${\rm
Hom}_A(G, N)\rightarrow {\rm Hom}_A(G, R^\bullet)$ is acyclic. Now
consider the Hom bicomplex ${\rm Hom}_A(G^\bullet, R^\bullet)$ and
the associated two spectral sequences. The two spectral sequences
collapse to ${\rm Ext}^*_{\rm GP}(M, N)$ and ${\rm Ext}^*_{\rm
GI}(M, N)$, respectively; for details, consult \cite[Proposition
2.3]{EJ0}. Then we are done.
\end{proof}

Recall that an artin algebra $A$ is said to be \emph{virtually
Gorenstein} provided that $(A\mbox{-GProj})^\perp ={^\perp
(A\mbox{-GInj})}$;  see \cite{BR, Bel2}.

We observe the following characterization of virtually Gorenstein
algebras; see \cite[Chapter X, Theorem 3.4(v)]{BR}. \index{algebra!virtually Gorenstein}

\begin{prop}
Let $A$ be an artin algebra. Then $A$ is virtually Gorenstein if and
only if for all modules $M$ and $N$ and $n\geq 0$, there are
isomorphisms $${\rm Ext}^n_{\rm GP}(M, N)\simeq {\rm Ext}^n_{\rm
GI}(M, N),$$ which are functorial in both $M$ and $N$.
\end{prop}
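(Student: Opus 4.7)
The plan is to prove the two implications separately. The reverse direction follows directly from Proposition \ref{prop:ExtGPExt} and its dual, while the forward direction extends the bicomplex/spectral sequence argument of Holm's theorem above, with the virtually Gorenstein hypothesis replacing the finite Gorenstein dimension hypothesis.

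For the reverse direction, assume the natural isomorphism ${\rm Ext}^n_{\rm GP}\simeq {\rm Ext}^n_{\rm GI}$ holds. Given $M\in (A\mbox{-{\rm GProj}})^\perp$, Proposition \ref{prop:ExtGPExt} implies that any projective resolution of $M$ is a proper GP-resolution, whence ${\rm Ext}^n_{\rm GP}(M,X)\simeq {\rm Ext}^n_A(M,X)$ for all $X$ and $n\geq 0$. For $X$ Gorenstein-injective, the trivial coresolution $X\stackrel{{\rm id}}\rightarrow X$ is a proper GI-coresolution, so ${\rm Ext}^n_{\rm GI}(M,X)=0$ for $n\geq 1$. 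The assumed isomorphism then yields ${\rm Ext}^n_A(M,X)=0$, so $M\in {}^\perp(A\mbox{-{\rm GInj}})$. The opposite inclusion follows by the dual argument, and thus $A$ is virtually Gorenstein.

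For the forward direction, set $\mathcal{Z}=(A\mbox{-{\rm GProj}})^\perp = {}^\perp(A\mbox{-{\rm GInj}})$. I would first construct a proper GP-resolution $G^\bullet\to M$ and a proper GI-coresolution $N\to R^\bullet$ whose cocycles (other than $M$ and $N$) all lie in $\mathcal{Z}$. Granting this, the Hom bicomplex ${\rm Hom}_A(G^\bullet,R^\bullet)$ admits two canonical spectral sequences converging to the cohomology of its total complex. For each fixed column $p$, the vanishing ${\rm Ext}^i_A(G^{-p},Z)=0$ for $i\geq 1$ and $Z\in \mathcal{Z}=(A\mbox{-{\rm GProj}})^\perp$ shows that ${\rm Hom}_A(G^{-p},R^\bullet)$ has cohomology ${\rm Hom}_A(G^{-p},N)$ concentrated in degree zero, so one spectral sequence collapses to ${\rm Ext}^n_{\rm GP}(M,N)$. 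Dually, using ${\rm Ext}^i_A(Z,R^q)=0$ for $Z\in \mathcal{Z}={}^\perp(A\mbox{-{\rm GInj}})$ and Gorenstein-injective $R^q$, the other collapses to ${\rm Ext}^n_{\rm GI}(M,N)$. Both identify with $H^n({\rm Tot}\,{\rm Hom}_A(G^\bullet,R^\bullet))$, yielding the required isomorphism; functoriality in $M$ and $N$ follows from the uniqueness up to homotopy of proper resolutions together with the naturality of the bicomplex construction.

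The principal obstacle is producing proper resolutions with cocycles in $\mathcal{Z}$. This is where the virtually Gorenstein hypothesis plays its essential role: by the Beligiannis--Reiten structural result, a virtually Gorenstein algebra admits complete cotorsion pairs $(A\mbox{-{\rm GProj}},\mathcal{Z})$ and $(\mathcal{Z},A\mbox{-{\rm GInj}})$ in $A\mbox{-{\rm Mod}}$. The first yields, for every module, a short exact sequence $0\to Y\to G\to M\to 0$ with $G\in A\mbox{-{\rm GProj}}$ and $Y\in\mathcal{Z}$; iterating this (using that $\mathcal{Z}$ is resolving and that right $(A\mbox{-{\rm GProj}})$-approximations exist by Corollary \ref{cor:GProjresolution}) produces the desired proper GP-resolution, while the second cotorsion pair supplies the proper GI-coresolution dually. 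A Wakamatsu-type argument then confirms that these particular resolutions are proper in the sense of GP-acyclicity (respectively GI-acyclicity), completing the verification.
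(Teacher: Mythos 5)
Your proposal is correct and follows essentially the same route as the paper: the reverse direction via Proposition \ref{prop:ExtGPExt} and its dual, and the forward direction by adapting the bicomplex/spectral-sequence argument of Holm's theorem, with the Beligiannis--Reiten cotorsion pairs $(A\mbox{-GProj}, (A\mbox{-GProj})^\perp)$ and $({}^\perp(A\mbox{-GInj}), A\mbox{-GInj})$ producing, under the virtually Gorenstein hypothesis $\mathcal{Z}=(A\mbox{-GProj})^\perp={}^\perp(A\mbox{-GInj})$, proper resolutions whose interior cocycles lie in $\mathcal{Z}$. One small imprecision: the collapse of the spectral sequence at the boundary step involving the short exact sequence $0\to N\to R^0\to Z^1\to 0$ is not a consequence of ${\rm Ext}^i_A(G^{-p},Z)=0$ alone (since $N$ need not lie in $\mathcal{Z}$); it requires the Wakamatsu-style lifting of Lemma \ref{lem:perpGInj} and its dual, which you mention but assign to establishing properness of the resolutions rather than to the collapse itself.
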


\begin{proof}
For the ``if" part,  we apply Proposition \ref{prop:ExtGPExt}.
Observe that for $M\in (A\mbox{-GProj})^\perp$ and $R\in
A\mbox{-GInj}$ we have ${\rm Ext}^n_A(M, R)\simeq {\rm Ext}^n_{\rm
GP}(M, R)\simeq {\rm Ext}^n_{\rm GI}(M, R)=0$ for $n\geq 1$. This
shows that $(A\mbox{-GProj})^\perp\subseteq
{^\perp(A\mbox{-GInj})}$. Dually one shows that
$^\perp(A\mbox{-GInj})\subseteq (A\mbox{-GProj})^\perp$, and then
$A$ is virtually Gorenstein.

For the ``only if" part, we apply Corollary
\ref{cor:properGP-resAppendixB} (and its dual). Then the same proof
as in the theorem above works.
\end{proof}

\section{Gorenstein Derived Categories}

In this section we will briefly study Gorenstein derived categories
of an artin algebra $A$. The GP-extension and GI-extension groups of
two $A$-modules are encoded as the Hom spaces between certain
objects in the Gorenstein derived categories.

Let $A$ be an artin algebra. Denote by $\mathbf{K}(A\mbox{-Mod})$
the \emph{homotopy category}\index{category!homotopy} of complexes
in $A\mbox{-Mod}$. For a complex $X^\bullet=(X^n, d_X^n)_{n\in
\mathbb{Z}}$  its \emph{shifted complex} \index{complex!shifted}
$X^\bullet[1]$ is defined such that $(X^\bullet[1])^n=X^{n+1}$ and
$d_{X^\bullet[1]}^n=-d^{n+1}_X$. This gives rise to an automorphism
$[1]\colon \mathbf{K}(A\mbox{-Mod})\rightarrow
\mathbf{K}(A\mbox{-Mod})$. We denote by $[n]$ the $n$-th power of
$[1]$ for $n\in \mathbb{Z}$. A module $M$ is usually identified with
the \emph{stalk complex} $\cdots \rightarrow 0\rightarrow
M\rightarrow 0\rightarrow \cdots$ concentrated at degree zero. Then
for each $n$ the stalk complex $M[n]$ has $M$ at degree $-n$ and
zero elsewhere.\index{complex!stalk}

For a chain map $f^\bullet \colon X^\bullet \rightarrow Y^\bullet$
its \emph{mapping cone} \index{mapping cone} ${\rm Cone}(f^\bullet)$ is complex defined
such that for each $n\in \mathbb{Z}$
$${\rm
Cone}(f^\bullet)^n =Y^n\oplus X^{n+1} \mbox{ and } d_{{\rm
Cone}(f^\bullet)}^n=\begin{pmatrix} d_Y^{n} & f^{n+1} \\ 0 &
-d^n_{X}\end{pmatrix},$$ where $d_X^n$ and $d_Y^n$ are differentials
of $X^\bullet$ and $Y^\bullet$, respectively. The homotopy category
$\mathbf{K}(A\mbox{-Mod})$ has a canonical triangulated structure
such that all exact triangles are isomorphic to the \emph{standard
triangles} \index{standard triangle} $X^\bullet\stackrel{f^\bullet}\rightarrow
Y^\bullet\stackrel{\binom{1}{0}}\rightarrow {\rm
Cone}(f^\bullet)\stackrel{(0\; 1)}\rightarrow X^\bullet[1]$
associated to chain maps $f^\bullet$; for details, see \cite{V,Hap,
KZ}.

For an additive subcategory $\mathcal{X}$ of $A\mbox{-Mod}$ denote
by $\mathbf{K}^{-}(\mathcal{X})$ (resp.
$\mathbf{K}^{+}(\mathcal{X})$, $\mathbf{K}^{b}(\mathcal{X})$) the
full subcategory of $\mathbf{K}(A\mbox{-Mod})$ consisting of bounded
above (resp. bounded below, bounded) complexes in $\mathcal{X}$;
they are triangulated subcategories.

 We call a chain map $f^\bullet\colon X^\bullet \rightarrow
Y^\bullet$  a \emph{right GP-quasi-isomorphism} \index{right
GP-quasi-isomorphism} provided that for each Gorenstein projective
module $G$ the induced chain map ${\rm Hom}_A(G, f^\bullet)$ is a
quasi-isomorphism. Observe that a complex $X^\bullet$ is right GP-acyclic if
and only if the trivial map $X^\bullet\rightarrow 0$ is a right
GP-quasi-isomorphism. Moreover, a chain map $f^\bullet$ is a right
GP-quasi-isomorphism if and only if its mapping cone ${\rm
Cone}(f^\bullet)$ is right GP-acyclic. Denote by ${\rm
GP}\mbox{-ac}$ the full triangulated subcategory of
$\mathbf{K}(A\mbox{-Mod})$ consisting of right GP-acyclic complexes;
it is \emph{thick}, that is, the subcategory ${\rm GP}\mbox{-ac}$ is
closed under taking direct summands. Denote by $\Sigma_{\rm GP}$ the
class of all the right GP-quasi-isomorphisms in
$\mathbf{K}(A\mbox{-Mod})$; it is a saturated multiplicative system.

The following is initiated by Gao and Zhang \cite{GZ}; also see
\cite{Ch3}.

\begin{defn}
The \emph{Gorenstein-projective derived category} \index{category!Gorenstein-projective derived} $\mathbf{D}_{\rm
GP}(A)$ of an artin algebra $A$ is defined to be the Verdier
quotient category \index{category!Verdier quotient}
$$\mathbf{D}_{\rm GP}(A)\colon=\mathbf{K}(A\mbox{-{\rm Mod}})/{\rm GP}\mbox{-{\rm ac}}
=\mathbf{K}(A\mbox{-{\rm Mod}})[\Sigma_{\rm GP}^{-1}].$$ We denote
by $Q\colon \mathbf{K}(A\mbox{-{\rm Mod}})\rightarrow
\mathbf{D}_{\rm GP}(A)$ the quotient functor.
\end{defn}

Observe that  a chain map $f^\bullet\colon X^\bullet \rightarrow
Y^\bullet$ is a right GP-quasi-isomorphism if and only if
$Q(f^\bullet)$ is an isomorphism in $\mathbf{D}_{\rm GP}(A)$.

Dually one defines the \emph{Gorenstein-injective derived category}
\index{category!Gorenstein-injective derived} of $A$ to be
$\mathbf{D}_{\rm GI}(A)\colon=\mathbf{K}(A\mbox{-{\rm Mod}})/{\rm
GI}\mbox{-{\rm ac}}$, where ${\rm GI}\mbox{-{\rm ac}}$ is the full
triangulated subcategory of $\mathbf{K}\mbox{-Mod}$ consisting of
left GI-acyclic complexes. Both $\mathbf{D}_{\rm GP}(A)$ and
$\mathbf{D}_{\rm GI}(A)$ are called \emph{Gorenstein derived
categories} \index{category!Gorenstein derived} of $A$. In what
follows we will mainly consider the Gorenstein-projective derived
category.

The following result is basic.

\begin{lem}\label{lem:nullhomotopy}
Let $X^\bullet\in \mathbf{K}^{-}(A\mbox{-{\rm GProj}})$ and
$Y^\bullet\in {\rm GP}\mbox{-{\rm ac}}$. Then we have $${\rm
Hom}_{\mathbf{K}(A\mbox{-}{\rm Mod})}(X^\bullet, Y^\bullet)=0.$$
\end{lem}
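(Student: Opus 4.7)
The plan is to show directly that any chain map $f^\bullet \colon X^\bullet \to Y^\bullet$ is null-homotopic, by constructing a contracting homotopy $s^n \colon X^n \to Y^{n-1}$ inductively from the top degree downward. Since $X^\bullet$ is bounded above, I can pick $N$ with $X^n = 0$ for all $n > N$ and start the induction with $s^n = 0$ for $n > N$, which trivially satisfies the homotopy identity.

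For the inductive step, suppose homotopy maps $s^{k}$ have been constructed for all $k > n$ satisfying $f^k = d_Y^{k-1}\circ s^k + s^{k+1}\circ d_X^k$. Consider the map
\[
g^n := f^n - s^{n+1}\circ d_X^n \colon X^n \longrightarrow Y^n.
\]
A short calculation using $d_Y^n\circ f^n = f^{n+1}\circ d_X^n$, the inductive identity at degree $n+1$, and $d_X^{n+1}\circ d_X^n = 0$ gives $d_Y^n \circ g^n = 0$; so $g^n$ factors through $Z^n(Y^\bullet)$. The task now is to lift $g^n$ along $d_Y^{n-1} \colon Y^{n-1}\to Z^n(Y^\bullet)$ to produce $s^n$.

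This lifting is exactly where the hypotheses come in: because $X^n$ is Gorenstein-projective and $Y^\bullet$ is right GP-acyclic, the complex $\mathrm{Hom}_A(X^n, Y^\bullet)$ is acyclic. Acyclicity at cohomological degree $n$ says precisely that every morphism $X^n \to Y^n$ killed by post-composition with $d_Y^n$ is of the form $d_Y^{n-1}\circ s$ for some $s \colon X^n \to Y^{n-1}$. Applying this to $g^n$ yields the required $s^n$ with $d_Y^{n-1}\circ s^n = f^n - s^{n+1}\circ d_X^n$, closing the induction.

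I do not expect any genuine obstacle; the argument is a standard downward inductive homotopy construction, and the key verification (that $g^n$ is a cocycle in the Hom complex) is purely formal. The only conceptual input is the translation between right GP-acyclicity of $Y^\bullet$ and acyclicity of the Hom complex $\mathrm{Hom}_A(X^n,Y^\bullet)$, which is immediate from the definition. Since the induction runs over $n \leq N$ with no lower bound needed, the boundedness-above hypothesis on $X^\bullet$ is used only to start the recursion, and no convergence issue arises.
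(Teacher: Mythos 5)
Your argument is correct and is essentially the same as the paper's proof: both run a downward induction starting above the top nonzero degree of $X^\bullet$, verify formally that $f^n - s^{n+1}\circ d_X^n$ is a cocycle in ${\rm Hom}_A(X^n, Y^\bullet)$, and then invoke the acyclicity of that Hom complex (coming from $X^n$ being Gorenstein-projective and $Y^\bullet$ being right GP-acyclic) to produce the next homotopy component. The only differences are notational (the paper normalizes to $X^n=0$ for $n>0$ rather than carrying around a general bound $N$).
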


\begin{proof}
Take a chain map $f^\bullet \colon X^\bullet \rightarrow Y^\bullet$.
Without loss of generality we assume that $X^n=0$ for $n>0$. Then we
have $d_Y^0\circ f^0=0$. Note that by assumption the Hom complex
${\rm Hom}_A(X^0, Y^\bullet)$ is acyclic. This implies that there
exists $h^0\colon X^0\rightarrow Y^{-1}$ such that
$f^0=d_Y^{-1}\circ h^0$. Set $h^n=0$ for $n\geq 1$.

 We make induction on $i\geq 0$. Assume that the morphisms
$h^{n}\colon X^n\rightarrow Y^{n-1}$ are defined for $n\geq -i$ such
that $f^n=d_Y^{n-1}\circ h^n+h^{n+1}\circ d_X^n$ for all $n\geq -i$.
We will construct $h^{-i-1}$. Note that
\begin{align*}
d_Y^{-i-1}\circ (f^{-i-1}-h^{-i}\circ d_X^{-i-1})&=f^{-i}\circ
d_X^{-i-1}- d_Y^{-i-1}\circ h^{-i}\circ
d_X^{-i-1}\\
&=h^{-i+1}\circ d_X^{-i}\circ d_X^{-i-1}=0.
\end{align*}
By assumption the Hom complex ${\rm Hom}_A(X^{-i-1}, Y^\bullet)$ is
acyclic. It follows that there exists $h^{-i-1}\colon
X^{-i-1}\rightarrow Y^{-i-2}$ such that $f^{-i-1}-h^{-i}\circ
d_X^{-i-1}=d_Y^{-i-2}\circ h^{-i-1}$. Continuing this argument we
find a homotopy $\{h^n\}_{n\in \mathbb{Z}}$ of the chain map
$f^\bullet$.
\end{proof}

We have the following direct consequence.

\begin{cor}
Let $X^\bullet\in \mathbf{K}(A\mbox{-{\rm Mod}})$ and $Y^\bullet \in
\mathbf{K}^{-}(A\mbox{-{\rm Mod}}) \cap {\rm GP}\mbox{-{\rm ac}}$.
Then we have
$${\rm
Hom}_{\mathbf{K}(A\mbox{-}{\rm Mod})}(X^\bullet, Y^\bullet)=0.$$
\hfill $\square$
\end{cor}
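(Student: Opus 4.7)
The plan is to construct a null-homotopy $\{h^n\colon X^n \to Y^{n-1}\}_{n\in \mathbb{Z}}$ of a given chain map $f^\bullet\colon X^\bullet \to Y^\bullet$ by descending induction on $n$, paralleling the proof of Lemma \ref{lem:nullhomotopy}. The key reversal is that the lemma anchored its induction at the top of $X^\bullet$ (using that $X^\bullet$ is bounded above), whereas here I would anchor it at the top of $Y^\bullet$ (using that $Y^\bullet$ is bounded above). The inductive step still invokes acyclicity of the Hom complex ${\rm Hom}_A(X^n, Y^\bullet)$, which is supplied by the right GP-acyclicity of $Y^\bullet$ together with the Gorenstein-projectivity of the components of $X^\bullet$.

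For the base, pick $N$ with $Y^n = 0$ for all $n > N$ and set $h^n := 0$ for $n \geq N+1$. In degrees $n \geq N+2$ the homotopy equation is vacuous since both $Y^{n-1}=0$ and $f^n=0$; in degree $n=N+1$ it reduces to $0 = d_Y^N\circ 0 + 0\circ d_X^{N+1}$, which holds because $d_Y^N\colon Y^N \to Y^{N+1}=0$ vanishes and $h^{N+2}=0$.

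For the inductive step, assume $h^{n+1}, h^{n+2},\ldots$ satisfy the homotopy identity $f^m = d_Y^{m-1}\circ h^m + h^{m+1}\circ d_X^m$ in all degrees $m \geq n+1$, and put $\phi^n := f^n - h^{n+1}\circ d_X^n$. A direct computation combining the chain-map relation $d_Y^n\circ f^n = f^{n+1}\circ d_X^n$, the inductive identity $d_Y^n\circ h^{n+1} = f^{n+1} - h^{n+2}\circ d_X^{n+1}$, and $d_X^{n+1}\circ d_X^n = 0$ yields $d_Y^n\circ \phi^n = 0$. Thus $\phi^n$ is a cocycle in ${\rm Hom}_A(X^n, Y^\bullet)$, and acyclicity of this complex supplies $h^n\colon X^n \to Y^{n-1}$ with $d_Y^{n-1}\circ h^n = \phi^n$, completing the induction.

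The main obstacle is exactly this invocation of acyclicity: ${\rm Hom}_A(X^n, Y^\bullet)$ is acyclic only when $X^n$ is Gorenstein-projective, so the argument closes under the natural reading that each component of $X^\bullet$ lies in $A\mbox{-{\rm GProj}}$, producing the downward-induction counterpart of Lemma \ref{lem:nullhomotopy} in which the ``bounded above'' hypothesis has migrated from $X^\bullet$ to $Y^\bullet$. Without such a hypothesis on the components of $X^\bullet$, the lifting step is genuinely obstructed: a three-term right GP-acyclic complex $0 \to K \to G \to M \to 0$ arising from a surjective right GP-approximation $G \to M$ admits the identity on $M$, viewed as a chain map from the stalk $M$ concentrated in degree zero, as a chain map that is null-homotopic only when $M$ is a direct summand of $G$.
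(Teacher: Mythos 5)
You have correctly spotted that the corollary cannot hold as literally printed, and your closing example is exactly the right one. Take any non-Gorenstein-projective module $M$ together with a surjective right $A\mbox{-GProj}$-approximation $\pi\colon G\rightarrow M$ (such a $\pi$ exists by Corollary \ref{cor:GProjresolution}) and kernel $K$; the bounded complex $Y^\bullet=[\cdots\rightarrow 0\rightarrow K\rightarrow G\xrightarrow{\pi} M\rightarrow 0\rightarrow\cdots]$ lies in $\mathbf{K}^{-}(A\mbox{-Mod})\cap{\rm GP}\mbox{-ac}$, yet the chain map from the stalk complex of $M$ in degree zero given by ${\rm id}_M$ is null-homotopic precisely when $\pi$ splits, which it does not since $M\notin A\mbox{-GProj}$. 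You also pinpoint exactly where the downward induction stalls: the lifting step needs ${\rm Hom}_A(X^n, Y^\bullet)$ to be acyclic, and right GP-acyclicity of $Y^\bullet$ guarantees this only when $X^n$ is Gorenstein-projective. So your argument is correct, but for the statement with $X^\bullet\in\mathbf{K}(A\mbox{-GProj})$ and $Y^\bullet\in\mathbf{K}^{-}(A\mbox{-Mod})\cap{\rm GP}\mbox{-ac}$.

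That is not, however, the repair the paper has in mind. Since the corollary is announced as a ``direct consequence'' of Lemma \ref{lem:nullhomotopy} with no proof offered, the intended hypothesis is almost certainly $Y^\bullet\in\mathbf{K}^{-}(A\mbox{-GProj})\cap{\rm GP}\mbox{-ac}$ (``Mod'' in place of ``GProj'' being a slip), keeping $X^\bullet$ fully arbitrary. Under that reading no induction is needed: $Y^\bullet$ itself satisfies both hypotheses of Lemma \ref{lem:nullhomotopy}, so applying the lemma with $X^\bullet=Y^\bullet$ yields ${\rm Hom}_{\mathbf{K}(A\mbox{-}{\rm Mod})}(Y^\bullet,Y^\bullet)=0$; hence ${\rm id}_{Y^\bullet}$ is null-homotopic, $Y^\bullet$ is contractible, and ${\rm Hom}_{\mathbf{K}(A\mbox{-}{\rm Mod})}(X^\bullet,Y^\bullet)=0$ for every $X^\bullet$. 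Your proof establishes a genuinely different (and also useful) variant at the cost of re-running the homotopy construction; the paper's intended one-line route buys complete freedom in $X^\bullet$ in exchange for Gorenstein-projective entries in $Y^\bullet$.
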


The following  consequence will be crucial to us.

\begin{cor}\label{cor:GPderived}
Let $X^\bullet\in \mathbf{K}^{-}(A\mbox{-{\rm GProj}})$ and
$Y^\bullet\in \mathbf{K}(A\mbox{-{\rm Mod}})$. Then the natural map
$${\rm
Hom}_{\mathbf{K}(A\mbox{-}{\rm Mod})}(X^\bullet,
Y^\bullet)\rightarrow {\rm Hom}_{\mathbf{D}_{\rm GP}(A)}(X^\bullet,
Y^\bullet)$$ sending $f^\bullet$ to $Q(f^\bullet)$ is an
isomorphism. In particular, the composite
$\mathbf{K}^{-}(A\mbox{-{\rm GProj}})\hookrightarrow
\mathbf{K}(A\mbox{-{\rm Mod}}) \stackrel{Q}\rightarrow
\mathbf{D}_{\rm GP}(A)$ is fully faithful.
\end{cor}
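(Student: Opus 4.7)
The plan is to exhibit the composite as a fully faithful embedding by the standard orthogonality argument used whenever a triangulated subcategory $\mathcal{S}$ embeds into a Verdier quotient $\mathcal{T}/\mathcal{N}$ for which $\mathrm{Hom}_{\mathcal{T}}(\mathcal{S},\mathcal{N})=0$. Here I take $\mathcal{T}=\mathbf{K}(A\mbox{-Mod})$, $\mathcal{N}=\mathrm{GP}\mbox{-ac}$ and $\mathcal{S}=\mathbf{K}^{-}(A\mbox{-GProj})$. The key input is Lemma \ref{lem:nullhomotopy}: since $\mathrm{GP}\mbox{-ac}$ is a triangulated subcategory, hence closed under all shifts $[n]$, the lemma gives $\mathrm{Hom}_{\mathbf{K}(A\mbox{-Mod})}(X^\bullet,N^\bullet[n])=0$ for every $X^\bullet\in\mathbf{K}^{-}(A\mbox{-GProj})$, every $N^\bullet\in\mathrm{GP}\mbox{-ac}$ and every $n\in\mathbb{Z}$.

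For injectivity (faithfulness), suppose a chain map $f^\bullet\colon X^\bullet\to Y^\bullet$ satisfies $Q(f^\bullet)=0$. Since $\Sigma_{\mathrm{GP}}$ is a saturated multiplicative system, there exists a right GP-quasi-isomorphism $s\colon Y^\bullet\to W^\bullet$ with $s\circ f^\bullet=0$ in $\mathbf{K}(A\mbox{-Mod})$. Embed $s$ in a standard triangle $N^\bullet[-1]\to Y^\bullet\xrightarrow{s}W^\bullet\to N^\bullet$ whose third term $N^\bullet=\mathrm{Cone}(s)$ lies in $\mathrm{GP}\mbox{-ac}$. Since $s\circ f^\bullet=0$, the map $f^\bullet$ factors through the connecting morphism $N^\bullet[-1]\to Y^\bullet$; but $\mathrm{Hom}_{\mathbf{K}}(X^\bullet,N^\bullet[-1])=0$ by the preceding paragraph, so $f^\bullet=0$ already in $\mathbf{K}(A\mbox{-Mod})$.

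For surjectivity (fullness), a morphism $X^\bullet\to Y^\bullet$ in $\mathbf{D}_{\mathrm{GP}}(A)$ is represented by a roof $X^\bullet\xleftarrow{s}Z^\bullet\xrightarrow{g}Y^\bullet$ with $s\in\Sigma_{\mathrm{GP}}$. Embedding $s$ in a triangle $N^\bullet[-1]\to Z^\bullet\xrightarrow{s}X^\bullet\to N^\bullet$ with $N^\bullet=\mathrm{Cone}(s)\in\mathrm{GP}\mbox{-ac}$ and applying $\mathrm{Hom}_{\mathbf{K}}(X^\bullet,-)$, the two outer terms $\mathrm{Hom}(X^\bullet,N^\bullet[-1])$ and $\mathrm{Hom}(X^\bullet,N^\bullet)$ both vanish, hence $s_{*}\colon\mathrm{Hom}_{\mathbf{K}}(X^\bullet,Z^\bullet)\to\mathrm{Hom}_{\mathbf{K}}(X^\bullet,X^\bullet)$ is a bijection. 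In particular there is $u\colon X^\bullet\to Z^\bullet$ with $s\circ u=\mathrm{id}_{X^\bullet}$, so in the quotient $Q(s)$ is invertible with inverse $Q(u)$, and the given roof becomes $Q(g)\circ Q(s)^{-1}=Q(g\circ u)$, proving that every morphism in $\mathbf{D}_{\mathrm{GP}}(A)$ comes from a genuine chain map.

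The main obstacle I expect is a bookkeeping one rather than a conceptual one: one must be careful to invoke the calculus of fractions correctly (that $Q(s)Q(u)=\mathrm{id}$ together with $Q(s)$ invertible really forces $Q(u)=Q(s)^{-1}$, and that the roof $(s,g)$ is equivalent to $(\mathrm{id},g\circ u)$), which is precisely where the saturation of $\Sigma_{\mathrm{GP}}$ is needed. The second sentence of the corollary is then immediate: we have just shown that $\mathbf{K}^{-}(A\mbox{-GProj})\hookrightarrow\mathbf{K}(A\mbox{-Mod})\xrightarrow{Q}\mathbf{D}_{\mathrm{GP}}(A)$ induces an isomorphism on Hom sets, i.e.\ it is fully faithful.
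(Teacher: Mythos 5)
Your proof is correct and follows the same strategy as the paper's: the paper applies Lemma \ref{lem:nullhomotopy} and then cites the standard orthogonality statement \cite[\S 2, 5-3 Proposition]{V}, which says that objects left-orthogonal to a thick subcategory see the same Hom spaces in the Verdier quotient. You have simply unwound the proof of Verdier's proposition (faithfulness via $Q(f)=0\Leftrightarrow sf=0$ for some $s\in\Sigma_{\rm GP}$, fullness via inverting $s_{*}$ on Hom sets using the long exact sequence of a triangle with GP-acyclic cone), so the two arguments coincide in substance.
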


\begin{proof}
We apply Lemma \ref{lem:nullhomotopy}. Then this result is an
immediate consequence of \cite[\S 2, 5-3 Proposition]{V}.
\end{proof}

The following observation highlights Gorenstein derived categories;
see \cite{GZ}.

\begin{thm}
Let $M, N$ be $A$-modules and let $n\in \mathbb{Z}$. Then there is a
natural isomorphism
$${\rm Hom}_{\mathbf{D}_{\rm GP}(A)}(M, N[n])\simeq {\rm Ext}_{\rm GP}^n(M, N).$$
\end{thm}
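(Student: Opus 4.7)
The strategy is to replace $M$ by a proper GP-resolution and then reduce the computation to chain maps modulo homotopy, exactly mimicking the classical construction of $\mathrm{Ext}$ via the ordinary derived category.

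First, by Corollary \ref{cor:GProjresolution} the module $M$ admits a proper GP-resolution, which, viewed as a bounded-above complex, yields an object $G^\bullet\in\mathbf{K}^{-}(A\mbox{-GProj})$ of the shape $\cdots\to G^{-2}\to G^{-1}\to G^0\to 0$ together with an augmentation chain map $f^\bullet\colon G^\bullet\to M$, where $M$ is regarded as a stalk complex in degree zero. The mapping cone $\mathrm{Cone}(f^\bullet)$ is (up to sign) the augmented complex $\cdots\to G^{-1}\to G^0\to M\to 0$, which is right GP-acyclic by the definition of ``proper''. Hence $\mathrm{Cone}(f^\bullet)\in\mathrm{GP}\mbox{-ac}$, so $f^\bullet$ belongs to $\Sigma_{\mathrm{GP}}$ and becomes invertible in $\mathbf{D}_{\rm GP}(A)$.

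Second, I chain together two identifications:
$$\mathrm{Hom}_{\mathbf{D}_{\rm GP}(A)}(M,N[n])\simeq \mathrm{Hom}_{\mathbf{D}_{\rm GP}(A)}(G^\bullet,N[n])\simeq \mathrm{Hom}_{\mathbf{K}(A\mbox{-Mod})}(G^\bullet,N[n]),$$
where the first uses Step~1 and the second uses Corollary \ref{cor:GPderived} (applicable because $G^\bullet\in\mathbf{K}^{-}(A\mbox{-GProj})$).

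Third, I compute the homotopy Hom by direct inspection. A chain map from $G^\bullet$ into the stalk complex $N[n]$ (which has $N$ concentrated in degree $-n$) is the data of a single morphism $\phi\colon G^{-n}\to N$ satisfying $\phi\circ d_G^{-n-1}=0$; two such maps are homotopic precisely when their difference has the form $h\circ d_G^{-n}$ for some $h\colon G^{-n+1}\to N$. For $n<0$ both sides vanish since $G^{-n}=0$; for $n=0$ one recovers $\mathrm{Hom}_A(M,N)$ using that $G^0\to M$ is a cokernel; for $n\geq 1$ the quotient is exactly $H^n(\mathrm{Hom}_A(G^\bullet,N))=\mathrm{Ext}_{\rm GP}^n(M,N)$. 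Naturality in $N$ is transparent from the construction, while naturality in $M$ is obtained from the Comparison Theorem applied to lifts of a morphism $M\to M'$ to chain maps between chosen proper GP-resolutions.

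The only subtle point is Step~1, i.e.\ recognising the augmentation of a proper GP-resolution as a right GP-quasi-isomorphism; once this and the fully-faithful embedding furnished by Corollary \ref{cor:GPderived} are in place, the remainder is the standard ``chain maps modulo homotopy equals cohomology of the Hom complex'' calculation, so I expect no genuine obstacle beyond bookkeeping.
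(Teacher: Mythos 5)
Your proposal is correct and follows essentially the same approach as the paper: replace $M$ by a proper GP-resolution $G^\bullet \in \mathbf{K}^{-}(A\mbox{-GProj})$, observe that the augmentation is a right GP-quasi-isomorphism (hence inverted in $\mathbf{D}_{\rm GP}(A)$), invoke the fully faithful embedding of Corollary \ref{cor:GPderived}, and identify homotopy classes of chain maps into $N[n]$ with $H^n(\mathrm{Hom}_A(G^\bullet,N))$. The paper just cites this last identification as standard where you unwind it by hand, but the argument is the same.
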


\begin{proof}
Take a proper GP-resolution $\varepsilon\colon G^\bullet \rightarrow
M$. View $M$ as a stalk complex concentrated in degree zero and
$G^\bullet$ as a complex belonging to
$\mathbf{K}^{-}(A\mbox{-GProj})$. Note that $\varepsilon$ is a right
GP-quasi-isomorphism. Then $G^\bullet$ is isomorphic to $M$ in
$\mathbf{D}_{\rm GP}(A)$. We apply  Corollary \ref{cor:GPderived}.
Then we have
\begin{align*}{\rm Hom}_{\mathbf{D}_{\rm GP}(A)}(M,
N[n]) &\simeq {\rm Hom}_{\mathbf{D}_{\rm GP}(A)}(G^\bullet,
N[n])\\
&\simeq  {\rm Hom}_{\mathbf{K}(A\mbox{-}{\rm Mod})}(G^\bullet,
N[n])\\
&\simeq H^n({\rm Hom}_A(G^\bullet, N)).
\end{align*} By definition we have ${\rm Ext}_{\rm GP}^n(M, N)=H^n({\rm Hom}_A(G^\bullet,
N))$. We are done.
\end{proof}

We will finish this section with a remark on Gorenstein derived
categories. For more, we refer to \cite{GZ} and \cite{Ch3}.

Consider $\mathcal{E}_{\rm GP}$ the class of short exact sequence of
$A$-modules on which each functor ${\rm Hom}_A(G, -)$ is exact for
$G\in A\mbox{-GProj}$. Then the pair $(A\mbox{-Mod},
\mathcal{E}_{\rm GP})$ is an exact category in the sense of Quillen.
We will denote  this exact category by $A\mbox{-Mod}_{\rm GP}$.
Following Neeman (\cite[section 1]{Nee90}) a complex
$X^\bullet=(X^n, d_X^n)_{n\in \mathbb{Z}}$  is \emph{acyclic}
\index{complex!acyclic} in $\mathbf{K}(A\mbox{-Mod}_{\rm GP})$ if
and only if there are factorizations $d_X^n\colon
X^n\stackrel{p^n}\rightarrow Z^{n+1} \stackrel{i^{n+1}}\rightarrow
X^{n+1}$ such that for each $n$, $0\rightarrow Z^n
\stackrel{i^n}\rightarrow X^n\stackrel{p^n}\rightarrow
Z^{n+1}\rightarrow 0$ is a short exact sequence belonging to
$\mathcal{E}_{\rm GP}$. Observe that a complex is acyclic in
$\mathbf{K}(A\mbox{-Mod}_{\rm GP})$ if and only if it is right
GP-acyclic.

Following Neeman again (\cite[Remark 1.6]{Nee90}; also see
\cite[sections 11,12]{Ke96}) the \emph{derived category}
$\mathbf{D}(A\mbox{-Mod}_{\rm GP})$ \index{category!derived} of the
exact category $A\mbox{-Mod}_{\rm GP}$ is defined by
$$\mathbf{D}(A\mbox{-Mod}_{\rm GP})\colon =\mathbf{K}(A\mbox{-Mod}_{\rm GP})/{{\rm GP}\mbox{-ac}}. $$

Dually one may also consider the exact category $A\mbox{-Mod}_{\rm
GI}$ with the exact structure given by short exact sequences of
$A$-modules on which ${\rm Hom}_A(-, I)$ is exact for each $I\in
A\mbox{-GInj}$. Then one has the derived category
$\mathbf{D}(A\mbox{-Mod}_{\rm GI})$.

The last result can be viewed as a remark: roughly speaking,
Gorenstein derived categories are not ``new". This remark makes
possible to apply the general results on the derived categories of
exact categories to Gorenstein homological algebra; see \cite{Ke96}.

\begin{prop}
Let $A$ be an artin algebra. Then we have $\mathbf{D}_{\rm
GP}(A)=\mathbf{D}(A\mbox{-{\rm Mod}}_{\rm GP})$ and $\mathbf{D}_{\rm
GI}(A)=\mathbf{D}(A\mbox{-{\rm Mod}}_{\rm GI})$. \hfill $\square$
\end{prop}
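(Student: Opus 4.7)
The plan is to observe that the equality is essentially tautological once two formal points are unpacked: the homotopy category of an exact category depends only on the underlying additive category, and the class of acyclic complexes for the exact structure $\mathcal{E}_{\rm GP}$ has already been identified with the right GP-acyclic complexes.

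First I would note that for any exact category $\mathcal{E}$ the homotopy category $\mathbf{K}(\mathcal{E})$ is defined as complexes in the underlying additive category modulo chain homotopy, and the triangulated structure is given by standard triangles coming from mapping cones; neither construction refers to the exact structure. Since $A\mbox{-Mod}_{\rm GP}$ and $A\mbox{-Mod}$ have the same underlying additive category, there is a canonical identification of triangulated categories
$$\mathbf{K}(A\mbox{-Mod}_{\rm GP}) = \mathbf{K}(A\mbox{-Mod}).$$

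Next I would invoke the observation already made in the excerpt that a complex $X^\bullet$ is acyclic in $\mathbf{K}(A\mbox{-Mod}_{\rm GP})$ (in Neeman's sense, built from conflations in $\mathcal{E}_{\rm GP}$) if and only if it is right GP-acyclic. Under the identification above, the thick subcategory of Neeman-acyclic complexes in $\mathbf{K}(A\mbox{-Mod}_{\rm GP})$ therefore coincides with the thick subcategory ${\rm GP}\mbox{-ac} \subseteq \mathbf{K}(A\mbox{-Mod})$. Taking Verdier quotients by the same subcategory of the same triangulated category yields
$$\mathbf{D}(A\mbox{-Mod}_{\rm GP}) = \mathbf{K}(A\mbox{-Mod}_{\rm GP})/{\rm GP}\mbox{-ac} = \mathbf{K}(A\mbox{-Mod})/{\rm GP}\mbox{-ac} = \mathbf{D}_{\rm GP}(A).$$
The dual argument, using the exact structure $\mathcal{E}_{\rm GI}$ and the identification of its acyclic complexes with left GI-acyclic complexes, gives $\mathbf{D}_{\rm GI}(A) = \mathbf{D}(A\mbox{-Mod}_{\rm GI})$.

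The only potential obstacle is the verification that Neeman-acyclicity for the exact category $A\mbox{-Mod}_{\rm GP}$ really does agree with right GP-acyclicity, but this is exactly the content of the sentence preceding the statement: the factorization condition $d_X^n = i^{n+1}\circ p^n$ with each short sequence $0\to Z^n \to X^n \to Z^{n+1}\to 0$ in $\mathcal{E}_{\rm GP}$ amounts to $X^\bullet$ being acyclic as a complex of modules together with each surjection $X^n \twoheadrightarrow Z^{n+1}(X^\bullet)$ remaining surjective after applying ${\rm Hom}_A(G,-)$ for every $G \in A\mbox{-GProj}$, which is precisely the definition of right GP-acyclicity. Hence the proof reduces to bookkeeping, and no further input is needed.
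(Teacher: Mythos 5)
Your argument is correct and is exactly the argument implicit in the paper, which offers no proof because the statement is tautological once the surrounding definitions are laid out: $\mathbf{K}(A\mbox{-Mod}_{\rm GP})=\mathbf{K}(A\mbox{-Mod})$ since the homotopy category and its triangulated structure depend only on the underlying additive category, and the preceding observation in the paper identifies the Neeman-acyclic complexes for $\mathcal{E}_{\rm GP}$ with the right GP-acyclic complexes, so both derived categories are the same Verdier quotient.
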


\appendix

\chapter{Cotorsion Pairs}

In this section we review the theory of cotorsion pairs and other
relevant notions. The main references are \cite{Sa}, \cite[Chapter
7]{EJ} and \cite[Chapter 2]{GT}.

Throughout $\mathcal{A}$ is an abelian category. Let $\mathcal{X}$
be a full additive subcategory of $\mathcal{A}$. Let $M\in
\mathcal{A}$ be an object. A \emph{right
$\mathcal{X}$-approximation} \index{approximation!right} of $M$ is a
morphism $f\colon X\rightarrow M$ such that $X\in \mathcal{X}$ and
any morphism $X'\rightarrow M$ from an object $X'\in \mathcal{X}$
factors through $f$.  Dually one has the notion of \emph{left
$\mathcal{X}$-approximation}  \index{approximation!left}
(\cite{AS}). A right (resp. left) $\mathcal{X}$-approximation  is
also known as an $\mathcal{X}$-\emph{precover} \index{precover}
(resp. $\mathcal{X}$-\emph{preenvelop}) \index{preenvelop}
(\cite{Eno}). The subcategory $\mathcal{X}\subseteq \mathcal{A}$ is
said to be \emph{ contravariantly
finite}\index{subcategory!contravariantly finite}
\index{subcategory!covariantly finite}(resp. \emph{covariantly
finite}) provided that each object in $\mathcal{A}$ has a right
(resp. left) $\mathcal{X}$-approximation. The subcategory
$\mathcal{X}\subseteq \mathcal{A}$ is said to be \emph{functorially
finite} provided that it is both contravariantly finite and
covariantly finite.\index{subcategory!functorially finite}

 Let $\mathcal{X}\subseteq \mathcal{A}$ be a full additive subcategory. Denote by
$\mathcal{X}^{\perp_1}=\{Y\in \mathcal{A}\; |\; {\rm
Ext}_\mathcal{A}^1(X, Y)=0 \mbox{ for all } X\in \mathcal{X}\}$. A
\emph{special right $\mathcal{X}$-approximation} \index{approximation!right!special} of an object $M$ is
an  epimorphism $\phi\colon X\rightarrow M$ such that $X\in
\mathcal{X}$ and  the kernel ${\rm Ker}\; \phi$ lies in
$\mathcal{X}^{\perp_1}$. Observe that a special right
$\mathcal{X}$-approximation is a right $\mathcal{X}$-approximation.
Dually one has the notation $^{\perp_1}\mathcal{X}$ and the notion
of \emph{special left $\mathcal{X}$-approximation}. \index{approximation!left!special}

A right $\mathcal{X}$-approximation $f\colon X\rightarrow M$ is said
to be \emph{minimal} provided that any endomorphism $\theta\colon
X\rightarrow X$ with $f\circ \theta=f$ is necessarily an isomorphism
(\cite{AS}). \index{approximation!right!minimal} Such a minimal
right $\mathcal{X}$-approximation is also known as an
$\mathcal{X}$-\emph{cover}\index{cover}. Dually one has the notion
of $\mathcal{X}$-\emph{envelop} \index{envelop}(\cite{Eno}).

We  have the following useful lemma.\footnote{This lemma is supposed
to be found in \cite{Waka}, while I do not find it there. For a
proof, see \cite[Lemma 2.1.1]{Xu}; also see \cite{EJ} and
\cite[Lemma 2.1.13]{GT}.}

\begin{lem}{\rm (Wakamatsu's Lemma)} Let $\mathcal{X}\subseteq \mathcal{A}$ be a full
additive subcategory which is closed under extensions.
 Let $f\colon X\rightarrow M$ be an
$\mathcal{X}$-cover. Then ${\rm Ker}\; f$ lies in
$\mathcal{X}^{\perp_1}$. In particular, an epic $\mathcal{X}$-cover
is a special right  $\mathcal{X}$-approximation. \hfill $\square$
\end{lem}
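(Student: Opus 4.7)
The plan is to fix an arbitrary object $X' \in \mathcal{X}$ together with a short exact sequence $\xi\colon 0 \to K \xrightarrow{j} E \xrightarrow{p} X' \to 0$, where $K = {\rm Ker}\, f$, and then construct an explicit retraction $\bar r\colon E \to K$ of $j$, thereby splitting $\xi$ and proving ${\rm Ext}_\mathcal{A}^1(X', K) = 0$. The construction will feed the approximation property of $f$ into a suitable pushout and exploit the minimality of $f$ as the final ingredient.

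First I would form the pushout of $j$ along the inclusion $i\colon K \hookrightarrow X$, yielding a short exact sequence $0 \to X \xrightarrow{x} Y \xrightarrow{q} X' \to 0$ together with a map $e\colon E \to Y$ satisfying $e \circ j = x \circ i$. Since $X, X' \in \mathcal{X}$ and $\mathcal{X}$ is closed under extensions, we have $Y \in \mathcal{X}$. Because $f \circ i = 0$, the universal property of the pushout produces a unique morphism $g\colon Y \to M$ with $g \circ x = f$ and $g \circ e = 0$. As $Y \in \mathcal{X}$ and $f$ is a right $\mathcal{X}$-approximation, $g$ factors as $g = f \circ h$ for some $h\colon Y \to X$; setting $\theta := h \circ x \colon X \to X$ gives $f \circ \theta = g \circ x = f$, and the minimality hypothesis on the cover $f$ then forces $\theta$ to be an isomorphism (so in particular $f \circ \theta^{-1} = f$).

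The crux of the argument is the morphism $\tilde r := \theta^{-1} \circ h \circ e \colon E \to X$. Using $e \circ j = x \circ i$ I would compute $\tilde r \circ j = \theta^{-1} \circ h \circ x \circ i = \theta^{-1} \circ \theta \circ i = i$, while $f \circ \tilde r = f \circ \theta^{-1} \circ h \circ e = f \circ h \circ e = g \circ e = 0$ shows that $\tilde r$ factors through $K = {\rm Ker}\, f$ as $\tilde r = i \circ \bar r$ for a unique $\bar r\colon E \to K$. Applying monicity of $i$ to $i \circ \bar r \circ j = \tilde r \circ j = i$ yields $\bar r \circ j = {\rm id}_K$, so $\bar r$ is the desired retraction and $\xi$ splits. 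I expect the only mildly delicate point to be the orchestration of these three diagram chases (pushout commutativity, the factorisation $g = f \circ h$, and the identity $f \circ \theta^{-1} = f$); none is hard individually, but their combination is what makes the proof work. The concluding clause of the lemma is then immediate: if $f$ is additionally epic, the short exact sequence $0 \to K \to X \to M \to 0$ with $X \in \mathcal{X}$ and $K \in \mathcal{X}^{\perp_1}$ exhibits $f$ as a special right $\mathcal{X}$-approximation of $M$.
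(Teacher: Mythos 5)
Your proof is correct, and since the paper itself only cites external sources (Xu, Enochs--Jenda, G\"{o}bel--Trlifaj) rather than proving the lemma, there is no in-paper argument to compare against; your pushout-plus-minimality argument is precisely the standard proof found in those references. All the diagram chases check out: the pushout universal property applied to $(0, f)$ is legitimate because $f\circ i=0$, the extension-closure gives $Y\in\mathcal{X}$, the identity $f\circ\theta=f$ does imply $f\circ\theta^{-1}=f$, and the factorisation of $\tilde r$ through $\mathrm{Ker}\,f$ together with monicity of $i$ yields the required retraction.
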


The notion of cotorsion pair presented below is different from the
original one\footnote{The cotorsion pair introduced here is also
referred as a \emph{complete} cotorsion pair; see \cite{EJ, GT}.};
see \cite{Sa}. In our opinion  this one is more useful.

\begin{defn} {\rm (Salce)} A pair $(\mathcal{F},
\mathcal{C})$ of full additive subcategories in $\mathcal{A}$ is called a
\emph{cotorsion pair} \index{cotorsion pair} if the following conditions are satisfied:
\begin{enumerate}
\item[{\rm (C0)}] the subcategories $\mathcal{F}$ and $\mathcal{C}$
are closed under taking direct summands;
\item[{\rm (C1)}] ${\rm Ext}_\mathcal{A}^1(F, C)=0$ for all $F\in
\mathcal{F}$ and $C\in \mathcal{C}$;
\item[{\rm (C2)}] each object $M\in \mathcal{A}$ fits into a short
exact sequence $0\rightarrow C \rightarrow F\rightarrow M\rightarrow
0$ with $F\in \mathcal{F}$ and $C\in \mathcal{C}$;
\item[{\rm (C3)}] each object $M\in \mathcal{A}$ fits into a short
exact sequence $0\rightarrow M\rightarrow C'\rightarrow
F'\rightarrow 0$ with $C'\in \mathcal{C}$ and $F'\in \mathcal{F}$.
\hfill $\square$
\end{enumerate}
\end{defn}

\begin{rem}
(1). We have assumed that both $\mathcal{F}$ and $\mathcal{C}$ are
closed under taking direct summands. It follows immediately from the
conditions above that $\mathcal{F}={^{\perp_1}\mathcal{C}}$ and
$\mathcal{C}=\mathcal{F}^{\perp_1}$. In particular, one infers that
both $\mathcal{F}$ and $\mathcal{C}$ are closed under extensions.

(2). The condition (C2) claims that each object $M$ has a special
right $\mathcal{F}$-approximation. Hence the subcategory
$\mathcal{F}\subseteq \mathcal{A}$ is contravariantly finite. Dually
$\mathcal{C}$ is covariantly finite.
\end{rem}

Recall that for a full additive subcategory $\mathcal{X}$ in
$\mathcal{A}$ we denote by ${\rm fac}\; \mathcal{X}$ (resp. ${\rm
sub}\; \mathcal{X}$) the full subcategory of $\mathcal{A}$
consisting of factor objects (resp. sub objects) of objects in
$\mathcal{X}$.

We have the following result; compare \cite[Corollary 2.4]{Sa}. Let
us remark that using Wakamatsu's Lemma one deduces \cite[Proposition
1.9]{AR}  from the result below quite easily.

\begin{lem} {\rm (Salce's Lemma)}
Let $(\mathcal{F}, \mathcal{C})$ be a pair of full additive
subcategories in $\mathcal{A}$ such that
$\mathcal{F}={^{\perp_1}\mathcal{C}}$ and
$\mathcal{C}=\mathcal{F}^{\perp_1}$. Then the following statements
are equivalent:
\begin{enumerate}
\item[(1)] the pair $(\mathcal{F}, \mathcal{C})$ is a cotorsion
pair;
\item[(2)] ${\rm fac}\; \mathcal{F}=\mathcal{A}$ and the condition
(C3) holds;
\item[(3)] ${\rm sub}\; \mathcal{C}=\mathcal{A}$ and the condition
(C2) holds.
\end{enumerate}
\end{lem}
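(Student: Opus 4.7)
The plan is to observe first that conditions (C0) and (C1) are automatic from the assumption $\mathcal{F}={}^{\perp_1}\mathcal{C}$ and $\mathcal{C}=\mathcal{F}^{\perp_1}$: if $F_1\oplus F_2$ lies in ${}^{\perp_1}\mathcal{C}$ then $\operatorname{Ext}^1_\mathcal{A}(F_i, C)$ is a direct summand of $\operatorname{Ext}^1_\mathcal{A}(F_1\oplus F_2, C)=0$ for every $C\in\mathcal{C}$, so each $F_i\in\mathcal{F}$, and dually for $\mathcal{C}$; (C1) is by definition. As a byproduct both $\mathcal{F}$ and $\mathcal{C}$ are closed under extensions, which will be needed below.

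Next I would handle the easy direction: $(1)\Rightarrow (2)$ and $(1)\Rightarrow (3)$ are immediate since (C2) produces, for each $M$, an epimorphism $F\twoheadrightarrow M$ with $F\in\mathcal{F}$, whence $\operatorname{fac}\mathcal{F}=\mathcal{A}$, and dually (C3) gives $\operatorname{sub}\mathcal{C}=\mathcal{A}$.

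The main content is $(2)\Rightarrow(1)$ (the implication $(3)\Rightarrow (1)$ being dual). Given $M\in\mathcal{A}$, first I would use $\operatorname{fac}\mathcal{F}=\mathcal{A}$ to fix an epimorphism $F\twoheadrightarrow M$ with $F\in\mathcal{F}$, and let $K$ be its kernel, giving
\[
0\longrightarrow K\longrightarrow F\longrightarrow M\longrightarrow 0.
\]
Then I would apply (C3) to $K$, obtaining $0\to K\to C'\to F'\to 0$ with $C'\in\mathcal{C}$ and $F'\in\mathcal{F}$. Forming the pushout of $K\to F$ and $K\to C'$ yields a commutative diagram with exact rows and columns
\[
\xymatrix{
0\ar[r] & K\ar[r]\ar[d] & F\ar[r]\ar[d] & M\ar[r]\ar@{=}[d] & 0\\
0\ar[r] & C'\ar[r]\ar[d] & E\ar[r]\ar[d] & M\ar[r] & 0\\
& F'\ar@{=}[r]\ar[d] & F'\ar[d] & & \\
& 0 & 0. & &
}
\]
The middle column $0\to F\to E\to F'\to 0$ has both outer terms in $\mathcal{F}$, so since $\mathcal{F}$ is closed under extensions $E\in\mathcal{F}$. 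The middle row $0\to C'\to E\to M\to 0$ is then the desired special right $\mathcal{F}$-approximation, verifying (C2). Combined with (C0), (C1), (C3), this gives (1).

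The main obstacle is conceptual rather than computational: one must recognise that the gap between ``(C3) holds'' and ``(C2) holds'' is bridged by a single pushout, using only that every object is covered by some $F\in\mathcal{F}$ and that $\mathcal{F}$ is closed under extensions. The implication $(3)\Rightarrow(1)$ is obtained by the completely dual construction (pullback instead of pushout, using that $\mathcal{C}$ is closed under extensions and $\operatorname{sub}\mathcal{C}=\mathcal{A}$), so I would simply state it as dual rather than repeating the diagram.
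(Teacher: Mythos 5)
Your proposal is correct and follows essentially the same route as the paper: the paper also covers $M$ by some $F\in\mathcal{F}$, applies (C3) to the kernel, and forms the identical pushout, using closure of $\mathcal{F}$ under extensions to conclude $E\in\mathcal{F}$, with $(3)\Rightarrow(1)$ left as dual. The only cosmetic difference is that you spell out why (C0) and (C1) are automatic, which the paper leaves implicit.
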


\begin{proof}
We will only show the equivalence ``$(1)\Leftrightarrow (2)$". The
implication ``$(1)\Rightarrow (2)$" is trivial. For the converse,
let $M\in \mathcal{A}$. Since ${\rm fac}\; \mathcal{F}=\mathcal{A}$
we may take a short exact sequence $0\rightarrow M'\rightarrow
F\rightarrow M\rightarrow 0$ with $F\in \mathcal{F}$. Applying the
condition (C3) to $M'$ we get a short exact sequence $0\rightarrow
M'\rightarrow C'\rightarrow F'\rightarrow 0$ with $C'\in
\mathcal{C}$ and $F'\in  \mathcal{F}$. Consider the following
pushout diagram.
\[\xymatrix{
& 0\ar[d] & 0\ar[d] \\
0\ar[r] &  M' \ar[d] \ar[r] & F \ar@{.>}[d] \ar[r] & M \ar@{=}[d]
\ar[r] & 0\\
0\ar[r] & C' \ar@{.>}[r] \ar[d] & E \ar@{.>}[r] \ar@{.>}[d] & M
\ar[r] & 0\\
& F' \ar[d]\ar@{=}[r] & F' \ar[d]\\
& 0 & 0 }\] By $\mathcal{F}={^{\perp_1}\mathcal{C}}$ the subcategory
$\mathcal{F}$ is closed under extensions. Consider the short exact
sequence in the middle column. We infer that $E\in \mathcal{F}$.
Then the short exact sequence in the middle row proves the condition
(C2) for $M$.
\end{proof}

We will recall a remarkable result due to Auslander and Buchweitz
on cotorsion pairs.

Let $\mathcal{X}\subseteq \mathcal{A}$ be a full additive
subcategory and let $n\geq 0$. Set $\mathcal{X}^n$ to the full
subcategory of $\mathcal{A}$ consisting of objects $M$ with an exact
sequence $0\rightarrow X^{-n}\rightarrow \cdots \rightarrow
X^{-1}\rightarrow X^0\rightarrow M\rightarrow 0$ such that each
$X^{-i}\in \mathcal{X}$. Note that $\mathcal{X}^0=\mathcal{X}$. Set
$\mathcal{X}^{-1}=0$. Denote by $\widehat{\mathcal{X}}$ the union of
all these $\mathcal{X}^n$'s.

Consider a full additive subcategory $\omega\subseteq \mathcal{X}$.
We say that $\omega$ \emph{cogenerates} $\mathcal{X}$ provided that
each object $X$ fits into a short exact sequence $0\rightarrow
X\rightarrow W\rightarrow X'\rightarrow 0$ with $W\in \omega$ and
$X'\in \mathcal{X}$. In this case $\omega$ is said to be a
\emph{cogenerator} of $\mathcal{X}$.\index{cogenerator}

The following is contained in \cite[Theorem 1.1]{ABu}. Let us remark
that it is proved directly by using induction on $n$ and taking
suitable pushout of short exact sequences.

\begin{prop} {\rm (Auslander-Buchweitz's decomposition theorem)}
Let $\mathcal{X}\subseteq \mathcal{A}$ be a full additive
subcategory which is closed under extensions. Let $\omega$ be a
cogenerator of $\mathcal{X}$ and let $n\geq 0$. Then for each $C\in
\mathcal{X}^n$, there are short exact sequences
\begin{align*}
&0\longrightarrow Y_C\longrightarrow X_C\longrightarrow  C\longrightarrow 0,\\
&0\longrightarrow  C \longrightarrow Y^C\longrightarrow  X^C
\longrightarrow 0,
\end{align*}
such that $X_C, X^C\in \mathcal{X}$, $Y^C\in \omega^{n-1}$ and
$Y_C\in \omega^n$. \hfill $\square$
\end{prop}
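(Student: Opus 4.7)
The plan is to proceed by induction on $n$. For the base case $n=0$, the object $C$ lies in $\mathcal{X}$, so the first sequence is the trivial $0\to 0\to C\to C\to 0$, while the second is obtained directly from the cogenerator property as $0\to C\to W\to X'\to 0$ with $W\in\omega$ and $X'\in\mathcal{X}$. For the inductive step with $n\ge 1$, I would extract from the given $\mathcal{X}$-resolution a short exact sequence $0\to C'\to X^0\to C\to 0$, where $C'=\mathrm{Im}(X^{-1}\to X^0)\in\mathcal{X}^{n-1}$ and $X^0\in\mathcal{X}$; the induction hypothesis applied to $C'$ supplies in particular a sequence $0\to C'\to Y^{C'}\to X^{C'}\to 0$ with $Y^{C'}\in\omega^{n-2}$ and $X^{C'}\in\mathcal{X}$.

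To obtain the first sequence for $C$, I would form the pushout $E$ of the span $X^0\leftarrow C'\to Y^{C'}$, whose two legs are monomorphisms. Since pushouts of monomorphisms in an abelian category yield squares of monomorphisms, $E$ fits into two short exact sequences $0\to X^0\to E\to X^{C'}\to 0$ (so $E\in\mathcal{X}$ by closure under extensions) and $0\to Y^{C'}\to E\to C\to 0$, yielding the first sequence with $X_C:=E$ and $Y_C:=Y^{C'}\in\omega^{n-2}\subseteq\omega^n$.

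For the second sequence, I would apply the cogenerator property to $E\in\mathcal{X}$ to get $0\to E\to W\to X'\to 0$ with $W\in\omega$ and $X'\in\mathcal{X}$; viewing $Y^{C'}$ as a subobject of $W$ via $Y^{C'}\hookrightarrow E\hookrightarrow W$, the quotient $W/Y^{C'}$ fits into $0\to C\to W/Y^{C'}\to X'\to 0$, using $E/Y^{C'}=C$ and $W/E=X'$. Splicing an $\omega$-resolution of $Y^{C'}$ of length at most $n-2$ with the short exact sequence $0\to Y^{C'}\to W\to W/Y^{C'}\to 0$ produces an $\omega$-resolution of $W/Y^{C'}$ of length at most $n-1$, so one may take $Y^C:=W/Y^{C'}\in\omega^{n-1}$ and $X^C:=X'\in\mathcal{X}$.

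The main delicate point is correctly propagating the $\omega$-dimensions through the pushout and the subsequent quotient; in particular, the initial step of the induction for the second sequence (where $n=1$ would formally involve $\omega^{-1}$) must be handled by directly invoking the cogenerator property on $C'\in\mathcal{X}$ in place of the inductive second sequence, and one must verify at each stage that the relevant subobject relations among $Y^{C'}$, $E$, and $W$ really do produce the claimed quotient short exact sequences.
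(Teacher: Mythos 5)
Your approach — induction on $n$ combined with a pushout of two short exact sequences — is precisely what the paper's one-line remark (``it is proved directly by using induction on $n$ and taking suitable pushout of short exact sequences'') alludes to, and it is the standard Auslander--Buchweitz argument. The pushout $E$ of $X^0\leftarrow C'\to Y^{C'}$ correctly produces the two exact sequences $0\to Y^{C'}\to E\to C\to 0$ and $0\to X^0\to E\to X^{C'}\to 0$ (with $E\in\mathcal{X}$ by closure under extensions), and quotienting the cogenerator sequence $0\to E\to W\to X'\to 0$ by $Y^{C'}$ gives $0\to C\to W/Y^{C'}\to X'\to 0$ with the $\omega$-dimension of $W/Y^{C'}$ exceeding that of $Y^{C'}$ by at most one. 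So the mechanics are right.

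There is one point of bookkeeping you should sort out, though: the exponents as printed in the proposition are swapped. Your argument, carried through from the base case, actually yields $Y_C\in\omega^{n-1}$ and $Y^C\in\omega^{n}$, not $Y_C\in\omega^{n}$ and $Y^C\in\omega^{n-1}$; and indeed the literal statement cannot be correct, since for $n=0$ it would demand $Y^C\in\omega^{-1}=0$, forcing $C=0$. The difficulty you flag at the end of your write-up — that the $n=1$ step ``would formally involve $\omega^{-1}$'' and must be patched by invoking the cogenerator on $C'$ directly — is exactly a symptom of carrying the misprinted exponents; notice also that your patch actually lands $Y^C=W/Y^{C'}$ in $\omega^{1}$, not $\omega^{0}$ as the printed claim for $n=1$ would require. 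Once the exponents are corrected, so that the inductive hypothesis for $C'\in\mathcal{X}^{n-1}$ reads $Y^{C'}\in\omega^{n-1}$, the base case is simply $Y_C=0\in\omega^{-1}$ and $Y^C=W\in\omega^{0}$, the pushout step gives $Y_C=Y^{C'}\in\omega^{n-1}$, the splicing step gives $Y^C\in\omega^{n}$, and no special handling of $n=1$ is needed. With that correction your proof is complete and coincides with the intended one.
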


We say that a cogenerator $\omega$ of $\mathcal{X}$ is
\emph{Ext-injective} \index{cogenerator!Ext-injective} provided that ${\rm Ext}_\mathcal{A}^n(X, W)=0$ for
all $n\geq 1$, $X\in \mathcal{X}$ and $W\in \omega$. This implies by
dimension-shift that ${\rm Ext}_\mathcal{A}^1(X, C)=0$ for $X\in \mathcal{X}$ and $C\in
\widehat{\omega}$.

The proof of the following result is contained in the one of
\cite[Proposition 3.6]{ABu}.

\begin{thm}\label{thm:AB} {\rm (Auslander-Buchweitz)}
Let $\omega \subseteq \mathcal{X}\subseteq \mathcal{A}$ be full
additive subcategories such that $\omega$ is closed under taking
direct summands and $\mathcal{X}$ is closed under extensions and
taking direct summands. Suppose that
$\widehat{\mathcal{X}}=\mathcal{A}$ and that  $\omega$ is an
Ext-injective cogenerator of $\mathcal{X}$. Then $(\mathcal{X},
\widehat{\omega})$ is a cotorsion pair in $\mathcal{A}$.
\end{thm}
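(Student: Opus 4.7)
The plan is to verify in turn the four conditions (C0)--(C3) defining a cotorsion pair for the pair $(\mathcal{X}, \widehat{\omega})$. For (C2) and (C3) I would invoke Auslander--Buchweitz's decomposition theorem directly: since $\widehat{\mathcal{X}} = \mathcal{A}$, every $M \in \mathcal{A}$ belongs to $\mathcal{X}^n$ for some $n$, and the two short exact sequences
\[
0 \to Y_M \to X_M \to M \to 0 \quad \text{and} \quad 0 \to M \to Y^M \to X^M \to 0
\]
supplied by the theorem have $X_M, X^M \in \mathcal{X}$ and $Y_M \in \omega^n$, $Y^M \in \omega^{n-1}$, both contained in $\widehat{\omega}$. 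For (C1), given $F \in \mathcal{X}$ and $C \in \widehat{\omega}$ with an $\omega$-resolution $0 \to W^{-n} \to \cdots \to W^0 \to C \to 0$, iterated dimension-shift using Ext-injectivity (which gives ${\rm Ext}^i_\mathcal{A}(F, W^{-j}) = 0$ for every $i \geq 1$) yields ${\rm Ext}^i_\mathcal{A}(F, C) = 0$ for all $i \geq 1$.

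The main obstacle is (C0). While $\mathcal{X}$ is closed under direct summands by hypothesis, closure of $\widehat{\omega}$ is not obvious because the $\omega$-dimension of a summand is not directly controlled. My plan is to establish the sharper identification $\widehat{\omega} = \mathcal{X}^{\perp_1}$, whence (C0) follows since $\mathcal{X}^{\perp_1}$ is manifestly closed under direct summands. The inclusion $\widehat{\omega} \subseteq \mathcal{X}^{\perp_1}$ is precisely (C1). For the reverse inclusion, take $C \in \mathcal{X}^{\perp_1}$ and let $n$ be minimal with $C \in \mathcal{X}^n$. The decomposition theorem supplies $0 \to C \to Y^C \to X^C \to 0$ with $Y^C \in \omega^{n-1}$ and $X^C \in \mathcal{X}$, and this splits because ${\rm Ext}^1_\mathcal{A}(X^C, C) = 0$; hence $Y^C \simeq C \oplus X^C$.

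To finish, I would first show that $X^C \in \omega$: as a direct summand of $Y^C \in \widehat{\omega}$ it inherits $X^C \in \mathcal{X}^{\perp_1}$, so the cogenerator sequence $0 \to X^C \to W \to X' \to 0$ splits and realises $X^C$ as a direct summand of $W \in \omega$. (The same splitting, applied directly to $C$ when $n = 0$, settles that base case at once.) For $n \geq 1$, I would convert an $\omega$-resolution $0 \to W^{-(n-1)} \to \cdots \to W^0 \xrightarrow{\varepsilon} Y^C \to 0$ of $Y^C$ into one of $C$ by considering the composite $W^0 \xrightarrow{\varepsilon} Y^C \twoheadrightarrow C$, whose kernel $\varepsilon^{-1}(X^C)$ splits as $\ker\varepsilon \oplus X^C$ by one more application of Ext-injectivity (now to $\ker\varepsilon \in \omega^{n-2} \subseteq \widehat{\omega}$ and $X^C \in \mathcal{X}$). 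Splicing this with the truncated resolution of $\ker\varepsilon$ then produces an $\omega$-resolution of $C$ of length $n-1$, placing $C$ in $\widehat{\omega}$ and completing the proof.
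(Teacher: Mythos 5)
Your proof is correct, and it follows the same top-level strategy as the paper: (C1) follows from Ext-injectivity and dimension-shift, (C2) and (C3) come straight from the Auslander--Buchweitz decomposition theorem, and the remaining point (C0) for $\widehat{\omega}$ is settled by identifying $\widehat{\omega} = \mathcal{X}^{\perp_1}$. Where you diverge from the paper is in the proof of the reverse inclusion $\mathcal{X}^{\perp_1} \subseteq \widehat{\omega}$. You take $C \in \mathcal{X}^{\perp_1}$ and use the \emph{left} decomposition sequence $0 \to C \to Y^C \to X^C \to 0$ with $Y^C \in \omega^{n-1}$ and $X^C \in \mathcal{X}$; this splits, you deduce $X^C \in \mathcal{X}^{\perp_1}$ as a direct summand of $Y^C$ and hence $X^C \in \omega$, and then you must ``subtract'' $X^C$ from an $\omega$-resolution of $Y^C$ to manufacture one for $C$ --- which you carry out correctly, at the cost of one further splitting-and-splicing step. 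The paper instead works with the \emph{right} decomposition sequence $0 \to Y_C \to X_C \to C \to 0$ with $Y_C \in \widehat{\omega}$ and $X_C \in \mathcal{X}$: since $Y_C$ and $C$ both lie in $\mathcal{X}^{\perp_1}$, which is closed under extensions, so does $X_C$, whence $X_C \in \omega$ via the split cogenerator sequence; that same exact sequence then exhibits $C$ directly as a cokernel of $Y_C \hookrightarrow X_C$ with $Y_C \in \omega^m$ and $X_C \in \omega$, giving $C \in \omega^{m+1}$ in one splice. The paper's choice is more economical precisely because the ``epi'' sequence presents $C$ as a cokernel, which is exactly the shape the definition of $\widehat{\omega}$ is designed to receive, whereas your ``mono'' sequence only exhibits $C$ as a direct summand of something in $\widehat{\omega}$ --- the very closure property you are trying to prove --- forcing the extra reduction. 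A small remark: the minimality of $n$ you impose is never used and can be dropped.
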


\begin{proof} We have observed that $\widehat{\omega}\subseteq
\mathcal{X}^{\perp_1}$. In view of the proposition above, it
suffices to show that $\widehat{\omega}$ is closed under taking
direct summands. In fact one shows that
$\widehat{\omega}={\mathcal{X}^{\perp_1}}$. It suffices to show that
$\widehat{\omega}\supseteq { \mathcal{X}^{\perp_1}}$. Let $C\in
{\mathcal{X}^{\perp_1}}$. Consider the short exact sequence
$0\rightarrow Y\rightarrow X \rightarrow C\rightarrow 0$ with $Y\in
\widehat{\omega}$ and $X\in \mathcal{X}$. Note that $Y\in
\widehat{\omega}\subseteq {\mathcal{X}^{\perp_1}}$ and then we have
$X\in {\mathcal{X}^{\perp_1} }$, since $\mathcal{X}^{\perp_1}$ is
closed under extensions. Note that $X$ fits into a short exact
sequence $0\rightarrow X\rightarrow W\rightarrow X'\rightarrow 0$
with $W\in \omega$ and $X'\in \mathcal{X}$. By $X\in
{\mathcal{X}^{\perp_1}}$ we infer that the sequence splits. Recall
that $\omega$ is closed under taking direct summands. We deduce that
$X\in \omega$ and then $C\in \widehat{\omega}$.
\end{proof}

\begin{rem}
Assume that $\mathcal{X}^n=\mathcal{A}$. Then we  have
$\widehat{\omega}=\omega^n$ by \cite[Proposition 3.6]{ABu}. Then the
cotorsion pair is given by $(\mathcal{X}, \omega^n)$.
\end{rem}

We will recall an important result which generates abundance of
cotorsion pairs. Recall that an abelian category $\mathcal{A}$
\emph{has enough projective objects} means that each object is a
factor object of a projective object.

The following important  result is contained in \cite[Theorem
2.4]{Hov}; compare  \cite[Theorem 10]{ET}.

\begin{thm}{\rm (Eklof-Trlifaj, Hovey)}
Let $\mathcal{A}$ be a Grothendieck category with enough projective
objects and let $\mathcal{S}\subseteq \mathcal{A}$ be a set of
objects. Set $\mathcal{C}=\mathcal{S}^{\perp_1}$ and
$\mathcal{F}={^{\perp_1}\mathcal{C}}$. Then $(\mathcal{F},
\mathcal{C})$ is a cotorsion pair in $\mathcal{A}$.\hfill $\square$
\end{thm}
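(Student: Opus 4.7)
The plan is to verify the four axioms (C0)--(C3) in sequence, with the main work concentrated on (C3); then (C2) follows formally via Salce's Lemma.

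First I would dispose of the easy axioms. Condition (C0) holds automatically: any class of the form $\mathcal{T}^{\perp_1}$ or ${}^{\perp_1}\mathcal{T}$ is closed under direct summands, since ${\rm Ext}^1_{\mathcal{A}}(-,-)$ is additive in each variable. Condition (C1) is immediate from the definitions: by construction $\mathcal{F} = {}^{\perp_1}\mathcal{C}$, so ${\rm Ext}^1_{\mathcal{A}}(F,C) = 0$ for all $F \in \mathcal{F}$ and $C \in \mathcal{C}$. Moreover, since every projective object lies in $\mathcal{F} = {}^{\perp_1}\mathcal{C}$ and $\mathcal{A}$ has enough projectives, we already have ${\rm fac}\,\mathcal{F} = \mathcal{A}$, so once (C3) is established Salce's Lemma delivers (C2) at no extra cost.

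The heart of the argument is (C3), which I would prove by Quillen's small object argument adapted to Grothendieck categories (this is the Eklof--Trlifaj construction). Fix $M \in \mathcal{A}$. I would build a continuous transfinite chain of monomorphisms $M = M_0 \hookrightarrow M_1 \hookrightarrow M_2 \hookrightarrow \cdots$ indexed by an ordinal $\kappa$ (chosen large enough in terms of a generator of $\mathcal{A}$ and the sizes of objects in $\mathcal{S}$) as follows. At a successor ordinal $\alpha+1$, enumerate all pairs $(S_i, \xi_i)$ with $S_i \in \mathcal{S}$ and $\xi_i \in {\rm Ext}^1_{\mathcal{A}}(S_i, M_\alpha)$, form the direct sum extension
\begin{equation*}
0 \longrightarrow M_\alpha \longrightarrow M_{\alpha+1} \longrightarrow \bigoplus_i S_i \longrightarrow 0
\end{equation*}
classified by $(\xi_i)_i$, so that every $\xi_i$ is killed by pullback along $M_\alpha \hookrightarrow M_{\alpha+1}$. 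At limit ordinals $\lambda$ set $M_\lambda = \varinjlim_{\alpha<\lambda} M_\alpha$; this is again a monomorphism target because filtered colimits are exact in a Grothendieck category. Call the resulting $C' := M_\kappa$.

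Two things then need to be checked, and this is where the subtlety lies. Firstly, I claim $C' \in \mathcal{C} = \mathcal{S}^{\perp_1}$: given $\xi \in {\rm Ext}^1_{\mathcal{A}}(S, C')$ with $S \in \mathcal{S}$, a suitable smallness/cofinality argument (using that $S$ is ``small'' relative to the chosen $\kappa$, which in turn uses the existence of a generator of $\mathcal{A}$ to bound the cardinality of $\mathcal{S}$ uniformly) shows that $\xi$ is the image under $M_\alpha \hookrightarrow C'$ of some $\xi' \in {\rm Ext}^1_{\mathcal{A}}(S, M_\alpha)$ for some $\alpha < \kappa$; by construction $\xi'$ was killed at stage $\alpha+1$, hence $\xi = 0$. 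This cardinality/smallness bookkeeping is the main obstacle and the only place where the Grothendieck hypothesis is seriously used. Secondly, the quotient $F' := C'/M$ is $\mathcal{S}$-filtered, i.e.\ it carries a continuous chain $0 = F'_0 \subseteq F'_1 \subseteq \cdots$ with successive quotients isomorphic to coproducts of objects in $\mathcal{S}$; by Eklof's Lemma (the fact that ${}^{\perp_1}\mathcal{C}$ is closed under transfinite extensions, provable by a dimension-shifting argument on the filtration together with exactness of filtered colimits), $F' \in {}^{\perp_1}\mathcal{C} = \mathcal{F}$. The resulting short exact sequence $0 \to M \to C' \to F' \to 0$ is exactly (C3).

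Finally I would invoke Salce's Lemma stated earlier in the excerpt: from (C3), the equalities $\mathcal{F} = {}^{\perp_1}\mathcal{C}$ and $\mathcal{C} = \mathcal{F}^{\perp_1}$ (which hold by definition of $\mathcal{F}$ and because $\mathcal{S} \subseteq \mathcal{F}$ forces $\mathcal{F}^{\perp_1} \subseteq \mathcal{S}^{\perp_1} = \mathcal{C}$, while the reverse inclusion is (C1)), and ${\rm fac}\,\mathcal{F} = \mathcal{A}$, we conclude that (C2) also holds, completing the proof that $(\mathcal{F},\mathcal{C})$ is a cotorsion pair.
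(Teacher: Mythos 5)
The paper does not actually prove this theorem; it is stated with the proof omitted and a citation to Hovey \cite[Theorem 2.4]{Hov} and Eklof--Trlifaj \cite[Theorem 10]{ET}. Your proposal reconstructs precisely the argument those sources give, and it is correct in all essentials. The reduction of (C0) and (C1) is trivial and you handle it correctly; the identification $\mathcal{C}=\mathcal{F}^{\perp_1}$ via $\mathcal{S}\subseteq\mathcal{F}$ is the right observation; projectives lying in $\mathcal{F}$ gives ${\rm fac}\,\mathcal{F}=\mathcal{A}$, so Salce's Lemma (stated earlier in the appendix) correctly converts your (C3) into (C2). The core of the argument, the transfinite construction of $M=M_0\hookrightarrow M_1\hookrightarrow\cdots\hookrightarrow M_\kappa=C'$ with $M_{\alpha+1}/M_\alpha$ a coproduct of objects of $\mathcal{S}$, is exactly the Eklof--Trlifaj ``killing'' process, and continuity at limits does use AB5. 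Membership of $C'$ in $\mathcal{S}^{\perp_1}$ then follows because, replacing $S$ by a syzygy $K$ of $S$ from a projective presentation $0\to K\to P\to S\to 0$ (this is where enough projectives is used), any class in ${\rm Ext}^1_{\mathcal{A}}(S,C')$ is represented by a map $K\to C'$ which, by $\kappa$-presentability of $K$ in the locally presentable category $\mathcal{A}$, factors through some $M_\alpha$ and was therefore killed at stage $\alpha+1$. Membership of $F'=C'/M$ in ${}^{\perp_1}\mathcal{C}$ is Eklof's Lemma applied to the induced $\mathcal{S}$-filtration. The one point you leave at the level of a gesture rather than a proof is the smallness bookkeeping, where you should explicitly pass to the syzygy $K$ (rather than arguing directly with ${\rm Ext}^1(S,-)$) and use the fact that objects in a Grothendieck category are $\lambda$-presentable for suitable $\lambda$; but this is exactly the ``main obstacle'' you flag, and it is standard. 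Your structure and its use of the auxiliary results from the paper's Appendix A are entirely correct.
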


The cotorsion pair above is called the cotorsion pair
\emph{cogenerated} by the set $\mathcal{S}$  of objects.\index{cotorsion pair!cogenerated by a set}

Finally we discuss resolving subcategories. Let $\mathcal{A}$ be an
abelian category with enough projective objects. A full additive
subcategory $\mathcal{X}$ is a \emph{resolving
subcategory}\index{subcategory!resolving} provided that it contains
all the projective objects and it is closed under extensions, taking
kernels of epimorphisms and direct summands (\cite[p.99]{ABr}). A
typical  example of a resolving subcategory is given by $^\perp
\mathcal{Y}$ where $\mathcal{Y}$ is a subcategory of $\mathcal{A}$
and $^\perp \mathcal{Y}=\{M\in \mathcal{A}\; |\; {\rm
Ext}_\mathcal{A}^i(M, Y)=0, \mbox{ for all } i\geq 1, Y\in
\mathcal{Y}\}$. Dually if $\mathcal{A}$ has enough injective
objects, we have the notion of \emph{coresolving
subcategory}\index{subcategory!coresolving} and a full subcategory
of the form $\mathcal{X}^\perp$ is coresolving.

The following observation is rather easy; see \cite[Lemma
2.2.10]{GT}.

\begin{prop}
Let $\mathcal{A}$ be an abelian category with enough projective and
injective objects. Let $(\mathcal{X}, \mathcal{Y})$ be a cotorsion
pair in $\mathcal{A}$. Then $\mathcal{X}$ is resolving if and only
if $\mathcal{Y}$ is coresolving. In this case we have
$\mathcal{X}={^\perp\mathcal{Y}}$ and
$\mathcal{Y}=\mathcal{X}^\perp$. \hfill $\square$
\end{prop}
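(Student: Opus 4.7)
The proof plan is to show that once $\mathcal{X}$ is resolving, one can perform dimension shifting along projective resolutions of objects in $\mathcal{X}$ to obtain vanishing of all higher Ext groups between $\mathcal{X}$ and $\mathcal{Y}$; once this is done, the coresolving properties of $\mathcal{Y}$ essentially fall out of the resulting long exact sequences.

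First I would prove the ``In this case'' clause assuming $\mathcal{X}$ resolving. Given $X \in \mathcal{X}$, take a projective resolution $\cdots \to P^{-1} \to P^0 \to X \to 0$ in $\mathcal{A}$. Since $\mathcal{X}$ is resolving, each $P^{-i}$ lies in $\mathcal{X}$, and applying the closure under kernels of epimorphisms to the short exact sequence $0 \to \Omega X \to P^0 \to X \to 0$ yields $\Omega X \in \mathcal{X}$. Iterating, every syzygy $\Omega^i X$ lies in $\mathcal{X}$. For any $Y \in \mathcal{Y}$, dimension shift gives
\[
{\rm Ext}^i_\mathcal{A}(X, Y) \;\simeq\; {\rm Ext}^1_\mathcal{A}(\Omega^{i-1} X, Y) \;=\; 0
\]
for all $i \geq 1$, where the last equality uses $\mathcal{Y} = \mathcal{X}^{\perp_1}$. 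Hence $\mathcal{X} \subseteq {}^\perp \mathcal{Y}$, and the reverse inclusion ${}^\perp \mathcal{Y} \subseteq {}^{\perp_1}\mathcal{Y} = \mathcal{X}$ is immediate. The identity $\mathcal{Y} = \mathcal{X}^\perp$ then follows at once from $\mathcal{X}^\perp \subseteq \mathcal{X}^{\perp_1} = \mathcal{Y} \subseteq \mathcal{X}^\perp$, the last inclusion being the vanishing just proved.

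Next I would verify that $\mathcal{Y}$ is coresolving. Every injective $I$ satisfies ${\rm Ext}^1_\mathcal{A}(X, I) = 0$ for all $X$, so $I \in \mathcal{X}^{\perp_1} = \mathcal{Y}$; closure under direct summands is part of axiom (C0); closure under extensions is automatic for any subcategory of the form $\mathcal{X}^{\perp_1}$ via the Ext long exact sequence. The only nontrivial point is closure under cokernels of monomorphisms: given $0 \to Y' \to Y \to Y'' \to 0$ with $Y', Y \in \mathcal{Y}$, the long exact sequence of ${\rm Hom}_\mathcal{A}(X, -)$ for $X \in \mathcal{X}$ contains
\[
{\rm Ext}^1_\mathcal{A}(X, Y) \longrightarrow {\rm Ext}^1_\mathcal{A}(X, Y'') \longrightarrow {\rm Ext}^2_\mathcal{A}(X, Y').
\]
The left term vanishes because $Y \in \mathcal{Y}$ and the right term vanishes by the higher-Ext vanishing established above applied to $Y' \in \mathcal{Y}$; hence $Y'' \in \mathcal{X}^{\perp_1} = \mathcal{Y}$.

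The reverse direction is handled dually, working with injective coresolutions of objects in $\mathcal{Y}$, dimension shifting the cosyzygies (all of which remain in $\mathcal{Y}$ by the coresolving hypothesis), and using the Ext long exact sequence in the first argument to check that $\mathcal{X}$ is closed under kernels of epimorphisms. I do not foresee a genuine obstacle here; the only mildly delicate point is being careful that the dimension-shift step uses the correct closure property of the resolving/coresolving subcategory (kernels of epimorphisms in projective resolutions, respectively cokernels of monomorphisms in injective coresolutions) so that all syzygies stay inside the subcategory, which is exactly what makes the higher Ext vanishing go through.
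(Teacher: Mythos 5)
Your proof is correct and follows the standard dimension-shifting argument, which is also what the reference the paper cites ([GT, Lemma 2.2.10]) would use; the paper itself offers no proof, calling the result ``rather easy.'' The key points are all in place: resolving $\Rightarrow$ syzygies stay in $\mathcal{X}$ $\Rightarrow$ $\mathrm{Ext}^{\geq 1}_{\mathcal{A}}(\mathcal{X},\mathcal{Y})=0$, from which $\mathcal{X}={}^\perp\mathcal{Y}$, $\mathcal{Y}=\mathcal{X}^\perp$, and closure under cokernels of monomorphisms all follow from the long exact sequence; you also correctly rely only on the identities $\mathcal{X}={}^{\perp_1}\mathcal{Y}$ and $\mathcal{Y}=\mathcal{X}^{\perp_1}$ (noted in the paper's remark after the definition), so the completeness axioms (C2) and (C3) are not actually used.
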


In the case of this proposition the cotorsion pair $(\mathcal{X}, \mathcal{Y})$
is said to be \emph{hereditary}.

\chapter{A Proof of Beligiannis's Theorem}

In this section we will sketch a proof of Beligiannis's Theorem,
which claims that for an artin algebra  the stable category of
Gorenstein-projective modules modulo projective modules is a
compactly generated triangulated category; see Theorem
\ref{thm:existenceofresolution}. Our proof follows the one in
\cite{IK}. We will present a result due to Beligiannis and Reiten on
cotorsion pairs in the category of modules induced by the
subcategory of Gorenstein-projective modules.

For an additive category $\mathfrak{a}$ denote by
$\mathbf{K}(\mathfrak{a})$ the homotopy category
\index{category!homotopy} of complexes in $\mathfrak{a}$ and by
$\mathbf{K}^{+}(\mathfrak{a})$ (resp. $\mathbf{K}^{-}(\mathfrak{a})$
and $\mathbf{K}^b(\mathfrak{a})$) the subcategory consisting of
bounded below (resp. bounded above and bounded) complexes. Recall
that each of these homotopy categories has a canonical triangulated
structure.

Throughout $A$ will be an artin $R$-algebra where $R$ is a
commutative artinian ring. Denote by $A\mbox{-Mod}$ the category of
left $A$-modules and by  $A\mbox{-Proj}$ (resp. $A\mbox{-Inj}$) the
full subcategory consisting of projective (resp. injective)
$A$-modules. For an $A$-module $X$ denote by $DX={\rm Hom}_R(X, E)$
 its Matlis dual where $E$ is the minimal injective cogenerator
for $R$. Note that $DX$ is a right $A$-module and it is viewed as a
left $A^{\rm op}$-module. For an $A$-module $X$, set $X^*={\rm
Hom}_A(X, A)$ which has a natural left $A^{\rm op}$-module
structure.

Recall that a complex $P^\bullet$ of $A$-modules  is
\emph{homotopically projective} \index{complex!homotopically
projective} provided that any chain map from $P^\bullet$ to an
acyclic complex is homotopic to zero (\cite{Sp}); if in addition the
complex $P^\bullet$ consists of projective modules it is called
\emph{semi-projective}\index{complex!semi-projective}. For example,
a bounded above complex of projective modules is semi-projective.
For each complex $X^\bullet$ there is a \emph{semi-projective
resolution}\index{resolution!semi-projective}, that is, a
quasi-isomorphism $P^\bullet \rightarrow X^\bullet$ with $P^\bullet$
semi-projective; this semi-projective resolution is unique up to
homotopy. Denote  the complex $P^\bullet$ by $\mathbf{p}X^\bullet$.
This gives rise to a triangle functor $\mathbf{p}\colon
\mathbf{K}(A\mbox{-Mod})\rightarrow \mathbf{K}(A\mbox{-Proj})$ which
is called the \emph{semi-projective resolution
functor}\index{functor!semi-projective resolution}. For details, see
\cite[Chapter 8]{KZ}.

Dually we have the notions of \emph{homotopically
injective}\index{complex!homotopically injective} and
\emph{semi-injective}\index{complex!semi-injective} complexes. For
every complex $X^\bullet$ there is a unique semi-injective
resolution, that is, a quasi-isomorphism $X^\bullet\rightarrow
\mathbf{i}X^\bullet$ with $\mathbf{i}X^\bullet$ semi-injective. This
gives rise to the \emph{semi-injective resolution functor}
\index{functor!semi-injective resolution} $\mathbf{i}\colon
\mathbf{K}(A\mbox{-Mod})\rightarrow \mathbf{K}(A\mbox{-Inj})$; for
details, see \cite[Chapter 8]{KZ}.

We begin with the following fact.

\begin{lem}\label{lem:fact1}
Let $X^\bullet$ be a complex of $A$-modules. Then we have a natural
isomorphism $D\mathbf{p}X^\bullet \simeq \mathbf{i}DX^\bullet$; if
the complex $X^\bullet$ is bounded below, then we have $D\mathbf{i}
X^\bullet\simeq \mathbf{p}D X^\bullet$.
\end{lem}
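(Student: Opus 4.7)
The plan is to verify in each case that the candidate complex on the left satisfies the defining universal property of the resolution on the right, namely that it is (i) quasi-isomorphic to the target and (ii) semi-injective (resp.\ semi-projective); uniqueness of semi-injective / semi-projective resolutions up to homotopy equivalence then delivers the natural isomorphism, with naturality built in by that uniqueness.

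For the first isomorphism $D\mathbf{p}X^\bullet\simeq \mathbf{i}DX^\bullet$, I would start from the quasi-isomorphism $\varepsilon\colon \mathbf{p}X^\bullet\to X^\bullet$ and apply $D$. Since $E$ is an injective $R$-module, $D=\mathrm{Hom}_R(-,E)$ is exact, so $D\varepsilon\colon DX^\bullet\to D\mathbf{p}X^\bullet$ remains a quasi-isomorphism. The crux is to show that $D\mathbf{p}X^\bullet$ is semi-injective. For this I would invoke the complex-level Hom-tensor adjunction: for every complex $Y^\bullet$ of right $A$-modules,
$$\mathrm{Hom}^\bullet_{A^{\mathrm{op}}}(Y^\bullet,\, D\mathbf{p}X^\bullet)\;\cong\; D\bigl(Y^\bullet\otimes^\bullet_A \mathbf{p}X^\bullet\bigr).$$
Because $\mathbf{p}X^\bullet$ is semi-projective it is in particular semi-flat, so tensoring with an acyclic $Y^\bullet$ is acyclic; applying the exact functor $D$ preserves this acyclicity. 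Hence the Hom complex is acyclic for every acyclic $Y^\bullet$, which is precisely semi-injectivity.

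For the second isomorphism $D\mathbf{i}X^\bullet\simeq \mathbf{p}DX^\bullet$, the bounded-below hypothesis on $X^\bullet$ lets one pick $\mathbf{i}X^\bullet$ bounded below by the standard construction. Applying $D$ to the quasi-isomorphism $\eta\colon X^\bullet\to \mathbf{i}X^\bullet$ yields a quasi-isomorphism $D\mathbf{i}X^\bullet\to DX^\bullet$, and $D\mathbf{i}X^\bullet$ is now bounded above. To see that it is semi-projective, I would observe that by Lemma \ref{lem:well-known} each injective $A$-module is a direct summand of a coproduct of $D(A_A)$; applying $D$ and using $DD(A_A)\cong A_A$ (by finite length over $R$), one finds $D$ of an injective is a direct summand of a product of copies of $A$, hence projective by Lemma \ref{lem:well-known} again. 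Thus $D\mathbf{i}X^\bullet$ is a bounded-above complex of projective $A^{\mathrm{op}}$-modules, and such complexes are automatically semi-projective.

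The main obstacle (and the source of the asymmetry between the two statements) is that semi-projectivity transports across $D$ to semi-injectivity in full generality via the tensor-Hom adjunction, whereas the converse has no analogous route through tensor products. The bounded-below hypothesis in the second part is exactly what is needed to bypass this: it forces $D\mathbf{i}X^\bullet$ to be bounded above, so that semi-projectivity follows for free from the module-theoretic observation that $D$ carries injective $A$-modules to projective $A^{\mathrm{op}}$-modules. Once both items (i) and (ii) are verified, the conclusion in each case is immediate from uniqueness of the resolution.
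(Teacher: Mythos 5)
Your proof is correct and follows essentially the same strategy as the paper's: apply the exact functor $D$ to the canonical quasi-isomorphism, verify that the resulting complex is a semi-resolution, and conclude by uniqueness of semi-projective/semi-injective resolutions up to homotopy. The one small variant is in how semi-injectivity of $D\mathbf{p}X^\bullet$ is certified: you route through $\mathrm{Hom}^\bullet(Y^\bullet, D\mathbf{p}X^\bullet)\cong D(Y^\bullet\otimes^\bullet_A\mathbf{p}X^\bullet)$ and invoke semi-flatness of $\mathbf{p}X^\bullet$, while the paper uses the symmetric duality $\mathrm{Hom}_{\mathbf{K}}(N^\bullet, D\mathbf{p}X^\bullet)\simeq \mathrm{Hom}_{\mathbf{K}}(\mathbf{p}X^\bullet, DN^\bullet)$ directly, which avoids introducing the semi-flat notion; both are morally the same adjunction. (You should also note explicitly that $D\mathbf{p}X^\bullet$ has injective terms, obtained exactly as you do for the second half by the product/coproduct characterization of Lemma \ref{lem:well-known}.) For the second isomorphism the paper only says it is easy; your bounded-above argument is precisely the intended one.
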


\begin{proof}
Applying $D$ to the quasi-isomorphism
$\mathbf{p}X^\bullet\rightarrow X^\bullet$ we get a
quasi-isomorphism $DX^\bullet \rightarrow D\mathbf{p}X^\bullet$.
Note that the complex $D\mathbf{p}X^\bullet$ consists of injective
modules. To show the first isomorphism it suffices to show that
$D\mathbf{p}X^\bullet$ is homotopically injective. For an acyclic
complex $N^\bullet$, we have ${\rm Hom}_{\mathbf{K}(A\mbox{-} {\rm
Mod})}(N^\bullet, D\mathbf{p}X^\bullet)\simeq {\rm
Hom}_{\mathbf{K}(A\mbox{-}{\rm Mod})}(\mathbf{p}X^\bullet,
DN^\bullet)=0$ since $\mathbf{p}X^\bullet$ is homotopically
projective and $DN^\bullet$ is acyclic. The second isomorphism is
easy to prove.
\end{proof}

Recall that the \emph{Nakayama functor} \index{functor!Nakayama}
$\nu=DA\otimes_A\colon A\mbox{-Mod}\rightarrow A\mbox{-Mod}$ has a
right adjoint $\nu^-={\rm Hom}_A(DA, -) \colon
A\mbox{-Mod}\rightarrow A\mbox{-Mod}$. Note that the Nakayama
functor induces an equivalence $\nu\colon A\mbox{-Proj}\rightarrow
A\mbox{-Inj}$ whose quasi-inverse is given by $\nu^{-}$.

Applying the above to complexes, we have an equivalence $\nu\colon
\mathbf{K}(A\mbox{-Proj})\rightarrow \mathbf{K}(A\mbox{-Inj})$ with
quasi-inverse given by $\nu^{-}$.

Denote by  $A\mbox{-mod}$ (resp. $A\mbox{-proj}$, $A\mbox{-inj}$)
the category of finitely generated (resp. projective, injective)
$A$-modules. We note the following fact.

\begin{lem}\label{lem:fact2}
Let $X^\bullet\in \mathbf{K}^{-}(A\mbox{-{\rm mod}})$. Then we have
a natural isomorphism $\nu^{-}\mathbf{i}X^\bullet\simeq
(\mathbf{p}DX^\bullet)^*$.
\end{lem}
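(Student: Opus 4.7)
The plan is to identify $\mathbf{i}X^\bullet$ with $D\mathbf{p}(DX^\bullet)$ via the first half of Lemma \ref{lem:fact1}, and then convert $\nu^{-}D(-)$ into $(-)^*$ using a tensor-hom adjunction together with the biduality $A \simeq DDA$.

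First I would apply the first part of Lemma \ref{lem:fact1} with $DX^\bullet$ in place of $X^\bullet$, now working in the category of complexes of right $A$-modules; this produces a natural isomorphism $D\mathbf{p}(DX^\bullet) \simeq \mathbf{i}(DDX^\bullet)$ in the homotopy category of complexes of injective left $A$-modules. Since the terms of $X^\bullet$ are finitely generated, the canonical biduality map $X^n \to DDX^n$ is an isomorphism in each degree, so $DDX^\bullet \simeq X^\bullet$ as complexes and hence $\mathbf{i}(DDX^\bullet) \simeq \mathbf{i}X^\bullet$. Combining these gives $\mathbf{i}X^\bullet \simeq D\mathbf{p}(DX^\bullet)$, naturally in $X^\bullet$.

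Next I would apply $\nu^{-} = {\rm Hom}_A(DA, -)$ to both sides. The standard tensor-hom adjunction gives a natural isomorphism ${\rm Hom}_A(N, DM) \simeq {\rm Hom}_{A^{\rm op}}(M, DN)$ for any left $A$-module $N$ and right $A$-module $M$; specializing to $N = DA$ and invoking the biduality $A \simeq DDA$ (valid because $A$ is finitely generated over the artinian ring $R$) yields a natural isomorphism $\nu^{-}(DM) \simeq M^{*}$ for every right $A$-module $M$. Applied term-wise to the complex $\mathbf{p}(DX^\bullet)$, this gives a chain-level isomorphism $\nu^{-}\bigl(D\mathbf{p}(DX^\bullet)\bigr) \simeq (\mathbf{p}DX^\bullet)^{*}$, and composing with the previous step furnishes the required natural isomorphism $\nu^{-}\mathbf{i}X^\bullet \simeq (\mathbf{p}DX^\bullet)^{*}$.

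The only point requiring care is that each module-level isomorphism is natural enough to assemble into a chain-level one, and that the side-conventions (left versus right $A$-module structures, the contravariance of $D$ and of $(-)^{*}$) line up correctly; both follow from the naturality assertions already present in Lemma \ref{lem:fact1} and in the tensor-hom adjunction, so I do not anticipate a genuine obstacle beyond this bookkeeping.
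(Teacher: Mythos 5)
Your proof is correct and follows essentially the same route as the paper: you establish $\mathbf{i}X^\bullet \simeq D\mathbf{p}(DX^\bullet)$ (the paper phrases this as $\mathbf{p}DX^\bullet \simeq D\mathbf{i}X^\bullet$ via Lemma \ref{lem:fact1}) and then convert $\nu^{-}$ into $(-)^{*}$ by a termwise module-level isomorphism; the paper uses $\nu^{-}I \simeq (DI)^{*}$ for $I \in A\mbox{-inj}$ whereas you use the equivalent $\nu^{-}(DM) \simeq M^{*}$ applied to the terms of $\mathbf{p}(DX^\bullet)$. The reordering of the two identities is cosmetic, and your more general form of the second isomorphism sidesteps the need to arrange $\mathbf{i}X^\bullet$ to have finitely generated terms.
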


\begin{proof}
By Lemma \ref{lem:fact1} we have $\mathbf{p}DX^\bullet \simeq
D\mathbf{i}X^\bullet$. Observe that we may assume that
$\mathbf{i}X^\bullet$ lies in $\mathbf{K}^{-}(A\mbox{-inj})$. Note
that for a module $I\in A\mbox{-inj}$ we have a natural isomorphism
$\nu^{-} I\simeq (DI)^*$. From this we infer that
$(D\mathbf{i}X^\bullet)^*\simeq \nu^{-}\mathbf{i}X^\bullet$. We are
done.
\end{proof}

 Denote by $\mathbf{K}^{-,b}(A \mbox{-{\rm
proj}})$ the full subcategory  of $\mathbf{K}^{-}(A\mbox{-proj})$
consisting of complexes with only finitely many nonzero
cohomologies. For the notions of compact objects \index{compact
object} and compactly generated triangulated
categories\index{category!compactly generated}, we refer to
\cite{Nee0, Nee1}.

The following result is due to J{\o}rgensen (\cite[Theorem
2.4]{Jor1}); also see Krause (\cite[Example 2.6]{Kr}) and Neeman
(\cite[Propositions 7.12 and 7.14]{Nee2}).

\begin{lem}\label{lem:fact3}
Let $A$ be an artin algebra. Then the homotopy category
$\mathbf{K}(A\mbox{-{\rm Proj}})$ is compactly generated; moreover,
a complex is compact if and only if it is isomorphic to a complex of
the form $(P^\bullet)^*$ for $P^\bullet\in \mathbf{K}^{-,b}(A^{\rm
op}\mbox{-{\rm proj}})$. \hfill $\square$
\end{lem}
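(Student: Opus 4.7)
The plan is to proceed in three stages. First, I would verify that $\mathbf{K}(A\mbox{-{\rm Proj}})$ admits arbitrary coproducts; these are formed termwise, since a coproduct of projective modules is projective.

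Second, I would exhibit the compact objects. For a finitely generated projective right $A$-module $Q$, the dual $Q^* = {\rm Hom}_{A^{\rm op}}(Q,A)$ is a finitely generated projective left $A$-module, and the natural isomorphism ${\rm Hom}_A(Q^*, X) \simeq Q \otimes_A X$ (valid for f.g.\ projective $Q$) yields
$${\rm Hom}_{\mathbf{K}(A\mbox{-}{\rm Proj})}(Q^*, X^\bullet) \simeq H^0(Q \otimes_A X^\bullet),$$
which commutes with coproducts in $X^\bullet$ since both $Q \otimes_A -$ and $H^0$ do. Hence each stalk $Q^*$ is compact, and so is every object of the thick triangulated subcategory they generate; in particular, every bounded complex of such stalks is compact. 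For a general $P^\bullet \in \mathbf{K}^{-,b}(A^{\rm op}\mbox{-{\rm proj}})$, the bounded-cohomology hypothesis allows one to identify $(P^\bullet)^*$ up to homotopy with an object already in this thick subcategory: the acyclic tail of $P^\bullet$ at degrees $\to -\infty$ is a projective resolution that dualises to a contractible piece of $(P^\bullet)^*$, leaving behind a bounded representative. The converse characterisation (that every compact is of this form) follows from the fact that the class $\{(P^\bullet)^* : P^\bullet \in \mathbf{K}^{-,b}(A^{\rm op}\mbox{-{\rm proj}})\}$ is itself closed under shifts, cones, and direct summands, and hence exhausts the thick subcategory generated by the stalks.

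For generation, let $X^\bullet \in \mathbf{K}(A\mbox{-{\rm Proj}})$ be right orthogonal to every compact object. Testing against the stalks $A[n]$ gives $H^n(X^\bullet) = 0$ for all $n$, so $X^\bullet$ is acyclic. To upgrade this to nullhomotopy, I would test against $(P^\bullet)^*$ for $P^\bullet$ a finitely generated projective resolution of an arbitrary finitely generated right $A$-module $M$; via the identification of the Hom computation with the derived tensor product, this yields vanishing of ${\rm Tor}_k^A(M, Z^i(X^\bullet))$ for all $k \geq 1$ and all $i$. Extending from f.g.\ $M$ to arbitrary $M$ by the filtered-colimit compatibility of ${\rm Tor}$, one deduces that each cocycle $Z^i(X^\bullet)$ is flat. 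Since $A$ is an artin (hence perfect) algebra, flat $A$-modules are projective, so the short exact sequences $0 \to Z^i(X^\bullet) \to X^i \to Z^{i+1}(X^\bullet) \to 0$ split, and $X^\bullet$ is contractible.

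The main obstacle is the last step, the passage from acyclicity to contractibility: acyclic complexes of projectives need not be contractible, as the very existence of non-projective Gorenstein-projective modules demonstrates, so the stalks $A[n]$ alone cannot generate. Producing enough compact test objects $(P^\bullet)^*$ to force the cocycles of $X^\bullet$ to be flat (hence projective, hence split summands) is the technical core of the theorem, and it is precisely here that both the bounded-cohomology feature of $\mathbf{K}^{-,b}(A^{\rm op}\mbox{-{\rm proj}})$ and the artin hypothesis (via Bass's theorem that over a perfect ring every flat module is projective) are used in an essential way.
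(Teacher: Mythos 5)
The paper states this lemma without proof, citing J{\o}rgensen, Krause, and Neeman, so your attempt must be measured against the actual argument in those references. Your sketch contains a genuine and fatal error in the compactness step.

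You claim that for $P^\bullet \in \mathbf{K}^{-,b}(A^{\rm op}\mbox{-{\rm proj}})$ the complex $(P^\bullet)^*$ is homotopy equivalent to a bounded complex, because ``the acyclic tail of $P^\bullet$ dualises to a contractible piece.'' This is false. The tail of $P^\bullet$ in very negative degrees is a brutal truncation, which is a projective resolution of some module $Z$, not an acyclic complex; its dual computes ${\rm Ext}^\bullet_{A^{\rm op}}(Z, A)$, which is generally nonzero and generally unbounded. Concretely, take $A = k[x]/(x^2)$ and $P^\bullet$ the minimal free resolution of $k$: then $(P^\bullet)^*$ is $0 \to A \xrightarrow{x} A \xrightarrow{x} \cdots$, a bounded-below complex of projectives with cohomology $k$ in degree $0$. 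This is not quasi-isomorphic (hence a fortiori not homotopy equivalent) to any bounded complex of projectives, since $k$ is not perfect over $A$. So $(P^\bullet)^*$ does \emph{not} lie in the thick subcategory of $\mathbf{K}(A\mbox{-{\rm Proj}})$ generated by the stalks $Q^*[n]$; that thick subcategory is precisely $\mathbf{K}^b(A\mbox{-{\rm proj}})$, the perfect complexes, and the compact objects of $\mathbf{K}(A\mbox{-{\rm Proj}})$ form a strictly larger thick subcategory, equivalent to $\mathbf{D}^b(A^{\rm op}\mbox{-{\rm mod}})^{\rm op}$.

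This error propagates into an internal inconsistency: if, as you claim, the class $\{(P^\bullet)^*\}$ coincided with the thick subcategory generated by the stalks $A[n]$, then the purported compact generators would all be perfect complexes. But as you yourself correctly observe in the final paragraph, perfect complexes cannot generate $\mathbf{K}(A\mbox{-{\rm Proj}})$ whenever $A$ has a non-projective Gorenstein-projective module, since any chain map from a bounded complex of finitely generated projectives into an acyclic complex of projectives is null-homotopic. So the argument, if carried out as stated, would end by proving the lemma false.

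The correct proof must establish compactness of $(P^\bullet)^*$ for genuinely unbounded-below $P^\bullet \in \mathbf{K}^{-,b}$ by a separate argument, not by reduction to $\mathbf{K}^b$; this is the substantive content of J{\o}rgensen's Theorem 2.4 and of Neeman's Propositions 7.12 and 7.14. Your final generation sketch --- testing against resolutions to force the cocycles of an orthogonal acyclic complex to be flat, then invoking perfectness of artin rings --- captures the right strategy, but the claimed isomorphism $${\rm Hom}_{\mathbf{K}(A\mbox{-}{\rm Proj})}((P^\bullet)^*, X^\bullet[n]) \simeq {\rm Tor}^A_{-n}(M, Z^0(X^\bullet))$$ is itself a non-trivial computation (the naive double-complex spectral sequence fails to converge for unbounded $X^\bullet$), and in the references it is handled by a careful truncation argument that also underlies the compactness proof. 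In short: the soft spot you flagged at the end is real, but your sketch does not close it; it is precisely where the reduction to bounded complexes, which would make the result easy, fails.
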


Denote by $\mathbf{K}_{\rm tac}(A\mbox{-Proj})$ the full subcategory
of $\mathbf{K}(A\mbox{-Proj})$ consisting of totally acyclic
complexes. It is a triangulated subcategory. Denote by
$A\mbox{-GProj}$ the category of Gorenstein-projective $A$-modules;
it is a Frobenius exact category such that its projective objects
are equal to projective $A$-modules. Denote by
$A\mbox{-\underline{GProj}}$ the \emph{stable category} \index{category!stable} of $A\mbox{-GProj}$
modulo projective modules; it has a canonical triangulated
structure. For details, see Chapter 2.

The following result is well known; see \cite[Lemma 7.3]{Kr}.

\begin{lem}\label{lem:fact4}
There is a triangle equivalence $A\mbox{-\underline{\rm
GProj}}\stackrel{\sim}\longrightarrow \mathbf{K}_{\rm
tac}(A\mbox{-{\rm Proj}})$ sending a Gorenstein-projective module to
its complete resolution, the quasi-inverse of which is given by the
functor $Z^0(-)$ of taking the zeroth cocycles. \hfill $\square$
\end{lem}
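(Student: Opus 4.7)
The plan is to construct both functors explicitly, verify they descend to the stable/homotopy levels, check that they are mutually quasi-inverse, and finally confirm the triangulated compatibility. For each $M\in A\mbox{-GProj}$ fix once and for all a complete resolution $F(M)=P_M^\bullet$, which exists by Definition \ref{defn:Gp}; this defines $F$ on objects. The cocycle functor $G(P^\bullet):=Z^0(P^\bullet)$ lands in $A\mbox{-GProj}$ tautologically. It descends to a well-defined functor out of the homotopy category: if $g^\bullet\colon P^\bullet\to Q^\bullet$ is null-homotopic via a homotopy $\{s^n\}$, then $g^0|_{Z^0(P^\bullet)}=d_Q^{-1}\circ s^0|_{Z^0(P^\bullet)}$, which factors through the projective $Q^{-1}$, so it becomes zero in $A\mbox{-}\underline{\mathrm{GProj}}$.

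To extend $F$ to morphisms, I would lift $f\colon M\to N$ to a chain map $P_M^\bullet\to P_N^\bullet$ in two stages. The negative-degree part is produced by the usual Comparison Theorem for projective resolutions. For the non-negative degrees, each cocycle $Z^i(P_M^\bullet)$ with $i\geq 1$ is Gorenstein-projective and hence lies in $^\perp A$ by Corollary \ref{cor:infinitedimension}, so ${\rm Ext}_A^1(Z^i(P_M^\bullet),P_N^j)=0$ for every projective $P_N^j$; this lets the composite $M\hookrightarrow P_N^0$ extend to $P_M^0\to P_N^0$, and then inductively to all higher degrees. A standard homotopy argument, using the same Ext-vanishing, shows any two such lifts of a fixed $f$ are chain-homotopic, and that if $f$ factors through a projective then the lift is null-homotopic; hence $F$ is a well-defined additive functor on the stable category.

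Next the compositions. By construction $G\circ F=\mathrm{Id}$ on objects. For essential surjectivity, observe that any $P^\bullet\in\mathbf{K}_{\rm tac}(A\mbox{-Proj})$ is itself a complete resolution of $Z^0(P^\bullet)$; the Comparison Theorem applied symmetrically to the two complete resolutions $P^\bullet$ and $F(Z^0(P^\bullet))$ of the same module produces mutually inverse chain-homotopy equivalences, yielding a natural isomorphism $F\circ G\cong\mathrm{Id}$. Thus $F$ is an equivalence of additive categories.

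For the triangulated structure, triangles in $A\mbox{-}\underline{\mathrm{GProj}}$ are induced by conflations $0\to M_1\to M_2\to M_3\to 0$ in $A\mbox{-GProj}$. I would apply the Horseshoe Lemma in two parts (the standard version for the negative-degree halves, and its $(-)^*$-dual for the non-negative-degree halves, using the duality $A\mbox{-Gproj}\simeq A^{\rm op}\mbox{-Gproj}$ of Corollary \ref{cor:duality}) to produce a termwise split short exact sequence $0\to P_{M_1}^\bullet\to E^\bullet\to P_{M_3}^\bullet\to 0$ in $\mathbf{K}_{\rm tac}(A\mbox{-Proj})$ with $E^\bullet$ homotopy equivalent to $P_{M_2}^\bullet$; such a sequence furnishes a standard triangle. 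Shift compatibility is immediate from $Z^0(P_M^\bullet[1])=Z^1(P_M^\bullet)=\Sigma M$ in $A\mbox{-}\underline{\mathrm{GProj}}$. The main obstacle is the functoriality of $F$, namely the simultaneous construction of compatible lifts on both halves of the complete resolution and the verification that these lifts are well-defined modulo chain homotopy; everything else is formal from Comparison, Horseshoe, and the Ext-vanishing already established for Gorenstein-projective modules.
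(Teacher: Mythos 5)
The paper does not actually prove this lemma; it cites Krause's Lemma 7.3 and puts $\square$ at the statement, so there is no ``paper's proof'' to compare against. Your construction of the two inverse functors is the natural one and the overall architecture (build $F$ and $G=Z^0$ explicitly, check descent and the two compositions, then triangulated compatibility via Horseshoe) is sound. Two points, however, need repair rather than being dismissed as standard.

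First, in the triangulated-compatibility step you invoke the $(-)^*$-dual Horseshoe Lemma ``using the duality $A\mbox{-Gproj}\simeq A^{\rm op}\mbox{-Gproj}$ of Corollary \ref{cor:duality}.'' That duality is a statement about \emph{finitely generated} Gorenstein-projectives; here you are in the large categories $A\mbox{-\underline{GProj}}$ and $\mathbf{K}_{\rm tac}(A\mbox{-Proj})$, where $(-)^*={\rm Hom}_A(-,A)$ is not a duality at all (it doesn't even land in $A^{\rm op}\mbox{-GProj}$ for arbitrary large modules). The dual Horseshoe is still true, but the correct justification is the Ext-vanishing you already established: given a conflation $0\to M_1\to M_2\to M_3\to 0$ with all terms in $A\mbox{-GProj}$ and chosen monomorphisms $M_1\hookrightarrow P_1^0$, $M_3\hookrightarrow P_3^0$ with Gorenstein-projective cokernels, the map $M_1\to P_1^0$ extends over $M_1\hookrightarrow M_2$ because ${\rm Ext}_A^1(M_3,P_1^0)=0$, since $M_3\in{}^\perp(A\mbox{-Proj})$. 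No duality is needed or available.

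Second, and more substantively, the ``standard homotopy argument'' asserting that any two lifts of a morphism $f$ are chain-homotopic (and that a morphism factoring through a projective lifts to a null-homotopic chain map) is the real technical content of the lemma, and it does not follow from a one-sided induction as for ordinary projective resolutions. A complete resolution is unbounded on both sides, so at each degree $n$ the homotopy identity $h^n=d_Q^{n-1}s^n+s^{n+1}d_P^n$ involves the homotopy component on either side and there is no top degree to start from. The correct argument splits into two stages: first, using that $h^0$ vanishes on $Z^0(P_M^\bullet)$ and hence factors through $Z^1(P_M^\bullet)$, and that ${\rm Ext}_A^1(Z^2(P_M^\bullet),Q^0)=0$, one produces $s^1$ with $h^0=s^1 d_P^0$; subtracting the corresponding null-homotopic chain map kills $h$ in degree $0$ and forces $Z^1$ of the modified map to vanish, so one iterates upward in degree to reduce to a chain map that is literally zero in all degrees $\geq 0$. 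Second, such a chain map is supported on the projective resolutions $\cdots\to P^{-1}\to 0$ and $\cdots\to Q^{-1}\to 0$ and is killed by the ordinary Comparison Theorem. You flag this as ``the main obstacle'' at the end, which is the right diagnosis, but as written your proof treats it as routine; the reader needs at least the two-stage structure above to be convinced. The rest of the outline — descent of $Z^0$ because $d_Q^{-1}s^0$ factors through $Q^{-1}$, the construction of $F$ on morphisms via Comparison plus ${\rm Ext}^1$-vanishing, the identification $Z^0(P_M^\bullet[1])=\Sigma M$, and essential surjectivity via uniqueness of complete resolutions — is correct.
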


Recall that for a complex $X^\bullet$ of $A$-modules we denote by
$H^n(X^\bullet)$ its $n$-th cohomology for each $n\in \mathbb{Z}$.

\begin{lem}\label{lem:fact5}
Let $X^\bullet\in \mathbf{K}(A\mbox{-{\rm Mod}})$ and $I^\bullet \in
\mathbf{K}(A\mbox{-{\rm Inj}})$. For each $n\in \mathbb{Z}$, we have
the following natural isomorphisms
\begin{align*}
{\rm Hom}_{\mathbf{K}(A\mbox{-}{\rm Mod})}(A, X^\bullet[n])\simeq
H^n(X^\bullet) \mbox{ and } {\rm Hom}_{\mathbf{K}(A\mbox{-}{\rm
Mod})}(\mathbf{i}A, I^\bullet[n])\simeq H^n(I^\bullet).
\end{align*}
\end{lem}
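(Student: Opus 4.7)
The plan is to handle the two isomorphisms separately, the first by a direct definitional computation and the second by reducing to it via a triangle argument in the homotopy category.

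For the first isomorphism I would unpack the definition. A chain map from the stalk complex $A$, viewed in degree $0$, to $X^\bullet[n]$ consists of a single morphism $f\colon A\to X^n$ satisfying $d_{X[n]}^0\circ f=-d_X^n\circ f=0$; under the identification ${\rm Hom}_A(A,X^n)\simeq X^n$ sending $f$ to $f(1)$, such chain maps correspond bijectively to $Z^n(X^\bullet)$. A null-homotopy reduces to a single morphism $h\colon A\to X^{n-1}$ with $f=-d_X^{n-1}\circ h$, so null-homotopic chain maps correspond to $B^n(X^\bullet)$. Taking the quotient yields $H^n(X^\bullet)$, and this identification is manifestly functorial in $X^\bullet$.

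For the second isomorphism, I would take $\mathbf{i}A$ to be a classical injective coresolution of ${}_AA$, namely the bounded below complex $I^0\to I^1\to\cdots$ coming from the exact sequence $0\to A\to I^0\to I^1\to\cdots$; any bounded below complex of injectives is automatically homotopically injective, so this is a legitimate representative of the semi-injective resolution. The quasi-isomorphism $\iota\colon A\to\mathbf{i}A$ then fits into a distinguished triangle
\[
A\xrightarrow{\iota}\mathbf{i}A\to C^\bullet\to A[1]
\]
in $\mathbf{K}(A\mbox{-Mod})$, where $C^\bullet={\rm Cone}(\iota)$ is bounded below and acyclic. Applying the functor ${\rm Hom}_{\mathbf{K}(A\mbox{-Mod})}(-,I^\bullet[n])$ gives a long exact sequence, and the second isomorphism will follow from the first one once I show that ${\rm Hom}_{\mathbf{K}(A\mbox{-Mod})}(C^\bullet[m],I^\bullet[n])=0$ for $m\in\{-1,0\}$.

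The main technical point is thus the following vanishing lemma: for any bounded below acyclic complex $K^\bullet$ and any complex $J^\bullet$ of injective $A$-modules, every chain map $f^\bullet\colon K^\bullet\to J^\bullet$ is null-homotopic. I would construct the contracting homotopy $h^k\colon K^k\to J^{k-1}$ by induction, starting with $h^k=0$ below the leftmost nonzero degree of $K^\bullet$. At the induction step one forms $g^{k+1}=f^{k+1}-d_J^k\circ h^{k+1}$ and checks, using the chain-map identity and the inductive homotopy equation together with $d_J^k\circ d_J^{k-1}=0$, that $g^{k+1}\circ d_K^k=0$; acyclicity of $K^\bullet$ gives $\ker d_K^{k+1}=\mathrm{Im}\,d_K^k$, so $g^{k+1}$ descends to $\mathrm{Im}\,d_K^{k+1}\hookrightarrow K^{k+2}$, and injectivity of $J^{k+1}$ extends the descended map to all of $K^{k+2}$, producing $h^{k+2}$. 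This extension along a subobject inclusion is the one place where injectivity of $J^\bullet$ is used, and it is the heart of the argument; once the vanishing is established the long exact sequence collapses and the first isomorphism completes the proof.
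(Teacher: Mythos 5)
Your argument is correct and follows essentially the same route as the paper: the paper records the first isomorphism as well known and reduces the second to it via the quasi-isomorphism $A\to\mathbf{i}A$, delegating the key vanishing (chain maps from a bounded-below acyclic complex to a complex of injectives are null-homotopic) to \cite[Lemma 2.1]{Kr}. You simply supply that vanishing lemma yourself by the standard inductive construction of a contracting homotopy, which makes the proof self-contained.
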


\begin{proof}
The first isomorphism is well known and the second follows from the
first one and  \cite[Lemma 2.1]{Kr}.
\end{proof}

For a subset $\mathcal{S}$ of objects in a triangulated category
$\mathcal{T}$, consider its \emph{right orthogonal subcategory}
$\mathcal{S}^\perp=\{X\in \mathcal{T}\; |\; {\rm Hom}_\mathcal{T}(S,
X[n])=0, \mbox{ for all }n\in \mathbb{Z}, S\in \mathcal{S}\}$; \index{subcategory!right orthogonal} it is
a triangulated subcategory of $\mathcal{T}$.

We need  the following important result; see \cite[Theorem
2.1]{Nee0} and \cite[Proposition 1.7(1)]{IK}.

\begin{lem}\label{lem:fact6}
Let $\mathcal{T}$ be a compactly generated triangulated category and
let $\mathcal{S}$ be a set of compact objects in $\mathcal{T}$. Then
the right orthogonal subcategory $\mathcal{S}^\perp$ is compactly
generated. \hfill $\square$
\end{lem}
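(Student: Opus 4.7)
The plan is to realize $\mathcal{S}^\perp$ as a Bousfield localization of $\mathcal{T}$ and transport compact generators through the corresponding left adjoint. First I would note that because every $S\in\mathcal{S}$ is compact, the functors ${\rm Hom}_\mathcal{T}(S,-[n])$ commute with coproducts, so $\mathcal{S}^\perp$ is a triangulated subcategory of $\mathcal{T}$ that is closed under arbitrary small coproducts, all of which are computed in $\mathcal{T}$. This step is mechanical but important: it is what makes ``compact in $\mathcal{S}^\perp$'' the same as ``compact in $\mathcal{T}$ after restricting test objects to $\mathcal{S}^\perp$''.

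Next I would form the localizing subcategory $\mathcal{L}={\rm Loc}(\mathcal{S})$ generated by $\mathcal{S}$, namely the smallest triangulated subcategory of $\mathcal{T}$ containing $\mathcal{S}$ and closed under coproducts. One checks that $\mathcal{L}$ is itself a compactly generated triangulated category with compact generating set $\mathcal{S}$, and that $\mathcal{L}^\perp=\mathcal{S}^\perp$ (the orthogonality is preserved under coproducts and triangles thanks to compactness of the $S$'s). Brown representability applied to the inclusion $i\colon\mathcal{L}\hookrightarrow\mathcal{T}$ produces a right adjoint $i_\rho$, whence for each $X\in\mathcal{T}$ a functorial localization triangle
\[
i i_\rho X\longrightarrow X\longrightarrow L X\longrightarrow (ii_\rho X)[1]
\]
with $LX\in\mathcal{L}^\perp=\mathcal{S}^\perp$. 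Equivalently, the inclusion $j\colon\mathcal{S}^\perp\hookrightarrow\mathcal{T}$ admits a left adjoint $j_\lambda\colon\mathcal{T}\to\mathcal{S}^\perp$, which as a left adjoint preserves all coproducts.

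Finally, let $\mathcal{C}$ be a set of compact generators of $\mathcal{T}$, and set $\mathcal{C}'=\{j_\lambda C\mid C\in\mathcal{C}\}\subseteq\mathcal{S}^\perp$. Using the adjunction $(j_\lambda,j)$ and the fact from the first step that coproducts in $\mathcal{S}^\perp$ are computed in $\mathcal{T}$, for any family $\{Y_i\}\subseteq\mathcal{S}^\perp$ I have
\[
{\rm Hom}_{\mathcal{S}^\perp}\bigl(j_\lambda C,\textstyle\coprod_i Y_i\bigr)\simeq{\rm Hom}_\mathcal{T}\bigl(C,\coprod_i Y_i\bigr)\simeq\coprod_i{\rm Hom}_\mathcal{T}(C,Y_i)\simeq\coprod_i{\rm Hom}_{\mathcal{S}^\perp}(j_\lambda C,Y_i),
\]
so each $j_\lambda C$ is compact in $\mathcal{S}^\perp$. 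For generation, if $Y\in\mathcal{S}^\perp$ satisfies ${\rm Hom}_{\mathcal{S}^\perp}(j_\lambda C,Y[n])=0$ for all $C\in\mathcal{C}$ and $n\in\mathbb{Z}$, then by the adjunction ${\rm Hom}_\mathcal{T}(C,Y[n])=0$ for all such $C,n$, and compact generation of $\mathcal{T}$ forces $Y=0$. Essential smallness of $(\mathcal{S}^\perp)^c$ follows because it is the thick closure of the set $\mathcal{C}'$.

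The main obstacle will be the middle step: establishing the Bousfield decomposition, which in turn rests on Brown representability for $\mathcal{L}$ and on the identification $\mathcal{L}^\perp=\mathcal{S}^\perp$. Both ingredients are available in the excerpt's references (\cite{Nee0,Nee1}), but their combination is the substantive content; the surrounding compactness verifications in the third paragraph are then formal consequences of the adjunction $j_\lambda\dashv j$.
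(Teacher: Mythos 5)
The paper does not prove this lemma; it records it as a black box and points to \cite[Theorem 2.1]{Nee0} and \cite[Proposition 1.7(1)]{IK}, which is precisely the Bousfield-localization statement your sketch reconstructs. Your outline is sound: the formation of $\mathcal{L}=\mathrm{Loc}(\mathcal{S})$, the localization triangle producing the left adjoint $j_\lambda$ to $j$, and the two verifications in the final paragraph (compactness of $j_\lambda C$ via the adjunction plus the fact that coproducts in $\mathcal{S}^\perp$ are computed in $\mathcal{T}$, and generation via the adjunction plus compact generation of $\mathcal{T}$) are all correct formal consequences once the localization is in place. Two imprecisions worth fixing. First, the equality $\mathcal{L}^\perp=\mathcal{S}^\perp$ owes nothing to compactness of the objects of $\mathcal{S}$: for fixed $Y$ the class $\{X : {\rm Hom}_\mathcal{T}(X,Y[n])=0 \text{ for all } n\}$ is always localizing because ${\rm Hom}$ sends coproducts in the first variable to products, so it contains $\mathcal{S}$ if and only if it contains $\mathrm{Loc}(\mathcal{S})$. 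Compactness is used elsewhere: to see that objects of $\mathcal{S}$ remain compact in $\mathcal{L}$ (coproducts in the localizing subcategory $\mathcal{L}$ agree with those in $\mathcal{T}$) and in your two computations at the end. Second, both the claim that $\mathcal{L}$ is compactly generated with $\mathcal{S}$ as compact generating set and the final claim that $(\mathcal{S}^\perp)^c$ is the thick closure of $\mathcal{C}'$ silently invoke Neeman's theorem that in a triangulated category with coproducts a set of compact generators generates all compacts up to thick closure; the generation part for $\mathcal{L}$, by contrast, is formal (if $X\in\mathcal{L}\cap\mathcal{S}^\perp$ then $\{Z:{\rm Hom}(Z,X[n])=0\ \forall n\}$ is localizing and contains $\mathcal{S}$, hence $\mathcal{L}$, so ${\rm Hom}(X,X)=0$ and $X=0$). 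With those attributions made precise, the argument is complete and matches the content of the cited references.
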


 Now we are in the position to prove Beligiannis's Theorem.

\begin{thm}{\rm (Beligiannis)} Let $A$ be an artin algebra. Then the
triangulated category $A\mbox{-\underline{\rm GProj}}$ is compactly
generated.
\end{thm}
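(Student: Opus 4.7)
The plan is to reduce, via the triangle equivalence $A\mbox{-}\underline{\mathrm{GProj}} \simeq \mathbf{K}_{\mathrm{tac}}(A\mbox{-Proj})$ of Lemma \ref{lem:fact4}, to showing that $\mathbf{K}_{\mathrm{tac}}(A\mbox{-Proj})$ is compactly generated. The ambient category $\mathbf{K}(A\mbox{-Proj})$ is already compactly generated by Lemma \ref{lem:fact3}, so by Lemma \ref{lem:fact6} it will suffice to exhibit $\mathbf{K}_{\mathrm{tac}}(A\mbox{-Proj})$ as the right orthogonal $\mathcal{S}^\perp$ of a finite set $\mathcal{S}$ of compact objects in $\mathbf{K}(A\mbox{-Proj})$.

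To identify the correct $\mathcal{S}$, I would unwind the definition of totally acyclic using the ``acyclic and $\nu$-acyclic'' characterization from Lemma \ref{lem:totallyacyclic,large}(3): a complex $P^\bullet\in \mathbf{K}(A\mbox{-Proj})$ is totally acyclic if and only if both $P^\bullet$ and $\nu P^\bullet$ are acyclic. The first condition is, by Lemma \ref{lem:fact5}, the vanishing of $\mathrm{Hom}_{\mathbf{K}(A\mbox{-}\mathrm{Mod})}(A, P^\bullet[n])$ for every $n$; equivalently, $P^\bullet\in \{A\}^\perp$. For the second condition, $\nu P^\bullet$ lies in $\mathbf{K}(A\mbox{-Inj})$, so Lemma \ref{lem:fact5} again rewrites its acyclicity as the vanishing of $\mathrm{Hom}_{\mathbf{K}(A\mbox{-}\mathrm{Mod})}(\mathbf{i}A,\, \nu P^\bullet[n])$ for every $n$. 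Transporting along the equivalence $\nu\colon \mathbf{K}(A\mbox{-Proj})\to \mathbf{K}(A\mbox{-Inj})$ with quasi-inverse $\nu^{-}$, this becomes the vanishing of $\mathrm{Hom}_{\mathbf{K}(A\mbox{-Proj})}(\nu^{-}\mathbf{i}A,\, P^\bullet[n])$ for every $n$, and Lemma \ref{lem:fact2} rewrites this as $\mathrm{Hom}_{\mathbf{K}(A\mbox{-Proj})}((\mathbf{p}DA)^*,\, P^\bullet[n]) = 0$. Combining the two conditions yields
\[
\mathbf{K}_{\mathrm{tac}}(A\mbox{-Proj}) \;=\; \bigl\{\,A,\ (\mathbf{p}DA)^*\,\bigr\}^{\perp}\quad\text{inside}\quad \mathbf{K}(A\mbox{-Proj}).
\]

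Finally I would check that both generators of this orthogonal are compact in $\mathbf{K}(A\mbox{-Proj})$, using the description in Lemma \ref{lem:fact3}. The regular module $A$ is compact since $A\simeq A^{**}$ with $A\in \mathbf{K}^{-,b}(A^{\mathrm{op}}\mbox{-proj})$. For the second generator, $DA$ is a finitely generated $A^{\mathrm{op}}$-module, so its semi-projective resolution can be chosen in $\mathbf{K}^{-,b}(A^{\mathrm{op}}\mbox{-proj})$, and then $(\mathbf{p}DA)^*$ is compact by the same lemma. An application of Lemma \ref{lem:fact6} then gives that $\{A,(\mathbf{p}DA)^*\}^\perp$ is compactly generated, hence so is $A\mbox{-}\underline{\mathrm{GProj}}$ via Lemma \ref{lem:fact4}.

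The only subtle step is the second paragraph: recognizing that total acyclicity, which is a priori a Hom-acyclicity condition against \emph{all} projectives, can be cut down to Hom-vanishing against the single compact test object $(\mathbf{p}DA)^*$. This is where the Nakayama duality combined with the identification $\nu^{-}\mathbf{i}A \simeq (\mathbf{p}DA)^*$ (Lemma \ref{lem:fact2}) does the essential work; the rest of the argument is a formal assembly of the cited lemmas.
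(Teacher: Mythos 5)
Your argument is essentially identical to the paper's: both reduce via Lemma \ref{lem:fact4}, identify $\mathbf{K}_{\mathrm{tac}}(A\mbox{-}\mathrm{Proj})$ as the right orthogonal $\{A,\ \nu^{-}\mathbf{i}A\}^\perp = \{A,\ (\mathbf{p}DA)^*\}^\perp$ using Lemmas \ref{lem:totallyacyclic,large}, \ref{lem:fact5}, and \ref{lem:fact2}, verify compactness via Lemma \ref{lem:fact3}, and conclude with Lemma \ref{lem:fact6}. You spell out the compactness of the stalk complex $A$ a bit more explicitly than the paper does, but the route is the same.
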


\begin{proof} Recall that a complex $P^\bullet$ of projective $A$-modules is totally acyclic if
and only if $P^\bullet$ and $\nu P^\bullet$ are both acyclic; see
Lemma \ref{lem:totallyacyclic,large}. Note that
$$H^n(\nu P^\bullet)\simeq {\rm Hom}_{\mathbf{K}(A\mbox{-}{\rm
Inj})}(\mathbf{i}A, \nu P^\bullet[n])\simeq {\rm
Hom}_{\mathbf{K}(A\mbox{-}{\rm Proj})}(\nu^{-}\mathbf{i}A,
P^\bullet[n]),$$ where the first isomorphism is by Lemma
\ref{lem:fact5} and the second follows from the equivalence
$\nu\colon \mathbf{K}(A\mbox{-Proj})\stackrel{\sim}\longrightarrow
\mathbf{K}(A\mbox{-Inj})$. By Lemma \ref{lem:fact2} we have
$\nu^{-}\mathbf{i}A\simeq (\mathbf{p}DA)^*$ and by Lemma
\ref{lem:fact3} it is compact in $\mathbf{K}(A\mbox{-Proj})$. Then
it is direct to conclude that in $\mathbf{K}(A\mbox{-Proj})$ we have
$\mathbf{K}_{\rm tac}(A\mbox{-Proj})=\{A,
\nu^{-}\mathbf{i}(A)\}^\perp$. Then the result follows from Lemmas
\ref{lem:fact4} and \ref{lem:fact6}.
\end{proof}

\vskip 5pt

We note the following  consequence of Beligiannis's Theorem.

\begin{cor} \label{cor:trianglesfrombrwonrep}
Let $A$ be an artin algebra. Then each complex $P^\bullet$ of
projective $A$-modules fits into a triangle
$$P_1^\bullet\longrightarrow P^\bullet\longrightarrow P_2^\bullet\rightarrow P_1^\bullet[1]$$
such that $P_1^\bullet\in \mathbf{K}_{\rm tac}(A\mbox{-{\rm Proj}})$
and $P_2^\bullet\in \mathbf{K}_{\rm tac}(A\mbox{-{\rm
Proj}})^\perp$.
\end{cor}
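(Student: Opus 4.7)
The plan is to obtain the desired triangle from Brown representability applied to the compactly generated triangulated category $\mathbf{K}_{\rm tac}(A\mbox{-{\rm Proj}})$ furnished by Beligiannis's Theorem. The idea is to construct, for each $P^\bullet \in \mathbf{K}(A\mbox{-{\rm Proj}})$, a right approximation $P_1^\bullet \to P^\bullet$ by a totally acyclic complex, and then let $P_2^\bullet$ be its cone; the orthogonality will fall out of the long exact sequence of Hom's.

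First I would check that the inclusion $i \colon \mathbf{K}_{\rm tac}(A\mbox{-{\rm Proj}}) \hookrightarrow \mathbf{K}(A\mbox{-{\rm Proj}})$ is a coproduct-preserving triangle functor. This is routine: a coproduct of acyclic complexes of projectives is acyclic, and $\nu$ commutes with coproducts (being a left adjoint), so a coproduct of totally acyclic complexes is totally acyclic by Lemma \ref{lem:totallyacyclic,large}. Next, for the fixed complex $P^\bullet$ consider the contravariant functor
\[
F \colon \mathbf{K}_{\rm tac}(A\mbox{-{\rm Proj}})^{\rm op} \longrightarrow \mathrm{Ab}, \qquad X^\bullet \longmapsto {\rm Hom}_{\mathbf{K}(A\mbox{-}{\rm Proj})}(iX^\bullet, P^\bullet).
\]
This functor is cohomological, and because $i$ preserves coproducts it sends coproducts to products.

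Now I apply the dual Brown representability theorem, which is available since $\mathbf{K}_{\rm tac}(A\mbox{-{\rm Proj}})$ is compactly generated by the theorem just proved (cf.\ \cite[Theorem 8.6.1]{Nee2001}). This yields an object $P_1^\bullet \in \mathbf{K}_{\rm tac}(A\mbox{-{\rm Proj}})$ together with a natural isomorphism
\[
{\rm Hom}_{\mathbf{K}_{\rm tac}(A\mbox{-}{\rm Proj})}(X^\bullet, P_1^\bullet) \stackrel{\sim}\longrightarrow {\rm Hom}_{\mathbf{K}(A\mbox{-}{\rm Proj})}(iX^\bullet, P^\bullet).
\]
Evaluating at $X^\bullet = P_1^\bullet$ on the identity produces a canonical morphism $\varphi \colon P_1^\bullet \to P^\bullet$ in $\mathbf{K}(A\mbox{-{\rm Proj}})$. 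Completing $\varphi$ to a triangle in $\mathbf{K}(A\mbox{-{\rm Proj}})$ defines $P_2^\bullet$:
\[
P_1^\bullet \longrightarrow P^\bullet \longrightarrow P_2^\bullet \longrightarrow P_1^\bullet[1].
\]

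Finally I would verify $P_2^\bullet \in \mathbf{K}_{\rm tac}(A\mbox{-{\rm Proj}})^\perp$. For any $X^\bullet \in \mathbf{K}_{\rm tac}(A\mbox{-{\rm Proj}})$ and any $n \in \mathbb{Z}$, apply ${\rm Hom}_{\mathbf{K}(A\mbox{-}{\rm Proj})}(X^\bullet, -)$ to the triangle and use the natural isomorphism above: the map ${\rm Hom}(X^\bullet, P_1^\bullet[n]) \to {\rm Hom}(X^\bullet, P^\bullet[n])$ is an isomorphism for every $n$, so the long exact sequence forces ${\rm Hom}(X^\bullet, P_2^\bullet[n]) = 0$. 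The only real subtlety is ensuring the functor $F$ meets the hypotheses of Brown representability; once that is in place, the triangle and the orthogonality properties follow formally.
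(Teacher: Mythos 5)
Your argument is essentially correct and follows the same route as the paper's: invoke Beligiannis's theorem to get $\mathbf{K}_{\rm tac}(A\mbox{-{\rm Proj}})$ compactly generated (via Lemma \ref{lem:fact4}), observe that the inclusion preserves coproducts, and then use Brown representability to produce the approximation. The difference is only one of packaging: the paper cites Neeman's adjoint functor theorem (\cite[Theorem 4.1]{Nee1}) to obtain a right adjoint to the inclusion globally and then invokes the standard Bousfield colocalization triangle, whereas you unwind the same mechanism by hand for a fixed $P^\bullet$, constructing the representing object $P_1^\bullet$, the morphism $\varphi$, and the orthogonality by a Yoneda-style computation. Both work, and your direct check of orthogonality is correct because the subcategory is full and closed under shifts, so the Brown isomorphism gives ${\rm Hom}(X^\bullet[-n], P_1^\bullet)\xrightarrow{\sim}{\rm Hom}(X^\bullet[-n],P^\bullet)$ for all $n$, forcing ${\rm Hom}(X^\bullet, P_2^\bullet[n])=0$.

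One slip to fix: the functor $F(X^\bullet)={\rm Hom}_{\mathbf{K}(A\mbox{-}{\rm Proj})}(iX^\bullet,P^\bullet)$ is \emph{contravariant} and sends coproducts to products, so what you are applying is the \emph{original} Brown representability theorem (\cite[Theorem 3.1]{Nee1} or \cite[Theorem 8.3.3]{Nee2001}), not its dual. Brown representability ``for the dual'' (\cite[Theorem 8.6.1]{Nee2001}) is the statement about covariant product-preserving cohomological functors being co-representable, which is not what you need here. This is a terminology and citation error only; it does not affect the substance of the proof.
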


\begin{proof}
Consider the inclusion functor $\mathbf{K}_{\rm tac}(A\mbox{-{\rm
Proj}})\hookrightarrow \mathbf{K}(A\mbox{-{\rm Proj}})$; it
preserves coproducts. By Beligiannis's Theorem and Lemma
\ref{lem:fact4} the category $\mathbf{K}_{\rm tac}(A\mbox{-{\rm
Proj}})$ is compactly generated. We apply Brown representability
theorem to get a right adjoint of this inclusion (\cite[Theorem
4.1]{Nee1}). The adjoint yields for each complex $P^\bullet$ such a
triangle; see \cite[Chapter 9]{Nee2}.
\end{proof}

For a class $\mathcal{S}$ of $A$-modules, set
$\mathcal{S}^\perp=\{X\in A\mbox{-Mod}\; |\; {\rm Ext}_A^i(S,
X)=0\mbox{ for all } i\geq 1, S\in \mathcal{S}\}$. The following
result is contained in \cite[Chapter X, Theorem 2.4(iv)]{BR}; also
see \cite[Proposition 3.4]{BK}. Observe that it is stronger than
Corollary \ref{cor:GProjresolution}.

\begin{thm}\label{thm:BR-AppendixB} {\rm (Beligiannis-Reiten)}
Let $A$ be an artin algebra. Then the pair $(A\mbox{-{\rm GProj}},
(A\mbox{-{\rm GProj}})^\perp)$ is a cotorsion pair in  $A\mbox{-{\rm
Mod}}$.
\end{thm}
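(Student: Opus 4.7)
The plan is to verify the four defining conditions (C0)--(C3) of a cotorsion pair for the pair $(A\mbox{-GProj}, (A\mbox{-GProj})^\perp)$. Condition (C0) is immediate: $A\mbox{-GProj}$ is closed under direct summands by the large analogue of Proposition \ref{prop:Gproj}, and $(A\mbox{-GProj})^\perp$, being defined by vanishing Ext groups, is automatically closed under direct summands. Condition (C1) holds by the very definition of $(A\mbox{-GProj})^\perp$, so the substantive work is to produce the two short exact sequences required by (C2) and (C3).

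For (C2), I would appeal to the covering property. Corollary \ref{cor:GProjresolution} shows that $A\mbox{-GProj}$ is precovering in $A\mbox{-Mod}$, and Proposition \ref{prop:GProj}(1) ensures it is closed under filtered colimits; invoking Xu's theorem as flagged in the first remark after Corollary \ref{cor:GProjresolution} promotes precovering to covering. Hence each $A$-module $M$ admits an $A\mbox{-GProj}$-cover $f\colon G\rightarrow M$. Any projective surjection $P\rightarrow M$ is an $A\mbox{-GProj}$-precover and therefore factors through $f$, forcing $f$ to be surjective. Since $A\mbox{-GProj}$ is closed under extensions, Wakamatsu's Lemma yields $\ker f\in(A\mbox{-GProj})^{\perp_1}$. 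Because $A\mbox{-GProj}$ is additionally closed under syzygies (the large analogue of Corollary \ref{cor:syzygy}), iterated dimension shift along short exact sequences $0\rightarrow \Omega G'\rightarrow P'\rightarrow G'\rightarrow 0$ upgrades this to ${\rm Ext}_A^i(G',\ker f)=0$ for every $i\geq 1$ and every $G'\in A\mbox{-GProj}$, so $\ker f\in(A\mbox{-GProj})^\perp$. The sequence $0\rightarrow \ker f\rightarrow G\rightarrow M\rightarrow 0$ is then the datum required by (C2).

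For (C3), I would apply Salce's trick. Embed $M$ into an injective module as $0\rightarrow M\rightarrow I\rightarrow N\rightarrow 0$, and apply (C2) to $N$ to obtain $0\rightarrow C'\rightarrow G'\rightarrow N\rightarrow 0$ with $G'\in A\mbox{-GProj}$ and $C'\in(A\mbox{-GProj})^\perp$. Forming the pullback of $G'\rightarrow N$ along $I\rightarrow N$ produces a module $E$ fitting into short exact sequences $0\rightarrow C'\rightarrow E\rightarrow I\rightarrow 0$ and $0\rightarrow M\rightarrow E\rightarrow G'\rightarrow 0$. Injective modules lie trivially in $(A\mbox{-GProj})^\perp$ and this subcategory is closed under extensions via the long Ext sequence, so $E\in(A\mbox{-GProj})^\perp$; the second sequence then gives (C3).

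The main obstacle is really the covering property used in (C2), which rests on Beligiannis's compact generation Theorem \ref{thm:existenceofresolution} via Corollary \ref{cor:GProjresolution}, combined with Xu's promotion of precovering to covering under closure under filtered colimits. Wakamatsu's Lemma by itself only produces Ext-vanishing from a minimal approximation, so one genuinely needs the cover rather than a mere precover; once this is in hand the remaining verifications, including Salce's pullback argument for (C3), are formal.
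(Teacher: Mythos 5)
Your argument is correct, but it is a genuinely different route from the proof the paper gives in Appendix~B, and in fact it is precisely the alternative that the paper anticipates in the remark immediately following Corollary~\ref{cor:GProjresolution}, where the author states that combining that corollary with \cite[Theorem 2.2.8]{Xu} makes $A\mbox{-{\rm GProj}}$ covering and that ``Using Wakamatsu's Lemma and this remark one deduces (part of) Theorem~\ref{thm:BR-AppendixB} directly.'' The paper's own proof does not pass through covers at all: it works at the level of the homotopy category $\mathbf{K}(A\mbox{-{\rm Proj}})$, applying Corollary~\ref{cor:trianglesfrombrwonrep} to a semi-projective resolution $\mathbf{p}M$ to get a semiorthogonal-decomposition triangle $P_1^\bullet\to\mathbf{p}M\to P_2^\bullet\to P_1^\bullet[1]$ with $P_1^\bullet$ totally acyclic, then realises both required short exact sequences simultaneously by applying the cokernel functor $C^0(-)$ to the two rotations of this triangle, and finishes by a computation in the homotopy category (using brutal truncation and the fact that chain maps from a totally acyclic complex to a bounded-below complex of projectives are null-homotopic) to show the third term lies in $(A\mbox{-{\rm GProj}})^\perp$. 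Your proof instead stays at the module level: you get the precover from Corollary~\ref{cor:GProjresolution} (Brown representability), promote it to a cover via closure under filtered colimits and Xu's theorem, extract Ext-vanishing for the kernel from Wakamatsu's Lemma plus dimension shift along syzygies (valid since $A\mbox{-{\rm GProj}}$ is closed under syzygies), and then settle (C3) by Salce's pullback against an injective embedding, using that injectives lie in $(A\mbox{-{\rm GProj}})^\perp$ and that this class is extension-closed. Both approaches ultimately rest on the same deep input, Beligiannis's compact generation theorem (Theorem~\ref{thm:existenceofresolution}), but reach it through different corollaries; the paper's proof is self-contained at the complex level and produces both sequences in one triangle-rotation, whereas yours imports Xu's cover-existence theorem and Wakamatsu's Lemma as black boxes and is arguably closer to the standard cotorsion-pair machinery of Appendix~A, at the cost of needing the Salce step separately. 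Both are complete proofs; the ``(part of)'' in the paper's remark signals only that (C3) requires the additional Salce argument you supplied.
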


\begin{proof}
Note that both $A\mbox{-{\rm GProj}}$ and $ (A\mbox{-{\rm
GProj}})^\perp$ are closed under taking direct summands. Then it
suffices to show that for an $A$-module $M$, there are short exact
sequences $0\rightarrow Y\rightarrow G\rightarrow M\rightarrow 0$
and $0\rightarrow M\rightarrow Y'\rightarrow G'\rightarrow 0$ such
that $G, G'\in A\mbox{-{\rm GProj}}$ and $Y, Y'\in (A\mbox{-{\rm
GProj}})^\perp$.

 We apply Corollary \ref{cor:trianglesfrombrwonrep}
to a projective resolution $\mathbf{p}M$ of $M$. We get a triangle
$P_1^\bullet\rightarrow  \mathbf{p}M\rightarrow P_2^\bullet
\rightarrow P_1^\bullet[1]$ with  $P_1^\bullet\in \mathbf{K}_{\rm
tac}(A\mbox{-{\rm Proj}})$ and $P_2^\bullet\in \mathbf{K}_{\rm
tac}(A\mbox{-{\rm Proj}})^\perp$. Note that $P_1^\bullet$ is acyclic
and then  $H^n(P_2^\bullet)=0$ for $n\neq 0$.

By rotating the triangle and adding some null-homotopic complexes to
$P_1^\bullet$ and $P_2^\bullet[1]$, we may assume that we have a
short exact sequence $0\rightarrow P_2^\bullet[-1]\rightarrow
P_1^\bullet\rightarrow \mathbf{p}M\rightarrow 0$ of complexes. For
each complex $X^\bullet$ denote by $C^0(X^\bullet)$ the cokernel of
$d^{-1}_X$. Applying $C^0(-)$ to the sequence, we get a short exact
sequence $0\rightarrow Y\rightarrow G\rightarrow M\rightarrow 0$ of
modules. Since $P_1^\bullet$ is totally acyclic,  the module $G$ is
Gorenstein-projective. We claim that $Y\in (A\mbox{-{\rm
GProj}})^\perp$. Note that the brutally truncated complex
$\sigma^{\leq 0}(P_2^\bullet[-1])$ is a projective resolution of
$Y$. We write $\mathbf{p}Y=\sigma^{\leq 0}(P_2^\bullet[-1])$. Take
$G'$ to be a Gorenstein-projective module and $P^\bullet$ its
complete resolution. Then we have the following isomorphisms
\begin{align*}
\underline{\rm Hom}_A(G, Y)&\simeq {\rm
Hom}_{\mathbf{K}(A\mbox{-}{\rm Mod})}(P^\bullet[-1], Y)\\
&\simeq {\rm Hom}_{\mathbf{K}(A\mbox{-}{\rm Proj})}(P^\bullet[-1],
\mathbf{p}Y)\\
&= {\rm Hom}_{\mathbf{K}(A\mbox{-}{\rm Proj})}(P^\bullet[-1], \sigma^{\leq 0}(P_2^\bullet[-1]))\\
&\simeq {\rm Hom}_{\mathbf{K}(A\mbox{-}{\rm Proj})}(P^\bullet[-1],
P_2^\bullet[-1])=0,
\end{align*}
where the first isomorphism is easy to see, the second follows from
the dual of \cite[Lemma 2.1]{Kr} and the fourth follows from that
fact that all chain morphisms from a totally acyclic complex to a
bounded below complex of projective modules is null-homotopic. Here
$\underline{\rm Hom}_A(-, -)$ means the morphism spaces in the
stable category $A\mbox{-\underline{Mod}}$ of $A\mbox{-Mod}$ modulo
projective modules. Then we are done with the claim by Lemma
\ref{lem:GPorthogonal}.

We have shown the first sequence. For the second on, we may assume
that there is a short exact sequence $0\rightarrow
\mathbf{p}M\rightarrow P_2^\bullet\rightarrow
P_1^\bullet[1]\rightarrow 0$ of complexes.
Similar as above we are done by
applying the functor $C^0(-)$ to this sequence.
\end{proof}

We end this section with an immediate consequence of
Beligiannis-Reiten's Theorem.

\begin{cor}\label{cor:properGP-resAppendixB}
Let $M$ be an $A$-module. Then there exists a proper GP-resolution
$$\cdots \longrightarrow P^{-2}\longrightarrow P^{-1}\longrightarrow G\longrightarrow M\longrightarrow 0$$
such that each $P^{-i}$ is projective and $G$ is
Gorenstein-projective.
\end{cor}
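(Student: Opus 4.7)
The plan is to combine the Beligiannis--Reiten cotorsion pair (Theorem \ref{thm:BR-AppendixB}) with a standard splicing argument, and then verify properness using Lemma \ref{lem:GPorthogonal} and Corollary \ref{cor:GPorthogonal}.

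First I would apply Theorem \ref{thm:BR-AppendixB} to the module $M$ to obtain a short exact sequence
\[
0 \longrightarrow Y \longrightarrow G \longrightarrow M \longrightarrow 0
\]
with $G \in A\mbox{-GProj}$ and $Y \in (A\mbox{-GProj})^\perp$. Then I would choose an ordinary projective resolution
\[
\cdots \longrightarrow P^{-2} \longrightarrow P^{-1} \longrightarrow Y \longrightarrow 0
\]
and splice it together with the previous sequence to produce the candidate long exact sequence
\[
\cdots \longrightarrow P^{-2} \longrightarrow P^{-1} \longrightarrow G \longrightarrow M \longrightarrow 0.
\]
This construction is automatic; the real content is to show that the resulting acyclic complex is \emph{right GP-acyclic}.

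The key observation is that the subcategory $(A\mbox{-GProj})^\perp$ is closed under taking syzygies: by Corollary \ref{cor:GPorthogonal}, if $N \in (A\mbox{-GProj})^\perp$ then $\Omega N \in (A\mbox{-GProj})^\perp$ as well. Iterating this starting from $Y$, every cocycle of the projective resolution of $Y$ lies in $(A\mbox{-GProj})^\perp$. Recall that an acyclic complex is right GP-acyclic iff at each degree the induced surjection from the term onto the next cocycle is a right $A\mbox{-GProj}$-approximation. For the degrees involving $P^{-i}$ mapping onto the next cocycle of $Y$'s resolution, Lemma \ref{lem:GPorthogonal}(3) gives the approximation property directly, since the target cocycle lies in $(A\mbox{-GProj})^\perp$. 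For the rightmost piece $G \to M$, whose kernel is $Y$, a one-line Ext-argument suffices: applying $\mathrm{Hom}_A(G',-)$ for $G' \in A\mbox{-GProj}$ to the short exact sequence yields exactness on the right because $\mathrm{Ext}_A^1(G',Y)=0$; hence $G \to M$ is also a right $A\mbox{-GProj}$-approximation.

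The main (and essentially the only) obstacle is ensuring that splicing in a Gorenstein-projective cover at the zeroth stage preserves right GP-acyclicity at the boundary between the projective part and the Gorenstein-projective term $G$. The cotorsion-pair condition $Y \in (A\mbox{-GProj})^\perp$ is exactly what is needed for this compatibility, so once that is in hand, the verification reduces to the routine syzygy-shift described above. Assembling these pieces gives the desired proper GP-resolution.
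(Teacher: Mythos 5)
Your proof is correct and takes exactly the route the paper intends: the paper's one-line proof (``We apply Beligiannis-Reiten's Theorem and Lemma~\ref{lem:GPorthogonal}'') is precisely the construction you have written out in full---use the cotorsion pair to obtain $0\to Y\to G\to M\to 0$ with $G$ Gorenstein-projective and $Y\in(A\mbox{-GProj})^\perp$, splice on a projective resolution of $Y$, and verify properness degree by degree using Lemma~\ref{lem:GPorthogonal}(3) together with the closure of $(A\mbox{-GProj})^\perp$ under syzygies (Corollary~\ref{cor:GPorthogonal}). Your explicit $\mathrm{Ext}^1$ check at the rightmost stage is likewise the standard observation that a special right approximation is a right approximation.
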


\begin{proof}
We apply Beligiannis-Reiten's Theorem and Lemma
\ref{lem:GPorthogonal}.
\end{proof}

\chapter{Open Problems}

In this section we will list some open problems in Gorenstein
homological algebra of artin algebras. They are mainly on CM-finite
artin algebras.

Let $A$ be an artin algebra. Recall that $A$ is
CM-finite\index{algebra!CM-finite} provided that up to isomorphism
there are only finitely many indecomposable finitely generated
Gorenstein-projective $A$-modules. Observe that algebras of finite
representation type are CM-finite. Recall that a remarkable result
due to Auslander states that an  artin algebra $A$ is of finite
representation type if and only if every (not necessarily finitely
generated) $A$-module is a direct sum of finitely generated ones.

The following analogue of Auslander's result for
Gorenstein-projective modules is asked in \cite{Ch1}.

\vskip 10pt

 \noindent {\bf Problem A.}\quad  \emph{Is it true that an artin
algebra $A$ is CM-finite if and only if every Gorenstein-projective
$A$-module is a direct sum of finitely generated ones?}

\vskip 10pt

 An affirmative answer is given for the case where $A$ is
Gorenstein\index{algebra!Gorenstein} (\cite{Ch1}). Recall that an
artin algebra $A$ is virtually Gorenstein\index{algebra!virtually
Gorenstein} if $(A\mbox{-GProj})^\perp={^\perp(A\mbox{-GInj})}$ and
that Gorenstein algebras are virtually Gorenstein. In fact, an
affirmative answer to Problem A is given even for the case where $A$
is virtually Gorenstein (\cite{Bel3}).

Based on the results in \cite{BK}, Problem A is equivalent to the
following one.

\vskip 10pt

 \noindent {\bf Problem B.}\quad \emph{ Is it true that a CM-finite artin
algebra $A$  is virtually Gorenstein?}

\vskip 10pt

Let us remark that an affirmative answer to Problem B is given in
\cite[Example 8.4(2)]{Bel2}, while the argument there is
incorrect\footnote{I would like to thank Professor Apostolos
Beligiannis for a private communication concerning this remark. The
argument in the second to last sentence in \cite[Example
8.4(2)]{Bel2} is incomplete.}. Hence Problem B stays open at
present.

Recall that an artin algebra $A$ is CM-free\index{algebra!CM-free} provided that
$A\mbox{-Gproj}=A\mbox{-proj}$. Closedly related to Problem A is the
following.

\vskip 10pt

 \noindent {\bf Problem C.}\quad  \emph{For a CM-free artin algebra $A$,
 do we have $A\mbox{-{\rm GProj}}=A\mbox{-{\rm Proj}}$?}

\vskip 10pt

For a CM-finite artin algebra $A$, take $G$ to be an additive
generator of $A\mbox{-Gproj}$. We call the algebra $\Gamma={\rm
End}_A(G)$ the \emph{CM-Auslander}\index{algebra!CM-Auslander}
algebra of $A$. Recall that there is, up to Morita equivalence, a
one-to-one correspondence between algebras of finite representation
type and algebras having global dimension at most $2$ and dominant
dimension at least $2$; this correspondence is called the
\emph{Auslander correspondence}.

\vskip 10pt

 \noindent {\bf Problem D.}\quad  \emph{What kinds of artin algebras is the CM-Auslander
 algebra of a CM-finite artin algebra? Is there an analogue of Auslander
 correspondence relating CM-finite artin algebras with their CM-Auslander
 algebras?}

\vskip 10pt

We call that an artin algebra $A$ is \emph{CM-bounded} \index{algebra!CM-bounded} provided that
the dimensions of all indecomposable finitely generated
Gorenstein-projective $A$-modules are uniformly bounded. Recall that
a famous theorem due to Roiter states that an artin algebra $A$ is
of finite representation type if the dimensions of all
indecomposable finitely generated $A$-modules are uniformly bounded;
see \cite{Roi}.

The following question then is natural.

\vskip 10pt

 \noindent {\bf Problem E.}\quad  \emph{Is a CM-bounded artin algebra necessarily CM-finite?}

\vskip 10pt

An affirmative answer to this problem is known in the case where $A$
is a $1$-Gorenstein algebra\index{algebra!Gornstein!$1$-Gorenstein}.

Recall that the stable category $A\mbox{-\underline{Gproj}}$ of
$A\mbox{-Gproj}$ modulo projective modules is a triangulated
category. However the information carried by this category is not
clear yet\footnote{I would like to thank Dr. Guodong Zhou for
discussions on this problem.}.

\vskip 10pt

 \noindent {\bf Problem F.}\quad  \emph{What is the Grothendieck group $K_0(A\mbox{-\underline{\rm Gproj}})$?
 What about other invariants of the algebra $A$ given by $A\mbox{-\underline{\rm Gproj}}$?}

\vskip 10pt

Recall that $\mathbf{D}_{\rm GP}(A)$ is the Gorenstein-projective
derived category of $A$\index{category!Gorenstein-projective
derived}. In our  point of view, the properties and the structure of
this category are far from clear.

\vskip 10pt

 \noindent {\bf Problem G.}\quad  \emph{Does $\mathbf{D}_{\rm GP}(A)$ always have arbitrary
 coproducts? For what kinds of algebras $A$ the category $\mathbf{D}_{\rm GP}(A)$  is compactly generated?}

\vskip 10pt

Note that for a self-injective algebra $A$ we have $\mathbf{D}_{\rm GP}(A)=\mathbf{K}(A\mbox{-Mod})$; in
this case, $\mathbf{D}_{\rm GP}(A)$ is compactly generated if and only if $A$ is of
finite representation type (\cite[Proposition 2.6]{Sto}). Moreover for a CM-finite  Gorenstein algebra $A$,
by combining the results in \cite{Ch1} and \cite{Ch3} one infers that
the category $\mathbf{D}_{\rm GP}(A)$ is compactly generated.

\vskip 10pt

\printindex

\end{document}